\documentclass{article}
\usepackage[margin=1in]{geometry}
\usepackage{hyperref}
\hypersetup{hidelinks}
\usepackage{stdpreamble}

\usepackage[toc]{appendix}
\usepackage[backend=biber,style=numeric-comp,sorting=nyt,sortcites=true,giveninits=true,maxbibnames=99]{biblatex}
\numberwithin{equation}{section}

\newcommand{\Nt}{\tilde{N}}

\DeclareMathOperator{\Ev}{Ev}

\newcommand{\G}{\mathcal{G}} 

\newcommand{\tF}{\widetilde{F}}
\newcommand{\tE}{\widetilde{E}}

\newcommand{\Lhat}{\widehat{L}}
\newcommand{\Khat}{\widehat{K}}

\newcommand{\Lcal}{\mathcal{L}}

\newcommand{\Fhat}{\widehat{F}}
\newcommand{\Ehat}{\widehat{E}}
\newcommand{\Uhat}{\widehat{U}}

\newcommand{\Hplus}{H^+}
\newcommand{\Hminus}{H^-}
\newcommand{\Hpm}{H^{\pm}}
\newcommand{\Htplus}{\widetilde{H}^+}

\newcommand{\Bplus}{B^+}
\newcommand{\Bminus}{B^-}

\newcommand{\CS}{C_{S}}

\newcommand{\ES}{E^{(S)}}
\newcommand{\FS}{F^{(S)}}
\newcommand{\US}{U^{(S)}}

\newcommand{\Fs}{F^{(s)}}
\newcommand{\Us}{U^{(s)}}

\newcommand{\EsOne}{E^{(s+1)}}
\newcommand{\FsOne}{F^{(s+1)}}

\newcommand{\ESp}{E^{(S)'}}
\newcommand{\FSp}{F^{(S)'}}
\newcommand{\USp}{U^{(S)'}}

\DeclareMathOperator{\SVS}{SVS}

\newcommand{\tC}{\tilde{C}}

\newcommand{\Mcal}{\mathcal{M}}

\newcommand{\Ccal}{\mathcal{C}}
\newcommand{\Ical}{\mathcal{I}}

\newcommand{\ann}[1]{{#1}^{\circ}} 
\newcommand{\preann}[1]{\prescript{\circ}{}{#1}} 

\DeclareMathOperator{\adj}{adj} 

\DeclareMathOperator{\vol}{vol}

\newcommand{\detLL}{\widetilde{\det}}
\newcommand{\VLL}{\widetilde{V}}

\newcommand{\measurable}{\operatorname{(M)}}

\newcommand{\phiq}{\vphi^{(q)}}

\DeclareMathOperator{\OSE}{OSE}

\newcommand{\nt}{\tilde{n}}

\title{Effective Multiplicative Ergodic Theorems for Banach Space Cocycles}
\author{Adam Zaidan\thanks{School of Mathematics, Georgia Institute of Technology, Atlanta, GA 30332, USA. Email: \texttt{adam.zaidan@gatech.edu}}}
\date{October 22, 2026}

\begin{document}
\maketitle

\begin{abstract}
    The multiplicative ergodic theorem (MET) describes the asymptotic growth rates of $q$-volumes under the iterates of an operator \textit{cocycle}, i.e., compositions of operators driven by a stationary process, organizing the underlying space into equivariant filtrations or splittings into so called Oseledets subspaces. For operator cocycles in finite dimensions, and more generally in Hilbert spaces, one can control the geometry of the Oseledets subspaces given by the MET via the convergence rates of finite-time volume growth data. This makes the MET \textit{effective} for matrix and Hilbert-space operator cocycles. We extend this effective theory to cocycles of Banach-space operators, obtaining quantitative angle estimates that are new even in the finite-dimensional setting. We take a sequential approach \`{a} la Raghunathan and Ruelle, allowing us to separate the geometric argument of the MET from measure-theoretic considerations. This permits weaker cocycle measurability hypotheses for the non-invertible and semi-invertible METs on a broad class of Banach spaces, including nonseparable ones. As an application, we establish the MET for transfer operator cocycles on $\BV[0,1]$ associated with jointly measurable random Lasota-Yorke maps.
\end{abstract}

\section{Introduction}\label{sec:intro}
Let $(\Omega,\F, \p)$ be a probability space and let $\sigma:\Omega\to \Omega$ be an invertible ergodic measure preserving transformation (mpt). Let $(V,\norm{\cdot})$ be a finite-dimensional normed space and suppose $A:\Omega\to L(V)$ is a measurable map into the space of linear operators. For $\omega\in \Omega$, the Lyapunov exponent of the cocycle $n \mapsto A^n_\omega = A_{\sigma^{n-1}\omega}\cdots A_{\omega}$ along $v\in V$ is the exponential rate
\[\lambda(\omega,v) = \limsup_{n\to\infty} \frac{1}{n}\log \norm{A^n_\omega v}.\]
Under a mild integrability condition on $\norm{A_\omega}$, the (semi-invertible) MET provides for $\p$-typical $\omega\in \Omega$ an a.s. constant finite Lyapunov spectrum $\lambda_1 > \cdots > \lambda_r \geq -\infty$, $1\leq r\leq \dim V$, and a splitting of $V$ into Oseledets spaces
\[V = U_1(\omega)\oplus\cdots\oplus U_r(\omega)\]
playing the role of generalized eigenspaces: they are equivariant (i.e., $A_\omega U_s(\omega) \subseteq U_s(\sigma\omega)$), and if $1\leq s\leq r$ and $v\in U_s(\omega)\setminus\innerp{0}$, then $\lambda(\omega,v) = \lambda_s$; in fact the $\limsup$ in $\lambda(\omega, v)$ is a convergent limit \cite{Oseledets1968,Raghunathan1979,Ruelle1979,Walters1993,FroylandLloydQuas2010}.

For cocycles of bounded linear operators on infinite-dimensional Hilbert \cite{Ruelle1982,SchaumloffelFlandoli1991} or Banach spaces \cite{Mane1983, Thieullen1987, LianLu2010, FroylandLloydQuas2013,GonzalezTokmanQuas2015,GonzalezTokmanQuas2014,Blumenthal2016,Lee2024}, we may extend the MET under similar integrability assumptions. However, in the infinite-dimensional setting, the measurability hypotheses required on the operator cocycle can be quite stringent, and there is an interplay between separability and cocycle measurability (see \autoref{subsec:litReview}). Nevertheless, if the measurability and integrability requirements are satisfied, the Lyapunov spectrum splits into an exceptional part and a residual portion, much like the spectrum of a single operator, whose exceptional spectrum is the set of eigenvalues above the essential spectral radius in magnitude. The exceptional Lyapunov spectrum is the (at most countable) set of exponential rates resolved by the MET, giving us a splitting of $V$ into the resolved $(U_s)$'s and, unlike the finite-dimensional case, an unresolved component $F_\alpha$, for which growth rates are dominated by the exceptional spectrum.

A central application of the MET is to derivative cocycles of smooth systems; it is foundational for the theory of non-uniformly hyperbolic systems in finite \cite{BarreiraPesin2001,Ruelle1979,GouezelStoyanov2017} and infinite dimensions \cite{Ruelle1982,LianLu2010,BlumenthalYoung2017}. Another fundamental application, pertaining specifically to infinite-dimensional Banach spaces, is the extension of spectral theory for transfer operators of deterministic maps \cite{LasotaYorke1973,Baladi2000} to random dynamical systems \cite{FroylandLloydQuas2010,FroylandLloydQuas2013,horan2020spectral,DS21quenchedLimThms,GonzalezTokmanQuas2022,Lee2023}. Indeed, for deterministic systems, the spectral theory of transfer operators can be used to obtain invariant densities and long-time statistical behaviors such as correlation decay. The analogous object to study for random systems would be quenched transfer operator cocycles \cite{Buzzi1999, DS21quenchedLimThms}, with the natural spectral-theory-analogue for cocycles being the MET.

\subsubsection*{Main Contributions of this Paper} 
In a Hilbert space, if $s_1(A)\geq s_2(A)\geq \cdots$ denote the singular values of an operator $A$, then the (exceptional) Lyapunov spectrum, counted with multiplicity, is given a.s. by the limits
\begin{equation}\label{eq:singularValueLimsForLE}
    \lim_{n\to\infty} \frac{1}{n} \log s_i(A^n_\omega) \text{ for $i \geq 1$.}
\end{equation}
Once these limits are obtained, a standard method to prove the MET is the Ruelle-Raghunathan strategy: for a fixed $\p$-typical $\omega$, one proves a sequential version of the MET utilizing singular value decompositions on the cocycle \cite{Raghunathan1979,Ruelle1979,Ruelle1982}. The primary advantage of this approach is its effective nature.
\begin{defn}\label{defn:modulusOfConvergence}
    For a metric space $(X,d_X)$ and a convergent sequence $x_n\to x$ in $X$, the map $N:(0,\infty)\to \N$ is a \textit{modulus of convergence} for $(x_n)_n$ if $d_X(x_n,x)<\e$ for all $n\geq N(\e)$, where $\e > 0$.
\end{defn}

Roughly speaking, an estimate is effective in a convergent sequence $x_n\to x$ if its defect can be expressed in terms of a modulus of convergence of $(x_n)_n$ (we discuss this idea more precisely in \autoref{remark:effectiveEstimates}). Thus, in this sense, the Ruelle-Raghunathan strategy produces growth estimates on the Oseledets subspaces effective in the limits given in \eqref{eq:singularValueLimsForLE}. Unfortunately, this strategy makes great use of the structures afforded by a Hilbert space, and as a result, all known treatments of the MET for Banach space cocycles do not provide effective growth estimates. Indeed, to circumvent the lack of an inner product structure, such treatments require an inductive procedure that peels off one Oseledets space at a time \cite{Mane1983,Thieullen1987,LianLu2010,FroylandLloydQuas2013,GonzalezTokmanQuas2014,GonzalezTokmanQuas2015,Blumenthal2016,Lee2024}. The key idea is that, once the top $s-1$ fast spaces are peeled off, the remaining portion should be a finite-codimensional Oseledets slow space $F_{s}(\omega)$ for which the exceptional Lyapunov spectrum is dominated by $\lambda_{s}$. Through this procedure, estimates are at most effective in the maximum growth of the cocycle restricted to $F_{s}(\omega)$, i.e., in
\begin{equation}\label{eq:}
    \frac{1}{n}\log \norm{A^n_\omega|_{F_{s}(\omega)}}\to \lambda_s
\end{equation}
rather than in growth information provided by the unrestricted cocycle as in \eqref{eq:singularValueLimsForLE}. As such, these current approaches do not permit direct control of the finite-time Lyapunov-Oseledets geometry via finite-time growth data of the cocycle. This is quite restrictive, as the ability to control the geometry via such finite-time data is pivotal in, e.g., the quantitative theory of hyperbolic dynamics \cite{GouezelStoyanov2017}. 

There is another disadvantage to the inductive approach that becomes apparent when extending the spectral theory of transfer operators to random systems, and, more generally, applying the MET to cocycles on nonseparable spaces. These inductive peeling procedures require the use of measure-theoretic and ergodic-theoretic tools at each inductive step in a way that does not yield a direct adaptation to a ``fixed $\omega$" sequential theorem. On nonseparable Banach spaces, the measurability requirements needed on the cocycle are often too strong. This is the case for transfer operators of random piecewise expanding interval maps, whose natural spectral-theoretic home is the nonseparable space of functions of bounded variation \cite{Buzzi1999,DS21quenchedLimThms}. Indeed, for such random maps $\omega\mapsto f_\omega:[0,1]\to[0,1]$, measurability requirements for previously known Banach space METs force $\omega\mapsto f_\omega$ to take on an at most countable collection of interval maps if one would like to apply the MET to the associated transfer operator cocycle on $\BV$ (see \cite[Remark 4.5.7.]{horan2020spectral} for an example).

The purpose of this paper is twofold:
\begin{enumerate}[label=(\Alph*)]
    \item bridge the effective treatment of the Hilbert space MET to Banach space cocycles; and
    \item establish a flexible measurability framework that allows for, among other things, applicability of the MET to random transfer operators on $\BV$ associated with general jointly measurable RDS $f:\Omega\times[0,1]\to [0,1]$ (subject of course to the requisite integrability conditions).
\end{enumerate}
We shall accomplish both of these goals by directly extending the Ruelle-Raghunathan framework to Banach space cocycles and deriving estimates in this framework beyond those provided by the original treatments. These estimates demonstrate a direct relationship between the Oseledets geometry of a cocycle and its finite-time growth data, accomplishing goal (A). Goal (B) shall follow as a by-product: our effective treatment allows us to separate the geometric properties of the Lyapunov spectrum from measure theoretic considerations, avoiding the repeated use of ergodic-theoretic arguments that force stringent measurability conditions on cocycles. Our measurability framework should extend well beyond transfer operators, with potential applications including cocycles induced by linearized PDE and SPDE on nonseparable function spaces.

\subsection{Statement of Results}\label{subsec:statementOfResults}
Our first two theorems will be results for one-sided sequences of bounded linear operators; these will form the basis of our effective theory. Our third theorem will showcase our extended measurability framework and multiplicative ergodic theorems through an application to transfer operator cocycles on $\BV[0,1]$ under joint measurability of the base random dynamical system. Four theorems, consisting of two sequential results for two-sided sequences and the non-invertible and semi-invertible METs, shall be stated in their relevant sections (see \autoref{subsec:paperStructure}).

\subsubsection*{The Sequential Theorems}
The sequential theorems state the qualitative consequences of our effective treatment; we discuss the effective forms of these theorems in \autoref{remark:effectiveEstimates}. 

We must define a few preliminary notions. For a Banach space $\B$ let $B_\B$ be the closed unit ball centered at the origin and write $B_E = E\cap B_\B$ for a linear subspace $E\subseteq \B$. For two linear subspaces $E,F\subset\B$, the \textit{minimal angle} $\theta(E,F)\in \bracket{0,\frac{\pi}{2}}$ from $E$ to $F$ is defined by
\[\sin\theta(E,F) = \inf \curly{d(e,F):e\in E \text{ and } \norm{e} = 1}.\]
We will also use the following notions of volume and determinant due to Blumenthal \cite{Blumenthal2016} (Lian and Lu formulate another notion of volume in \cite{LianLu2010}). Let $E\subset \B$ be a $q$-dimensional subspace for some $q\in \N = \{1,2,3,\ldots\}$. We define the \textit{induced volume} $m_E$ to be the unique Haar measure on $E$ for which $m_E(B_E) = \omega_{q}$, where $\omega_{q}$ is the volume of the Euclidean unit ball in $\R^{q}$.

\begin{defn}\label{defn:determinants}
    Let $\B$ and $\B'$ be Banach spaces. Let $A\in L(\B,\B')$ be a bounded linear operator and suppose $E\subset \B$ is a finite-dimensional subspace. We define the \textit{determinant} $\det(A|E)$ of $A$ on $E$ by
    \[\det(A|E) = \begin{cases}
        \frac{m_{AE}(AB_{E})}{m_E(B_E)} & A|_E \text{ is injective,} \\
        0 & \text{otherwise.}
    \end{cases}\]
\end{defn}
\begin{defn}\label{defn:BlumenthalMaximalVolGrowth}
    Let $\B$ and $\B'$ be Banach spaces. Let $q\in \N$ and $A\in L(\B,\B')$. We define the function $V_q:L(\B,\B')\to [0,\infty)$ by
    \[V_q(A) = \sup \curly{\det(A|E):\dim E = q}.\]
\end{defn}

We introduce the conventions that $V_0(A) = 1$ and $V_q(A) = 0$ when $q > \dim \B$. We now give the statement of our first ``sequential MET." Let $\{\B_n\}_{n\geq 0}$ be a collection of Banach spaces and $A_n:\B_n\to\B_{n+1}$ a sequence of bounded linear operators. 

\begin{assumption}\label{assump:oneSidedSequences}
    Write $A^n = A_{n-1}\cdots A_0$.
    \begin{enumerate}[label = (\roman*)]
        \item $\displaystyle \limsup_{n\to\infty} \frac{1}{n}\log \norm{A_n}_{\B_n\to \B_{n+1}}\leq 0$; and
        \item $\displaystyle l_q = \lim_{n\to\infty} \frac{1}{n} \log V_q(A^n)$ exists for all $q\geq 1$ and $l_1 < \infty$.
    \end{enumerate}
\end{assumption}

\begin{thm}\label{thm:sequentialMET_oneSided}
    Let $(A_n)_{n\geq 0}$ be a sequence of operators subject to \autoref{assump:oneSidedSequences}. Write $\chi_1 = l_1$ and $\chi_q = l_q-l_{q-1}$ for $q>1$ with $l_q > -\infty$, and $\chi_q = -\infty$ if $l_q = -\infty$ so that $(\chi_q)_{q\geq 1}$ is a decreasing sequence and
    \[l_\alpha = \inf_{q\in \N} \frac{1}{q} l_q = \inf_{q\in\N} \chi_q \geq -\infty\]
    exists. If $\chi_1>l_\alpha$, denote by $\lambda_1>\lambda_2>\cdots > -\infty$ the distinct values of $(\chi_q)_q$ with finite multiplicity $m_s$. Let $1\leq r\leq \infty$ be the number of distinct values with finite multiplicity and write $M_s = m_1+\cdots+m_s$ for $1\leq s \leq r$. Then, there is a filtration of linear subspaces
        \[\B_0 = F_1\supsetneq F_2 \cdots \supsetneq F_{\alpha} \supseteq \innerp{0}\]
    such that the following hold.
    \begin{enumerate}[label = (\arabic*)]
        \item For $1\leq s < r+1$, $F_s$ is closed and $M_{s-1}$-codimensional, and $F_\alpha$ is closed and $M_r$-codimensional. If $r = \infty$, then
        \[F_\alpha = \bigcap_{t=1}^\infty F_t.\]
        \item For $1\leq s < r+1$, the maximal exponential growth rate of $A^n|_{F_s}$ is $\lambda_s$ in the sense that
        \[\lim_{n\to\infty}\frac{1}{n}\log\sup_{v\in F_s\setminus\{0\}}\frac{\norm{A^n v}_{\B_n}}{\norm{v}_{\B_0}} = \lambda_s.\]
        Additionally, $\limsup_{n\to\infty} \frac{1}{n}\log\norm{A^n|_{F_\alpha}}\leq l_\alpha$, with equality attained if $r<\infty$.
        \item Writing $\Ccal_{s}(\eta) \coloneqq \{v\in F_{s}:d(v,F_{s+1})\geq \eta\norm{v}\}$ for $\eta\in (0,1]$ and $1 \leq s < r$, if $\Ccal_s(\eta)\neq \{0\}$, we have
        \[\lim_{n\to\infty} \frac{1}{n} \log \inf_{v\in \Ccal_s(\eta)\setminus\{0\}}\frac{\norm{A^n v}_{\B_n}}{\norm{v}_{\B_0}} = \lambda_{s}.\]
        If $r<\infty$, then the analogous statement holds writing $s = r$ and identifying $F_{r+1} = F_\alpha$.
        \item Writing $\G\Ccal_s(\eta) \coloneqq \{E\subset \B_0:\dim E = M_s \text{ and } \sin\theta(E,F_{s+1})\geq \eta\}$ for $\eta \in (0,1]$ and $1\leq s < r$, if $\G\Ccal_s(\eta)\neq \varnothing$, we have
        \[\lim_{n\to\infty}\frac{1}{n}\log \inf_{E\in \G\Ccal_s(\eta)}\det(A^n|E) = m_1\lambda_1+\cdots+m_{s}\lambda_{s} = l_{M_s}.\]
        If $r<\infty$, then the analogous statement holds writing $s = r$ and identifying $F_{r+1} = F_\alpha$.
    \end{enumerate}
\end{thm}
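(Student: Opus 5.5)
We follow the Raghunathan--Ruelle strategy, replacing singular value decompositions by approximate ``Auerbach-type'' slow spaces built from volume data. The basic tool is that $V_q$ is approximately multiplicative and approximately a product of ``singular values'': for each $n$ and $q$, we choose an $(M_s)$-codimensional closed subspace $F_{s}^{(n)}\subseteq\B_0$ that is approximately the slowest for $A^n$. Concretely, we first pick a $q$-dimensional $E_n$ nearly attaining $V_{q}(A^n)$ with $q=M_{s-1}$. We then take $F_s^{(n)}$ to be a complement of $E_n$ obtained by pulling back, via $A^n$, a complement of $A^nE_n$ of bounded projection norm (bounded by a constant $C_q$ depending only on $q$, via Kadec--Snobar/Auerbach). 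The key finite-time estimate is then
\[
\norm{A^n|_{F_s^{(n)}}}\lesssim_q \frac{V_{M_{s-1}+1}(A^n)}{V_{M_{s-1}}(A^n)}.
\]
This holds because, if some $v\in F^{(n)}_s$ grew faster, then $E_n\oplus\langle v\rangle$ would have determinant exceeding $V_{M_{s-1}+1}$ up to constants. We take for granted the standard determinant comparison facts for Blumenthal's volumes, namely $\det(A|E\oplus F)\asymp \det(A|E)\det(A|F)$ up to angle factors and constants depending on dimension.

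The heart of the argument is a Cauchy estimate: $F_s^{(n)}$ converges to some $F_s$ in the Hausdorff metric on closed subspaces of codimension $M_{s-1}$, exponentially fast. We compare $F_s^{(n)}$ and $F_s^{(n+1)}$. For $v\in F_s^{(n+1)}$ of norm one, we have $\norm{A^{n+1}v}\le e^{n(\lambda_s+\e)}$, while every vector whose component in the fast directions $E_n$ is macroscopic grows at rate at least $\lambda_{s-1}-\e$ at time $n$. For the latter we need a lower bound $\norm{A^n e}\gtrsim e^{n(\lambda_{s-1}-\e)}\norm{e}$ on $E_n$, which we obtain by writing the determinant on $E_n$ as at most $V_{q-1}\cdot \min$-growth. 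Combining this with $\norm{A_n}\le e^{n\e}$ (Assumption (i)) gives $d(v,F_s^{(n)})\lesssim e^{-n(\lambda_{s-1}-\lambda_s-3\e)}$, with constants depending only on $q$. Summing the resulting geometric series yields the limit $F_s$, together with explicit rates in terms of a modulus of convergence for $\frac1n\log V_q(A^n)$; this is the effective content. The subspaces are nested because $F_{s+1}^{(n)}$ can be chosen inside $F_s^{(n)}$ (by choosing the $E_n$ increasingly), or more robustly because limits of nearly nested spaces are nested. We then set $F_\alpha=\bigcap_t F_t$ when $r=\infty$, and when $r<\infty$ we run the same construction with $q=M_r$. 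This gives (1), with closedness and codimension preserved under Hausdorff limits of equicodimensional subspaces.

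For (2), the upper bound follows from $F_s$ being $e^{-cn}$-close to $F_s^{(n)}$ uniformly at all later times, combined with the finite-time norm estimate applied at time $n$ to the tail of the Cauchy sum: for $v\in F_s$ we write $v=w+u$ with $w\in F_s^{(n)}$ and $\norm u\le e^{-cn}$, and bound $\norm{A^n u}$ using the telescoping tail with $\norm{A^{k}}$ growth. The lower bound follows from (4)-type volume arguments, since otherwise $V_{M_{s-1}+1}$ would grow too slowly; in the case $F_\alpha$ only the upper bound via $l_\alpha=\inf\chi_q$ is needed. For (3) and (4), a subspace $E$ of dimension $M_s$ with $\sin\theta(E,F_{s+1})\ge\eta$ is transverse to $F^{(n)}_{s+1}$ for large $n$ (with angle at least $\eta/2$). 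Hence $\det(A^n|E)\gtrsim_\eta$ the determinant on the fast block, which is at least $V_{M_s}(A^n)e^{-n\e}$ up to constants. This gives (4); the upper bound in (4) is trivial from $V_{M_s}$. Item (3) follows by restricting to $F_s$ and using (4) together with (2) at level $s$: within a complement of $F_{s+1}$ in $F_s$, all vectors grow at rate $\lambda_s$, by a determinant ratio argument.

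We expect the main obstacle to be the Cauchy/angle estimate, which requires the constants in the Banach-space determinant comparisons (products over direct sums, and behaviour under projections) to be uniform in $n$ and to depend only on dimension and angles. It also requires showing that the near-maximizing subspaces $E_n$ have all their vectors expanded at rate close to $\lambda_{s-1}$ rather than merely expanded on average. We would first attempt the latter via an Auerbach-basis argument inside $E_n$, losing only a factor $C_q$.
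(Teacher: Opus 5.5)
The genuine gap is the upper bound in item (2) for $s\ge 3$. The upper bounds in (3) for $s\ge 3$, and those for $F_\alpha$ when $r\ge 2$ or $r=\infty$, rest on it.

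\textbf{Why your argument for (2) fails.} You write $v\in F_s$ as $v=w+u$ with $w\in F^{(n)}_s$ and $u$ in the $M_{s-1}$-dimensional fast block $E_n$. The Cauchy estimate only gives $\norm{u}\lesssim e^{-n(\lambda_{s-1}-\lambda_s-O(\delta))}$, i.e.\ decay at the rate of the \emph{last} spectral gap. But $A^n$ may expand $u$ at rate up to $\lambda_1$, so you only get $\norm{A^nv}\lesssim e^{n(\lambda_1-\lambda_{s-1}+\lambda_s+O(\delta))}$. Telescoping the Cauchy tail does not help. The increments $w_{k+1}-w_k$, $k\ge n$, are small and slowly growing only at the later times $k$; at time $n$ you can bound them only by $\norm{A^n}$.

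\textbf{What is actually needed.} Split the fast block into its singular blocks $U_{n,1}\oplus\cdots\oplus U_{n,s-1}$. You need the $U_{n,t}$-component of $v$ to be $\lesssim e^{-n(\lambda_t-\lambda_s-O(\delta))}\norm{v}$. Equivalently, $\Gap(F_s,F^{(n)}_{t+1})\lesssim e^{-n(\lambda_t-\lambda_s-O(\delta))}$ for every $t<s$. This is far better than the rate $\lambda_t-\lambda_{t+1}$ that $F_s\subseteq F_{t+1}$ plus the level-$(t+1)$ Cauchy bound give you. It compares a \emph{later, finer} slow space with an \emph{earlier, coarser} one. Since the $A_j$ need not be injective, it cannot be read off from growth at time $n$.

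This estimate is the core of the Ruelle--Raghunathan argument even in Hilbert space. The paper gets it in two steps:
\begin{enumerate}
\item The forward estimate $\Gap(F_{n,s},F_{n+k,s'})\lesssim e^{-n(\lambda_{s'-1}-\lambda_s-O(\delta))}$, proved by induction on $s-s'$ (\autoref{lem:forwardGapSepEstimate}). It uses one fixed complement $E_{n,s'-1}$ for all $F_{n+k,s'}$, so no projection-norm factor is paid per step.
\item Inversion of this estimate by a cofactor/adjugate argument on two approximately inverse $(M_{S-1}+1)\times(M_{S-1}+1)$ matrices (\autoref{lem:backwardGapSepEstimate}).
\end{enumerate}
Your proposal contains no substitute for this.

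\textbf{Cauchy step.} It runs in the wrong direction. For $v\in F^{(n+1)}_s$ you know $\norm{A^{n+1}v}$ is small, but this says nothing about $\norm{A^nv}$, since there is no lower bound on $A_n$. The bound $\norm{A_n}\le e^{n\delta}$ only transports smallness forward in time. The fix is as in \autoref{claim:oneStepGapBound}:
\begin{enumerate}
\item Take $v\in F^{(n)}_s$ instead, and use $\norm{A^{n+1}v}\le\norm{A_n}\norm{A^nv}$.
\item Split $v$ along $E_{n+1}\oplus F^{(n+1)}_s$.
\item Bound the $E_{n+1}$-component using the image-side projection bound and $m(A^{n+1}|_{E_{n+1}})\gtrsim c_{M_{s-1}}(A^{n+1})$.
\item Convert the one-sided gap into $d_H$ via the equal-codimension comparison in \autoref{lem:equivOfGap_and_Hausdorff}(3).
\end{enumerate}

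\textbf{Item (4).} Your route here is sound and simpler than the paper's. Let $\pi=\pi_{E_n//F^{(n)}_{s+1}}$ and let $\pi'$ be the image-side projection onto $A^nE_n$. Then $\pi'A^n|_E=A^n\pi|_E$, so $\det(A^n|E)=\det(A^n|E_n)\det(\pi|E)/\det(\pi'|A^nE)$. Once $\sin\theta(E,F^{(n)}_{s+1})\ge\eta/2$, this is $\gtrsim_{\eta,M_s}V_{M_s}(A^n)$, with no need for (2). The paper's longer proof of (4) relies on the upper bound in (2). In exchange it yields the image-angle estimate \eqref{eq:FinalAngleOfImagesEstimate}, which it reuses later.
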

\begin{remark}\label{remark:limsupDefnOfSlowSpaces}
    If $\chi_1 = l_\alpha$, we shall also define $r = 0$. Sequences of operators for which $\chi_1 > l_\alpha$ are called \textit{quasicompact}. In the quasicompact case, the values $(\lambda_s)_{1\leq s < r+1}$ are called the \textit{exceptional} Lyapunov exponents. Moreover, parts (1)-(3) imply
    \[F_{s} = \curly{v\in \B_0: \limsup_{n\to\infty}\frac{1}{n} \log \norm{A^n v}_{\B_n}\leq \lambda_s}\]
    for $1\leq s < r+1$, with the analogous statement holding if one replaces $F_s$ by $F_\alpha$ and $\lambda_s$ by $l_\alpha$, even if $r = \infty$.
\end{remark}

We may further refine the above theorem if we have slightly stronger assumptions on our sequence of operators that are exponentially uniform in base space.
\begin{assumption}\label{assump:refined_oneidedSequences}
    For $m\geq 0$, write $A_m^n = A^n(m) = A_{m+n-1}\cdots A_{m}$.
    \begin{enumerate}[label = (\roman*)]
        \item For all $m\geq 0$, the sequence $(A_m,A_{m+1},A_{m+2},\ldots)$ satisfies \autoref{assump:oneSidedSequences} and the Lyapunov exponents are constant w.r.t. $m$.
        \item For $q\in \N$ such that $l_q > -\infty$, we have
        \[\lim_{n,k\to\infty}\frac{1}{n+k} \abs{\log V_{q}\paren{A^k_{n}}-kl_{q}} = 0.\]
        \item If $q\in \N$ is the smallest integer such that $l_q = -\infty$, then for all $\Delta \in \R$,
        \[\limsup_{n,k\to\infty} \frac{1}{n+k}\paren{\log V_q(A^k_n) + k\Delta} \leq 0\]
    \end{enumerate}
\end{assumption}
We shall say a sequence of linear subspaces $V_{m}\subset \B_m$ is \textit{equivariant} if $A_m V_m\subseteq V_{m+1}$. For an operator $A:\B\to \B'$, where $\B$ and $\B'$ are Banach spaces, and a subspace $E\subseteq \B$, we define the \textit{minimum norm} by
\[m(A|_E) = \inf\curly{\norm{Ae}:e\in E\text{ and }\norm{e} = 1}.\]
We also define the \textit{annihilator} by
\[\ann{E} = \curly{\ell\in \B^*: \innerp{v,\ell} = 0\ \forall v\in E}.\]
\begin{thm}\label{thm:refined_sequentialMET_oneSided}
    Let $(A_n)_{n\geq 0}$ be a sequence of bounded linear operators subject to \autoref{assump:refined_oneidedSequences}, and assume the sequence is quasicompact. For $m\geq 0$, let
    \[\B_m = F_1(m)\supsetneq F_2(m)\supsetneq\cdots\supsetneq F_\alpha(m)\supseteq \innerp{0}\]
    be the corresponding filtration of slow spaces for $(A_{m+n})_{n\geq 0}$. Then the filtration is equivariant. Moreover, we have the following.
    \begin{enumerate}[label = (\arabic*)]
        \item For $1\leq s < r$ and $\eta\in (0,1]$, if $\G\Ccal_s(\eta)\neq \varnothing$, we have
        \[\lim_{n\to\infty} \frac{1}{n} \log \inf_{E\in \G\Ccal_s(\eta)} \sin\theta(A^n_0 E, F_{s+1}(n)) = 0.\]
        If $r<\infty$, then the analogous statement holds writing $s = r$ and identifying $F_{r+1}(n) = F_\alpha(n)$.
        \item For $1\leq s < r$, writing $\Ehat_s(n) = \ann{F_{s+1}(n)}$ we have
        \[\lim_{n\to\infty} \frac{1}{n}\log m\paren{(A^n_0)^*|_{\Ehat_s(n)}} = \lambda_s\]
        and
        \[\lim_{n\to\infty} \frac{1}{n} \log \det((A^n_0)^*|\Ehat_s(n)) = \sum_{t=1}^s m_t \lambda_t.\]
        If $r<\infty$, then the analogous statement holds writing $s = r$ and identifying $\Ehat_{r}(n) = \ann{F_\alpha(n)}$.
    \end{enumerate}
    The corresponding conclusions for the subspaces of $\B_m$ follow by applying the theorem to the sequence $(A_{m+n})_{n\geq 0}$.
\end{thm}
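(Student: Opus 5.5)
Equivariance comes first, and it follows from the characterization in \autoref{remark:limsupDefnOfSlowSpaces} together with \autoref{assump:refined_oneidedSequences}(i). For each $m$, the sequence $(A_{m+n})_{n\geq 0}$ satisfies \autoref{assump:oneSidedSequences} and has the same exponents. Hence
\[F_s(m) = \curly{v\in \B_m : \limsup_{n\to\infty}\tfrac1n\log\norm{A^n_m v}\leq \lambda_s}.\]
For $v\in F_s(m)$ we have $A^{n}_{m+1}A_m v = A^{n+1}_m v$, so $\limsup_n \frac1n\log\norm{A^n_{m+1}A_mv}\leq \lambda_s$. This gives $A_mF_s(m)\subseteq F_s(m+1)$, and the same argument handles $F_\alpha$.

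Part (1) is the core estimate, which I would prove by comparing volumes. Fix $E\in\G\Ccal_s(\eta)$, so $\dim E = M_s$. Write $\B_n = A^n_0E\oplus F_{s+1}(n)$ when the angle is positive; this is justified because $F_{s+1}(n)$ has codimension $M_s$ and $A^n_0E\cap F_{s+1}(n)=\{0\}$, by equivariance run backwards and \autoref{thm:sequentialMET_oneSided}(3). Apply $A^k_n$ for large $k$. On one hand, \autoref{thm:sequentialMET_oneSided}(4) applied to $E$ gives $\det(A^{n+k}_0|E)\approx e^{(n+k)l_{M_s}}$. On the other hand, by multiplicativity of determinants (up to constants depending only on $q$, via Blumenthal's volume comparison lemmas),
\[\det(A^{n+k}_0|E)=\det(A^n_0|E)\det(A^k_n|A^n_0E),\qquad \det(A^n_0|E)\lesssim V_{M_s}(A^n_0)\approx e^{nl_{M_s}}.\]
The key geometric lemma I would prove is as follows. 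If a subspace $G\subset\B_n$ of dimension $M_s$ makes angle $\theta$ with $F_{s+1}(n)$, then
\[\det(A^k_n|G)\lesssim V_{M_s-1}(A^k_n)\cdot\max\paren{\norm{A^k_n|_{F_{s+1}(n)}},\ \sin\theta\cdot\norm{A^k_n}}\cdot C_q.\]
To obtain it, decompose a unit vector of $G$ nearly orthogonally to $F_{s+1}(n)$ and use subadditivity of volume under column replacement. Using \autoref{assump:refined_oneidedSequences}(ii) uniformly in $n,k$, we have $V_{M_s-1}(A^k_n)\approx e^{k l_{M_s-1}}$, $\norm{A^k_n}\approx e^{k\lambda_1}$, and $\norm{A^k_n|_{F_{s+1}(n)}}\lesssim e^{k\lambda_{s+1}+\epsilon(n+k)}$. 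The last bound requires a uniform-in-$n$ version of \autoref{thm:sequentialMET_oneSided}(2), which I would extract from the effective proof of that theorem combined with (ii). Choosing $k=\delta n$ with $\delta$ small and comparing exponents then forces $\sin\theta(A^n_0E,F_{s+1}(n))\geq e^{-\epsilon' n}$. This is the main obstacle: the defects must be uniform in the base point $n$, and the double-limit hypothesis (ii), together with (iii) when $l_{M_s+1}=-\infty$ (the case $s=r$), is exactly what provides this. The upper bound $\sin\theta\leq 1$ is trivial.

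For part (2), I would use duality. Since $\Ehat_s(n)=\ann{F_{s+1}(n)}$ is $M_s$-dimensional, equivariance gives $(A^n_0)^*\Ehat_s(n)\subseteq\Ehat_s(0)$. Moreover, $\Ehat_s(n)$ is naturally dual to $\B_n/F_{s+1}(n)$, and $A^n_0$ induces a map $\B_0/F_{s+1}(0)\to\B_n/F_{s+1}(n)$. Identifying $\B_0/F_{s+1}(0)$ with a complement $E\in\G\Ccal_s(\eta)$ and $\B_n/F_{s+1}(n)$ with $A^n_0E$, the quotient norms are comparable to the restricted norms with distortion controlled by $\sin\theta(A^n_0E,F_{s+1}(n))$, which is subexponential by part (1). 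The determinant of the induced quotient map is then $\approx\det(A^n_0|E)\approx e^{nl_{M_s}}$ by \autoref{thm:sequentialMET_oneSided}(4), and its minimum norm is $\approx e^{n\lambda_s}$ by \autoref{thm:sequentialMET_oneSided}(3). Finally, adjoints of maps between $M_s$-dimensional spaces have comparable minimum norms and determinants up to constants depending only on $M_s$ (via John ellipsoids). This gives both limits.
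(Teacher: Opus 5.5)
Your proposal is correct in outline. For item (1), though, it takes a genuinely different route from the paper.

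\textbf{Your route for item (1).} You run a volume squeeze against the \emph{exact} slow space. Since $\det(A^k_n|A^n_0E)\geq \det(A^{n+k}_0|E)/V_{M_s}(A^n_0)$, the subspace $A^n_0E$ has near-maximal $M_s$-volume growth over the short window $k=\delta n$, so it cannot lie close to $F_{s+1}(n)$. Your geometric lemma is sound: pick a unit $g\in A^n_0E$ with $d(g,F_{s+1}(n))\leq 2\sin\theta$, then apply \autoref{lem:determinantProdDecomp} to $\innerp{g}$ and a well-angled complement of $\innerp{g}$ in $A^n_0E$.

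\textbf{What this costs.} You need $\norm{A^k_n|_{F_{s+1}(n)}}\lesssim e^{k\lambda_{s+1}+\epsilon(n+k)}$ uniformly in the base point $n$, and this is where the real work lies. Obtaining it means rerunning the effective proof of \autoref{thm:sequentialMET_oneSided}(2) at base point $n$, and checking three things:
\begin{enumerate}
\item $\Hplus_q(n,\epsilon)$ and $\Bplus(n,\epsilon)$ are $O(1)+O(\epsilon n)$. This follows from \autoref{assump:refined_oneidedSequences}(ii) and submultiplicativity of $V_q$. For the short times $k<N^*_q$, also use \autoref{assump:oneSidedSequences}(i) with a smaller $\epsilon$.
\item The effective constants are $e^{O(H+B)}$.
\item The thresholds are $O(1+H+B)$, so they fall below $k=\delta n$ once $\epsilon\ll\delta$.
\end{enumerate}
All of this holds on inspection. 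Your bound also carries an extra loss $\delta(\lambda_1-\lambda_s)$, which a second limit removes.

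\textbf{The paper's route for item (1).} The paper never estimates growth on $F_{s+1}(n)$ at a moving base point. It uses \autoref{thm:sequentialMET_oneSided}(4) at base point $0$, together with (ii), to show that $E(n)=A^n_0E$ is nearly volume-maximizing for $A^k_n$. It then takes SVD complements $F_k(n)$ of $E(n)$ of order $Ce^{C'\delta(n+k)}$. Via \autoref{claim:oneStepGapBound}, it shows $F_k(n)\to F_{s+1}(n)$ at rate $e^{-k(\gamma-C'\delta)}$ for $k\geq n$. That claim involves only Gelfand numbers of $A^k_n$ and $\norm{A_{n+k}}$, which (ii) controls directly. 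Setting $k=n$ gives the explicit bound \eqref{eq:effectiveForwardAngleTempering} with no re-tracing of constants. In exchange, your argument isolates a clean transversality-by-volume principle and needs no approximate slow spaces at time $n$.

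\textbf{Item (2).} Your quotient-map argument is a valid and tidy alternative to the paper's reuse of the pseudo-singular vectors of \autoref{lem:dual_SVD}. You identify $(A^n_0)^*|_{\Ehat_s(n)}$ with the adjoint of the induced map $\B_0/F_{s+1}(0)\to\B_n/F_{s+1}(n)$. The distortion factors are $\eta^{-1}$ and $\sin\theta(A^n_0E,F_{s+1}(n))^{-1}$, playing the role of the paper's bound $\norm{\pi_{E(n)//F_{s+1}(n)}}\lesssim e^{C'\delta n}$.

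One small fix is needed here. \autoref{thm:sequentialMET_oneSided}(3) is stated for $\Ccal_s(\eta)\subseteq F_s$, and $E\not\subseteq F_s$. To get $m(A^n_0|_E)\approx e^{n\lambda_s}$:
\begin{itemize}
\item take the lower bound from the remark following that proof, or from your own lemma;
\item take the upper bound from $E\cap F_s\neq\{0\}$ together with \autoref{thm:sequentialMET_oneSided}(2).
\end{itemize}
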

We use the term \textit{logarithmic temperedness} for sublinear growth of logarithms of geometric quantities, a property usually referred to simply as \textit{temperedness} in the literature. Item (1) is a sequential analogue of the usual notion of temperedness for angles between Oseledets spaces.

\begin{remark}[Effective estimates]\label{remark:effectiveEstimates}
Though we describe the Ruelle-Raghunathan strategy as effective, we note that neither Raghunathan \cite{Raghunathan1979} nor Ruelle \cite{Ruelle1979,Ruelle1982} provides an explicit connection between the convergence rates of singular values of the cocycle and growth properties of the cocycle on Oseledets spaces. Following Gou\"{o}zel and Stoyanov \cite{GouezelStoyanov2017}, we shall base our effective theory on large deviations bounds for finite-time volume growth rates. This is best illustrated through an example. 

Consider the case of \autoref{thm:sequentialMET_oneSided}. For the sequence $(A_n:\B_n\to\B_{n+1})_{n\geq 0}$, and for $q\in \N$ and $\delta > 0$, define the deviations-type quantities
\begin{itemize}
    \item $\Bplus(0,\delta) = \sup_{n\geq 0} \log \norm{A_{n}} - n \delta$; and
    \item $\Hplus_q(0,\delta) = \sup_{n\geq 0} \abs{\log V_q(A^n) - nl_q} - n\delta$.
\end{itemize}
In proving item (2), we obtain the following estimate: let $\delta>0$ be small enough (dependent on the gaps in the Lyapunov spectrum), and fix $B\geq \Bplus(0,\delta)$ and $H\geq \Hplus_1(0,\delta),\ldots,\Hplus_{1+M_{s-1}}(0,\delta)$. Then there exist constants $C\geq 1$ and $N\in \N$, both depending only on $\delta, H, B$ and $\lambda_1,\ldots,\lambda_s$ (including multiplicities) such that
\[\norm{A^n|_{F_s}}\leq C\exp\paren{n\paren{\lambda_s + 7(s-1)^2 \delta}}\]
for all $n\geq N$ (see \eqref{eq:effectiveFsMaxGrowthRate}). The above is what we mean by an effective estimate, as from the convergence data of the limits in \autoref{assump:oneSidedSequences}, we obtain a direct relationship to finite-time growth properties of the operator sequence on its slow spaces. Our estimates shall take this form throughout the paper. 

The effective treatment of \autoref{thm:sequentialMET_oneSided} rests on the quantitative convergence and gap estimates for approximate slow spaces given by Lemmas \ref{lem:cauchySeqSpaces}, \ref{lem:forwardGapSepEstimate}, and \ref{lem:backwardGapSepEstimate}. For \autoref{thm:refined_sequentialMET_oneSided}, an effective lower bound on angles is given by \eqref{eq:effectiveForwardAngleTempering}. For the two-sided theory, an effective lower bound on the separation between the equivariant fast and slow spaces is given by \eqref{eq:equivariantSlowFastSplittingAngles}.
\end{remark}

\subsubsection*{Transfer Operators for Random Lasota-Yorke Maps}
For our principal application, we consider random Lasota-Yorke interval maps in the setting of \cite{Buzzi1999}. These are jointly measurable random interval maps that are piecewise monotonic and nonsingular (p.m.n.s.), and satisfy certain covering, expansion, and integrability conditions. Precise hypotheses on $f$, and the precise statement of the following theorem, are given in \autoref{subsec:randomLYmaps}.

Now, let $(\Omega,\F,\p)$ be a probability space, and let $\sigma:\Omega\to\Omega$ be an ergodic measure-preserving transformation (mpt) with measurable inverse. Take $f:\Omega\times [0,1]\to [0,1]$ to be a jointly measurable random Lasota-Yorke map and note that it induces a random dynamical system $(\omega,x)\mapsto f^n_\omega(x) = f_{\sigma^{n-1}\omega}\circ\cdots\circ f_\omega(x)$. Denote by $\Lcal_\omega^n$ the Perron-Frobenius operator associated with $f^n_\omega$, which satisfies the cocycle relation 
\[\Lcal^n_\omega = \Lcal_{\sigma^{n-1}\omega} \circ \cdots \circ \Lcal_{\omega}.\]
For a random Lasota-Yorke map, $\Lcal_\omega$ is a bounded linear operator from $\BV[0,1]\to \BV[0,1]$, the space of $L^1$-functions on $[0,1]$ of bounded variation; this is a nonseparable Banach space in its standard norm. We shall view $\omega\mapsto \Lcal_\omega$ as a cocycle on $\BV[0,1]$.

\begin{thm}[Summary]\label{summaryThm:transferOps_cocycle}
    The cocycle $\omega\mapsto \Lcal_\omega$ is quasicompact, and its Lyapunov spectrum is a.s. constant. The top Lyapunov exponent is $\lambda_1 = 0$, and it is simple. The top Oseledets fast space is spanned by the a.s. unique invariant density of the RDS induced by $f$, and the codimension-one slow space is a.s. equal to $\BV_0[0,1] \coloneqq \curly{u\in \BV[0,1]:\int_0^1 u\,dx = 0}$. Moreover, the Oseledets fast spaces are strongly measurable w.r.t. the $L^1$-Grassmannian, while the Oseledets slow spaces are strongly measurable w.r.t. the $\BV[0,1]$-Grassmannian.
\end{thm}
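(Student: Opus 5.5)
The plan is to feed the transfer operator cocycle into the semi-invertible MET established later in the paper, using the weakened measurability framework. The concrete claims are then identified using the structure of Lasota--Yorke maps. The argument runs in four steps.

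\emph{Step 1: the MET hypotheses.} Joint measurability of $f:\Omega\times[0,1]\to[0,1]$ gives, for each $u\in\BV[0,1]$, measurability of $\omega\mapsto\Lcal_\omega u$ as an $L^1$-valued map. The explicit formula $\Lcal_\omega u(x)=\sum_{y\in f_\omega^{-1}x}u(y)/\abs{f_\omega'(y)}$ makes this available. This gives measurability of the quantities $\log V_q(\Lcal^n_\omega)$ and $\log\norm{\Lcal_\omega}_{\BV}$, which are the data the sequential approach actually requires. For $V_q$, one approximates suprema over $q$-dimensional subspaces by a countable family that is $L^1$-dense among $\BV$-bounded sets; here one uses that $\BV$ balls are $L^1$-compact and lower semicontinuity of variation.

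\emph{Step 2: integrability, quasicompactness and the top exponent.} The integrability conditions of \cite{Buzzi1999} give $\log^+\norm{\Lcal_\omega}_{\BV}\in L^1$. Kingman's subadditive ergodic theorem then yields a.s. constant limits $l_q$, so the exponents are a.s. constant. Moreover, \autoref{assump:refined_oneidedSequences} holds along $\p$-typical orbits.

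The random Lasota--Yorke inequality
\[
\operatorname{Var}(\Lcal^n_\omega u)\le C_\omega\theta^n_\omega\operatorname{Var}(u)+D_\omega\norm{u}_{L^1},
\]
with $\frac1n\log\theta^n_\omega\to\log\bar\theta<0$, bounds the index of compactness. Combined with $L^1$-compactness of the $\BV$ ball, this gives $l_\alpha\le\log\bar\theta<0$.

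For the top exponent, $\norm{\Lcal_\omega u}_{L^1}\le\norm{u}_{L^1}$, and the inequality above gives $\norm{\Lcal^n_\omega}_{\BV}$ subexponential, so $\lambda_1\le0$. Since $\int\Lcal^n_\omega u=\int u$, we also have $\norm{\Lcal^n_\omega u}_{\BV}\ge\abs{\int u}$, so $\lambda_1=0>l_\alpha$ and the cocycle is quasicompact.

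\emph{Step 3: identifying the slow and fast spaces.} By \autoref{remark:limsupDefnOfSlowSpaces}, $F_2(\omega)$ consists of the vectors with exponent $<0$. Any $u$ with $\int u\neq0$ has exponent $0$ by the lower bound above. For the reverse inclusion, the covering condition of \cite{Buzzi1999} gives exponential decay of $\norm{\Lcal^n_\omega u}_{\BV}$ for $u\in\BV_0$; this follows from a random cone/Birkhoff contraction argument or Buzzi's quenched decay of correlations. Hence $F_2(\omega)=\BV_0$, which has codimension one, so $m_1=1$ and $\lambda_1$ is simple.

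The semi-invertible MET provides an equivariant one-dimensional fast space $U_1(\omega)$ complementing $\BV_0$. A generator $h_\omega$ normalized by $\int h_\omega=1$ satisfies $\Lcal_\omega h_\omega=h_{\sigma\omega}$. Positivity follows by applying the pullback construction $h_\omega=\lim_n\Lcal^n_{\sigma^{-n}\omega}\mathbf 1$, which converges by the decay on $\BV_0$. Uniqueness follows because two such densities differ by an equivariant element of $\BV_0$, and such an element must decay along the backward-forward orbit.

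\emph{Step 4: strong measurability.} Measurability of the fast spaces in the $L^1$-Grassmannian comes from the pullback limit, since each $\Lcal^n_{\sigma^{-n}\omega}\mathbf 1$ is $L^1$-measurable. For the slow spaces, $\BV_0$ is constant, which settles codimension one. For the higher slow spaces, they are annihilators of spans of $L^1$-measurable fast vectors for the dual cocycle, using \autoref{thm:refined_sequentialMET_oneSided}(2), so $\BV$-Grassmannian measurability transfers.

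The main obstacle is Step 1: getting measurability of $V_q(\Lcal^n_\omega)$ on a nonseparable space without $\BV$-strong measurability of $\omega\mapsto\Lcal_\omega$. I would attack it by reducing suprema over subspaces to suprema over a countable family, using $L^1$ lower semicontinuity of the $\BV$ norm.
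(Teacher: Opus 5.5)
\textbf{Gap: measurability in Step 4.} Your Step 4 does not establish the measurability claims in the statement.

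\emph{Fast spaces.} The statement asserts that every fast space $E_s$, $1\le s<r+1$, is strongly measurable in $\G(L^1[0,1])$. Your pullback $h_\omega=\lim_n\Lcal^n_{\sigma^{-n}\omega}\mathbf{1}$ only produces $E_1$; for $s\ge 2$ there is no fixed vector to pull back. What is needed is the construction behind \autoref{thm:semi-invMET}:
\begin{enumerate}
\item measurably selected near-maximizing tuples $w_{1,n},\dots,w_{M_s,n}$ from \autoref{prop:measurableFastSpace};
\item the fact that $\omega\mapsto\Lcal^n_{\sigma^{-n}\omega}w_{i,n}(\sigma^{-n}\omega)$ is $L^1$-strongly measurable, by Pettis in the separable space $L^1$, testing against $L^\infty$ through Koopman duality;
\item continuity of the span map;
\item the fact that $\BV$-gap convergence $E_{n,s}\to E_s$ of $M_s$-dimensional subspaces forces $L^1$-gap convergence.
\end{enumerate}

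\emph{Slow spaces.} Your one sentence here skips the actual difficulty, and \autoref{thm:refined_sequentialMET_oneSided}(2) is a growth statement, not a measurability one. Strong measurability of $F_{\lambda_s}$ in $\G(\BV[0,1])$ requires the approximating annihilators $(\Lcal^n_\omega)^*L_{n,s-1}(\omega)$ to be strongly measurable for the dual norm of the nonseparable space $\BV[0,1]^*$. By contrast, \autoref{thm:non-invMET} only gives measurability of $\omega\mapsto d(v,F_{\lambda_s}(\omega))$ for each fixed $v$. The paper gets norm-strong measurability from weak-* continuity of $\Lcal_\omega$. This gives $\Lcal_\omega^*X\subseteq X$ for the separable predual $X$, so the selections and their pullbacks live in $X$. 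Then $\Gap(\preann{L},\preann{K})=\Gap(K,L)$ transfers measurability and convergence to $F_{n,s}\to F_{\lambda_s}$ in $\G(\BV[0,1])$. An $L^1$-based version could also work, via $\norm{g}_{\BV[0,1]^*}\le\norm{g}_{L^1}$ (because $\norm{u}_\infty\le\norm{u}_{\BV}$) applied to the Koopman pullbacks $g\circ f^n_\omega$, but you would have to set this up explicitly.

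\textbf{Spectral part: correct, but a different route.}
\begin{enumerate}
\item \emph{Quasicompactness and $\lambda_1\le 0$.} You bound $l_\alpha\le\log\bar\theta<0$ via a random Lasota--Yorke inequality and \eqref{eq:MNC_for_MET}, and get $\lambda_1\le 0$ from forward subexponential growth. The paper instead uses Buzzi's Lemma 2.1 ($\sup_n\norm{\Lcal^n_{\sigma^{-n}\omega}}_{\BV}<\infty$) with the backward characterization of $l_1$. It then gets quasicompactness and simplicity together from the uniform rate $\eta$ on $\BV_0[0,1]$ of \cite{DS21quenchedLimThms} plus the uniform boundedness principle, which gives $\chi_2\le\log\eta<0$. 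That makes your noncompactness step redundant, and it avoids justifying an averaged contraction rate for maps that expand only on average.
\item \emph{Identifying $F_2$.} Your identification $F_2=\BV_0[0,1]$ needs the decay for a.e.\ $\omega$ simultaneously for all $u$ in the nonseparable space $\BV_0[0,1]$. Otherwise you need an extra step, such as item (4) of \autoref{thm:non-invMET} ($L^1$-closedness of $B_{\BV[0,1]}\cap F_{\lambda_2}(\omega)$) applied to a countable $L^1$-dense family.
\item \emph{The fast space $E_1$.} You argue in the reverse direction from the paper: you build $h_\omega$ from $U_1$ and get positivity by pullback. This is valid, but it uses the backward decay of \autoref{thm:sequentialMET_twoSided}(3) and the tempered angle of \autoref{thm:refined_sequentialMET_twoSided}(3). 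The paper instead takes Buzzi's density and applies the uniqueness clause of \autoref{thm:refined_sequentialMET_twoSided}.
\end{enumerate}
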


\subsection{Comparison with Previous Work}\label{subsec:litReview}
\paragraph{On Effectiveness} 
We note that the works of Blumenthal \cite{Blumenthal2016}, Gonz\'{a}lez-Tokman and Quas \cite{GonzalezTokmanQuas2015}, and Lee \cite{Lee2024} yield partially effective estimates. In particular, effective estimates on the convergence of approximate slow spaces to the Oseledets slow spaces may be obtained by the methods in \cite{Blumenthal2016} and \cite{GonzalezTokmanQuas2015}, while the analogous estimates for fast spaces may be obtained by the methods in \cite{Lee2024}. However, as previously noted, their arguments are ultimately inductive and thus do not yield growth estimates effective in the unrestricted cocycle's finite-time volume growth data. Our approach avoids the inductive peeling method, but it takes inspiration from these previous works: we build upon the volume-based methods of \cite{Blumenthal2016} and \cite{GonzalezTokmanQuas2015} and make use of the elegant ``midpoint" construction for fast spaces developed in \cite{Lee2024}.

In terms of effectiveness for Banach space cocycles, the most closely aligned work is that of Quas, Thieullen, and Zarrabi \cite{QuasThieullenZarrabi2019}, extending earlier work of Blumenthal and Morris \cite{BlumenthalMorris2015} and Bochi and Gourmelon \cite{BochiGourmelon2009}. The result of Bochi and Gourmelon may be thought of as a topological analogue of the MET for dominated splittings in finite dimensions. Blumenthal and Morris provide a constructive generalization to Banach space cocycles, but their results are only effective in finite dimensions. Those of Quas, Thieullen, and Zarrabi are both constructive and effective, and much like our approach, they abstract away the need for a base dynamical system by considering (two-sided) sequences of operators subject to uniform two-sided analogues of Assumptions \ref{assump:oneSidedSequences} and \ref{assump:refined_oneidedSequences} (at a fixed index). The key difference with ours, however, is that their goal is to construct a dominated splitting, and this is reflected in the strong uniformity present in their assumptions. 

In particular, our \autoref{thm:sequentialMET_oneSided} only requires existence of one-sided volume growth limits at a fixed initial time, without requiring any uniformity of convergence across time-shifts. Such estimates, formalized as (SVG) and (FI), are pivotal to the approach of \cite{QuasThieullenZarrabi2019}. On the other hand, our approach to \autoref{thm:sequentialMET_oneSided} requires full knowledge of the Lyapunov spectrum up to a fixed index (as in \cite{Ruelle1982}), whereas \cite{QuasThieullenZarrabi2019} requires a spectral gap at only one index.

Our \autoref{thm:refined_sequentialMET_oneSided} is closer in spirit to \cite{QuasThieullenZarrabi2019}. Like \cite{QuasThieullenZarrabi2019}, we also construct ``moving" approximate slow-spaces. However, we only require subexponential control of volume growth defects along time, rather than the uniform requirements of (SVG) and (FI). As a consequence, we obtain effective estimates that measure the at most subexponential decay of angles between fast and slow directions under non-uniform hypotheses. To the best of our knowledge, these estimates are new even in the case of finite-dimensional semi-invertible cocycles. All previously known proofs of the semi-invertible MET (even in finite dimensions) establish logarithmic temperedness of angles via ergodic-theoretic arguments, and as such, do not derive explicit angle estimates based on finite-time volume growth data.

\paragraph{On Measurability Issues} Ruelle \cite{Ruelle1982} was the first to extend the MET to the infinite-dimensional setting, treating strongly measurable cocycles generated by compact operators or by invertible unitary-plus-compact operators on a separable Hilbert space. Subsequently, Ma\~{n}\'{e} \cite{Mane1983} generalized the MET to compact injective Banach space operators under a continuity condition w.r.t. the underlying probability space. Thieullen \cite{Thieullen1987} weakened Ma\~{n}\'{e}'s compactness condition to \textit{quasicompactness}. Schauml\"offel and Flandoli \cite{SchaumloffelFlandoli1991} then adapted Thieullen's proof to strongly measurable quasicompact cocycles on a separable Hilbert space, weakening Ruelle's compactness condition. Blumenthal \cite{Blumenthal2016} weakened Thieullen's continuity condition to the still rather strong condition of Bochner measurability for quasicompact cocycles on a nonseparable Banach space. Lian and Lu \cite{LianLu2010} consider injective cocycles on separable spaces, for which they show strong measurability is sufficient. We lastly note that Froyland, Lloyd, and Quas \cite{FroylandLloydQuas2013} extend their semi-invertible MET from finite-dimensions to Banach space cocycles under Ma\~{n}\'{e}'s continuity condition, while Gonz\'{a}lez-Tokman and Quas \cite{GonzalezTokmanQuas2014} weaken the requirement to strong measurability if the Banach space is separable.

As previously mentioned, the measurability requirements from prior treatments are a serious obstruction to the study of random transfer operators on $\BV$ \cite{horan2020spectral, DS21quenchedLimThms}. Gonz\'{a}lez-Tokman and Quas \cite{GonzalezTokmanQuas2022} recover partial Lyapunov-Oseledets data for a class of random transfer operator cocycles on $\BV$ by applying the MET to the cocycle on an auxiliary separable Banach space, yielding a partial MET interpretation for the $\BV$ cocycle associated with their family of systems. This is not a direct application of the MET on $\BV$. In contrast, our measurability framework permits direct application of the MET to a broad class of Banach spaces and cocycles, including random transfer operator cocycles on $\BV$ for jointly measurable random Lasota-Yorke maps, as in \cite{Buzzi1999}.

\subsection{Structure of the Paper}\label{subsec:paperStructure}
In \autoref{sec:geometry}, we review some preliminary results on geometry in a Banach space. We shall review the Grassmannian, angles, notions of volume and volume growth, and a generalization of the singular value decomposition due to Blumenthal and Morris \cite{BlumenthalMorris2015}. We shall also make slight modifications to some of these results.

In \autoref{sec:sequentialMET_oneSided}, we prove Theorems \ref{thm:sequentialMET_oneSided} and \ref{thm:refined_sequentialMET_oneSided}. In \autoref{sec:sequentialMET_twoSided}, we generalize the previous one-sided sequential theorems to two-sided sequences of operators, resulting in \autoref{thm:sequentialMET_twoSided}, which generalizes the first theorem, and \autoref{thm:refined_sequentialMET_twoSided}, which generalizes the second. The treatment of the one-sided and two-sided sequential theorems is effective. Both \autoref{thm:refined_sequentialMET_oneSided} and \autoref{thm:refined_sequentialMET_twoSided} are new in the case of finite-dimensional one-sided and two-sided matrix sequences, respectively.

In \autoref{sec:MET}, we formulate an existential measurability framework for cocycles. Then, under this weak cocycle measurability framework and the usual integrability assumption, we prove our extensions of the non-invertible MET (\autoref{thm:non-invMET}), and for invertible base dynamics, the semi-invertible MET (\autoref{thm:semi-invMET}). Using a strengthening of Kingman's subadditive ergodic theorem (\autoref{appThm:temperedSET}), for non-invertible and semi-invertible cocycles, we show the assumptions of the one-sided and two-sided sequential theorems, respectively, hold almost surely. In \autoref{sec:applications}, we provide checkable conditions which can be used to verify if a Banach space and cocycle pair satisfies our existential measurability framework. We then study some of the functional analytic properties of $\BV[0,1]$ and transfer operators for random Lasota-Yorke maps and show that joint measurability of the random system is enough to satisfy our measurability framework. As a consequence, we obtain an MET interpretation of \cite{Buzzi1999}, given in \autoref{thm:MET_applied_to_transferOps}, which is a more detailed version of \autoref{summaryThm:transferOps_cocycle}.

\subsection*{Notations and Conventions}
\begin{enumerate}[label = (\arabic*)]
    \item If $f,g:D\to \R\cup\{\pm\infty\}$ are functions from some set $D$ and $P$ is a collection of parameters, we write $f\lesssim_P g$ if there exists a constant $C_P>0$ depending on the parameters $P$ such that $f(x)\leq C_P g(x)$ for all $x\in D$. We write $f\approx_P g$ if $f\lesssim_P g$ and $g\lesssim_P f$. The subscript $P$ may be dropped if dependence is clear from context.
    \item $\N = \{1,2,3,\ldots\}$
    \item Throughout this paper, all vector spaces are assumed to be real, unless otherwise specified.
    \item For topological vector spaces (TVS) $V_1,V_2$, we denote the space of continuous linear operators $V_1\to V_2$ by $L(V_1,V_2)$. If $V_1=V_2=V$, then we write $L(V)$. The continuous dual of $V$ is denoted by $V^*$.
    \item The calligraphic $\B$ is reserved for Banach spaces.
    \item Double bars ($\norm{\cdot}$) are reserved for Banach space norms, while single bars ($\abs{\cdot}$) are reserved for Hilbert space norms. Moreover, $(\cdot,\cdot)$ will be used for inner products.
    \item If $v$ is a vector and $f$ a linear functional, then we will write $f(v) = v[f] = \innerp{v,f}$ for evaluations.
    \item If $v_1,\ldots,v_n$ are vectors, their linear span will be denoted by $\innerp{v_1,\ldots,v_n}$ or $\vspan(v_1,\ldots,v_n)$.
    \item For a subspace $E\subset \B$, we write
    \[B_E \coloneqq \{e\in E:\norm{e} \leq 1\}\text{ and } S_E\coloneqq \{e\in \E:\norm{e}=1\}.\]
    \item For a linear operator $A\in L(\B)$ and a subspace $E\subset \B$, we write $A|_E$ for the restriction of $A$ to $E$.
    \item The symbol $\delta_{ij}$ will denote the Kronecker delta.
    \item If $(X,\tau)$ is a topological space, and $S\subset X$, then $\tau|_S$ will denote the $\tau$-subspace topology on $S$.
\end{enumerate}

\subsubsection*{Acknowledgments}
I am deeply grateful to Alex Blumenthal for introducing me to this problem, for many fruitful discussions, and for his excellent advice on writing papers.

This material is based on work supported by the National Science Foundation Graduate Research Fellowship Program under Grant No. DGE-2545526 and by Alex Blumenthal's National Science Foundation CAREER Grant under Grant No. DMS-2237360. Any opinions, findings, conclusions, or recommendations expressed in this material are those of the author and do not necessarily reflect the views of the National Science Foundation.

\section{Volume and Geometry in Banach Spaces}\label{sec:geometry}
\subsection{Subspaces and Volume}
Let $(\B,\norm{\cdot})$ be a Banach space. If $F\subset \B$ is a linear subspace of $\B$, we define its annihilator $\ann{F}$ by
\[\ann{F} = \{\ell\in \B^*:\innerp{v,\ell} = 0\ \forall v\in F\}.\]
Similarly, if $L\subset \B^*$, we define its pre-annihilator by
\[\preann{L} = \{v\in \B:\innerp{v,\ell} = 0\ \forall \ell\in L\}.\]
It is easy to see that, if $F\subset \B$ is a closed subspace, then $\preann{\paren{\ann{F}}} = F$. Let $q\geq 0$. We define the codimension of the closed subspace $F$ by $\codim F\coloneqq \dim \ann{F}$. We will continue using the Banach space $(\B,\norm{\cdot})$ for the rest of this subsection.

\begin{remark}
    We note that every finite-dimensional subspace of $\B^*$ is the annihilator of some $q$-codimensional subspace of $\B$. In fact, if $L\subset \B^*$ is finite-dimensional, then $\ann{\paren{\preann{L}}} = L$. This fact continues to hold for the broader class of weak-* closed subspaces $L\subset \B^*$.
\end{remark}

\subsubsection{The Grassmannian}
We denote the Grassmannian of closed subspaces of $\B$ by
\[\G(\B) = \curly{E\subseteq \B: E \text{ is a closed linear subspace of } \B }.\]
If $q \geq 0$, then $\G_q(\B)\subset \G(\B)$ will denote the Grassmannian of $q$-dimensional subspaces, while $\G^q(\B)\subset \G(\B)$ will denote the Grassmannian of closed $q$-codimensional subspaces.

We shall assign to the Grassmannian the Hausdorff metric on the unit sphere, defined for $E,F\in\G(\B)\setminus \curly{\innerp{0}}$ by
\[d_H(E,F) = \max\curly{\sup_{v\in S_E} d(v,S_F),\sup_{v\in S_F} d(v,S_E)},\]
and if $E = \innerp{0}$, then $d_H(\innerp{0},\innerp{0}) = 0$ and $d_H(\innerp{0},F) \coloneqq 1$ if $F\neq \innerp{0}$. It is well known that, with the topology induced by this metric (also called the $\Gap$-topology), $\G(\B)$ is complete and $\G_q(\B)$ and $\G^q(\B)$ are closed subsets of $\G(\B)$ (see, e.g., Chapter IV, \S 2.1 of \cite{Kato1995}). 
\begin{lem}\label{lem:continuityOfDistToSpaces}
    Let $E, E'\subseteq \B$ be closed subspaces. For all $v\in \B$,
    \[\abs{d(v,E) - d(v,E')}\leq 4d_H(E,E')\norm{v}\]
\end{lem}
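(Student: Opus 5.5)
By symmetry in $E$ and $E'$, it suffices to show the one-sided bound $d(v,E') \le d(v,E) + 4 d_H(E,E')\norm{v}$. Write $\e = d_H(E,E')$ throughout.

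First dispose of the degenerate cases.
\begin{itemize}
    \item If $\e \ge 1/4$, the right-hand side is at least $\norm{v}$. Both distances lie in $[0,\norm{v}]$, since $0$ lies in every subspace, so the bound holds trivially.
    \item If one of $E, E'$ is $\innerp{0}$ and the other is not, then $\e = 1$ by convention, which falls under the previous case.
    \item If both are $\innerp{0}$, the two distances are equal.
\end{itemize}
So assume both subspaces are nonzero and $\e < 1/4$.

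The key step is an approximation claim: every $e \in E$ has some $e' \in E'$ with $\norm{e - e'} \le \e\norm{e}$, up to an arbitrarily small slack. For $e = 0$ take $e' = 0$. Otherwise, apply the definition of $d_H$ to the unit vector $e/\norm{e} \in S_E$. It gives $u \in S_{E'}$ with $\norm{e/\norm{e} - u} \le \e + \eta$ for any $\eta > 0$. Then $e' = \norm{e}u$ satisfies $\norm{e - e'} \le (\e + \eta)\norm{e}$.

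Now fix $\eta > 0$ and pick $e \in E$ with $\norm{v - e} \le d(v,E) + \eta$. We may take $\norm{e} \le 2\norm{v} + \eta$, because $\norm{e} \le \norm{v} + \norm{v - e} \le \norm{v} + d(v,E) + \eta$ and $d(v,E) \le \norm{v}$. With $e'$ from the claim,
\[
d(v,E') \le \norm{v - e'} \le \norm{v - e} + \norm{e - e'} \le d(v,E) + \eta + (\e + \eta)(2\norm{v} + \eta).
\]
Letting $\eta \to 0$ gives $d(v,E') \le d(v,E) + 2\e\norm{v}$. This is stronger than the stated constant $4$; the factor $4$ leaves room for a cruder estimate of $\norm{e}$.

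Swapping the roles of $E$ and $E'$ gives the reverse inequality, which completes the proof. The only delicate point is the bookkeeping around the suprema and infima in the definitions of $d_H$ and $d(\cdot,\cdot)$, together with the conventions for $\innerp{0}$; there is no substantial obstacle.
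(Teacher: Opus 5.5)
Your argument is correct and complete. The paper leaves this proof to the reader, and your approach is the natural one: rescale a unit-sphere approximant from the definition of $d_H$ to approximate a near-minimizer $e\in E$ with $\norm{e}\le 2\norm{v}+\eta$, and handle $\innerp{0}$ through the stated conventions. It in fact yields the sharper constant $2$ in place of $4$.
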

\begin{proof}
    The proof is straightforward and details are left to the reader.
\end{proof}
Another notion of distance that is nice to work with is the (directed) gap function, defined for $E,F\in \G(\B)\setminus \curly{\innerp{0}}$ by
\[\Gap(E,F) = \sup_{v\in S_E} d(v,F),\]
with the obvious redefinitions for $E$ or $F$ equal to $\innerp{0}$.
\begin{remark}
    The gap function may be referred to as the aperture function in other literature.
\end{remark}
The undirected gap function $(E,F)\mapsto \max\curly{\Gap(E,F),\Gap(F,E)}$ does not satisfy the triangle inequality, but the notion of ``distance" defined by it is equivalent to that of the Hausdorff metric.
\begin{lem}\label{lem:equivOfGap_and_Hausdorff}
Let $E,F\in \G(\B)$.
\begin{enumerate}[label = (\arabic*)]
    \item We have
    \[\Gap(E,F)\leq \sup_{v\in S_E} d(v,S_{F})\leq 2 \Gap(E,F),\]
    \[\max\curly{\Gap(E,F),\Gap(F,E)}\leq d_H(E,F)\leq 2 \max\curly{\Gap(E,F),\Gap(F,E)}.\]
    \item The map $E\mapsto \ann{E}$ is an ``isometry" in the sense that $\Gap(E,F) = \Gap(\ann{F},\ann{E})$.
    \item (Lemma 2.6 in \cite{Blumenthal2016}) Let $q\in \N$. If $E,F\in \G_q(\B)$ or $E,F\in \G^q(\B)$, and $\Gap(E,F) < \frac{1}{q}$, then
    \[\Gap(F,E)\leq \frac{q\Gap(E,F)}{1-q\Gap(E,F)}.\]
\end{enumerate}
\end{lem}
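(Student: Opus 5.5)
We treat the three items in turn, using only the definitions of $d_H$, $\Gap$, annihilators, and the Hahn--Banach theorem. Item (3) is quoted from Lemma 2.6 in \cite{Blumenthal2016}; we take it as given and sketch a possible proof only for completeness.

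For item (1), the first chain of inequalities is elementary. For $v\in S_E$, since $S_F\subset F$ we have $d(v,F)\le d(v,S_F)$. For the reverse bound, fix $\epsilon>0$ and pick $w\in F$ with $\norm{v-w}\le d(v,F)+\epsilon$. Then $\abs{1-\norm{w}}\le\norm{v-w}$, so the normalized vector $w/\norm{w}\in S_F$ satisfies
\[\norm{v-w/\norm{w}}\le \norm{v-w}+\abs{\norm{w}-1}\le 2\norm{v-w}.\]
(If $w=0$ then $d(v,F)\geq 1-\epsilon$, and the bound is trivial because $d(v,S_F)\le 2$.) Taking the supremum over $v\in S_E$ and letting $\epsilon\to0$ gives the first chain. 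The second chain follows by taking the maximum of the first chain over the two orderings $(E,F)$ and $(F,E)$. The degenerate cases $E=\innerp{0}$ or $F=\innerp{0}$ are handled by the stated conventions.

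For item (2), the key tool is the duality formula $d(v,F)=\sup\{\innerp{v,\ell}:\ell\in S_{\ann F}\}$ for a closed subspace $F$, which is a consequence of Hahn--Banach. Its companion is the formula $d(\ell,\ann{E})=\norm{\ell|_E}=\sup_{v\in S_E}\innerp{v,\ell}$, which holds because $\B^*/\ann E\cong E^*$ isometrically, again via Hahn--Banach. We then compute both sides:
\[\Gap(E,F)=\sup_{v\in S_E}\ \sup_{\ell\in S_{\ann F}}\innerp{v,\ell},\qquad \Gap(\ann F,\ann E)=\sup_{\ell\in S_{\ann F}} d(\ell,\ann E)=\sup_{\ell\in S_{\ann F}}\ \sup_{v\in S_E}\innerp{v,\ell}.\]
Interchanging the two suprema shows the two expressions are equal. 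No closedness of $E$ is needed in this argument, since $\ann E=\ann{\overline E}$.

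Item (3) is the only nontrivial point, and it is the main obstacle if one wants a self-contained proof. The plan is as follows. First, reduce the codimensional case to the dimensional case by applying item (2), using that $\ann F$ and $\ann E$ are $q$-dimensional. In the dimensional case, choose an Auerbach basis $f_1,\ldots,f_q$ of $F$ with dual functionals $f_i^*$ of norm one. Since $\Gap(E,F)<1/q$ is small, each $f_i$ can be approximated by some $e_i\in E$; this closeness is a priori measured in the direction $F\to E$, which is the missing direction, so the argument must be set up to approximate from $E$ instead. Concretely, one shows that the projection-type map $P:E\to F$, which picks near-best approximations, is injective with a lower bound $m(P)\ge 1-\Gap(E,F)$. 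By equality of dimensions $P$ is then onto $F$. Inverting $P$ with the Auerbach-basis bound $\norm{\sum a_if_i}\ge \max\abs{a_i}$, which costs a factor $q$, yields, for $w\in S_F$, a vector $e\in E$ with $\norm{w-e}\le q\Gap(E,F)\,(1+\norm{w-e})$. Rearranging gives the stated bound $q\Gap/(1-q\Gap)$. The delicate step is making the approximation linear, which is the role of the Auerbach basis; absent that, we would simply cite \cite{Blumenthal2016}.
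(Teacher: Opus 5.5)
Your proofs of items (1) and (2) are correct and are the standard arguments from Kato that the paper cites; the paper gives no proof of its own. In particular, your proof of (2) is exactly the interchange of suprema using the two duality formulas recorded in Lemma~\ref{lem:distance_vs_orthogonalProj}. Your reduction of the codimensional case of (3) to the dimensional case is also fine: apply the dimensional statement in $\B^*$ to the pair $(\ann{F},\ann{E})$ and use (2). Since the paper likewise just quotes (3) from \cite{Blumenthal2016}, your proposal stands on that citation.

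The optional sketch of the dimensional case of (3), however, does not work as written. A map $P:E\to F$ that picks near-best approximations is in general not linear, so ``injective, hence onto by equality of dimensions'' does not apply to it. Nothing in your sketch produces a \emph{linear} $P$ with $m(P)\geq 1-\Gap(E,F)$. If you had one, inverting it would already give $\Gap(F,E)\leq \Gap(E,F)/(1-\Gap(E,F))$ with no Auerbach basis and no factor $q$, so the sketch misplaces where $q$ enters.

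The Auerbach basis must be taken in $E$, not $F$, and the factor $q$ is paid in the linearization.
\begin{itemize}
\item Set $\gamma=\Gap(E,F)$ and fix $\gamma<\gamma'<1/q$. Let $e_1,\ldots,e_q$ be an Auerbach basis of $E$, so $\norm{e_i}=1$ and $\abs{a_i}\leq\norm{\sum_j a_je_j}$.
\item Choose $f_i\in F$ with $\norm{e_i-f_i}\leq\gamma'$, and define the linear map $T:E\to F$ by $Te_i=f_i$.
\item For $e=\sum_i a_ie_i$ you get $\norm{e-Te}\leq\sum_i\abs{a_i}\norm{e_i-f_i}\leq q\gamma'\norm{e}$, so $\norm{Te}\geq(1-q\gamma')\norm{e}$. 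This is where the hypothesis $\Gap(E,F)<1/q$ is used.
\item Hence $T$ is injective, and therefore onto $F$ since $\dim E=\dim F$.
\item For $w\in S_F$, the vector $e=T^{-1}w$ satisfies $d(w,E)\leq\norm{w-e}\leq q\gamma'\norm{e}\leq q\gamma'(1+\norm{w-e})$. This rearranges to $d(w,E)\leq q\gamma'/(1-q\gamma')$, and letting $\gamma'\downarrow\gamma$ gives the claim.
\end{itemize}
Your final inequality is exactly this one, so only the middle of the sketch needs to be replaced.
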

Items (1) and (2) of \autoref{lem:equivOfGap_and_Hausdorff} may be found in \S 2.1 and \S 2.3, respectively, of Chapter IV of \cite{Kato1995}. We will find another result in the same vein as item (2) of the above useful.
\begin{lem}[Chapter IV, \S 2.3 of \cite{Kato1995}]\label{lem:distance_vs_orthogonalProj}
    Let $F\in \G(\B)$ be such that $\innerp{0}\neq F\neq \B$.
    \begin{enumerate}[label = (\arabic*)]
        \item If $v\in \B$, then
        \[d(v,F) = \sup_{\ell\in S_{\ann{F}}} \innerp{v,\ell}.\]
        \item If $\ell\in \B^*$, then
        \[d(\ell,\ann{F}) = \sup_{v\in S_{F}} \innerp{v,\ell}.\]
    \end{enumerate}
\end{lem}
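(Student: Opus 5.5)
The plan is to derive both parts from the Hahn--Banach theorem, using the quotient description of distance. Since $F$ is a closed proper nonzero subspace, we work in the quotient space $\B/F$, whose norm is $\norm{v+F} = d(v,F)$. The dual of $\B/F$ is isometrically identified with $\ann{F}$, via $\ell\mapsto(\text{the induced functional } v+F\mapsto \innerp{v,\ell})$. This identification is isometric: for $\ell\in\ann{F}$ and any $w\in F$ we have $\abs{\innerp{v,\ell}} = \abs{\innerp{v-w,\ell}} \le \norm{\ell}\norm{v-w}$, so the induced functional has norm at most $\norm{\ell}$. The reverse inequality holds because $\norm{v+F}\le\norm{v}$.

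For part (1), I first obtain the easy inequality from the same computation. For $\ell\in S_{\ann{F}}$ and $w\in F$, $\innerp{v,\ell}=\innerp{v-w,\ell}\le\norm{v-w}$; taking the infimum over $w$ gives $\sup_{\ell}\innerp{v,\ell}\le d(v,F)$. For the reverse inequality, if $v\in F$ both sides are zero, since $\ann{F}\ne\{0\}$ because $F\neq\B$. Otherwise, Hahn--Banach applied in $\B/F$ provides a norm-one functional attaining the value $\norm{v+F}=d(v,F)$ at $v+F$. Pulling it back to $\B$ gives $\ell\in S_{\ann{F}}$ with $\innerp{v,\ell}=d(v,F)$, so the supremum is attained.

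For part (2), the roles are dual: the relevant dual of $F$ is $\B^*/\ann{F}$. The map $\ell\mapsto\ell|_F$ is surjective onto $F^*$ by Hahn--Banach extension, and its kernel is $\ann{F}$. I claim it is isometric, i.e. $d(\ell,\ann{F})=\norm{\ell|_F}$. The direction $\norm{\ell|_F}\le\norm{\ell-\mu}$ for every $\mu\in\ann{F}$ is immediate, since $\ell-\mu$ agrees with $\ell$ on $F$. For the other direction, extend $\ell|_F$ by Hahn--Banach to $\tilde\ell\in\B^*$ with $\norm{\tilde\ell}=\norm{\ell|_F}$. Then $\mu=\ell-\tilde\ell\in\ann{F}$, so $d(\ell,\ann{F})\le\norm{\tilde\ell}=\norm{\ell|_F}$. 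Finally, $\norm{\ell|_F}=\sup_{v\in S_F}\innerp{v,\ell}$; absolute values are unnecessary because $S_F$ is symmetric. Here $F\ne\{0\}$ ensures that $S_F$ is nonempty.

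The main point needing care is the attainment argument in (1): the suprema should be read with signed values, which is harmless since $S_{\ann{F}}$ and $S_F$ are symmetric. The hypothesis $\{0\}\neq F\neq\B$ is used only to make these spheres nonempty. Otherwise both parts are standard consequences of Hahn--Banach and involve no real obstacle.
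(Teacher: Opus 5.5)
Your proposal is correct. The easy inequalities come from $\innerp{v,\ell}=\innerp{v-w,\ell}$ and from $\norm{\ell|_F}\le\norm{\ell-\mu}$. The reverse inequalities come from a norm-attaining Hahn--Banach functional on $\B/F$ in (1) and from a norm-preserving Hahn--Banach extension of $\ell|_F$ in (2); this is exactly the standard argument in Kato (Chapter IV, \S 2.3), which the paper cites in lieu of its own proof, so your route matches the intended one (and signed suprema are fine since the paper works over $\R$).
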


\subsubsection{Angles, Complements, and Projectors}
Let $E,F\subset \B$ be linear subspaces. We say that $E$ and $F$ are \textit{algebraic complements} if $\B = E \oplus F$. This means that, given $v\in \B$, there are unique vectors $e\in E$ and $f\in F$ such that $v = e + f$. In this case, we will write
\[\pi_{E//F} v = e \text{ and } \pi_{F//E} v = f.\]
The map $v\mapsto \pi_{E//F} v$ is the projection operator onto $E$ parallel to $F$, and one should easily see that $\pi_{F//E} = \id - \pi_{E//F}$. If the projection operator $\pi_{E//F}$ is bounded, then we say that $E$ and $F$ are \textit{topological} complements and that the splitting $\B = E \oplus F$ is \textit{topological}. By Lemma 2.2 in \cite{Blumenthal2016}, the splitting is topological if and only if $\theta(E,F)>0$, in which case
\[\sin\theta(E,F) = \norm{\pi_{E//F}}^{-1}.\]
We note that the closed graph theorem immediately implies that an algebraic splitting of closed subspaces is a topological splitting. It is also easy to see that $\pi_{E//F}^* = \pi_{\ann{F}//\ann{E}}$.

\begin{lem}\label{lem:closed_q_dim_complemented}
    Let $q\in \N$.
    \begin{enumerate}[label = (\arabic*)]
        \item (III.B.10 in \cite{Wojtaszczyk1991}) Suppose $E\subset \B$ is a $q$-dimensional subspace of $\B$. Then there is a closed $q$-codimensional subspace $F\subset \B$ such that $\sin\theta(E,F)\geq q^{-\frac{1}{2}}$.
        \item (III.B.11 in \cite{Wojtaszczyk1991}) Suppose $F\subset \B$ is a closed $q$-codimensional subspace of $\B$. Then there is a $q$-dimensional subspace $E\subset \B$ such that $\sin\theta(F,E)\geq (1+\sqrt{q})^{-1}$.
    \end{enumerate}
\end{lem}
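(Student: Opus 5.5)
Both parts reduce to constructing a bounded projection with controlled norm. We then use the identity $\sin\theta(E,F)=\norm{\pi_{E//F}}^{-1}$ from Lemma 2.2 of \cite{Blumenthal2016}, quoted above.

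\textbf{Part (1).} Let $E$ be $q$-dimensional. I would invoke the Kadec--Snobar theorem: there is a bounded projection $P:\B\to\B$ onto $E$ with $\norm{P}\le\sqrt q$. To sketch it, embed $\B$ isometrically into $\ell_\infty(\Gamma)$, where $\Gamma=S_{\B^*}$ and $v\mapsto(\innerp{v,\ell})_\ell$. It then suffices to produce a projection of $\ell_\infty(\Gamma)$ onto the image of $E$ with norm at most $\sqrt q$, since composing with the embedding gives a projection of $\B$ onto $E$. For this I would use Lewis's lemma, a John-ellipsoid type normalization. It produces an isomorphism $u:\ell_2^q\to E$ maximizing $\det$ (in the sense of Lewis) subject to $\ell(u)\le1$ for the $\ell$-norm of $\ell_\infty$-valued operators. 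By trace duality the extremality gives $\operatorname{tr}(u^{-1}w)\le q\,\ell(w)$. Averaging over Gaussian vectors then yields an explicit projection $Pv=\sum_i \langle v,\varphi_i\rangle u e_i$ with $\norm{P}\le\sqrt q$. Setting $F=\ker P$, which is closed and $q$-codimensional, we get $\pi_{E//F}=P$. Hence $\sin\theta(E,F)=\norm{P}^{-1}\ge q^{-1/2}$.

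\textbf{Part (2).} Let $F$ be closed with $\codim F=q$, and let $\pi:\B\to X\coloneqq\B/F$ be the quotient map, which has norm one. Here $X$ is $q$-dimensional and $X^*\cong\ann F$. The goal is a linear lift $S:X\to\B$ with $\pi S=\id_X$ and $\norm{S}\le\sqrt q$ (up to an arbitrarily small loss, see below). Given $S$, put $E=S X$, which is $q$-dimensional. Then $P=S\pi$ is a projection onto $E$ with kernel $F$, so
\[
\pi_{F//E}=\id-S\pi,\qquad \norm{\pi_{F//E}}\le 1+\norm{S}\le 1+\sqrt q,
\]
and therefore $\sin\theta(F,E)\ge(1+\sqrt q)^{-1}$.

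To build $S$, I would dualize part (1) inside $X$. Apply the Lewis normalization to the finite-dimensional space $X$ itself, realized as the quotient of an $\ell_1(\Gamma')$ space onto which $\B$ maps: take $\Gamma'=S_\B$ and map $\delta_v\mapsto v$. The dual statement of Kadec--Snobar says that a $q$-dimensional space has a lifting constant through $\ell_1$-quotients at most $\sqrt q$. Concretely, choose vectors $x_i\in\B$ lifting a Lewis-normalized basis of $X$ and define $S$ linearly on that basis. Then check the norm bound by the same Gaussian averaging argument, now applied to $\ann F=X^*$.

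\textbf{Main obstacle.} The step I expect to be hardest is obtaining the lift with the sharp constant. Quotient norms are infima, so the lifted vectors $x_i$ can only be chosen with norms within $1+\e$ of the quotient norms, which gives $\norm{S}\le\sqrt q(1+\e)$. To remove the $\e$, I would first try a compactness argument, since everything after choosing $S$ lives in finite-dimensional data. If that fails, I would follow the argument of III.B.11 in \cite{Wojtaszczyk1991} directly. That argument works with the projection $P^*$ onto $\ann F$ parallel to $\ann E$; recall that $\pi_{E//F}^*=\pi_{\ann F//\ann E}$ and that $\Gap$ is preserved by annihilators by \autoref{lem:equivOfGap_and_Hausdorff}(2). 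It obtains the bound $1+\sqrt q$ without loss.
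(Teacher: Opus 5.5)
Your part (1) is fine and matches the paper's citation of III.B.10. Kadec--Snobar gives a projection $P$ onto $E$ with $\norm{P}\le\sqrt q$ with no loss, and $F=\ker P$ works. Your Lewis-lemma/Gaussian sketch is not the usual route to the sharp constant, which goes through the $2$-summing estimate $\pi_2(\mathrm{id}_E)\le\sqrt q$ and Pietsch factorization, but since you are invoking the theorem, nothing hinges on it.

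The genuine gap is the last step of part (2), and it cannot be closed, because part (2) with the sharp constant $1+\sqrt q$ is false. Take $\B=\ell_1$, $f=(i/(i+1))_{i\ge1}\in\ell_\infty=\ell_1^*$ and $F=\ker f$, so $q=1$. If $\sin\theta(F,E)>0$ then $E=\innerp{x}$ with $f(x)=1$, and $\pi_{F//E}v=v-f(v)x$. Testing on the unit vectors $e_j$ gives
\[
\norm{\pi_{F//E}}\ \ge\ \lim_{j\to\infty}\norm{e_j-f_jx}_1=\lim_{j\to\infty}\Bigl(\abs{1-f_jx_j}+f_j\sum_{i\neq j}\abs{x_i}\Bigr)=1+\norm{x}_1,
\]
while $\norm{x}_1>\sum_i f_i\abs{x_i}\ge f(x)=1$ because every $f_i<1$. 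Hence $\sin\theta(F,E)<\frac12=(1+\sqrt1)^{-1}$ for every one-dimensional $E$.

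This also shows why both of your fallbacks fail. The admissible lifts form an affine subspace of $L(X,\B)$ with $\B$ infinite-dimensional, so there is no compactness to use. Weak-* compactness only yields an optimal lift into $\B^{**}$: in the example a norm-one lift exists in $\ell_1^{**}$, since every functional attains its norm on the bidual ball, but none exists in $\ell_1$. Likewise, no argument, whatever III.B.11 literally asserts, produces $1+\sqrt q$ without loss.

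What your approach can prove is the corrected statement: for every $\varepsilon>0$ there is a $q$-dimensional $E$ with $\sin\theta(F,E)\ge(1+\sqrt q+\varepsilon)^{-1}$. Even this needs the lifting step made precise. Choosing near-minimal lifts $x_i$ of a normalized basis one vector at a time does not control $\norm{\sum_i t_ix_i}$, and hence does not control $\norm{S}$.

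A clean route is as follows.
\begin{enumerate}
\item Apply part (1) in $\B^*$ to the $q$-dimensional space $\ann F=\pi^*X^*$. This gives a projection $R$ of $\B^*$ onto $\ann F$ with $\norm{R}\le\sqrt q$.
\item Set $S_0=((\pi^*)^{-1}R)^*:X\to\B^{**}$. Then $\pi^{**}S_0=\mathrm{id}_X$ and $\norm{S_0}\le\sqrt q$.
\item By local reflexivity there is $T:S_0X\to\B$ with $\norm{T}\le1+\varepsilon$ and $\innerp{Tz,\ell}=\innerp{z,\ell}$ for all $\ell\in\ann F$.
\item Then $S=TS_0$ satisfies $\pi S=\mathrm{id}_X$ and $\norm{S}\le(1+\varepsilon)\sqrt q$. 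With $E=SX$ this gives $\norm{\pi_{F//E}}=\norm{\mathrm{id}-S\pi}\le1+(1+\varepsilon)\sqrt q$.
\end{enumerate}
The $\varepsilon$-loss is harmless for the paper, which only ever uses part (1). It does so in Step one of the proof of item (4) of \autoref{thm:sequentialMET_oneSided}, and with $q=1$ in the proof of \autoref{lem:backwardGapSepEstimate}.
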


\begin{prop}[Lemmas 3.3 and A.1 in \cite{BlumenthalYoung2017}]\label{prop:complementationOpen}
    Suppose $\B = E\oplus F$ is a topological splitting. For any $E'\in \G(\B)$ such that $d_H(E,E') < \sin \theta(E,F)$, we have that $\B = E'\oplus F$ is a topological splitting with 
    \[\sin\theta(E',F) \geq \sin \theta(E,F)-d_H(E,E').\]
\end{prop}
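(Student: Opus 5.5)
We prove the statement directly via the projection characterization of the minimal angle. Since $\B = E\oplus F$ is a topological splitting, Lemma 2.2 of \cite{Blumenthal2016} gives $\sin\theta(E,F) = \norm{\pi_{E//F}}^{-1} > 0$. Write $\eta = \sin\theta(E,F)$ and $\e = d_H(E,E') < \eta$. The plan has three steps: show that $\B = E'\oplus F$ holds algebraically, conclude that the splitting is topological, and then obtain the quantitative lower bound on $\sin\theta(E',F)$.

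\textbf{Step 1: the angle bound, which also gives $E'\cap F = \innerp{0}$.} Fix $e'\in S_{E'}$. By definition of $d_H$, for every $\e_1>\e$ there is $e\in S_E$ with $\norm{e-e'}<\e_1$. For any $f\in F$,
\[\norm{e'-f}\geq \norm{e-f}-\norm{e-e'} \geq d(e,F) - \e_1 \geq \eta - \e_1.\]
Taking the infimum over $f$ and letting $\e_1\downarrow\e$ gives
\[d(e',F)\geq \eta-\e > 0.\]
This is exactly the claimed inequality $\sin\theta(E',F)\geq \sin\theta(E,F)-d_H(E,E')$, provided $E'\oplus F$ is indeed a splitting of $\B$. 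It also shows $E'\cap F = \innerp{0}$, and by the standard estimate $\norm{e'}\leq (\eta-\e)^{-1}\norm{e'+f}$, the sum $E'+F$ is closed.

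\textbf{Step 2: the main obstacle, $E'+F = \B$.} Here I would use a perturbation argument on the restricted projection. Consider $P = \pi_{E//F}|_{E'}: E'\to E$. For $e'\in S_{E'}$ and a nearby $e\in S_E$,
\[\norm{Pe' - e} = \norm{\pi_{E//F}(e'-e)} \leq \eta^{-1}\e_1.\]
This alone is too weak to get surjectivity of $P$, so I would instead use a Neumann-series scheme. Given $v\in\B$, write $v = e_0 + f_0$ with $e_0 = \pi_{E//F}v$. Approximate $e_0$ by some $e_0'\in E'$ with $\norm{e_0-e_0'}\leq \e_1\norm{e_0}$; this is possible using $\sup_{u\in S_E} d(u,S_{E'})\leq\e$ after scaling. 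The residual $v-e_0'-f_0 = e_0-e_0'$ then has $E$-component of norm at most $\eta^{-1}\e_1\norm{e_0}$. Iterating, the error contracts only if $\e<\eta$, which is precisely our hypothesis, but the factor $\eta^{-1}$ seems to demand $\e<\eta^2$. I expect this to be the delicate point. A cleaner route is to measure the residual in the quotient $\B/F$, whose norm restricted to $E$ is bounded below by $\eta$ and above by $1$. In that quotient, $E'$ is $\e$-dense in $E$ relative to the quotient norm, which compares to $\e/\eta<1$, so the closed subspace $q(E')$ of the Banach space $\B/F\cong q(E)$ is dense, hence all of it. This yields $E'+F=\B$.

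\textbf{Step 3: conclusion.} With $E'\cap F = \innerp{0}$ and $E'+F=\B$, both subspaces closed, the closed graph theorem makes the splitting topological. The bound from Step 1 then gives the stated inequality on $\sin\theta(E',F)$.

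If the quotient density argument in Step 2 is imperfect, I would fall back on the cited Lemmas 3.3 and A.1 of \cite{BlumenthalYoung2017}, which establish complementation directly.
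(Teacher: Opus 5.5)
Your proof is correct, and it differs from the paper in that the paper gives no argument here at all. It simply defers to Lemmas 3.3 and A.1 of \cite{BlumenthalYoung2017}, while you supply a self-contained elementary proof of exactly the stated bound. Step 1 is fine, and it gives more than you use: the estimate $\norm{e'}\le(\eta-\e)^{-1}\norm{e'+f}$ says $\norm{\pi_{E'//F}}\le(\eta-\e)^{-1}$ as soon as $E'+F=\B$ is known. So the closed graph theorem in Step 3 is unnecessary.

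In Step 2, your worry that the Neumann scheme needs $\e<\eta^2$ is unfounded. Set $e_{k+1}=\pi_{E//F}(e_k-e_k')$ and $g_{k+1}=\pi_{F//E}(e_k-e_k')$. Then $\norm{e_{k+1}}\le\norm{\pi_{E//F}}\,\e_1\norm{e_k}=(\e_1/\eta)\norm{e_k}$, so any $\e_1\in(\e,\eta)$ gives geometric decay. Since $\norm{e_k'}\le(1+\e_1)\norm{e_k}$ and $\norm{g_{k+1}}\le(1+\eta^{-1})\e_1\norm{e_k}$, the series $\sum_k e_k'$ and $f_0+\sum_k g_k$ converge in the closed subspaces $E'$ and $F$, and they sum to $v$. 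So your first route already works under exactly the hypothesis $\e<\eta$.

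The quotient route is the same estimate in disguise. What you actually prove there is $d(x,q(E'))\le(\e/\eta)\norm{x}$ for all $x\in\B/F$. Passing from this to $q(E')=\B/F$ is Riesz's lemma, not mere density, and you should say so explicitly. Riesz's lemma needs $q(E')$ closed, which follows from Step 1 since $\norm{q(e')}=d(e',F)\ge(\eta-\e)\norm{e'}$. Either way, no fallback to the cited lemmas is needed.
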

Note, in a similar vein, for $q\in \N$ we have by Corollary 5.5 of \cite{LianLu2010} that the map $(v_1,\ldots,v_q)\mapsto \innerp{v_1,\ldots,v_q}$ is continuous on the open set of $q$-tuples of linearly independent vectors in $\B$, viewed as a subspace of $\B^q$. In particular, this implies the map $(v_1,\ldots,v_q)\mapsto \innerp{v_1,\ldots,v_q}$ from $\B^q\to \G_0(\B)\cup\G_1(B)\cup\cdots\cup\G_q(\B)$ is measurable.

We will use \autoref{prop:complementationOpen} frequently. Another inequality we will use frequently is the following.

\begin{prop}\label{prop:gapProjEquiv}
    Suppose $E,F\subset \B$ are closed subspaces such that $\B = E\oplus F$ is a topological splitting. Assume $F'\subset\B$ is another closed subspace. Then
    \[\norm{\pi_{E//F}|_{F'}}\sin \theta(E,F)\leq \Gap(F',F)\leq \norm{\pi_{E//F}|_{F'}}.\]
\end{prop}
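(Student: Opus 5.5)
We argue directly from the definitions, using the identity $\pi_{F//E} = \id - \pi_{E//F}$ and the formula $\sin\theta(E,F) = \norm{\pi_{E//F}}^{-1}$ recalled above. Both bounds reduce to pointwise estimates on unit vectors $v\in S_{F'}$; taking the supremum over $v$ then gives the claim. If $F' = \innerp{0}$ both sides vanish, so we assume $F'\neq\innerp{0}$.

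\emph{Upper bound.} Fix $v\in S_{F'}$ and write $v = \pi_{E//F}v + \pi_{F//E}v$. Since $\pi_{F//E}v\in F$,
\[d(v,F)\leq \norm{v-\pi_{F//E}v} = \norm{\pi_{E//F}v}\leq \norm{\pi_{E//F}|_{F'}}.\]
Taking the supremum over $v\in S_{F'}$ yields $\Gap(F',F)\leq\norm{\pi_{E//F}|_{F'}}$.

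\emph{Lower bound.} Fix $v\in S_{F'}$ and any $f\in F$. Because $\pi_{E//F}$ annihilates $F$, we have $\pi_{E//F}v = \pi_{E//F}(v-f)$. Hence
\[\norm{\pi_{E//F}v}\leq \norm{\pi_{E//F}}\,\norm{v-f} = \frac{\norm{v-f}}{\sin\theta(E,F)}.\]
Taking the infimum over $f\in F$ gives $\norm{\pi_{E//F}v}\sin\theta(E,F)\leq d(v,F)\leq \Gap(F',F)$. Taking the supremum over $v\in S_{F'}$ then gives the left-hand inequality.

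Neither bound presents a real obstacle. The only point requiring care is the lower bound, which depends on two facts: $\pi_{E//F}$ vanishes on $F$, and the splitting is topological, so that $\norm{\pi_{E//F}} = (\sin\theta(E,F))^{-1}$ is finite. The latter is exactly Lemma 2.2 of \cite{Blumenthal2016}, quoted earlier.
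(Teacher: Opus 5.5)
Your proof is correct, and it is the straightforward argument the paper leaves to the reader (the paper gives no written proof): both bounds are pointwise estimates on $S_{F'}$, exactly as you set them up. The only cosmetic difference is in the lower bound, which can also be read off directly from the definition of the minimal angle, via $\norm{\pi_{E//F}v}\sin\theta(E,F)\leq d(\pi_{E//F}v,F)= d(v,F)$; this is how the paper later invokes the proposition in the proof of \autoref{lem:dual_SVD}, and it is equivalent to your use of $\norm{\pi_{E//F}}=\sin\theta(E,F)^{-1}$.
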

\begin{proof}
    The proof is straightforward and details are left to the reader.
\end{proof}

\subsubsection{Finite-Dimensional Volumes}
We shall use two notions of finite-dimensional volume in this paper. We have already introduced induced volumes in \autoref{subsec:statementOfResults}. The greatest utility of induced volumes on finite-dimensional subspaces is that they are generated by an inner product on $E$ that approximates the norm-structure of $\norm{\cdot}$ on $E$ uniformly well based only on $q=\dim E$. Having access to such a ``local" inner product structure is highly useful.
\begin{prop}[Proposition 2.14 in \cite{Blumenthal2016}]\label{prop:JohnsTheorem}
    Let $q\in \N$ and let $E\subset \B$ be a $q$-dimensional subspace. Then, there exists an inner product $\paren{\cdot,\cdot}_E$ on $E$ for which the following hold.
    \begin{enumerate}[label = (\arabic*)]
        \item The norm $\abs{\cdot}_E$ induced by $(\cdot,\cdot)_E$ satisfies the inequality
        \[\frac{1}{\sqrt{q}}\norm{v}\leq \abs{v}_E \leq \sqrt{q}\norm{v}\]
        for all $v\in E$.
        \item The induced volume measure $m_E$ corresponds to the Lebesgue measure on $E$ arising from the inner product $(\cdot,\cdot)_E$.
    \end{enumerate}
\end{prop}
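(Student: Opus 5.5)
This is John's theorem applied to the unit ball $B_E$, followed by a normalization of the resulting inner product. Identify $E$ with $\R^q$ by a linear isomorphism. Then $B_E$ is a compact, convex, origin-symmetric body with nonempty interior. John's theorem for symmetric convex bodies gives a unique ellipsoid $\mathcal{E}\subseteq B_E$ of maximal volume, and it satisfies
\[\mathcal{E}\subseteq B_E\subseteq \sqrt{q}\,\mathcal{E}.\]
Every origin-centered ellipsoid is the unit ball of a unique inner product. So let $(\cdot,\cdot)_0$ be the inner product with $\{v:(v,v)_0\leq 1\}=\mathcal{E}$, and write $\abs{\cdot}_0$ for its norm. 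The two inclusions translate into
\[\norm{v}\leq \abs{v}_0\leq \sqrt{q}\,\norm{v}\quad\text{for all } v\in E.\]

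The second step fixes the volume normalization required by (2). Let $\lambda_0$ be the Lebesgue measure on $E$ induced by $(\cdot,\cdot)_0$, and rescale the inner product as $(\cdot,\cdot)_E = c^2(\cdot,\cdot)_0$ for some $c>0$. The Lebesgue measure of $(\cdot,\cdot)_E$ is then $c^q\lambda_0$. Choose $c$ so that
\[c^q\lambda_0(B_E)=\omega_q.\]
Both $m_E$ and $c^q\lambda_0$ are Haar measures on $E$ and they agree on $B_E$. By uniqueness of Haar measure up to scaling, they are equal, which proves (2).

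It remains to check that the rescaling does not destroy (1), which means bounding $c$. Since $\mathcal{E}\subseteq B_E\subseteq\sqrt{q}\,\mathcal{E}$ and $\lambda_0(\mathcal{E})=\omega_q$, we get
\[\omega_q\leq \lambda_0(B_E)\leq q^{q/2}\omega_q.\]
Solving $c^q\lambda_0(B_E)=\omega_q$ then gives $c\in[q^{-1/2},1]$. Combining this with the first display and $\abs{v}_E=c\abs{v}_0$ yields
\[q^{-1/2}\norm{v}\leq c\norm{v}\leq \abs{v}_E\leq c\sqrt{q}\,\norm{v}\leq \sqrt{q}\,\norm{v},\]
which is (1).

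The only substantive input is John's theorem with the factor $\sqrt{q}$ for symmetric bodies, which I would cite rather than reprove. The one place where care is needed is the final step: choosing $c$ to match the volume normalization must not cost more than the $\sqrt{q}$ factors allowed by (1). The volume sandwich above shows that it does not.
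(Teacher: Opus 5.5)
Your proof is correct and follows the same route the paper indicates. The paper gives no proof of its own: it cites Blumenthal and remarks that John's theorem supplies the inner product. You carry that out by taking the John ellipsoid of $B_E$ to get $\norm{v}\leq\abs{v}_0\leq\sqrt{q}\,\norm{v}$, then rescaling to match $m_E(B_E)=\omega_q$. The sandwich $\omega_q\leq\lambda_0(B_E)\leq q^{q/2}\omega_q$ correctly confines the scaling factor to $c\in[q^{-1/2},1]$, so the $\sqrt{q}$ bounds survive.
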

\begin{remark}
    John's theorem (see Theorem 15 in Chapter 4 of \cite{Bollobas1999}) assigns a canonical inner product satisfying \autoref{prop:JohnsTheorem} to each finite-dimensional subspace.
\end{remark}
\begin{defn}
    Let $q\in \N$ and $v_1,\ldots,v_q\in \B$ be linearly independent. Then we say that $(v_1,\ldots,v_q)$ is \textit{almost orthonormal} if, writing $E = \innerp{v_1,\ldots,v_q}$, we have that $(v_1,\ldots,v_q)$ is an orthonormal basis of $E$ w.r.t. $\paren{\cdot,\cdot}_E$. This means that, for $1\leq i,j\leq q$, we have $\paren{v_i,v_j}_E = \delta_{ij}$.
\end{defn}

We shall also use a notion introduced by Lian and Lu \cite{LianLu2010}, defined for $q\in \N$ and $v_1,\ldots, v_q\in \B$ by
\[\vol_q(v_1,\ldots,v_q) \coloneqq \norm{v_q}d(v_{q-1},\innerp{v_q})\cdots d(v_1,\innerp{v_2,\ldots,v_q}).\]
To give a shorthand, we shall call $\vol_q$ the $q$-dimensional parallelepiped volume. This is because if $E$ is a $q$-dimensional space containing $v_1,\ldots,v_q$ then
\begin{equation}\label{eq:parallelepipedVolumeIdentity}
    q^{-q/2}m_E P[v_1,\ldots,v_q]\leq \vol_q(v_1,\ldots,v_q) \leq q^{q/2}m_E P[v_1,\ldots,v_q],
\end{equation}
where $P[v_1,\ldots,v_q] \coloneqq \curly{t_1v_1+\cdots+t_qv_q : t_1,\ldots,t_q\in [0,1]}$ (see Lemma 1.2 of \cite{Blumenthal2016}). By Lemma 4.2 of \cite{LianLu2010}, the map $\vol_q:\B^q\to [0,\infty)$ is continuous. The main utility of $\vol_q$ is its simplicity, from which we will greatly benefit in the discussions of \autoref{sec:MET} pertaining to measurable volume functions.

\subsection{The Geometry of Linear Operators}
For the rest of this subsection, we shall assume $\paren{\B,\norm{\cdot}_{\B}}$, $\paren{\B',\norm{\cdot}_{\B'}}$, and $(\B'',\norm{\cdot}_{\B''})$ are Banach spaces.

\subsubsection{Determinants, Volume Functions, and Singular Values}
Consider the determinant given in \autoref{defn:determinants}. If $E\subset \B$ is a finite-dimensional subspace and $A:\B\to \B'$ is a bounded linear operator that is injective on $E$, then it follows from uniqueness of Haar measures that
\begin{equation}\label{eq:determinantHaarUniqueness}
    \det(A|E) = \frac{m_{F}(A\mathcal{O})}{m_E(\mathcal{O})},
\end{equation}
for any Borel set $\mathcal{O}\subset E$ of non-zero and finite measure. Moreover the determinant is multiplicative, i.e., if $B:\B'\to\B''$ is another bounded linear operator, then $\det(BA|E) = \det(B|AE)\det(A|E)$. Multiplicativity of $\det$ implies submultiplicativity of $A\mapsto V_q(A)$, i.e., we have $V_q(BA)\leq V_q(B) V_q(A)$. We have the following further properties of determinants.
\begin{lem}[Lemma 3.11 in \cite{BlumenthalYoung2017}]
    Let $q\in \N$ and $A\in L(\B,\B')$. Suppose $V\subset \B$ and $V'\subset \B'$ are $q$-dimensional subspaces such that $AV\subset V'$.
    \begin{enumerate}[label = (\arabic*)]
        \item (Hadamard bound) If $v_1,\ldots,v_q\in V$ form an almost orthonormal basis, then
        \[\det(A|V) \leq \prod_{k = 1}^q \abs{Av_k}_{V'} \leq q^{q/2}\prod_{k = 1}^q \norm{Av_k}_{\B'}.\]
        \item (Singular Value Decomposition) If $v_1,\ldots,v_q\in V$ are almost orthonormal right singular vectors of $A|_V:V\to V'$ w.r.t. $(\cdot,\cdot)_V$ and $(\cdot,\cdot)_{V'}$, then $\det(A|V) = \abs{Av_1}_{V}\cdots\abs{Av_q}_{V}$ and
        \[q^{-q/2}\prod_{k = 1}^q \norm{Av_k}_{\B'}\leq\det(A|V) \leq q^{q/2}\prod_{k = 1}^q \norm{Av_k}_{\B'}.\]
    \end{enumerate}
\end{lem}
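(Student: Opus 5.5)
The plan is to reduce both items to the corresponding Euclidean facts, using the local inner products of \autoref{prop:JohnsTheorem}. First dispose of the degenerate case: if $A|_V$ is not injective, then $\det(A|V)=0$. In that case the Hadamard bound holds trivially. In the SVD setting, non-injectivity forces some singular value $\abs{Av_k}_{V'}$ to vanish, so all three expressions in (2) are $0$. Assume now that $A|_V$ is injective. Then $AV$ is $q$-dimensional and contained in $V'$, so $AV=V'$ and $m_{AV}=m_{V'}$.

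Equip $V$ and $V'$ with $(\cdot,\cdot)_V$ and $(\cdot,\cdot)_{V'}$. By \autoref{prop:JohnsTheorem}(2), $m_V$ and $m_{V'}$ are the Lebesgue measures of these inner products. Fix an orthonormal basis $w_1,\ldots,w_q$ of $V'$ and let $M$ be the $q\times q$ matrix of $A|_V$ from the orthonormal basis $(v_k)$ to $(w_j)$. Apply \eqref{eq:determinantHaarUniqueness} with $\mathcal{O}=P[v_1,\ldots,v_q]$, which has $m_V(\mathcal{O})=1$. Its image is $A\mathcal{O}=P[Av_1,\ldots,Av_q]$, and in the coordinates of $(w_j)$ this parallelepiped has Lebesgue measure $\abs{\det M}$. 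Hence
\[\det(A|V)=\abs{\det M}.\]
This identification is the only step needing care: one must not use $B_V$ directly, but the freedom in the choice of $\mathcal{O}$ afforded by Haar uniqueness.

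For (1), the $k$-th column of $M$ is the coordinate vector of $Av_k$, whose Euclidean length is $\abs{Av_k}_{V'}$. The classical Hadamard inequality then gives $\abs{\det M}\le\prod_k\abs{Av_k}_{V'}$. Next apply $\abs{u}_{V'}\le\sqrt q\,\norm{u}_{\B'}$ from \autoref{prop:JohnsTheorem}(1) to each of the $q$ factors. This yields the factor $q^{q/2}$.

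For (2), if the $v_k$ are right singular vectors of $A|_V$ with respect to $(\cdot,\cdot)_V$ and $(\cdot,\cdot)_{V'}$, then the images $Av_k$ are mutually orthogonal in $V'$. The columns of $M$ are therefore orthogonal, and Hadamard's inequality is an equality: $\det(A|V)=\prod_k\abs{Av_k}_{V'}$. (The $\abs{\cdot}_V$ in the statement should read $\abs{\cdot}_{V'}$.) Finally, the two-sided norm comparison $q^{-1/2}\norm{u}_{\B'}\le\abs{u}_{V'}\le q^{1/2}\norm{u}_{\B'}$, applied to each factor, gives the stated bounds with constants $q^{\mp q/2}$.
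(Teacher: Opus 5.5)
Your proof is correct. The paper gives no proof of its own here; it imports the lemma from Blumenthal--Young. Your argument is the standard one: you use Haar uniqueness with $\mathcal{O}=P[v_1,\ldots,v_q]$ to identify $\det(A|V)$ with $|\det M|$ in coordinates that are orthonormal for the John inner products. You then apply the Euclidean Hadamard inequality, with equality for orthogonal columns in the singular-vector case, followed by the comparison $q^{-1/2}\|u\|\le |u|_{V'}\le q^{1/2}\|u\|$.

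Two small remarks:
\begin{itemize}
\item In the non-injective case of (1), only the first inequality is trivial. The second follows from the same norm comparison, which never uses injectivity.
\item You are right that $|Av_k|_V$ in (2) should read $|Av_k|_{V'}$.
\end{itemize}
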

\begin{lem}[Lemma 2.15 in \cite{Blumenthal2016}]\label{lem:determinantProdDecomp}
    Let $q\in \N$ and $A\in L(\B,\B')$. Suppose $V\subset \B$ has dimension $q$ and assume $A|_V$ is injective. Let $V = E\oplus F$ be a splitting of $V$ with $\dim E = k<q$, and write $E' = AE$ and $F' = AF$. There is a constant $C_q$ dependent only on $q$ such that
    \[C_q^{-1} \paren{\sin \theta(E',F')}^k\leq \frac{\det(A|V)}{\det(A|E)\det(A|F)}\leq \frac{C_q}{\paren{\sin \theta(E,F)}^k}\]
\end{lem}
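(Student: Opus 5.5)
We reduce both inequalities to the Euclidean case via the inner products of \autoref{prop:JohnsTheorem}. Equip $V$ with $(\cdot,\cdot)_V$ and $V' = AV$ with $(\cdot,\cdot)_{V'}$, so that $m_V$, $m_{V'}$ are the corresponding Lebesgue measures and $\det(A|V)$ is the absolute Euclidean determinant of $A|_V:(V,\abs{\cdot}_V)\to(V',\abs{\cdot}_{V'})$. Likewise $\det(A|E)$ and $\det(A|F)$ are computed with respect to the John inner products on $E,F,E',F'$. These differ from the restrictions of $(\cdot,\cdot)_V$ and $(\cdot,\cdot)_{V'}$. However, both are $\sqrt{q}$-equivalent to $\norm{\cdot}$, hence $q$-equivalent to each other. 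By \eqref{eq:determinantHaarUniqueness}, replacing a Haar measure on a $j$-dimensional space by the Lebesgue measure of an equivalent inner product changes the measure by a factor bounded above and below in terms of $q$ only (compare the unit balls). So up to a constant $C_q$ we may compute $\det(A|E)$ and $\det(A|F)$ using the restrictions of $(\cdot,\cdot)_V$ and $(\cdot,\cdot)_{V'}$.

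In a Euclidean space $V$ with splitting $V=E\oplus F$, $\dim E=k$, $\dim F=q-k$, we use the classical formula
\[
m_V(P) = m_E(P_E)\, m_F(P_F)\, \sigma(E,F),
\]
where $P$ is the parallelepiped spanned by orthonormal bases of $E$ and of $F$, and $\sigma(E,F)=\det(\pi_{F^\perp}|_E)$ is the volume distortion of the orthogonal projection of $E$ onto $F^{\perp}$. Equivalently, $\sigma(E,F)$ is the product of the sines of the principal angles between $E$ and $F$. Applying this before and after $A$ gives
\[
\frac{\det(A|V)}{\det(A|E)\det(A|F)} = \frac{\sigma(E',F')}{\sigma(E,F)},
\]
with Euclidean $\sigma$ computed in $(V,\abs{\cdot}_V)$ and $(V',\abs{\cdot}_{V'})$.

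Since $\sigma\le 1$, it remains to compare $\sigma$ with the Banach minimal angle. The product of the $k$ principal sines lies between $(\sin\theta_{\mathrm{euc}})^k$ and $\sin\theta_{\mathrm{euc}}$, where $\theta_{\mathrm{euc}}$ is the smallest Euclidean principal angle. Hence
\[
(\sin\theta_{\mathrm{euc}}(E,F))^k \le \sigma(E,F) \le 1 .
\]
Moreover, $\sin\theta_{\mathrm{euc}}(E,F)=\inf_{e}\abs{e}_V^{-1}\, d_{\abs{\cdot}}(e,F)$, and $\sqrt q$-equivalence of norms gives $\sin\theta_{\mathrm{euc}}\ge q^{-1}\sin\theta(E,F)$, and similarly for $E',F'$. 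Substituting these bounds yields the upper bound $C_q(\sin\theta(E,F))^{-k}$ and the lower bound $C_q^{-1}(\sin\theta(E',F'))^k$.

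The main obstacle is bookkeeping the constants in the first step, namely changing from John inner products on subspaces to the restricted inner product on $V$. The key point is that each comparison costs only a factor depending on $q$, via the ball inclusions $q^{-1/2}B\subset B'\subset q^{1/2}B$.
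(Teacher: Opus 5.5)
Your argument is correct. The paper gives no proof of this lemma; it is imported from \cite{Blumenthal2016}, so there is no internal argument to compare against, and what you give is a complete, self-contained proof resting only on \autoref{prop:JohnsTheorem}.

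The three steps, with their costs, are as follows.
\begin{enumerate}
\item The reduction to the restricted inner products $(\cdot,\cdot)_V$ and $(\cdot,\cdot)_{V'}$ costs a factor at most $q^{\pm q}$. You can compare $m_E, m_F, m_{E'}, m_{F'}$ directly with the Lebesgue measures of these restricted inner products using the inclusions $\{|v|_V\le q^{-1/2}\}\cap E\subset B_E\subset\{|v|_V\le q^{1/2}\}\cap E$. So the John inner products on the subspaces are not actually needed.
\item The Euclidean identity $\det(A|V)=\det(A|E)\det(A|F)\,\sigma(E',F')/\sigma(E,F)$ is exact, with all determinants taken with respect to the restricted inner products.
\item Comparing the Euclidean minimal angle with the Banach one costs $q^{k}$.
\end{enumerate}
Together these give, for example, $C_q=q^{2q}$.

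One step should be stated more carefully. After applying $A$, the bases $(Ae_i)$ and $(Af_j)$ are no longer orthonormal. So you need the identity in the form $\mathrm{vol}(P[e,f])=\mathrm{vol}_E(P[e])\,\mathrm{vol}_F(P[f])\,\det(\pi_{F^\perp}|_E)$ for arbitrary bases $e$ of $E$ and $f$ of $F$. This follows from the same base-times-height computation: $\pi_{F^\perp}$ annihilates the $F$-vectors and scales $k$-volumes in $E$ by $\det(\pi_{F^\perp}|_E)$.
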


We can also define an analogous notion of determinant and volume function based on parallelepiped volumes.
\begin{defn}\label{defn:Lian-LuOperatorVol}
    Let $q\in \N$ and $A\in L(\B,\B')$.
    \begin{enumerate}[label = (\roman*)]
        \item For linearly independent vectors $v_1,\ldots,v_q\in \B$, define
        \[\detLL(A|v_1,\ldots,v_q) = \frac{\vol_q(Av_1,\ldots,Av_q)}{\vol_q(v_1,\ldots,v_q)}.\]
        \item Define
        \[\VLL_q(A) = \sup_{\vol_q(v_1,\ldots,v_q) \neq 0} \detLL(A|v_1,\ldots,v_q).\]
        We take the convention that $\VLL_0(A) = 1$ and $\VLL_q(A) = 0$ when $q > \dim \B$.
    \end{enumerate}
\end{defn}
\begin{remark}
    Notice that, since $\vol_q$ is not permutation-invariant, the order of the basis written in $\detLL$ is important.
\end{remark}
By Lemma 4.4 of \cite{LianLu2010}, $\VLL_q$ can be written in an easier form:
\begin{equation}\label{eq:formsOfLianLuVolNorm}
    \VLL_q(A) = \sup_{\vol_q(v_1,\ldots,v_q) = 1} \vol_q(Av_1,\ldots,Av_q) = \sup_{\norm{v_1},\ldots,\norm{v_q}\leq 1}\vol_q(Av_1,\ldots,Av_q).
\end{equation}
Additionally, like $V_q$, $A\mapsto \VLL_q(A)$ is submultiplicative. It is also immediately clear $\rank(A)<q$ iff $V_q(A) = 0$ iff $\VLL_q(A) = 0$. In fact, we have the following.
\begin{lem}\label{lem:equivalenceOfVolNorms}
    Let $q\in \N$ and $A\in L(\B,\B')$.
    \begin{enumerate}[label = (\arabic*)]
        \item Let $E\subset \B$ be a $q$-dimensional subspace and suppose $v_1,\ldots,v_q\in E$ form a basis of $E$. Then
        \[q^{-q} \det(A|E) \leq \detLL(A|v_1,\ldots,v_q)\leq q^q \det(A|E).\]
        \item $q^{-q} V_q(A)\leq \VLL_q(A)\leq q^q V_q(A)$.
    \end{enumerate}
\end{lem}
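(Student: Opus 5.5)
Part (2) follows from part (1) by taking suprema, so the plan is to prove (1) and then deduce (2). Throughout we compare $\vol_q$ with the induced volume $m_E$ through the two-sided bound \eqref{eq:parallelepipedVolumeIdentity}.

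For (1), set $E=\innerp{v_1,\ldots,v_q}$. Linear independence gives $\dim E = q$, and the parallelepiped $P[v_1,\ldots,v_q]$ is a Borel subset of $E$ of finite nonzero $m_E$-measure. Applying \eqref{eq:parallelepipedVolumeIdentity} in $E$ yields
\[q^{-q/2}m_E P[v_1,\ldots,v_q]\leq \vol_q(v_1,\ldots,v_q)\leq q^{q/2}m_E P[v_1,\ldots,v_q].\]
Now split into two cases according to whether $A|_E$ is injective.

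If $A|_E$ is not injective, then the vectors $Av_1,\ldots,Av_q$ are linearly dependent. In that case some factor $d(Av_i,\innerp{Av_{i+1},\ldots,Av_q})$ vanishes, or $Av_q=0$. Hence $\vol_q(Av_1,\ldots,Av_q)=0=\det(A|E)$, and the inequality is trivial.

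If $A|_E$ is injective, then $AE$ is $q$-dimensional and contains $Av_1,\ldots,Av_q$. Moreover $A P[v_1,\ldots,v_q] = P[Av_1,\ldots,Av_q]$. Applying \eqref{eq:parallelepipedVolumeIdentity} in $AE$ gives the same two-sided bound for $\vol_q(Av_1,\ldots,Av_q)$ in terms of $m_{AE}(AP[v_1,\ldots,v_q])$. Dividing the two estimates and using \eqref{eq:determinantHaarUniqueness} with $\mathcal{O}=P[v_1,\ldots,v_q]$, we get
\[\frac{m_{AE}(AP)}{m_E(P)}=\det(A|E).\]
The quotient of the constants is at worst $q^{q/2}\cdot q^{q/2}=q^q$ in each direction, which gives (1).

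For (2), take the supremum of (1) over all linearly independent $q$-tuples. Every such tuple spans some $q$-dimensional $E$, so $\detLL(A|v_1,\ldots,v_q)\le q^q\det(A|E)\le q^qV_q(A)$, and hence $\VLL_q(A)\le q^qV_q(A)$. Conversely, every $q$-dimensional $E$ admits a basis, so $q^{-q}\det(A|E)\le \detLL(A|v_1,\ldots,v_q)\le \VLL_q(A)$. Taking the supremum over $E$ gives $q^{-q}V_q(A)\le\VLL_q(A)$.

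The degenerate case $q>\dim\B$ holds by the convention that both volume functions are $0$. The only point needing care is the non-injective case of (1), which the argument above handles directly. Otherwise the argument is a direct bookkeeping of constants, with no real obstacle.
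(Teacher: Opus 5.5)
Your proof is correct and follows the paper's argument: part (1) comes from applying the parallelepiped bound \eqref{eq:parallelepipedVolumeIdentity} in $E$ and in $AE$ and then \eqref{eq:determinantHaarUniqueness} with $\mathcal{O}=P[v_1,\ldots,v_q]$, and part (2) comes from taking suprema. The paper does not spell out your separate treatment of the non-injective case, in which $\vol_q(Av_1,\ldots,Av_q)=0=\det(A|E)$, but that case is handled correctly.
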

\begin{proof}
    Item (1) follows immediately from \eqref{eq:parallelepipedVolumeIdentity} and \eqref{eq:determinantHaarUniqueness} applied to $\mathcal{O} = P[v_1,\ldots,v_q]$. Item (2) is a simple corollary of item (1).
\end{proof}

In a Hilbert space, another notion intertwined with determinants and volume functions is the singular value sequence. This notion has been generalized by Pietsch to Banach space operators via the so called $s$-number sequences (see \cite{Pietsch1987,Ullrich2024}). We shall use one particular realization called the Gelfand number sequence. For $q\in \N$ and $A\in L(\B,\B')$, the $q$th Gelfand number is defined by
\[c_q(A) \coloneqq \inf\curly{\norm{A|_F}:F\in\G^{q-1}(\B)},\]
with the convention that $c_q(A) = 0$ when $q > \dim \B$.

\begin{lem}[Lemma 2.21 in \cite{Blumenthal2016}]\label{lem:s-number_volGrowth_relation}
    Let $q\in \N$ and $A\in L(\B,\B')$. Then there is a constant $C_q\geq 1$, dependent only on $q$, such that
    \[C_q^{-1} V_q(A)\leq \prod_{k=1}^q c_k(A) \leq C_q V_q(A).\]
\end{lem}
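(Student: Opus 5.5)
We prove both inequalities by induction on $q$, comparing the Gelfand numbers $c_k(A)$ with the volume growth of $A$ on suitably chosen $q$-dimensional subspaces. The tools are the local inner products of \autoref{prop:JohnsTheorem}, the parallelepiped volume $\vol_q$, and the comparison in \autoref{lem:equivalenceOfVolNorms}. By that lemma we may replace $V_q$ by $\VLL_q$ at the cost of a factor $q^q$. The advantage of $\VLL_q$ is its product structure: $\vol_q(Av_1,\ldots,Av_q)$ is a product of distances $d(Av_j,\innerp{Av_{j+1},\ldots,Av_q})$, and each such distance can be compared with a Gelfand number.

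For the upper bound on $V_q$, we take $v_1,\ldots,v_q$ with $\norm{v_j}\leq 1$, as allowed by \eqref{eq:formsOfLianLuVolNorm}, and bound $\vol_q(Av_1,\ldots,Av_q)$ by $C_q\prod_k c_k(A)$.
\begin{enumerate}[label = (\alph*)]
    \item For each $k$, pick $F_k\in\G^{k-1}(\B)$ with $\norm{A|_{F_k}}\leq 2c_k(A)$. If some $c_k(A)=0$, approximate and argue by limits.
    \item Reorder the factors so that the $j$-th distance factor corresponds to a vector lying in a $(q-j+1)$-dimensional span. Since $\vol_q$ is not permutation-invariant, this reordering is exactly where the comparison with $m_E$ in \eqref{eq:parallelepipedVolumeIdentity} is used, at the cost of $q^{q}$ factors.
    \item Set $E=\innerp{v_1,\ldots,v_q}$. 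The subspace $E\cap F_k$ has dimension at least $q-k+1$. Using $(\cdot,\cdot)_E$, choose an almost orthonormal basis $w_1,\ldots,w_q$ of $E$ adapted to the flag $E\cap F_q\subseteq\cdots\subseteq E\cap F_1$, so that $w_k\in F_k$.
    \item Then $\norm{Aw_k}\leq 2\sqrt q\,c_k(A)$. The Hadamard bound then gives $\det(A|E)\leq q^{q/2}\prod_k\norm{Aw_k}\lesssim_q\prod_k c_k(A)$.
\end{enumerate}
Taking the supremum over $E$ gives $V_q(A)\lesssim_q\prod_k c_k(A)$. In fact this argument works directly with $\det$, so the passage through $\VLL_q$ is not needed for this direction.

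The lower bound $\prod_k c_k(A)\lesssim_q V_q(A)$ is the main obstacle, since we must construct a good $q$-dimensional subspace. We build $v_1,\ldots,v_q$ greedily, each with $\norm{v_k}=1$:
\begin{enumerate}[label = (\alph*)]
    \item Given $v_1,\ldots,v_{k-1}$, let $G_{k-1}=\innerp{Av_1,\ldots,Av_{k-1}}$.
    \item By \autoref{lem:closed_q_dim_complemented}(1), choose a closed complement $F'$ of $G_{k-1}$ in $\B'$ with $\sin\theta(G_{k-1},F')\geq (k-1)^{-1/2}$.
    \item The preimage $A^{-1}F'$ has codimension at most $k-1$ in $\B$. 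Hence, by the definition of $c_k$ as an infimum, we have $\norm{A|_{A^{-1}F'}}\geq c_k(A)$.
    \item So there is a unit vector $v_k\in A^{-1}F'$ with $\norm{Av_k}\geq c_k(A)/2$.
    \item Since $Av_k\in F'$, the angle bound gives $d(Av_k,G_{k-1})\geq (k-1)^{-1/2}\norm{Av_k}$.
\end{enumerate}
The vectors $v_k$ are linearly independent unless $c_k(A)=0$, in which case the inequality is trivial. Multiplying the distance factors in the order of construction, and using the volume with the basis reversed (which costs only $q^{q}$ factors by \eqref{eq:parallelepipedVolumeIdentity}), we get
\[\prod_k c_k(A)\lesssim_q \vol_q(Av_q,\ldots,Av_1).\]
Since $\norm{v_k}\leq 1$ for all $k$, the right-hand side is at most $\VLL_q(A)$, and hence at most $q^qV_q(A)$ by \autoref{lem:equivalenceOfVolNorms}. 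This completes the lower bound.
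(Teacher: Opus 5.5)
The paper does not prove this lemma; it quotes it from Blumenthal, so there is no in-paper argument to compare against. Judged on its own, your proposal is a correct and fairly elementary proof. John's theorem plus the Hadamard bound give $V_q(A)\lesssim_q\prod_k c_k(A)$, and a greedy choice of vectors plus \autoref{lem:closed_q_dim_complemented} gives the reverse, with explicit constants. Despite your opening sentence, neither direction actually uses induction on $q$. Three steps need local repair, none of which affects the overall argument.

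\textbf{Upper bound: the $F_k$ do not form a flag.} The $F_k$ are chosen independently, so $E\cap F_q\subseteq\cdots\subseteq E\cap F_1$ is in general not a flag, and there is nothing to ``adapt'' a basis to. The vectors you need still exist. Choose $w_q,w_{q-1},\ldots,w_1$ in this order, taking $w_k$ to be a $(\cdot,\cdot)_E$-unit vector in $E\cap F_k$ that is $(\cdot,\cdot)_E$-orthogonal to $w_{k+1},\ldots,w_q$. Such a vector exists because $\dim(E\cap F_k)\geq q-k+1$, while the orthogonality constraints cut out a subspace of codimension $q-k$ in $E$. If you choose $\norm{A|_{F_k}}\leq c_k(A)+\varepsilon$ and let $\varepsilon\to 0$, this also handles vanishing Gelfand numbers and gives $V_q(A)\leq q^q\prod_k c_k(A)$.

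\textbf{Lower bound, step (e): minimal angles are not symmetric.} The bound $\sin\theta(G_{k-1},F')\geq (k-1)^{-1/2}$ controls $d(g,F')$ for unit $g\in G_{k-1}$. Step (e) needs the reverse quantity $\sin\theta(F',G_{k-1})$, and in a Banach space the two can differ. Since $\B'=G_{k-1}\oplus F'$ and $\pi_{F'//G_{k-1}}=I-\pi_{G_{k-1}//F'}$, you get $\sin\theta(F',G_{k-1})\geq s/(1+s)\geq s/2$, where $s=\sin\theta(G_{k-1},F')$. So (e) holds with an extra factor $1/2$, which is harmless.

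\textbf{Lower bound, step (c): codimension must be exact.} $\G^{k-1}(\B)$ consists of subspaces of codimension exactly $k-1$. If $A^{-1}F'$ has smaller codimension, pass to a closed subspace of it of codimension $k-1$; this can only lower the restricted norm, so the inequality $\norm{A|_{A^{-1}F'}}\geq c_k(A)$ still follows.

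Finally, item (b) of the upper bound and the ``$q^q$ cost'' of reversing the basis are unnecessary. The upper bound never uses $\vol_q$, and $(v_q,\ldots,v_1)$ is itself an admissible tuple in \eqref{eq:formsOfLianLuVolNorm}.
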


\begin{cor}\label{cor:equiv_volGrowth_and_dual}
    For all $q\in \N$, there is a constant $C_q\geq 1$ such that, for all operators $A\in L(\B,\B')$, we have
    \begin{enumerate}[label = (\arabic*)]
        \item $C_q^{-1}c_q(A)\leq c_q(A^*)\leq C_q c_q(A)$; and
        \item $C_q^{-1}V_q(A)\leq V_q(A^*)\leq C_q V_q(A)$.
    \end{enumerate}
\end{cor}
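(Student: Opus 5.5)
The plan is to deduce both items from \autoref{lem:s-number_volGrowth_relation}: the Gelfand numbers of $A$ and of $A^*$ should be comparable up to constants depending only on $q$. Item (2) is then a formal consequence of item (1). Indeed, applying \autoref{lem:s-number_volGrowth_relation} to $A$ and to $A^*\in L(\B'^*,\B^*)$ gives
\[V_q(A^*)\approx_q \prod_{k=1}^q c_k(A^*)\approx_q \prod_{k=1}^q c_k(A)\approx_q V_q(A).\]
The degenerate cases need a word. The conventions $c_q=0$ for $q>\dim$, together with the fact that $\rank A=\rank A^*$ when the rank is finite, handle the cases where some factor vanishes. In those cases both sides are zero, since $V_q(A)=0$ iff $\rank A<q$.

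For item (1), I would route through an intermediate quantity rather than compare $c_q(A)$ and $c_q(A^*)$ directly. The natural candidate is the Kolmogorov number
\[d_q(A)=\inf\{\sup_{\norm{v}\le1} d(Av,G): G\subset \B',\ \dim G\le q-1\}.\]
Classical duality (Pietsch) gives $c_q(A^*)=d_q(A)$ for all bounded $A$, and $d_q(A^*)\le c_q(A)$. Thus item (1) reduces to showing $c_q(A)\approx_q d_q(A)$.

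\textbf{Step (a): the duality identity.} For $F\in\G^{q-1}(\B'^*)$, write $\norm{A^*|_F}=\sup_{\ell\in S_F}\sup_{v\in B_\B}\innerp{Av,\ell}$. The goal is to turn this into a distance from $A B_\B$ to a $(q-1)$-dimensional subspace. I would use \autoref{lem:distance_vs_orthogonalProj}, with the care that $F$ need not be weak-$*$ closed. If that causes trouble, I would instead prove only the inequalities I need:
\begin{itemize}
\item $c_q(A^*)\le d_q(A)$, obtained by taking $F=\ann G$, which has codimension $\dim G$;
\item $c_q(A^*)\gtrsim_q c_q(A)$, obtained directly.
\end{itemize}

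\textbf{Step (b): comparing $c_q$ and $d_q$ up to $q$-constants.} Given $F\in\G^{q-1}(\B)$ with $\norm{A|_F}$ small, I would use \autoref{lem:closed_q_dim_complemented}(2) to choose a $(q-1)$-dimensional complement $E$ with $\sin\theta(F,E)\ge(1+\sqrt q)^{-1}$. Every $v\in B_\B$ splits as $v=\pi_{F//E}v+\pi_{E//F}v$, with projector norms $\lesssim_q 1$. Then
\[d(Av,AE)\le\norm{A\pi_{F//E}v}\lesssim_q\norm{A|_F},\]
which yields $d_q(A)\lesssim_q c_q(A)$.

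The reverse direction starts from $G$ with $\dim G\le q-1$ and $\sup_{v\in B_\B} d(Av,G)$ small. I would use \autoref{lem:closed_q_dim_complemented}(1) to pick a closed complement $H$ of $G$ in $\B'$ with $\sin\theta(G,H)\ge q^{-1/2}$, and set $F=A^{-1}H$. This $F$ is closed with $\codim F\le q-1$. For $v\in F$ we have $Av\in H$, so $\norm{Av}\le \sqrt q\, d(Av,G)$, and hence $c_q(A)\lesssim_q d_q(A)$. Enlarging $F$'s codimension to exactly $q-1$ only decreases the infimum, so that step is harmless, and monotonicity handles it.

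Combining steps (a) and (b) gives $c_q(A^*)\approx_q c_q(A)$, which is item (1). Item (2) then follows from the chain above.

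\textbf{Main obstacle.} I expect the hardest part to be the lower bound $c_q(A^*)\gtrsim_q c_q(A)$, equivalently $d_q(A)\le c_q(A^*)$ up to constants. The difficulty is that a finite-codimensional $F\subset\B'^*$ need not be weak-$*$ closed. My first approach would be to replace $F$ by $\ann{(\preann F)}$ using the remark on annihilators. Failing that, I would take any $(q-1)$-dimensional complement of $F$ in $\B'^*$, with angle control from \autoref{lem:closed_q_dim_complemented}(2) applied in $\B'^*$, and use Helly's theorem or a Goldstine-type approximation to realize its functionals against finitely many vectors in $\B'$. This would produce a $(q-1)$-dimensional $G\subset\B'$ at the cost of a constant depending only on $q$.
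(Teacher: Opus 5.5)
\textbf{Gap.} The lower bound $c_q(A)\lesssim_q c_q(A^*)$ is never established.

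Your main route relies on ``$c_q(A^*)=d_q(A)$ for all bounded $A$,'' and this is false in general. The duality valid for every bounded operator is the other one, $d_q(A^*)=c_q(A)$ (use $\preann{N}$ for finite-dimensional $N\subset\B^*$). The identity $c_q(A^*)=d_q(A)$ is only guaranteed for compact $A$; in general you only have $c_q(A^*)\le d_q(A)$, and it can be strict.

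For a concrete example, let $\B'=c_0$. Let $A$ be the identity from $c_0$ renormed so that its unit ball is $S=\{z\in c_0: d_{\ell^\infty}(z,\R\mathbf{1})\le 1\}$. Let $N=\{a\in\ell^1:\sum_i a_i=0\}\in\G^1(\ell^1)$. Writing $\innerp{z,a}=\innerp{z-t\mathbf{1},a}$ with $\norm{z-t\mathbf{1}}_\infty\le 1$ gives $\norm{A^*|_N}\le 1$, so $c_2(A^*)\le 1$. On the other hand, $2e_i\in S$, and for every $y\in c_0$ one has $d(2e_i,\R y)\to 2$ as $i\to\infty$. Hence $d_2(A)=2$.

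Your two fallbacks do not repair this.
\begin{itemize}
\item The weak-$*$ closure fails. $N$ is weak-$*$ dense in $\ell^1$, so $\ann{(\preann{F})}$ can be all of $\B'^*$. Replacing $\mathbf{1}$ by $e_1+\delta\mathbf{1}$ throughout the example (in $S$ and in $N$) keeps $\norm{A^*|_N}\le 1$ but makes $\norm{A^*}\ge\delta^{-1}$. So passing to the weak-$*$ closure loses all $q$-dependent control.
\item The Helly/Goldstine idea targets the harder inequality $d_q(A)\lesssim_q c_q(A^*)$. That inequality needs a single $(q-1)$-dimensional $G\subset\B'$ serving the whole non-compact set $AB_\B$ at once. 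Helly's and Goldstine's theorems only match finitely many vectors or functionals at a time, so as sketched this gives no uniform bound.
\end{itemize}

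\textbf{Fix.} You do not need a subspace of $\B'$ at all; pull a finite-dimensional complement back through $A^*$. Given $F\in\G^{q-1}(\B'^*)$, apply \autoref{lem:closed_q_dim_complemented}(2) in $\B'^*$ to get a $(q-1)$-dimensional complement $L$ with $\norm{\pi_{F//L}}\le 1+\sqrt q$. Set $M=\preann{(A^*L)}$, a closed subspace of $\B$ of codimension at most $q-1$. For $v\in M$ and $\ell\in B_{\B'^*}$ we have $\innerp{v,A^*\pi_{L//F}\ell}=0$, hence $\innerp{Av,\ell}=\innerp{v,A^*\pi_{F//L}\ell}$. Therefore $\norm{A|_M}\le(1+\sqrt q)\norm{A^*|_F}$, and so $c_q(A)\le(1+\sqrt q)\,c_q(A^*)$.

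Because $L$ is finite-dimensional it is automatically weak-$*$ closed, so your obstacle never arises; this is exactly the device of \autoref{cor:reverse_dual_SVD}. Equivalently, apply your Step (b) to $A^{**}$ and use $c_q(A)\le c_q(A^{**})$ together with $d_q(A^{**})\le c_q(A^*)$.

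\textbf{What is already sound.} The upper bound $c_q(A^*)\le d_q(A)\lesssim_q c_q(A)$ is fine. Step (b) is fine, except that you need $\sin\theta(H,G)\ge(1+\sqrt q)^{-1}$ rather than $\sin\theta(G,H)$; this follows from $\norm{\pi_{H//G}}\le 1+\norm{\pi_{G//H}}$. Item (2) you derive exactly as the paper does.

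\textbf{Comparison with the paper.} Patched this way, your route gives the explicit constant $C_q=1+\sqrt q$ by elementary means. The paper instead observes that $A\mapsto c_q(A^*)$ is an $s$-number sequence and invokes Ullrich's theorem that all $s$-numbers are equivalent up to $q$-dependent constants.
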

\begin{proof}[Proof sketch]
    It follows immediately from the definition of $s$-number sequences (see \cite{Ullrich2024}) that the dual Gelfand numbers $A\mapsto c_q^*(A) \coloneqq c_q(A^*)$ form an $s$-number sequence. By a result of Ullrich \cite{Ullrich2024}, all $s$-numbers are equivalent up to a constant dependent only on $q$, which means there is some $C_q\geq 1$ for which item (1) is satisfied. Item (2) follows immediately from \autoref{lem:s-number_volGrowth_relation} and item (1).
\end{proof}

\subsubsection{Singular Value Splittings for Banach Space Operators}
A key technical tool for this paper will be a generalization of the singular value decomposition (SVD) to Banach space operators developed by Blumenthal and Morris \cite{BlumenthalMorris2015}. We first begin by defining a useful bookkeeping tool. For a linear operator $A\in L(\B,\B')$ and a subspace $E\subset \B$, write
\[m(A|_E) = \inf\curly{\norm{Av}_{\B'}:v\in S_{E}}.\]

\begin{defn}\label{defn:fast-slowSplit}
    $A:\B\to\B'$ is a bounded linear operator. Take $1\leq q < \infty$ and $C\geq 1$. We say $(E,E',F,F')$ is a $q$\textit{-dimensional singular value splitting of order} $C$ (for $A$) if
    \begin{enumerate}[label = (\alph*)]
    \item $F$ and $F'$ are closed and $\dim E = \dim E'  = \codim F = \codim F' = q$;
    \item $\B = E \oplus F$ and $\B' = E'\oplus F'$ are topological splittings;
    \item $E' = AE$ and $F' \supseteq A F$; and
    \item the following volume growth conditions are satisfied
    \begin{enumerate}[label = (\roman*)]
        \item $m(A|_E) \geq C^{-1} c_{q}(A)$
        \item $\norm{A|_F}\leq C c_{q+1}(A)$
        \item $\norm{\pi_{E//F}},\norm{\pi_{E'//F'}}\leq C$
    \end{enumerate}
\end{enumerate}
The set of such quadruples of subspaces is denoted by $\SVS_q(C;A)$.
\end{defn}

\begin{defn}
    Let $q\in \N$ and $A\in L(\B,\B')$ be such that $V_q(A)\neq 0$. Let $E\in \G_q(\B)$ be such that $\det(A|E) > 0$ and write $E' = AE$. Then we define the quantities
    \[\sigma_1(A|E)\geq\cdots\geq \sigma_q(A|E)>0\]
    to be the $q$ singular values of $A|_E\in L(E,E')$ w.r.t. the induced inner products $\paren{\cdot,\cdot}_E$ and $\paren{\cdot,\cdot}_{E'}$.
\end{defn}
\begin{remark}
    Notice that $\det(A|E) = \sigma_1(A|E)\cdots\sigma_q(A|E)$.
\end{remark}
\begin{prop}[Singular Value Decomposition for Banach Space Operators]\label{prop:SVD}
    Let $\alpha\in (0,1)$ and $q\geq 1$. Then there is a constant $D = D(\alpha,q)\geq 1$ with the following property. Let $\B$ and $\B'$ be Banach spaces and let $A:\B\to\B'$ be a bounded linear operator whose rank is at least $q$. Suppose $E\subset\B$ is a $q$-dimensional subspace such that $\det(A|E)\geq \alpha V_q(A)$.
    \begin{enumerate}[label = (\arabic*)]
        \item (Proposition 18 in \cite{BlumenthalMorris2015}) There exist topological complements $F\in\G^q(\B)$ and $F'\in \G^q(\B')$ of $E$ and $E' = AE$, respectively, such that $(E,E',F,F')\in \SVS_q(D;A)$.
        \item There is an almost orthonormal basis $(v_1,\ldots,v_q)$ of $E$ such that $\paren{u_k = Av_k/\abs{Av_k}_{E'}}_{k=1}^q$ is an almost orthonormal basis of $E'$, and for all $1\leq k\leq q$
        \[\frac{1}{D} c_k(A)\leq \frac{\norm{Av_k}}{\norm{v_k}}\leq Dc_k(A),\]
        and
        \[\sin\theta(\innerp{v_k},\innerp{v_i:i\neq k}),\ \sin\theta(\innerp{u_k},\innerp{u_i:i\neq k})\geq \frac{1}{q}.\]
        We shall call $(v_1,\ldots,v_q)$ right singular vectors of $A|_E$, and $(u_1,\ldots,u_q)$ the left singular vectors of $A|_E$.
        \item For $1\leq k\leq q-1$, write
        \[E_k = \innerp{v_1,\ldots, v_k},\ F_k = \innerp{v_{k+1},\ldots,v_q}+F\]
        and
        \[E_k' = \innerp{u_1,\ldots, u_k},\ F_k' = \innerp{u_{k+1},\ldots,u_q}+F'.\]
        Then $(E_k,E_k',F_k,F_k')\in \SVS_k\paren{D; A}$.
    \end{enumerate}
\end{prop}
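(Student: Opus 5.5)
We take item (1) as given, from \cite{BlumenthalMorris2015}. Items (2) and (3) then come from a Euclidean singular value decomposition of $A|_E:E\to E'$ with respect to the John inner products of \autoref{prop:JohnsTheorem}, together with a comparison of its singular values to the Gelfand numbers.

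\emph{Construction and upper bounds.} Since $\det(A|E)\geq \alpha V_q(A)>0$, the restriction $A|_E$ is injective and $E'=AE$ is $q$-dimensional. Take the SVD of $A|_E$ with respect to $(\cdot,\cdot)_E$ and $(\cdot,\cdot)_{E'}$. This gives a $(\cdot,\cdot)_E$-orthonormal basis $v_1,\ldots,v_q$ of $E$ whose images $Av_k$ are $(\cdot,\cdot)_{E'}$-orthogonal, with $\abs{Av_k}_{E'}=\sigma_k(A|E)$. Then $u_k=Av_k/\abs{Av_k}_{E'}$ is an almost orthonormal basis of $E'$.

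To show $\sigma_k(A|E)\lesssim_q c_k(A)$, fix any $F_0\in\G^{k-1}(\B)$. The space $W=\innerp{v_1,\ldots,v_k}$ meets $F_0$ nontrivially, since $\dim W=k>\codim F_0$. For $0\neq w\in W\cap F_0$ we have $\abs{Aw}_{E'}\geq \sigma_k\abs{w}_E$. The norm comparison in \autoref{prop:JohnsTheorem} then gives
\[\sigma_k\leq q\,\frac{\norm{Aw}}{\norm{w}}\leq q\norm{A|_{F_0}}.\]
Taking the infimum over $F_0$ yields $\sigma_k\leq q\,c_k(A)$.

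\emph{Lower bounds.} This is the one place the hypothesis $\det(A|E)\geq \alpha V_q(A)$ is really used, and it is the main step. By \autoref{lem:s-number_volGrowth_relation},
\[\prod_{j}\sigma_j=\det(A|E)\geq \alpha V_q(A)\geq \alpha C_q^{-1}\prod_j c_j(A).\]
Dividing by the upper bounds $\sigma_j\leq q c_j$ for all $j\neq k$ gives $\sigma_k\geq \alpha C_q^{-1}q^{-(q-1)}c_k(A)$. Converting John norms to $\norm{\cdot}$ costs a further factor of at most $q$, which gives the two-sided bound on $\norm{Av_k}/\norm{v_k}$.

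\emph{Angles.} Take $w\in\innerp{v_i:i\neq k}$. By orthonormality, $\abs{v_k-w}_E\geq 1$, so $\norm{v_k-w}\geq q^{-1/2}$. Since $\norm{v_k}\leq q^{1/2}$, this gives $\sin\theta(\innerp{v_k},\innerp{v_i:i\neq k})\geq 1/q$. The same argument applies to the $u_k$.

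\emph{Item (3).} The spaces $F_k$ and $F_k'$ are closed, being finite-dimensional extensions of closed subspaces. The splittings $\B=E_k\oplus F_k$ and $\B'=E_k'\oplus F_k'$ follow from $\B=E\oplus F$ and $\B'=E'\oplus F'$. We have $E_k'=AE_k$, and $AF_k\subseteq \innerp{u_{k+1},\ldots,u_q}+AF\subseteq F_k'$. For the projections, $\pi_{E_k//F_k}=P_k\circ\pi_{E//F}$, where $P_k$ is the $(\cdot,\cdot)_E$-orthogonal projection of $E$ onto $E_k$. Since $P_k$ has $\norm{\cdot}$-norm at most $q$, we get $\norm{\pi_{E_k//F_k}}\leq qD$, and similarly on $\B'$.

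For $v\in E_k$, the SVD gives $\abs{Av}_{E'}\geq\sigma_k\abs{v}_E$, so $m(A|_{E_k})\geq q^{-1}\sigma_k\gtrsim c_k(A)$. For $v=w+f\in F_k$ with $w\in\innerp{v_{k+1},\ldots,v_q}$ and $f\in F$, the projection bounds give $\norm{w},\norm{f}\lesssim_q D\norm{v}$. Then:
\begin{itemize}
\item $\norm{Aw}\leq q\,\sigma_{k+1}\abs{w}_E\lesssim c_{k+1}(A)\norm{w}$;
\item $\norm{Af}\leq D\,c_{q+1}(A)\norm{f}\leq D\,c_{k+1}(A)\norm{f}$, using item (1) and monotonicity of the Gelfand numbers.
\end{itemize}
Hence $\norm{A|_{F_k}}\lesssim c_{k+1}(A)$. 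Finally, we enlarge $D$ to a constant depending only on $\alpha$ and $q$ so that it absorbs all of these factors.
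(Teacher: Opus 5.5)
Your proposal is correct and follows essentially the same route as the paper: an SVD of $A|_E$ with respect to the John inner products, upper bounds $\sigma_k(A|E)\lesssim_q c_k(A)$, lower bounds from $\det(A|E)\ge\alpha V_q(A)$ combined with \autoref{lem:s-number_volGrowth_relation}, and item (3) via $\pi_{E_k//F_k}=P_k\circ\pi_{E//F}$ together with the decomposition $v=w+f$. The only difference is cosmetic: you get $\sigma_k\le q\,c_k(A)$ by a direct dimension count against each $F_0\in\G^{k-1}(\B)$, whereas the paper uses the Courant--Fischer characterization to obtain $c_k(A|_E)\approx_q\sigma_k(A|E)$ and then $c_k(A|_E)\le c_k(A)$.
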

\begin{remark}
    We note that Blumenthal and Morris do develop a notion of left and right ``singular vectors" in the proof of their proposition 18 \cite{BlumenthalMorris2015}. For our purposes, however, there are some conveniences that come with ensuring the bases are almost orthonormal, which is not the case in the original proof of the proposition.
\end{remark}
\begin{remark}
    Item (2) implies that
    \[\sin\theta(\innerp{v_k},\innerp{v_i:i\neq k}+F),\ \sin\theta(\innerp{u_k},\innerp{u_i:i\neq k}+F')\geq \frac{1}{qD}.\]
\end{remark}
\begin{proof}
    Item (1) is just the statement of Proposition 18 in \cite{BlumenthalMorris2015}. For item (2), note that by \autoref{lem:s-number_volGrowth_relation},
    \[\alpha\prod_{k=1}^q c_k(A)\lesssim_q \prod_{k=1}^q \sigma_k(A|E)\lesssim_q \prod_{k=1}^q c_k(A).\]
    The variational characterization of singular values gives
    \[\sigma_k(A|E) = \inf\curly{\sup_{v\in V,\abs{v}_E\leq 1} \abs{Av}_{E'}:V\subseteq E,\ \dim V = q - k + 1 },\]
    which means $c_k(A|_E) \approx_q \sigma_k(A|E)$ by \autoref{prop:JohnsTheorem}. Since $c_k(A|_E)\leq c_k(A)$, \autoref{lem:s-number_volGrowth_relation} gives us $\alpha c_k(A)\lesssim_q \sigma_k(A|E)\lesssim_q c_k(A)$. Denoting by $v_1,\ldots,v_q \in E$ and $u_1,\cdots,u_q\in E'$ the right and left singular vectors of $A|_E\in L(E,E')$, respectively, w.r.t. the induced inner product structures on $E$ and $E'$, we get $\norm{Av_k}/\norm{v_k}\approx_{q} \sigma_k(A|E) \approx_{q,\alpha} c_k(A)$. Moreover, it is clear that for all $v\in \B$,
    \[\pi_{\innerp{v_k}//\innerp{v_i:i\neq k}+F}v = \paren{\pi_{E//F}v,v_k}_E v_k.\]
    If $v\in E$, then $\abs{(v,v_k)_E}\leq \abs{v}_E \leq \sqrt{q}\norm{v}$. Since $\norm{v_k}\leq \sqrt{q}\abs{v_k}_E = \sqrt{q}$, we get the operator norm bound in item (2). The same argument holds for $(u_k)_{k}$, thus item (2) is proven.

    For item (3), take $1\leq k\leq q$ and write $E_k, F_k, E_k',\text{ and } F_k'$ as in item (3). Items (a)-(c) and (d)(i) in \autoref{defn:fast-slowSplit} are trivially verified. We can also see that
    \[\norm{\pi_{E_k//F_k}}\leq \norm{\pi_{E_k//\innerp{v_i:k<i}}|_E}\norm{\pi_{E//F}}\lesssim_{q,\alpha} 1.\]
    This takes care of item (d)(iii) since the same argument holds for $E'_k$ and $F'_k$. Now take $v\in F_k$ and write $v = f_k + f$ where $f_k \in \innerp{v_i:k < i}$ and $f \in F$. We know $\norm{f_k},\norm{f}\lesssim_{q,\alpha} \norm{v}$, and thus
    \[\norm{A v}\leq \norm{Af_k} + \norm{A f}\lesssim (\sigma_{k+1}(A|E)+c_{q+1}(A))\norm{v}\lesssim c_{k+1}(A)\norm{v}.\]
    This provides item (d)(ii) of \autoref{defn:fast-slowSplit}, and thus the rest of the proposition.
\end{proof}

We are interested in relating singular value decompositions of adjoints to the original operator. Every Banach space canonically embeds isometrically into its bidual via the pointwise evaluation map. More precisely, for a Banach space $\B$, the pointwise evaluation map $\Ev:\B\to \B^{**}$ is defined for $v\in \B$ by $f\mapsto \Ev(v)f = \innerp{v,f}$ for all $f\in \B^*$. It is clear that $\Ev$ is a linear isometry. We shall use the notation $\Ev$ to denote the pointwise evaluation map for any Banach space as the ambiguity should be cleared from context. Under this isometry, we identify $\B$ and $\B'$ as closed subspaces of $\B^{**}$ and $\B'^{**}$, respectively.
\begin{lem}\label{lem:dual_SVD}
    Let $\alpha \in (0,1)$ and $q\in \N$. Then there is a constant $C = C(\alpha,q)\geq 1$ with the following property. Let $\B$ and $\B'$ be Banach spaces and let $A:\B\to\B'$ be a bounded linear operator whose rank is at least $q$. Suppose $E\subset \B$ is a $q$-dimensional subspace such that
    \[\det(A|E)\geq \alpha V_q(A)\]
    and write $E' = AE$. Let $F$ and $F'$ be as guaranteed in \autoref{prop:SVD} so that $(E,E',F,F')\in \SVS_q(D; A)$ for some $D\geq 1$ and write $L = \ann{F'}$, $K = \ann{E'}$, $\Lhat = \ann{F}$, and $\Khat = \ann{E}$. Then
    \[\det(A^*|L)\geq C^{-1} V_q(A^*)\text{ and }(L,\Lhat,K,\Khat)\in \SVS_q(C; A^*).\]
\end{lem}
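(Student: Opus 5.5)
We need to verify each condition of \autoref{defn:fast-slowSplit} for the quadruple $(L,\Lhat,K,\Khat)$ and the operator $A^*:\B'^*\to\B^*$, and then deduce the determinant bound. The plan is to transfer everything through annihilators and adjoint projections, using \autoref{cor:equiv_volGrowth_and_dual} to compare Gelfand numbers of $A$ and $A^*$.

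\textbf{Algebraic conditions (a)--(c).} Since $\dim E=\dim E'=q$ and $\codim F=\codim F'=q$, we get $\dim L=\dim\ann{F'}=q$ and $\dim \Lhat=q$ by definition of codimension. Also $K=\ann{E'}$ and $\Khat=\ann{E}$ are weak-* closed, hence norm closed, of codimension $q$. The topological splittings $\B'^*=L\oplus K$ and $\B^*=\Lhat\oplus\Khat$ follow from $\pi_{E'//F'}^*=\pi_{\ann{F'}//\ann{E'}}=\pi_{L//K}$, and likewise for $\pi_{E//F}^*=\pi_{\Lhat//\Khat}$. Because adjoints preserve norms, we obtain $\norm{\pi_{L//K}},\norm{\pi_{\Lhat//\Khat}}\le D$, which is (d)(iii).

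For (c): if $\ell\in K$ and $e\in E$, then $\innerp{e,A^*\ell}=\innerp{Ae,\ell}=0$ since $Ae\in E'$, so $A^*K\subseteq\Khat$. Similarly, $AF\subseteq F'$ gives $A^*L\subseteq\Lhat$. To get equality $A^*L=\Lhat$, it suffices to show $A^*|_L$ is injective, since both spaces have dimension $q$. Injectivity will follow from the lower bound in (d)(i) below.

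\textbf{Norm estimates (d)(i), (d)(ii).} For $\ell\in K$ and $v\in\B$, write $v=e+f$ with $\norm{f}\le(1+D)\norm v$. Then
\[\innerp{v,A^*\ell}=\innerp{Af,\ell}\le \norm{A|_F}(1+D)\norm v\norm\ell.\]
This gives $\norm{A^*|_K}\le(1+D)D\,c_{q+1}(A)\le C\,c_{q+1}(A^*)$ by \autoref{cor:equiv_volGrowth_and_dual}(1).

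For (d)(i), take $\ell\in S_L$. We have $\norm{A^*\ell}\ge\sup_{e\in S_E}\innerp{Ae,\ell}$. Since $\ell$ vanishes on $F'$, we get $\norm{\ell|_{E'}}\ge\norm{\pi_{E'//F'}}^{-1}\ge D^{-1}$. Moreover $A|_E:E\to E'$ is onto, with $m(A|_E)\ge D^{-1}c_q(A)$, so every unit $u\in E'$ is $Ae$ with $\norm e\le D/c_q(A)$. Therefore
\[\norm{A^*\ell}\ge D^{-2}c_q(A)\ge C^{-1}c_q(A^*),\]
again by \autoref{cor:equiv_volGrowth_and_dual}(1). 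This also yields the injectivity of $A^*|_L$ needed for (c).

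\textbf{Determinant bound.} Now $A^*|_L:L\to\Lhat$ is an isomorphism with $m(A^*|_L)\gtrsim c_q(A^*)$. In the induced inner products (\autoref{prop:JohnsTheorem}), every singular value of $A^*|_L$ is at least $q^{-1}m(A^*|_L)$. Hence
\[\det(A^*|L)\gtrsim_q c_q(A^*)^q.\]
This is the main obstacle: $c_q^q$ is not comparable to $\prod_{k\le q}c_k$ unless the top singular values are close, so the crude bound is insufficient. Instead, I would first apply the argument above to each intermediate splitting $(E_k,E_k',F_k,F_k')\in\SVS_k(D;A)$ from \autoref{prop:SVD}(3). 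This gives $m(A^*|_{\ann{F_k'}})\gtrsim c_k(A^*)$ for each $k$.

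Then choose a flag in $L$ adapted to $\ann{F'_1}\subset\ann{F'_2}\subset\cdots\subset L$. Using almost orthonormal vectors $\ell_k\in\ann{F'_k}$ transverse to $\ann{F'_{k-1}}$ (constructed from the dual basis of $u_1,\ldots,u_q$), bound $\det(A^*|L)$ from below by a $\vol_q$ product via \autoref{lem:equivalenceOfVolNorms}. Each factor $d(A^*\ell_k,A^*\ann{F'_{k-1}})$ is estimated by pairing with $v_k$: it is at least $\innerp{v_k,A^*\ell_k}/\norm{v_k}\cdot(\text{angle constant})\gtrsim c_k(A)$, since $\ann{F'_{k-1}}$ annihilates $Av_k=\abs{Av_k}u_k$ for $k>k-1$.

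This gives $\det(A^*|L)\gtrsim\prod_k c_k(A)\gtrsim V_q(A)\gtrsim V_q(A^*)$, using \autoref{lem:s-number_volGrowth_relation} and \autoref{cor:equiv_volGrowth_and_dual}(2). All constants depend only on $q$ and $D=D(\alpha,q)$.
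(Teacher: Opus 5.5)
Your proof is correct. It differs from the paper's mainly in how you obtain (d)(i) and (d)(ii).

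The paper builds the pseudo-singular vectors $f_k(u)=(\pi_{E'//F'}u,u_k)_{E'}$ and $g_k(v)=(\pi_{E//F}v,v_k)_E$, proves $A^*f_k=\sigma_k g_k$, and derives everything from the resulting bound $\detLL(A^*|f_1,\ldots,f_q)\gtrsim\sigma_1\cdots\sigma_q$. The minimum-norm bound on $L$ comes from dividing this determinant by $\sigma_1\cdots\sigma_{q-1}$. The bound on $\|A^*|_K\|$ comes from comparing $\detLL(A^*|h,f_1,\ldots,f_q)$ with $V_{q+1}(A^*)\lesssim V_{q+1}(A)$.

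You instead dualize the defining inequalities directly:
\[
m(A^*|_{\ann{F'}})\ge \frac{m(A|_E)}{\|\pi_{E'//F'}\|},\qquad \|A^*|_{\ann{E'}}\|\le\|\pi_{F//E}\|\,\|A|_F\|.
\]
This is more elementary and gives explicit constants ($D^2$ and $D(1+D)$, before the Gelfand-number comparison). It does not depend on the determinant estimate, and for (d)(ii) it needs only $c_{q+1}(A)\lesssim c_{q+1}(A^*)$ rather than the volume comparison. The paper's route instead packages everything into an explicit dual SVD for $A^*|_L$, and that computation is what it reuses later.

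For the determinant, your flag argument is in substance the paper's computation. The natural choice of $\ell_k$ built from the dual basis of $u_1,\ldots,u_q$ is exactly $f_k$. Since $f_j(u_k)=\delta_{jk}$, no extra angle constant is needed. Bounding $d(A^*f_k,\langle A^*f_j:j<k\rangle)$ by pairing with $v_k$ is the same as the paper's bound on $d(g_k,\langle g_i:i\neq k\rangle)$ by evaluation at $v_k$; you simply avoid writing $A^*f_k=\sigma_k g_k$ explicitly.

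Your preliminary step of applying (d)(i) to the intermediate splittings $(E_k,E_k',F_k,F_k')$ is never used in the final estimate and can be dropped.
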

\begin{proof}
    Let $(v_1,\ldots,v_q)$ and $(u_1,\ldots,u_q)$ be the right and left singular vectors of $A|_E$, respectively, and denote the singular values by $\sigma_k = \sigma_k(A|E)$. Since $\pi_{L//K} = (\pi_{E'//F'})^*$ and $\pi_{\Lhat//\Khat} = (\pi_{E//F})^*$, it follows that $\B'^{*} = L \oplus K$ and $\B^{*} = \Lhat\oplus \Khat$ are topological splittings for which $\sin\theta(L,K),\ \sin\theta(\Lhat,\Khat)\gtrsim_{q,\alpha} 1$.

    Given $1\leq k\leq q$, consider the linear functionals $f_k\in \B'^{*}$ and $g_k\in \B^{*}$ defined by
    \[f_k(u) \coloneqq \paren{\pi_{E'//F'} u, u_k}_{E'} \text{ and } g_k(v) \coloneqq \paren{\pi_{E//F} v, v_k}_{E}\]
    for $u\in \B'$ and $v\in \B$. Clearly, $f_k\in L=\ann{F'}$ and $g_k\in \Lhat=\ann{F}$. We would like to treat $(f_k)$ and $(g_k)$ as pseudo-singular vectors for $A^{*}|_L:L\to\Lhat$, and so there are a few things to verify. One can easily show that both $(f_1,\ldots,f_q)$ and $(g_1,\ldots,g_q)$ are linearly independent, and as such, form bases for $L$ and $\Lhat$, respectively. Furthermore, we see by definition that for all $v\in \B$, 
    \[A^{*}f_k(v) = \frac{1}{\sigma_k}\paren{A\pi_{E//F}v ,Av_k} = \frac{1}{\sigma_k}\paren{\pi_{E//F} v,T^\dagger Tv_k} = \sigma_k g_k(v),\]
    where $T = A|_E\in L(E,E')$ and $T^\dagger$ denotes the inner product adjoint. Since a function is determined by its action on all elements of its domain, it follows that $A^{*}f_k = \sigma_k g_k$.

    Notice that
    \[\norm{f_k} = \sup_{u\in S_{\B'}} \paren{\pi_{E'//F'} u, u_k}_{E'}\lesssim_{q,\alpha} 1.\]
    Furthermore, if $g\in \innerp{g_i:i\neq k}$, then
    \[\norm{g-g_k}\gtrsim_{q} \abs{(g-g_k)(v_k)} = g_k(v_k) \geq q^{-1/2},\]
    so, taking an infimum over $g\in \innerp{g_i:i\neq k}$ gives us $d(g_k,\innerp{g_i:i\neq k})\gtrsim_q 1$. This implies
    \begin{equation}\label{eq:svdCor_VolDistortion}
        \detLL(A^*|f_{1},\ldots,f_q) = \sigma_1\cdots\sigma_q\frac{\vol_q(g_1,\ldots,g_q)}{\vol_q(f_1,\ldots,f_q)} \gtrsim_{q,\alpha} \sigma_1\cdots\sigma_q.
    \end{equation}

    It is obvious that $K$ is a closed $q$-codimensional subspace, as it is the annihilator of a $q$-dimensional subspace of $\B'\subseteq \B'^{**}$. The same reasoning holds for $\Khat$, and clearly $A^*K\subseteq \Khat$. For the maximum growth estimate, let $h\in K$ be of unit norm. Following the proof of \autoref{prop:gapProjEquiv}, $\norm{A^* h}\leq d(A^*h,\Lhat) /\sin\theta(\Khat,\Lhat)\lesssim_{q,\alpha} d(A^*h,\Lhat)$. As such, $\norm{A^*h}\detLL(A^*|f_1,\ldots,f_q)\lesssim \detLL(A^*|h,f_1,\ldots,f_q)\lesssim c_1(A)\cdots c_q(A) c_{q+1}(A)$ by \autoref{cor:equiv_volGrowth_and_dual} and \autoref{lem:equivalenceOfVolNorms}. Hence,
    \[\norm{A^*h}\lesssim \frac{c_1(A)\cdots c_q(A)}{\sigma_1\cdots\sigma_q} c_{q+1}(A)\lesssim c_{q+1}(A).\]
    For the minimum growth estimate, we have by \eqref{eq:svdCor_VolDistortion} that
    \[m(A^*|_L)\gtrsim \frac{\det(A^*|L)}{\sigma_1\cdots\sigma_{q-1}}\gtrsim \sigma_q\gtrsim c_q(A).\]
\end{proof}

The following corollary will be useful for establishing measurability properties of the Oseledets filtration.
\begin{cor}\label{cor:reverse_dual_SVD}
    Let $\alpha \in (0,1)$ and $q\in \N$. Then there is a constant $C = C(\alpha,q)\geq 1$ with the following property. Let $\B$ and $\B'$ be Banach spaces and let $A:\B\to\B'$ be a bounded linear operator whose rank is at least $q$. Suppose $L\subset \B'^*$ is a $q$-dimensional subspace of linear functionals such that 
    \[\det(A^*|L)\geq \alpha V_q(A^*),\]
    naturally viewing $A^*$ as a bounded linear operator between the Banach spaces $\B'^*\to \B^*$. Then we have that $\preann{(A^*L)}\subset \B$ is a closed $q$-codimensional subspace of $\B$ and
    \[\norm{A|_{\preann{(A^*L)}}}\leq C(\alpha,q) c_{q+1}(A).\]
\end{cor}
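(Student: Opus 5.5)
We prove \autoref{cor:reverse_dual_SVD} by running \autoref{lem:dual_SVD} in reverse, using the bidual. The plan is to apply \autoref{prop:SVD} and \autoref{lem:dual_SVD} to the operator $A^*:\B'^*\to\B^*$ itself. First, $A^*L$ is $q$-dimensional, since $\det(A^*|L)>0$ forces $A^*|_L$ to be injective. Hence $\preann{(A^*L)}$ is closed, as an intersection of kernels of $q$ linearly independent bounded functionals, and it has codimension $q$, because $\ann{(\preann{(A^*L)})}=A^*L$ is $q$-dimensional.

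Next, since $\det(A^*|L)\geq \alpha V_q(A^*)$ and $\rank A^*\geq q$, \autoref{prop:SVD}(1) gives complements $\Khat'\in\G^q(\B'^*)$ and $K'\in\G^q(\B^*)$ with $(L,A^*L,\Khat',K')\in\SVS_q(D;A^*)$. Then \autoref{lem:dual_SVD}, applied to $A^*$, gives $(\ann{K'},\ann{\Khat'},\ann{(A^*L)},\ann{L})\in\SVS_q(C;A^{**})$, with annihilators taken in the biduals. In particular
\[\norm{A^{**}|_{\ann{(A^*L)}}}\leq C\,c_{q+1}(A^*)\leq C'\,c_{q+1}(A),\]
where the second inequality is \autoref{cor:equiv_volGrowth_and_dual}(1).

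To conclude, we restrict to $\B\subseteq\B^{**}$ via $\Ev$. We have $A^{**}\circ\Ev=\Ev\circ A$, and $\Ev$ is isometric on both sides. Moreover, $\Ev(v)\in\ann{(A^*L)}$ exactly when $\innerp{v,\ell}=0$ for all $\ell\in A^*L$, i.e., when $v\in\preann{(A^*L)}$. So $\Ev(\preann{(A^*L)})\subseteq\ann{(A^*L)}$, and therefore
\[\norm{A|_{\preann{(A^*L)}}}\leq\norm{A^{**}|_{\ann{(A^*L)}}}\leq C(\alpha,q)\,c_{q+1}(A).\]

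The main point to watch is that every constant depends only on $(\alpha,q)$. This holds because \autoref{prop:SVD} and \autoref{lem:dual_SVD} are stated for arbitrary Banach spaces, so they apply to the pair $\B'^*,\B^*$, and the dual Gelfand comparison constant depends only on $q$. A direct alternative avoids biduals. For unit $v\in\preann{(A^*L)}$ and a basis $f_1,\dots,f_q$ of $L$, one has $\innerp{Av,f_i}=\innerp{v,A^*f_i}=0$. Hence $Av\in\preann{L}$, and one can then bound $\norm{Av}$ by a volume comparison of $(Av,\cdot)$ against the $f_i$. The bidual route seems cleaner, so it is the one I would use.
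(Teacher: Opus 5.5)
Your proposal is correct and follows the paper's proof essentially verbatim. Like the paper, you apply \autoref{prop:SVD} to $A^*$ on $L$, then \autoref{lem:dual_SVD} to get a singular value splitting for $A^{**}$ whose slow space is $\ann{(A^*L)}\subset\B^{**}$, and then restrict to $\B$ via $\Ev$ to reach $\preann{(A^*L)}$. The only nit is that the splitting for $A^{**}$ literally gives the bound $C\,c_{q+1}(A^{**})$, not $C\,c_{q+1}(A^*)$; comparing it to $c_{q+1}(A)$ means applying \autoref{cor:equiv_volGrowth_and_dual}(1) twice, which is harmless since the constant still depends only on $(\alpha,q)$.
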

\begin{proof}
    Apply \autoref{prop:SVD} to $A^*$ on $L$ so that we get spaces $(L,\Lhat,K,\Khat)\in \SVS_q(D; A^*)$ for some $D$. Then apply \autoref{lem:dual_SVD} to get spaces $(E,E',F,F')\in \SVS_q(C;A^{**})$ for some $C\geq 1$. Notice that $F = \ann{\Lhat} \subset \B^{**}$, so under the identification via the pointwise evaluation map, $F\cap\B = \preann{(A^*L)}\subset \B$. Since $A^{**}|_\B = A$, we can see by \autoref{cor:equiv_volGrowth_and_dual} that
    \[\norm{A|_{\preann{(A^*L)}}} \leq \norm{A^{**}|_F}\leq Cc_{q+1}(A^{**})\lesssim c_{q+1}(A).\]
    Since $A^*|_L$ is injective, $\codim \preann{(A^*L)} = \dim L = q$.
\end{proof}
\begin{remark}\label{remark:formOfSVDconsts}
    In \autoref{prop:SVD}, \autoref{lem:dual_SVD}, and \autoref{cor:reverse_dual_SVD}, the multiplicative constants that depend on $\alpha$ and $q$ are all of the form
    \[G_1(q)\alpha^{-G_2(q)},\]
    where $G_1,G_2:\N\to\N$ are some maps. We may assume $G_1 = G_2 \eqqcolon G$ and that they are the same (as non constant functions of $q$) across the three results by taking a finite number of maximums.
\end{remark}

\section{Growth Theorems for Products of Operators}\label{sec:sequentialMET_oneSided}
In this section, we will consider a one-sided sequence of bounded linear Banach space operators $(A_n:\B_n\to\B_{n+1})_{n\geq 0}$ subject to one of the two assumptions listed in the introduction. To simplify our proofs and avoid some tedious case-work, we shall assume $l_\alpha > -\infty$ in the case that there are a finite number of Lyapunov exponents, i.e. $r<\infty$. It will be notationally convenient to write $\lambda_{r+1} \coloneqq l_\alpha$ in this case. The case that $l_\alpha = -\infty$ when $r < \infty$ only requires a few modifications, and if $r = \infty$ then the concerns are irrelevant. Necessary modifications for the case $l_\alpha = -\infty$ are described at the end of this section.

We shall define the following quantities for $q\in \N$, $m\geq 0$, $\delta > 0$, and $N\geq 0$.
\begin{itemize}
    \item $\Bplus(m,\delta) = \sup_{n\geq 0} \log \norm{A_{m+n}} - n \delta$
    \item $\Hplus_q(m,\delta) = \sup_{n\geq 0} \abs{\log V_q(A^n_{m}) - nl_q} - n\delta$
    \item $\Htplus_q(m,\delta,N) = \sup_{n,k\geq N} \abs{\log V_q(A^k_{m+n}) -  kl_q} - (n+k)\delta$
\end{itemize}
For the sake of convenience, we define these functions to be 0 at $q = 0$. The relevant assumptions determine which of these quantities are finite. The inspiration for these functions derives from \cite{GouezelStoyanov2017}, and they are the main drivers behind our effective treatment of the MET. One can think of these quantities as a non-uniformity scale for convergence of growth rates. For instance, the slower $\Hplus_q(m,\delta)$ grows as $\delta\to 0^+$, the more uniform the rate of convergence of $(V_{q}(A_m^n))^{1/n}$ to $e^{l_q}$. Thus, an effective treatment of the MET reflects the non-uniformity of volume growth quantitatively.

We would like to state the following trivial claim, which will allow us to compare the $H$-functions to non-uniformity in the convergence rates of the Gelfand numbers. 

\begin{claim}\label{claim:equiv_of_vol_gelfand_pesin_funcs}
    Let $q\in \N$. Then, there is a constant $D_q\geq 1$, dependent only on $q$, such that the following holds. For all Banach space bounded linear operators $A$ of rank at least $q$, $\delta > 0$, and $i,j\geq 0$,
    \[\abs{\log c_{q}(A) - i \chi_q} - 2j\delta \leq D_q + \paren{\abs{\log V_{q}(A) - i l_q} - j\delta} +  \paren{\abs{\log V_{q-1}(A) - i l_{q-1}} - j\delta},\]
    writing $V_0(A) \coloneqq 1$ and $l_0 \coloneqq 0$.
\end{claim}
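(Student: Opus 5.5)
The plan is to compare the Gelfand numbers to ratios of consecutive volume functions using \autoref{lem:s-number_volGrowth_relation}, and then use the triangle inequality. First, since $A$ has rank at least $q$, we have $V_q(A)>0$ and $V_{q-1}(A)>0$. Also $c_k(A)>0$ for $k\leq q$: if $c_k(A)=0$ for some such $k$, then by the lemma $V_k(A)=0$, contradicting rank $\geq k$. Hence all the logarithms are finite. Applying \autoref{lem:s-number_volGrowth_relation} at levels $q$ and $q-1$ (at level $0$ both sides equal $1$ by convention) and dividing gives
\[
C_q^{-1}C_{q-1}^{-1}\,\frac{V_q(A)}{V_{q-1}(A)}\;\leq\; c_q(A)\;\leq\; C_qC_{q-1}\,\frac{V_q(A)}{V_{q-1}(A)},
\]
so that $\abs{\log c_q(A) - \log V_q(A) + \log V_{q-1}(A)}\leq \log(C_qC_{q-1}) \eqqcolon D_q'$.

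Next, treat the exponent $\chi_q$. In the case where $l_q$ and $l_{q-1}$ are finite, $\chi_q = l_q - l_{q-1}$ by definition (with $l_0=0$, this also covers $q=1$). Then $\log c_q(A) - i\chi_q$ splits as the sum of the three terms
\[
\paren{\log c_q(A) - \log V_q(A)+\log V_{q-1}(A)} + \paren{\log V_q(A) - il_q} - \paren{\log V_{q-1}(A) - il_{q-1}}.
\]
The triangle inequality bounds $\abs{\log c_q(A) - i\chi_q}$ by $D_q'$ plus the two absolute deviations. Subtracting $2j\delta$ and distributing one $j\delta$ to each deviation term gives the claim with $D_q = \max(1, D_q')$.

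The only point needing care is what happens when $l_q=-\infty$. There the right-hand side is $+\infty$ and the inequality is trivial under the natural convention $\abs{\,\text{finite} - (-\infty)}=+\infty$. The claim is only ever applied when the relevant $l$'s are finite, so this edge case poses no real obstacle. The main "work" is simply the quotient of the two-sided estimates from \autoref{lem:s-number_volGrowth_relation}, and the constant depends only on $q$ because $C_q$ and $C_{q-1}$ do.
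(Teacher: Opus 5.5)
Your argument is correct and is the one the paper intends. The paper's proof only says the claim is a straightforward consequence of \autoref{lem:s-number_volGrowth_relation}, and your argument (dividing the two-sided bounds at levels $q$ and $q-1$, then applying the triangle inequality) is that consequence written out; note only that your remark that the right-hand side is $+\infty$ when $l_q=-\infty$ requires $i\geq 1$, since for $i=0$ all $i$-terms vanish and your finite decomposition already applies.
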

\begin{proof}
    This is a straightforward consequence of \autoref{lem:s-number_volGrowth_relation}.
\end{proof}

We introduce the following bookkeeping tool before we move on to proving the four spectral theorems.
\begin{defn}\label{defn:OseledetsSpectrumData}
    For $1\leq s < r+1$, let $\OSE_s = (\lambda_t;m_t)_{1\leq t\leq s}$ denote the \textit{Oseledets spectrum up to} $\lambda_s$. If $r$ is finite, let $\OSE_{r+1} = \OSE_r\cup\{l_\alpha\}$.
\end{defn}

\begin{remark}
    All constants in the estimates in this section and \autoref{sec:sequentialMET_twoSided} can be made explicit by tracing the proofs. We record their dependence on the relevant parameters but make no attempt to optimize their precise form.
\end{remark}

\subsection{Proof of \autoref{thm:sequentialMET_oneSided}}
In this section, we prove \autoref{thm:sequentialMET_oneSided}. Let $(A_{n}:\B_{n}\to\B_{n+1})_{n\geq 0}$ be a sequence of operators satisfying \autoref{assump:oneSidedSequences}. We shall subdivide the proof into three key lemmas. Note that \autoref{assump:oneSidedSequences} implies that $\Hplus_q(0,\delta)$ is finite for all $q,\delta$.

\begin{lem}\label{lem:cauchySeqSpaces}
    Let $2\leq s < r + 2$ and $C_s\geq 1$. Suppose we have a sequence of singular value splittings $(E_{n,s-1},E_{n,s-1}',F_{n,s},F_{n,s}')\in \SVS_{M_{s-1}}(C_s;A^n)$ for all $n$. Then, given $0<\delta<\paren{\lambda_{s-1}-\lambda_{s}}/5$, $H\geq \max_{-1\leq j\leq 1} \Hplus_{j+M_{s-1}}(0,\delta)$, and $B\geq B^+(0,\delta)$, there is an integer $N\in\N$ and constant $C\geq 1$, both only dependent on $C_s,\delta,H,B,\lambda_s,\lambda_{s-1},\text{ and } M_{s-1}$, such that for all $n\geq N$ and $k\geq 0$ we have
    \begin{equation}\label{eq:cauchyInequality}
        d_H(F_{n,s},F_{n+k,s})\leq C\exp(-n(\lambda_{s-1}-\lambda_{s}-5\delta))
    \end{equation}
    In particular, $(F_{n,s})_n$ is a Cauchy sequence of subspaces in $\G^{M_{s-1}}(\B_0)$. Moreover, the slow spaces in $\B_0$ of any such sequence of singular value splittings, over all $C_s\geq 1$, have the same limit.
\end{lem}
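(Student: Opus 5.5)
The plan is the classical Raghunathan–Ruelle one-step comparison: bound the gap between consecutive slow spaces $F_{n,s}$ and $F_{n+1,s}$, then sum the resulting geometric series. Throughout, write $q=M_{s-1}$.

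The first step collects the growth data for $A^n$. From $H\geq\Hplus_{q-1},\Hplus_q,\Hplus_{q+1}$ at $(0,\delta)$, \autoref{claim:equiv_of_vol_gelfand_pesin_funcs} gives, for all $n$,
\[
c_q(A^n)\geq e^{-D-2H}e^{n(\lambda_{s-1}-2\delta)},
\qquad
c_{q+1}(A^n)\leq e^{D+2H}e^{n(\lambda_s+2\delta)}.
\]
Here we use $\chi_q=\lambda_{s-1}$ and $\chi_{q+1}=\lambda_s$, since $q=M_{s-1}$ sits at the boundary of a multiplicity block. From $B$ we also have $\norm{A_n}\leq e^{B+n\delta}$.

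The key estimate is a bound on $\Gap(F_{n+1,s},F_{n,s})$. Take $v\in S_{F_{n+1,s}}$ and decompose
\[
v=e+f,\qquad e=\pi_{E_{n,s-1}//F_{n,s}}v,\quad f\in F_{n,s}.
\]
Since $\Gap(F_{n+1,s},F_{n,s})\leq\norm{\pi_{E_{n,s-1}//F_{n,s}}|_{F_{n+1,s}}}$ by \autoref{prop:gapProjEquiv}, it suffices to bound $\norm{e}$.

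For the lower bound on $\norm{A^ne}$: because $e\in E_{n,s-1}$, the singular value splitting gives $\norm{A^ne}\geq C_s^{-1}c_q(A^n)\norm{e}$.

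For the upper bound, write $A^ne=A^nv-A^nf$. The second term satisfies $\norm{A^nf}\leq C_s(1+C_s)c_{q+1}(A^n)$. For the first term, use $\norm{A^{n+1}v}\leq C_sc_{q+1}(A^{n+1})$ together with the splitting at time $n$ on $\B_n$. Decompose $A^nv$ along $E_{n,s-1}'\oplus F_{n,s}'$. The component in $F'_{n,s}$ is controlled by $\norm{A^nf}$ plus the image of $e$. The component $u\in E'_{n,s-1}=A^nE_{n,s-1}$ is the delicate piece.

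To control $u$, the idea is that $A_n$ cannot shrink $E'_{n,s-1}$ too much relative to $c_q(A^{n+1})$. This follows from
\[
m(A^{n+1}|_{E_{n,s-1}})\geq \frac{\det(A^{n+1}|E_{n,s-1})}{V_{q-1}(A^{n+1})}
\]
(up to $q$-dependent constants), and
\[
\det(A^{n+1}|E_{n,s-1})\geq\det(A^n|E_{n,s-1})\cdot\frac{V_q(A^{n+1})}{V_q(A^n)\,\norm{A_n}^q}\cdot\mathrm{const}.
\]
This yields a lower bound $e^{n(\lambda_{s-1}-O(\delta))}$ on $A^{n+1}$ restricted to $E_{n,s-1}$. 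Combining this with $\norm{A^{n+1}v}\lesssim e^{(n+1)(\lambda_s+2\delta)}$ gives
\[
\norm{e}\lesssim e^{-n(\lambda_{s-1}-\lambda_s-5\delta)}.
\]
The constants depend only on $C_s,H,B,\delta,q$ and the two exponents.

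I expect the main obstacle to be exactly this step: controlling the $E'_{n,s-1}$-component after one more application of $A_n$, with exactly $5\delta$ of loss. This loss is $2\delta$ from each Gelfand bound plus $\delta$ from $\norm{A_n}$, and the bookkeeping has to come out precisely to that. An alternative I would try first is to avoid projections and use volumes directly. For $v\in F_{n+1,s}$ and a basis of $E_{n,s-1}$, \autoref{lem:determinantProdDecomp} together with $V_{q+1}(A^{n+1})$ bounds $d(v,F_{n,s})$ via $\det(A^{n+1}|E_{n,s-1}\oplus\innerp{v})$, in the style of \cite{Blumenthal2016}.

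Finally, I would convert and sum. By \autoref{lem:equivOfGap_and_Hausdorff}(3), once $n\geq N$ (chosen so that the gap is below $1/(2q)$), the reverse gap is also small, so
\[
d_H(F_{n,s},F_{n+1,s})\lesssim e^{-n(\lambda_{s-1}-\lambda_s-5\delta)}.
\]
The triangle inequality for $d_H$ and summing the geometric series then give \eqref{eq:cauchyInequality}. Completeness of $\G^{q}(\B_0)$ yields a limit. For uniqueness of the limit, I would run the same one-step argument with two different sequences of splittings, of orders $C_s$ and $C_s'$, at the same time $n$ and at $n+1$. This shows that $d_H(F_{n,s},\tilde F_{n,s})$ is exponentially small, so the two limits coincide.
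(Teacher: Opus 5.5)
There is a genuine gap in your key estimate, and it comes from which directed gap you chose to bound. For $v\in S_{F_{n+1,s}}$, the only smallness available is $\norm{A^{n+1}v}\le C_s c_{q+1}(A^{n+1})$. This says nothing about $A^n v$, because $A_n$ may contract. So to bound $e=\pi_{E_{n,s-1}//F_{n,s}}v$ you are forced to prove that $A_n$ does not contract $E'_{n,s-1}$, that is, a lower bound on $m(A^{n+1}|_{E_{n,s-1}})$.

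The determinant inequality you propose for this step is not derived:
\[
\det(A^{n+1}|E_{n,s-1})\gtrsim \det(A^n|E_{n,s-1})\,\frac{V_q(A^{n+1})}{V_q(A^n)\norm{A_n}^q}.
\]
As an operator inequality it is false for every constant. Take $q=1$ on Euclidean $\R^2$, with $A^n=\mathrm{diag}(100,1)$, $A_n=\mathrm{diag}(\varepsilon,1)$ and $E=\innerp{e_1}$. The left side is $100\varepsilon\to 0$, while the right side equals $1$. So it could only hold by exploiting the spectral gap, and you give no argument for that.

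Even granting it, the factor $\norm{A_n}^{-q}$ costs $e^{-qn\delta}$ through $B$. The additional uses of $H$ on $V_q(A^{n+1})$ and $V_{q-1}(A^{n+1})$ cost more. You would end with a loss of order $(q+5)\delta$ rather than $5\delta$. Under the hypothesis $\delta<(\lambda_{s-1}-\lambda_s)/5$, the exponent in \eqref{eq:cauchyInequality} then need not even be positive.

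The fix is to bound the other directed gap, $\Gap(F_{n,s},F_{n+1,s})$, as in \autoref{claim:oneStepGapBound}. This costs nothing, since you already use \autoref{lem:equivOfGap_and_Hausdorff}(3) to symmetrize. Take $v\in S_{F_{n,s}}$ and $u=\pi_{E_{n+1,s-1}//F_{n+1,s}}v$. Since $A^{n+1}E_{n+1,s-1}=E'_{n+1,s-1}$ and $A^{n+1}F_{n+1,s}\subseteq F'_{n+1,s}$, one has $A^{n+1}u=\pi_{E'_{n+1,s-1}//F'_{n+1,s}}A^{n+1}v$. Hence
\[
C_s^{-1}c_q(A^{n+1})\norm{u}\le\norm{A^{n+1}u}\le C_s\norm{A_n}\norm{A^nv}\le C_s^2\norm{A_n}c_{q+1}(A^n).
\]
This uses only forward submultiplicativity. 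It gives exactly $2\delta+\delta+2\delta$ of loss: $2\delta$ from $c_{q+1}(A^n)$, $\delta$ from $\norm{A_n}$, and $2\delta$ from $c_q(A^{n+1})$.

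If you want to keep your direction, replace the determinant inequality with a Gelfand-number argument. Suppose a unit $e\in E_{n,s-1}$ has $\norm{A^{n+1}e}=\varepsilon$. Testing $c_q(A^{n+1})$ on the codimension-$(q-1)$ space $F_{n,s}\oplus\innerp{e}$ gives $c_q(A^{n+1})\le C_s(1+C_s)\norm{A_n}c_{q+1}(A^n)+C_s\varepsilon$. Hence $m(A^{n+1}|_{E_{n,s-1}})\gtrsim c_q(A^{n+1})$ for large $n$, which also yields $5\delta$.

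Your uniqueness argument is fine when you compare the two splittings at the same time $n$. There both slow spaces are slow for the same operator $A^n$, so the backward-control issue does not arise.
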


Now, consider an arbitrary $2\leq S < r+2$ and a sequence of spaces $\ES_{n,S-1}\in \G_{M_{S-1}}(\B_0)$ such that
\[\frac{\det\paren{A^n|\ES_{n,S-1}}}{V_{M_{S-1}}(A^n)}\geq \alpha_{S} > 0\]
for all $n\geq 1$. Let $\delta>0$. By \autoref{prop:SVD} and \autoref{claim:equiv_of_vol_gelfand_pesin_funcs}, there is a constant $\CS = C(M_{S-1},\alpha_S)\geq 1$, a sequence of spaces $\paren{\ES_{n,S-1},\ESp_{n,S-1},\FS_{n,S},\FSp_{n,S}}\in \SVS_{M_{S-1}}\paren{\CS;A^n}$, and almost orthonormal splittings of \textit{singular spaces}
\[\ES_{n,S-1} = \US_{n,1}\oplus \cdots \oplus \US_{n,S-1}\]
and
\[\ESp_{n,S-1} = \USp_{n,1}\oplus \cdots \oplus \USp_{n,S-1}\]
with the following properties.
\begin{itemize}
        \item $\USp_{n,t} = A^n \US_{n,t}$ and $\dim \US_{n,t} = \dim \USp_{n,t} = m_t$.
        \item  If $\delta>0$ and $H\geq \max_{0\leq q\leq 1+M_{S-1}} \Hplus_{q}(0,\delta)$ then, for all $n\geq 0$, 
        \begin{equation}\label{eq:SVDtime-nGrowth}
            (e^{2H}\CS)^{-1}e^{n(\lambda_{t}-2\delta)}\norm{u}\leq \norm{A^n u}\leq e^{2H}\CS e^{n(\lambda_{t}+2\delta)}\norm{u}
        \end{equation}
        for all $u\in \US_{n,t}\setminus\innerp{0}$.
        \item Writing (for $1\leq t\leq S$)
        \[\ES_{n,t-1} \coloneqq \US_{n,1}\oplus\cdots\oplus \US_{n,t-1}\]
        and
        \[\FS_{n,t} \coloneqq \US_{n,t}\oplus \cdots \oplus \US_{n,S-1}\oplus \FS_{n,S},\]
        and the analogous spaces $\ESp_{n,t-1}$ and $\FSp_{n,t}$ in $\B_n$, we have
        \begin{equation}\label{eq:SVDprojNorm}
            \norm{\pi_{\ES_{n,t-1}//\FS_{n,t}}},\norm{\pi_{\ESp_{n,t-1}//\FSp_{n,t}}}\leq \CS.
        \end{equation}
\end{itemize}

When the context is clear, we will drop the $(S)$ superscript. For convenience, we write $\ES_{n,0} = \innerp{0}$, $\ESp_{n,0} = \innerp{0}$, $\FS_{n,1} = \B_0$, and $\FSp_{n,1} = \B_n$.
\begin{lem}\label{lem:forwardGapSepEstimate}
        Suppose $2\leq s'\leq s \leq S$. Given $0<\delta < \frac{1}{5}\min_{2\leq t\leq S} \lambda_{t-1}-\lambda_{t}$, $H \geq \max_{1\leq q\leq 1+M_{S-1}} \Hplus_q(0,\delta)$, and $B > B^{+}(0,\delta)$, there is an integer $N\in\N$ and constant $C\geq 1$, dependent on $\alpha_S,\delta,H,B,\text{ and}$ $\OSE_S$, such that for all $n\geq N$ and $k\geq 0$ we have
    \begin{equation}\label{eq:forwardGapSepEstimate}
        \Gap\paren{\FS_{n,s},\FS_{n+k,s'}}\leq C\exp(-n(\lambda_{s'-1}-\lambda_{s}-5(1+s-s')\delta)).
    \end{equation}
\end{lem}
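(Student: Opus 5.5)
The plan is to reduce the estimate to a one-step bound and then sum a geometric series, as in the standard Ruelle--Raghunathan Cauchy argument (cf.\ \autoref{lem:cauchySeqSpaces}).

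\textbf{One-step bound.} Fix $n$ and $v\in S_{\FS_{n,s}}$. By \autoref{prop:gapProjEquiv} it suffices to bound $\norm{\pi_{\ES_{n+1,s'-1}//\FS_{n+1,s'}}v}$. The splitting at time $n+1$ has projection norm at most $\CS$ by \eqref{eq:SVDprojNorm}, so the resulting gap estimate loses only a constant.

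Write $v=e+f$ with $e\in \ES_{n+1,s'-1}$ and $f\in\FS_{n+1,s'}$, so that $\norm{e},\norm{f}\leq \CS+1$. Apply $A^{n+1}=A_nA^n$ and use the structure of $\FS_{n,s}$: it is spanned by singular spaces $\US_{n,t}$ with $t\geq s$ together with $\FS_{n,S}$. From \eqref{eq:SVDtime-nGrowth} for $t\geq s$, and \autoref{defn:fast-slowSplit}(d)(ii) with \autoref{claim:equiv_of_vol_gelfand_pesin_funcs} for $\FS_{n,S}$, we get
\[\norm{A^{n+1}v}\leq e^{B+\delta n}\norm{A^n v}\lesssim e^{2H+B}e^{n(\lambda_s+3\delta)}.\]
On the other side, $A^{n+1}f$ grows at most like $e^{n(\lambda_{s'}+3\delta)}$ by the same bound for $\FS_{n+1,s'}$.

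To isolate $e$, I will project $A^{n+1}v$ onto $\ESp_{n+1,s'-1}$ parallel to $\FSp_{n+1,s'}$; this projection also has norm at most $\CS$. Since $A^{n+1}$ maps $\FS_{n+1,s'}$ into $\FSp_{n+1,s'}$, the projection kills $A^{n+1}f$ and returns exactly $A^{n+1}e$. Expanding $e$ in the almost orthonormal basis of singular spaces, the lower bound in \eqref{eq:SVDtime-nGrowth} on $\US_{n+1,t}$ for $t\leq s'-1$, combined with the $\frac{1}{qD}$ angle control from the remark after \autoref{prop:SVD}, gives
\[\norm{A^{n+1}e}\gtrsim e^{-2H}e^{(n+1)(\lambda_{s'-1}-2\delta)}\norm{e}.\]
Hence
\[\norm{e}\lesssim \exp(-n(\lambda_{s'-1}-\lambda_s-5\delta)),\]
which gives
\[\Gap(\FS_{n,s},\FS_{n+1,s'})\leq C e^{-n(\lambda_{s'-1}-\lambda_s-5\delta)}.\]

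\textbf{Iteration.} To go from $n$ to $n+k$, I will not chain gaps directly, since $\Gap$ obeys a triangle inequality only up to constants. Instead I will use \autoref{lem:cauchySeqSpaces} for the fixed index $s'$: it makes $(\FS_{m,s'})_m$ Cauchy, with
\[d_H(\FS_{n+1,s'},\FS_{n+k,s'})\lesssim e^{-n(\lambda_{s'-1}-\lambda_{s'}-5\delta)}.\]
Here I take the slow space of the shortened SVS to be $\FS_{m,s'}$, which lies in $\SVS_{M_{s'-1}}$ by \autoref{prop:SVD}(3). Then I combine with the one-step bound via
\[\Gap(X,Z)\leq \Gap(X,Y)+2d_H(Y,Z),\]
which follows from \autoref{lem:equivOfGap_and_Hausdorff}(1). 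Since $\lambda_{s'}\geq\lambda_s$, the rate $\lambda_{s'-1}-\lambda_{s'}$ dominates $\lambda_{s'-1}-\lambda_s$ in size, so the Cauchy term is smaller.

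This already yields the exponent $\lambda_{s'-1}-\lambda_s-5\delta$, which is better than the claimed $5(1+s-s')\delta$ loss. If the one-step estimate above fails, I would fall back on induction on $s-s'$: pass from $\FS_{n,s}$ to $\FS_{n+k,s-1}$, then to $\FS_{n+k,s'}$, accruing a $5\delta$ loss per step. That is likely the authors' route and explains the stated constant.

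\textbf{Main obstacle.} The hard step is the lower bound on $\norm{A^{n+1}e}$. The singular directions at time $n+1$ must expand at rate $\lambda_{s'-1}$ under $A^{n+1}$, while $v$ is controlled only at time $n$. The projection trick relies on two facts: the equivariance $A^{n+1}\FS_{n+1,s'}\subseteq\FSp_{n+1,s'}$ from \autoref{defn:fast-slowSplit}(c), extended to the intermediate spaces via \autoref{prop:SVD}(3); and uniform, $n$-independent control of projections across the whole singular-space decomposition. I expect the bookkeeping of the constants $H,B,\CS$, so that $N$ and $C$ depend only on the stated parameters, to be routine once that step is secured.
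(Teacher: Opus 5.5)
Your one-step bound is correct. It is exactly \autoref{claim:oneStepGapBound} applied to the truncated splittings of \autoref{prop:SVD}(3), and it gives $\Gap(\FS_{n,s},\FS_{n+1,s'})\lesssim e^{-n(\lambda_{s'-1}-\lambda_s-5\delta)}$.

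The iteration step, however, rests on a reversed inequality. For $s>s'$ we have $\lambda_{s'}>\lambda_s$, hence $\lambda_{s'-1}-\lambda_{s'}<\lambda_{s'-1}-\lambda_s$. So the Cauchy term $e^{-n(\lambda_{s'-1}-\lambda_{s'}-5\delta)}$ from \autoref{lem:cauchySeqSpaces} at level $s'$ is the \emph{larger} of your two terms. Your combination therefore only gives $\Gap(\FS_{n,s},\FS_{n+k,s'})\lesssim e^{-n(\lambda_{s'-1}-\lambda_{s'}-5\delta)}$. That is the trivial bound, already implied by $\FS_{n,s}\subseteq\FS_{n,s'}$ and \autoref{lem:cauchySeqSpaces}.

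Your fallback fails for the same reason. The link $\FS_{n+k,s-1}\to\FS_{n+k,s'}$ is free by inclusion, but the link $\FS_{n,s}\to\FS_{n+k,s-1}$ only carries the rate $\lambda_{s-2}-\lambda_s$. Chaining gap estimates through intermediate subspaces cannot repair this. The rate of a chain is the worst rate among its links, and no available link other than the single time step carries the full rate $\lambda_{s'-1}-\lambda_s$ uniformly in $k$. Running your one-step argument directly from $n$ to $n+k$ does not help either: it would need control of $\norm{A^k_n}$ that \autoref{assump:oneSidedSequences} does not provide.

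The missing idea is to use linear structure instead of gap triangle inequalities. As $m$ grows, $\FS_{m,s'}$ moves roughly along the levels $s',\dots,s-1$, and a unit vector $v\in\FS_{n,s}$ has only small components there. The paper exploits this by induction on $T=s-s'$ (base case \autoref{lem:cauchySeqSpaces}) with a \emph{fixed} complement:

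\begin{itemize}
\item By \autoref{lem:cauchySeqSpaces} and \autoref{prop:complementationOpen}, $\ES_{n,s'-1}$ complements every $\FS_{n+k,s'}$, $k\geq 0$, with uniformly bounded projection.
\item Put $w_k=\pi_{\ES_{n,s'-1}//\FS_{n+k,s'}}v$ and decompose $v=\pi_{\FS_{n+k,s}//\ES_{n,s-1}}v+w_k+\sum_{t=s'}^{s-1}P_{n,k,t}v$, with $P_{n,k,t}v\in\FS_{n+k,t}\cap\ES_{n,t}$.
\item The induction hypothesis gives $\norm{P_{n,k,t}v}\lesssim e^{-n(\lambda_t-\lambda_s-5(1+T)\delta)}$.
\item The one-step bound at time $n+k$ shows that projecting this piece onto $\ES_{n,s'-1}$ along $\FS_{n+k+1,s'}$ costs a further factor $e^{-(n+k)(\lambda_{s'-1}-\lambda_t-5\delta)}$.
\item Since $w_k\in\ES_{n,s'-1}$ is fixed by the new projection, $w_{k+1}-w_k$ is a sum of terms of size $\lesssim e^{-n(\lambda_{s'-1}-\lambda_s-5(2+T)\delta)}e^{-k(\lambda_{s'-1}-\lambda_t-5\delta)}$ for $s'\leq t\leq s$.
\item With $w_0=0$, a geometric series bounds $\norm{w_k}$ uniformly in $k$, and \autoref{prop:gapProjEquiv} finishes the argument.
\end{itemize}

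This induction is where the $5(1+s-s')\delta$ loss comes from, so your claimed improvement to a $5\delta$ loss is unsupported.
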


\begin{lem}\label{lem:backwardGapSepEstimate}
    Suppose $2\leq s'\leq s \leq S$. Given $0<\delta < \frac{1}{5(S-1)^2}\min_{2\leq t\leq S} \lambda_{t-1}-\lambda_{t}$, $H \geq \max_{1\leq q\leq 1+M_{S-1}} \Hplus_q(0,\delta)$, and $B > B^{+}(0,\delta)$, there is an integer $N\in\N$ and constant $C\geq 1$, dependent on $\alpha_S,\delta,H,B,\text{ and}$ $\OSE_S$, such that for all $n\geq N$ and $k\geq 0$ we have
    \begin{equation}\label{eq:backwardGapSepEstimate}
        \Gap(\FS_{n+k,s},\FS_{n,s'})\leq C\exp(-n(\lambda_{s'-1}-\lambda_{s}-5(S-1)(1+s-s')\delta))
    \end{equation}
\end{lem}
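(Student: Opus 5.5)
The plan is to reduce the estimate to a one-step inequality and then telescope. The one-step inequality is
\[
\Gap(\FS_{m+1,s},\FS_{m,s'})\lesssim \exp\big(-m(\lambda_{s'-1}-\lambda_s-c\delta)\big),
\]
with $c=O((S-1)(1+s-s'))$. It should be proved by a growth argument that uses the singular value splittings at the two consecutive times $m$ and $m+1$, together with \autoref{lem:forwardGapSepEstimate}.

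\emph{Base case $s=s'$.} The spaces $\FS_{n,s}$ and $\FS_{n+k,s}$ are both closed and $M_{s-1}$-codimensional. By \autoref{lem:forwardGapSepEstimate}, $\Gap(\FS_{n,s},\FS_{n+k,s})\le Ce^{-n(\lambda_{s-1}-\lambda_s-5\delta)}$, and this is $<1/M_{s-1}$ once $n\ge N$. Item (3) of \autoref{lem:equivOfGap_and_Hausdorff} then gives the reverse gap with an extra constant $M_{s-1}(1-M_{s-1}\Gap)^{-1}\le 2M_{s-1}$. This settles $s=s'$ for all $k$.

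For $s'<s$ this trick is too weak. Using $\FS_{n+k,s}\subseteq \FS_{n+k,s'}$ only yields the exponent $\lambda_{s'-1}-\lambda_{s'}$, not $\lambda_{s'-1}-\lambda_s$. The aim is instead to show that a unit vector $v\in\FS_{n+k,s}$ has small components in the fast singular spaces $\US_{n,t}$ for every $t<s'$. For such $v$, write
\[
v=\sum_{t<s'}u_t+f,\qquad u_t\in \US_{n,t},\ f\in \FS_{n,s'}.
\]
Then $d(v,\FS_{n,s'})\le\sum_{t<s'}\norm{u_t}$, so it suffices to bound each $\norm{u_t}$.

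\emph{One-step estimate.} Fix $m$ and a unit vector $v\in \FS_{m+1,s}$. Since $v$ lies in the slow space at time $m+1$, \eqref{eq:SVDtime-nGrowth} and \eqref{eq:SVDprojNorm} give $\norm{A^{m+1}v}\lesssim e^{2H}e^{(m+1)(\lambda_s+2\delta)}$. Decompose $v$ in the time-$m$ splitting as $v=u_1+\dots+u_{s-1}+f$ with $u_t\in\US_{m,t}$ and $f\in\FS_{m,s}$. The estimates on each piece are as follows.

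\begin{enumerate}
\item For the tail, $\norm{A^{m+1}f}\le \norm{A_m}\,\norm{A^m f}\le e^{B+m\delta}e^{2H}\CS e^{m(\lambda_s+2\delta)}$. This is the only place $B$ enters.
\item For each $t$, the space $\ES_{m,t}$ is uniformly transverse to $\FS_{m+1,t+1}$. This follows from \autoref{lem:forwardGapSepEstimate} with $k=1$ combined with \autoref{prop:complementationOpen}. Consequently, projecting $A^{m+1}$ of a vector in $\US_{m,t}$ onto the time-$(m+1)$ fast space $\ESp_{m+1,t}$ retains a factor $\gtrsim e^{(m+1)(\lambda_t-2\delta)}$.
\item Project $A^{m+1}v$ onto $\ESp_{m+1,t}$ parallel to $\FSp_{m+1,t+1}$, for $t=1,2,\dots$ in increasing order. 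The projection has norm $\le\CS$, so the projected vector is $\lesssim e^{m(\lambda_s+c\delta)}$. The contributions of the $u_j$ with $j>t$ are small: they come from vectors which, by \autoref{lem:forwardGapSepEstimate} at $k=1$, lie $e^{-m(\lambda_t-\lambda_j-5\delta)}$-close to $\FS_{m+1,t+1}$, where $A^{m+1}$ grows at most like $e^{m\lambda_{t+1}}$. The contributions of the $u_j$ with $j<t$ have already been bounded at earlier stages.
\end{enumerate}

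Solving these inequalities recursively in $t$ gives $\norm{u_t}\lesssim e^{-m(\lambda_t-\lambda_s-c_t\delta)}$. Each level of the recursion adds $O(\delta)$ to the exponent, and this accumulation is the source of the factor $(S-1)(1+s-s')$ and of the smallness hypothesis $\delta<\frac{1}{5(S-1)^2}\min_t(\lambda_{t-1}-\lambda_t)$. Since $\lambda_t\ge\lambda_{s'-1}$ for $t<s'$, summing over $t<s'$ yields the one-step inequality.

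\emph{Telescoping in $k$.} Pass from time $n+k$ down to time $n$ through the chain $\FS_{n+k,s}\to\FS_{n+k-1,s}\to\dots\to\FS_{n,s}$, applying at each link $j$ the component bounds just proved. One cannot simply chain gaps with a fixed index $s$: that would only give decay at the weaker rate $e^{-n(\lambda_{s-1}-\lambda_s)}$. Instead, at each step one tracks only the $\US_{j,t}$-components with $t<s'$, using again that $\ES_{j,t}$ and $\ES_{j+1,t}$ are exponentially close by \autoref{lem:cauchySeqSpaces} and \autoref{lem:forwardGapSepEstimate}. The errors form a geometric series $\sum_{j\ge n}e^{-j(\lambda_{s'-1}-\lambda_s-c\delta)}$, which sums to a constant times its first term. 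Finally, choose $N$ so that all the transversality conditions (the uses of \autoref{prop:complementationOpen} and of \autoref{lem:equivOfGap_and_Hausdorff}(3)) hold for $j\ge N$; then $C$ and $N$ depend only on $\alpha_S,\delta,H,B,\OSE_S$.

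\emph{Main obstacle.} I expect the main difficulty to be step 3 of the one-step estimate, the recursive control of the cross terms. The $u_j$ with $j>t$ pollute the projection onto $\ESp_{m+1,t}$, and the bound for $u_t$ must not inherit the worse rate $\lambda_{s'}$ from the tail. My first attempt would be to order the recursion from the slowest fast level $t=s-1$ downward, so that each $u_t$ is compared only against the already-controlled slower components. If the bookkeeping fails in that order, I would fall back on a volume argument instead: estimate $\det(A^{m+1}\,|\,\ES_{m,t}\oplus\langle v\rangle)$ from above by $V_{M_t+1}(A^{m+1})$ and from below using \autoref{lem:determinantProdDecomp}. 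This yields $d(A^{m+1}v,A^{m+1}\ES_{m,t})$ directly, without tracking the individual cross terms.
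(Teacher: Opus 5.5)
\textbf{Verdict: genuine gap.} The gap is in the telescoping step; the one-step estimate also does not close in the order you wrote it, though your fallback order works.

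\textbf{The telescoping step.} You carry component bounds from time $j+1$ to time $j$ by asserting that $\ES_{j,t}$ and $\ES_{j+1,t}$ are exponentially close, citing \autoref{lem:cauchySeqSpaces} and \autoref{lem:forwardGapSepEstimate}. Those lemmas only concern slow spaces, and the assertion is false in general, because the fast spaces are arbitrary subspaces satisfying the determinant condition. For instance, take $A_n=\mathrm{diag}(2,1)$ on $\R^2$ with the Euclidean norm and $\alpha_S=\frac12$; then $\ES_{n,1}$ may alternate between $\innerp{e_1}$ and $\innerp{e_1+e_2}$. Without this closeness, re-expanding the tracked components in the time-$j$ splitting costs a constant factor $>1$ at every link. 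Over $k$ links this compounds to $C^k$, which is exactly the obstruction described before the proof of \autoref{lem:forwardGapSepEstimate}.

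Tracking only the levels $t<s'$ is also insufficient once $s-s'\geq 2$. At link $j$, the vector $v\in\FS_{n+k,s}$ also has components along $\US_{j+1,t}$ for $s'\leq t<s$. Your one-step bound moves these into $\ES_{j,s'-1}$ only with the factor $e^{-j(\lambda_{s'-1}-\lambda_t)}$. To reach the target rate you therefore need those components to be $\lesssim e^{-j(\lambda_t-\lambda_s)}$, and that is itself a multi-step backward estimate for the pair $(s,t+1)$.

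Both problems go away if you copy the architecture of the proof of \autoref{lem:forwardGapSepEstimate}: induct on $s-s'$, and freeze the complements at the base time $n$.
\begin{itemize}
\item For unit $v\in\FS_{n+k,s}$, set $w_j=\pi_{\ES_{n,s'-1}//\FS_{j,s'}}v$ for $n\le j\le n+k$. Then $w_{n+k}=0$ and $w_j-w_{j+1}=\pi_{\ES_{n,s'-1}//\FS_{j,s'}}y_{j+1}$, where
$y_{j+1}=\pi_{\FS_{j+1,s'}//\ES_{n,s'-1}}v=\sum_{t=s'}^{s-1}P_{n,j+1-n,t}v+\pi_{\FS_{j+1,s}//\ES_{n,s-1}}v$.
\item The induction hypothesis gives $\norm{P_{n,j+1-n,t}v}\lesssim\Gap(\FS_{n+k,s},\FS_{j+1,t+1})\lesssim e^{-(j+1)(\lambda_t-\lambda_s-c\delta)}$.
\item Your one-step estimate gives $\norm{\pi_{\ES_{n,s'-1}//\FS_{j,s'}}|_{\FS_{j+1,t}}}\lesssim\Gap(\FS_{j+1,t},\FS_{j,s'})\lesssim e^{-j(\lambda_{s'-1}-\lambda_t-c\delta)}$. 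The last term is handled the same way with $t=s$.
\item Hence each link contributes $\lesssim e^{-j(\lambda_{s'-1}-\lambda_s-c\delta)}$, and the sum over $j\ge n$ is uniform in $k$. The $\delta$-exponents add up to $5(1+s-s')\delta$, which is within the stated bound.
\end{itemize}

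\textbf{The one-step estimate.} In step 3 with $t$ increasing, the projected contribution of $u_j$ for $j>t$ is $A^{m+1}\pi_{\ES_{m+1,t}//\FS_{m+1,t+1}}u_j$. This can be of order $e^{m\lambda_j}\norm{u_j}$: the part of $u_j$ lying in $\FS_{m+1,t+1}$ is killed by the projection, while the part outside it is amplified at rate at least $\lambda_t$. So this term is small only once $\norm{u_j}$ is already controlled.

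Your downward fallback works, provided you treat the whole block $e_t=\pi_{\ES_{m,t}//\FS_{m,t+1}}v$ at once rather than $u_t$ alone.
\begin{itemize}
\item $v-e_t\in\FS_{m,t+1}$ is the sum of the already-controlled $u_j$ ($t<j<s$) and $f$, so $\norm{A^{m+1}e_t}\lesssim e^{m(\lambda_s+c\delta)}$.
\item Your step 2 applies to all of $\ES_{m,t}$, so the minimum norm of $A^{m+1}$ on $\ES_{m,t}$ is $\gtrsim e^{m(\lambda_t-2\delta)}$.
\item Hence $\norm{e_t}\lesssim e^{-m(\lambda_t-\lambda_s-c'\delta)}$, starting the recursion from $t=s-1$.
\end{itemize}
The volume fallback does not help: it only bounds $d(A^{m+1}v,A^{m+1}\ES_{m,t})$ from above, which already follows from $\norm{A^{m+1}v}\lesssim e^{m(\lambda_s+2\delta)}$.

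\textbf{Comparison with the paper.} With both repairs, your route is a valid dynamical alternative. The paper uses neither a one-step estimate nor telescoping. It compares times $n$ and $n+k$ directly and extracts the backward bounds from \autoref{lem:forwardGapSepEstimate} purely algebraically: it applies Cramer's rule to the nearly block-triangular matrix of $\pi_{\ES_{n+k,S}//\FS_{n+k,S+1}}|_{\ES_{n,S}}$ in almost orthonormal singular bases, then corrects by $I-[BC]$.
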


\autoref{lem:cauchySeqSpaces} and \autoref{lem:backwardGapSepEstimate} will be directly used in the proof of \autoref{thm:sequentialMET_oneSided}, while \autoref{lem:forwardGapSepEstimate} is used to prove \autoref{lem:backwardGapSepEstimate}. 

We postpone the proofs of these three lemmas to the sequel and instead provide the proof of \autoref{thm:sequentialMET_oneSided}. For the sake of brevity, we shall suppress subscript notation from norms and operator norms, as the relevant spaces should be clear regardless.

\begin{remark}
    The proof of item (4) is more involved than the other three items, so we shall treat it separately.
\end{remark}

\begin{proof}[Proof of items (1)-(3)]
\noindent \textbf{Item (1):} For each $2\leq s < r+2$, we will define
\[F_s = \lim_{n\to\infty} F^{(s)}_{n,s}\in \G^{M_{s-1}}(\B_0)\]
which exists by \autoref{lem:cauchySeqSpaces}. In the case that $r$ is finite, this means that $F_\alpha\coloneqq F_{r+1}$. Otherwise, $F_{\alpha}\coloneqq F_1\cap F_2\cap\cdots$. For $1\leq s < r+1$, the fact that $F_{s+1}\subset F_s$ follows from the fact that $\FsOne_{n,s+1}\to F_{s+1}$, $\FsOne_{n,s}\to F_s$ and $\FsOne_{n,s+1}\subset \FsOne_{n,s}$ for all $n$.

Now, take $1\leq s < r+2$. We shall utilize the spaces constructed under \autoref{lem:cauchySeqSpaces}, and we arbitrarily fix $\alpha_S = \frac{1}{2}$ for all $2\leq S < r+2$. Throughout the following proofs, the constant in $\lesssim$ shall depend on $\delta,H,B,\text{ and}$ $\OSE_S$, where $\delta, H, B$ and $S$ are to be specified. In the partial proof of item (3), it shall also depend on $\eta>0$.

\noindent \textbf{Upper bound for item (2):} Since $F_1 = \B_0$, item (2) is clear for $s = 1$, so we may assume $s\geq 2$. Let $\delta>0, H,\text{ and } B$ be as in \autoref{lem:backwardGapSepEstimate} for $S = s$. Consider an arbitrary vector $v\in F_{s}$. We may then write
\[v = v_n + \sum_{t=1}^{s-1} u_{n,t}\]
where $v_n \in \Fs_{n,s}$ and $u_{n,t} \in \Us_{n,t}$. \autoref{lem:cauchySeqSpaces} establishes that $\Fs_{n,s'}\to F_{s'}$ for each $2\leq s'\leq s$, and so \autoref{lem:backwardGapSepEstimate} gives us that
\[\Gap\paren{F_{s},\Fs_{n,s'}} = \lim_{k\to\infty} \Gap\paren{\Fs_{n+k,s},\Fs_{n,s'}}\lesssim \exp\paren{-n(\lambda_{s'-1}-\lambda_s-5(s-1)(1+s-s')\delta)}.\]
In particular, this implies for each $1\leq t\leq s-1$ that
\[\norm{u_{n,t}}\lesssim \exp\paren{-n(\lambda_{t}-\lambda_s-5(s-1)(s-t)\delta)}\norm{v},\]
from which it follows
\[\norm{A^n u_{n,t}}\lesssim \exp(n(\lambda_t+2\delta))\norm{u_{n,t}} \lesssim \exp(n(\lambda_s+7(s-1)(s-t)\delta))\norm{v}.\]
In addition,
\[\norm{A^n v_{n}}\lesssim \exp(n(\lambda_s+2\delta))\norm{v_n}\lesssim \exp(n(\lambda_s+2\delta))\norm{v},\]
and so
\begin{equation}\label{eq:effectiveFsMaxGrowthRate}
    \norm{A^nv}\lesssim \exp(n(\lambda_s+7(s-1)^2\delta))\norm{v}.
\end{equation}
Therefore,
\begin{equation}\label{eq:FsMaxGrowthRate}
    \limsup_{n\to\infty} \frac{1}{n}\log\sup_{v\in F_s\setminus\{0\}} \frac{\norm{A^nv}}{\norm{v}}\leq \lambda_s + 7(s-1)^2\delta
\end{equation}
Since $\delta>0$ was arbitrarily small, this shows the $\limsup$ is bounded above by $\lambda_s$.

\noindent \textbf{Lower bound for item (3):} Now, assume $1 \leq s < r+1$. Let $\delta>0, H,\text{ and } B$ be as in \autoref{lem:cauchySeqSpaces} for $S = s+1$. Let $0 < \eta \leq 1$ and take an arbitrary $v\in \Ccal_s(\eta)\setminus\innerp{0}$. It follows by \autoref{lem:continuityOfDistToSpaces} that, for $n$ large enough,
\begin{equation}\label{eq:item(3)AngleEstimate}
    \norm{v_n}\geq d\paren{v_n,\FsOne_{n,s+1}} = d\paren{v,\FsOne_{n,s+1}}\geq \paren{\eta-4d_H\paren{F_{s+1},\FsOne_{n,s+1}}}\norm{v} \geq \frac{\eta}{2}\norm{v}
\end{equation}
where $v_n = \pi_{\EsOne_{n,s}//\FsOne_{n,s+1}} v$. Writing $w_n = \pi_{\FsOne_{n,s+1}//\EsOne_{n,s}} v$, we get
\begin{align*}
    \norm{A^n v}&\geq \norm{A^n v_n} - \norm{A^n w_n} \\
    &\geq (e^{2H}C_{s+1})^{-1}\frac{\eta}{2}\exp(n(\lambda_s - 2\delta))\norm{v}-e^{2H}C_{s+1}^2\exp(n(\lambda_{s+1}+2\delta))\norm{v} \\
    &\geq \paren{(e^{2H}C_{s+1})^{-1}\frac{\eta}{2}-e^{2H}C_{s+1}^2\exp(-n(\lambda_{s}-\lambda_{s+1}-4\delta))}\exp(n(\lambda_s-2\delta))\norm{v} \\
    &\geq \frac{\eta}{4e^{2H}C_{s+1}}\exp(n(\lambda_s-2\delta))\norm{v}.
\end{align*}
for $n$ large enough. As such,
\begin{equation}\label{eq:unifGrowthOverConeFs}
    \liminf_{n\to\infty} \frac{1}{n} \log \inf\curly{\frac{\norm{A^n v}}{\norm{v}}:d\paren{v,F_{s+1}}\geq \eta \norm{v}>0}\geq \lambda_s - 2\delta.
\end{equation}
Since $\delta>0$ was arbitrarily small, the $\liminf$ is bounded below by $\lambda_s$. 

\begin{remark}
    The proof of the lower bound for item (3) holds for all $v\in \B_0\setminus F_{s+1}$ with $d(v,F_{s+1}) \geq \eta \norm{v}$.
\end{remark}

\noindent \textbf{Finishing Items (2) and (3):} Collecting both \eqref{eq:FsMaxGrowthRate} and \eqref{eq:unifGrowthOverConeFs} gives us the rest of items (2) and (3) simultaneously except for the lower bound on $\liminf_{n\to\infty} \frac{1}{n}\log\norm{A^n|_{F_\alpha}}$ when $r< \infty$ or the upper bound on the $\limsup$ when $r = \infty$. In the case $r<\infty$, it follows easily from the definition of the Gelfand numbers and \autoref{assump:oneSidedSequences} that
\[l_\alpha = \lim_{n\to\infty}\frac{1}{n}\log c_{1+M_r}(A^n)\leq \liminf_{n\to\infty} \frac{1}{n}\log\norm{A^n|_{F_\alpha}}.\]
As item (2) provides the upper bound of $l_\alpha$ on the $\limsup$, we are done.

When $r = \infty$, we then get that
\[\limsup_{n\to\infty} \frac{1}{n}\log \norm{A^n|_{F_\alpha}}\leq \inf_{s\geq 1} \lim_{n\to\infty} \frac{1}{n}\log \norm{A^n|_{F_s}} = \inf_{s\geq 1}\lambda_s = l_\alpha.\]
\end{proof}

For the proof of item (4), we shall require a preliminary result. The idea for the following claim originates from Lemma 12 of \cite{Lee2024}.

\begin{claim}\label{claim:ImagesOfFastSpacesSqueeze}
    Let $1\leq s < S < r + 2$. Given $\eta\in (0,1]$, $0<\delta < \frac{1}{9S^2+4}\min_{2\leq t\leq S} \lambda_{t-1}-\lambda_{t}$, $H \geq \max_{1\leq q\leq 1+M_{S-1}} \Hplus_q(0,\delta)$, and $B > B^{+}(0,\delta)$, there is an integer $N\in\N$ and constant $C\geq 1$, dependent on $\alpha_S,\eta,\delta,H,B,\text{ and}$ $\OSE_S$, such that for all $n\geq N$ and $E\in \G\Ccal_{s}(\eta)$, one has
    \[d_{H}\paren{A^n E,A^n \ES_{n,s}}\leq C\exp(-n(\lambda_{s}-\lambda_{s+1}-4\delta)).\]
\end{claim}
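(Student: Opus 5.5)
The plan is to compare $A^nE$ with $A^n\ES_{n,s}$ by splitting each vector of $E$ along the singular value splitting $\B_0=\ES_{n,s}\oplus\FS_{n,s+1}$. First I control the directed gap $\Gap(A^nE,A^n\ES_{n,s})$. Since both spaces have dimension $M_s$, \autoref{lem:equivOfGap_and_Hausdorff}(3) then reverses the gap, and \autoref{lem:equivOfGap_and_Hausdorff}(1) converts the result into a bound on $d_H$.

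\textbf{Step 1 (the slow space converges to $F_{s+1}$ at a rate).} By \autoref{prop:SVD}(3), for each $n$ the quadruple $(\ES_{n,s},\ESp_{n,s},\FS_{n,s+1},\FSp_{n,s+1})$ lies in $\SVS_{M_s}(\CS;A^n)$, with $\CS$ depending only on $\alpha_S$ and $M_{S-1}$. Apply \autoref{lem:cauchySeqSpaces} with index $s+1$, which is allowed since $2\le s+1\le S<r+2$. Its last assertion says that the limit of these slow spaces is $F_{s+1}$. Letting $k\to\infty$ in \eqref{eq:cauchyInequality} gives
\[d_H\paren{\FS_{n,s+1},F_{s+1}}\le C\exp(-n(\lambda_s-\lambda_{s+1}-5\delta))\]
for $n\ge N$. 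This is the step I expect to need the most care: I must check that the hypotheses of \autoref{lem:cauchySeqSpaces} (order $\CS$, and the ranges of $\delta,H,B$) are implied by those of the claim, and that the constants depend only on the listed parameters. The $\delta$-range in the claim is much smaller than required there, so this should be fine. Enlarging $N$, I also get $4d_H(\FS_{n,s+1},F_{s+1})\le\eta/2$.

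\textbf{Step 2 (splitting a unit vector of $E$).} Fix $E\in\G\Ccal_s(\eta)$ and $e\in S_E$, and write $e=a_n+b_n$ with $a_n=\pi_{\ES_{n,s}//\FS_{n,s+1}}e$ and $b_n\in\FS_{n,s+1}$.

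For $a_n$: since $d(e,F_{s+1})\ge\eta$, \autoref{lem:continuityOfDistToSpaces} and Step 1 give
\[\norm{a_n}\ge d(e,\FS_{n,s+1})\ge \eta-4d_H\paren{\FS_{n,s+1},F_{s+1}}\ge \eta/2,\]
exactly as in \eqref{eq:item(3)AngleEstimate}. Decompose $a_n$ into its components in $\Us_{n,t}$ for $t\le s$. Using \eqref{eq:SVDtime-nGrowth}, the projection bounds \eqref{eq:SVDprojNorm}, and the fact that the $\USp_{n,t}$ are almost orthonormally split, I get $\norm{A^na_n}\gtrsim\eta\, e^{n(\lambda_s-2\delta)}$. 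Alternatively, this follows from $m(A^n|_{\ES_{n,s}})\ge \CS^{-1}c_{M_s}(A^n)$ together with \autoref{claim:equiv_of_vol_gelfand_pesin_funcs}.

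For $b_n$: we have $\norm{b_n}\le 1+\CS$, and by (d)(ii) of the singular value splitting and \autoref{claim:equiv_of_vol_gelfand_pesin_funcs},
\[\norm{A^nb_n}\le e^{2H}\CS^2 e^{n(\lambda_{s+1}+2\delta)}.\]

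\textbf{Step 3 (conclusion).} Combining the two bounds, for $n$ large (uniformly in $E$) we get $\norm{A^ne}\ge \tfrac12\norm{A^na_n}\gtrsim \eta\, e^{n(\lambda_s-2\delta)}$. In particular $A^n|_E$ is injective and $\dim A^nE=M_s$. Since $A^na_n\in A^n\ES_{n,s}$,
\[\frac{d(A^ne,A^n\ES_{n,s})}{\norm{A^ne}}\le \frac{\norm{A^nb_n}}{\norm{A^ne}}\lesssim \eta^{-1}\exp(-n(\lambda_s-\lambda_{s+1}-4\delta)).\]
Taking the supremum over $e\in S_E$ bounds $\Gap(A^nE,A^n\ES_{n,s})$ by $C\exp(-n(\lambda_s-\lambda_{s+1}-4\delta))$, with $C$ independent of $E$. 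Once this is below $1/(2M_s)$, \autoref{lem:equivOfGap_and_Hausdorff}(3) bounds the reverse gap by twice $M_s$ times the same quantity. Part (1) of that lemma then gives the $d_H$ bound for all $n\ge N$, with $N$ and $C$ depending only on $\alpha_S,\eta,\delta,H,B$ and $\OSE_S$.
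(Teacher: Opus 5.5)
Your proposal is correct and follows essentially the same route as the paper. The paper also takes the item-(3) lower bound $m(A^n|_E)\gtrsim \eta\, e^{n(\lambda_s-2\delta)}$, controls the $\FSp_{n,s+1}$-component of $A^n v$ through $\norm{A^n|_{\FS_{n,s+1}}}$, bounds $\Gap(A^nE,A^n\ES_{n,s})$, and finishes with \autoref{lem:equivOfGap_and_Hausdorff}. The differences are cosmetic. You rederive the item-(3) bound directly for the $(S)$-family, using the uniqueness clause of \autoref{lem:cauchySeqSpaces} to identify the limit with $F_{s+1}$. You also bound $d(A^ne,A^n\ES_{n,s})$ by $\norm{A^nb_n}$ instead of going through \autoref{prop:gapProjEquiv}, which makes explicit the injectivity of $A^n|_E$ needed to reverse the gap.
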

\begin{remark}
    We do not need $\delta$ to be so small for this particular proof, but it will be helpful for item (4).
\end{remark}
\begin{proof}
    Let $n$ be large enough. Take $E\in \G\Ccal_s(\eta)$. The proof of item (3) shows that
    \[\frac{\eta}{4e^{2H}C_{s+1}}\exp(n(\lambda_s-2\delta))\leq m(A^n|_{E})\]
    for all $n$ large enough. As such, if $v\in E$ is such that $\norm{A^n v} = 1$, then
    \[\norm{v}\leq \frac{1}{m(A^n|_{E})}\lesssim \exp(n(-\lambda_s+2\delta)).\]
    Now, since
    \[\pi_{F_{n,s+1}'//E_{n,s}'} A^n v = A^n \pi_{F_{n,s+1}//E_{n,s}} v,\]
    it follows that
    \begin{align*}
        \norm{\pi_{F_{n,s+1}'//E_{n,s}'} A^n v} &\lesssim \norm{A^n|_{F_{n,s+1}}}\norm{v}\\
        &\lesssim \exp(n(\lambda_{s+1}+2\delta))\exp(n(-\lambda_s+2\delta)) \\
        &= \exp(-n(\lambda_s-\lambda_{s+1}-4\delta))
    \end{align*}
    for $n$ large enough. Since $v$ was arbitrary, this means
    \[\Gap(A^nE,A^nE_{n,s})\leq \norm{\pi_{F_{n,s+1}'//E_{n,s}'}|_{A^nE}}\lesssim\exp(-n(\lambda_s-\lambda_{s+1}-4\delta)).\]
    The rest follows by \autoref{lem:equivOfGap_and_Hausdorff}.
\end{proof}

\begin{proof}[Proof of item (4)]
    Let $1\leq s < r+1$. Take $0 < \eta \leq 1$ and $E\in\G\Ccal_s(\eta)$, and let $\delta, H, \text{ and } B$ be as in \autoref{claim:ImagesOfFastSpacesSqueeze} for $S = s+1$. Lastly, take $n$ large enough.

    \noindent\textbf{Step one:} The first step is to define an inductive procedure that divides $E$ into an Oseledets-like splitting of ``fast spaces." Let $U_s \coloneqq E \cap F_{s}$, which is clearly an $m_s$-dimensional subspace. By definition of $\G\Ccal_s(\eta)$ and minimal angle, it follows that 
    \[\sin\theta(U_s,F_{s+1})\geq \sin\theta(E,F_{s+1}) \geq \eta.\]
    By \autoref{lem:closed_q_dim_complemented}, there is a subspace $E_{s-1}\subset E$ of dimension $M_{s-1}$ such that
    \[\sin\theta(U_{s},E_{s-1})\geq \frac{1}{\sqrt{m_s}}.\]
    Let $U_{s - 1}\coloneqq E_{s-1}\cap F_{s-1}$, which, similarly to before, has dimension $m_{s-1}$. Now, since
    \[\pi_{E_{s-1}//F_{s}} = \paren{\pi_{E_{s-1}//U_s}|_{E_s}} \pi_{E_{s}//F_{s+1}},\]
    it follows that
    \[\sin\theta(U_{s-1},F_s)\geq\sin\theta(E_{s-1},F_s)\geq \frac{\eta}{2\sqrt{m_s}}.\]
    Keeping this process going inductively, we get for $1\leq t\leq s$ a collection of spaces
    \[E_t = U_1\oplus \cdots \oplus U_t\subseteq E\]
    such that
    \begin{equation}\label{eq:OseledetsLikeSplittingAngles}
        \sin\theta(U_t,F_{t+1})\geq\sin\theta(E_t,F_{t+1})\geq \frac{\eta}{2^{s-t}\sqrt{m_s\cdots m_{t}}} \eqqcolon \eta_t.
    \end{equation}
    Following the proofs of items (2) and (3), we get that, for all $\delta > 0$ and $n\in\N$,
    \[\exp(n(\lambda_{t}-2\delta))\lesssim m(A^n|_{E_t})\leq m(A^n|_{U_t})\leq \norm{A^n|_{U_t}}\lesssim \exp(n(\lambda_{t}+7t^2\delta)).\]

    \noindent\textbf{Step two:} The next step is to compute angle estimates. Let $2\leq t\leq s$ and take $u\in U_t\setminus\innerp{0}$. Then,
    \begin{equation}\label{eq:InitialAngleOfImagesEstimate}
        d(A^n u, A^n E_{t-1}) \geq d(A^n u, A^nE_{n,t-1}) - 4\norm{A^n u}d_H(A^n E_{t-1},A^n E_{n,t-1}).
    \end{equation}
    Of course,
    \[d(A^n u, A^nE_{n,t-1}) = d(\pi_{F'_{n,t}//E'_{n,t-1}}A^n u, E_{n,t-1}')\geq \frac{1}{2}\norm{A^n \pi_{F_{n,t}//E_{n,t-1}} u}\sin\theta(E_{n,t-1}',F_{n,t}'),\]
    and $\sin\theta(E_{n,t-1}',F_{n,t}')\geq C_{s+1}^{-1} > 0$ by construction. Now, we may write
    \[\pi_{F_{n,t}//E_{n,t-1}} u = u_{n,t} + w_{n,t}\]
    where $u_{n,t}\in U_{n,t}$ and $w_{n,t} \in F_{n,t+1}$. Take $v_{n,t} = \pi_{E_{n,t-1}//F_{n,t}} u$ so that
    \[e_{n,t}\coloneqq \pi_{E_{n,t}//F_{n,t+1}} u = v_{n,t} + u_{n,t}.\]
    By \eqref{eq:item(3)AngleEstimate}, it follows that $\norm{e_{n,t}}\geq \frac{\eta_t}{2}\norm{u}$ for $n$ large enough. However, by \autoref{lem:cauchySeqSpaces} and the fact that $u \in F_{t}$, it follows that
    \[\norm{v_{n,t}}\lesssim \exp(-n(\lambda_{t-1}-\lambda_{t}-5\delta))\norm{u}\leq \frac{\eta_t}{4}\norm{u}\]
    for $n$ large enough. Therefore,
    \[\norm{u_{n,t}}\geq\norm{e_{n,t}}-\norm{v_{n,t}}\geq \frac{\eta_t}{4}\norm{u},\]
    which means
    \begin{align*}
        \norm{A^n\pi_{F_{n,t}//E_{n,t-1}} u} &\geq \norm{A^n u_{n,t}}-\norm{A^n w_{n,t}} \\
        &\geq m\paren{A^n|_{U_{n,t}}}\norm{u_{n,t}} - \norm{A^n|_{F_{n,t+1}}}\norm{w_{n,t}} \\
        &\geq \exp(n(\lambda_{t}-2\delta))\frac{\eta_t}{4e^{2H} C_{s+1}}\norm{u} - e^{2H}C_{s+1}^2\exp(n(\lambda_{t+1}+2\delta))\norm{u} \\
        &\gtrsim e^{-9s^2\delta n}\norm{A^n|_{U_t}}\norm{u}
    \end{align*}
    for $n$ large enough. Substituting the above and \autoref{claim:ImagesOfFastSpacesSqueeze} into \eqref{eq:InitialAngleOfImagesEstimate} and using the fact that $\lambda_{t-1}-\lambda_t - 4\delta > 9s^2 \delta$, we get
        \[d(A^n u, A^n E_{t-1})\geq (2C_{s+1})^{-1}\norm{A^n\pi_{F_{n,t}//E_{n,t-1}} u}-d_H(A^n E_{t-1},A^n E_{n,t-1})\norm{A^nu} \gtrsim e^{-9s^2\delta n}\norm{A^n|_{U_t}}\norm{u}\]
    for all $n$ large enough. Therefore,
    \begin{equation}\label{eq:FinalAngleOfImagesEstimate}
        \sin\theta(A^n U_{t},A^nE_{t-1})\geq \inf_{u\in U_{t}\setminus\innerp{0}}\frac{1}{\norm{A^n|_{U_{t}}}\norm{u}}d(A^nu,A^n E_{t-1})\gtrsim e^{-9s^2\delta n}
    \end{equation}
    for $n$ large enough.

    \noindent\textbf{Step three:} The final step is to combine steps one and two to compute volume estimates. By \autoref{lem:determinantProdDecomp} and \eqref{eq:FinalAngleOfImagesEstimate}, it follows that
    \[\det(A^n|E)\gtrsim \det(A^n|U_1)\cdots\det(A^n|U_s) e^{-9s^2M_sn\delta}.\]
    Now,
    \[\det(A^n|U_t)\geq (m_t)^{-m_t} m(A^n|U_t)^{m_t}\gtrsim\exp(nm_t(\lambda_t-2\delta)).\]
    \[\inf_{E\in \G\Ccal_s(\eta)}\det(A^n|E)\gtrsim \exp(n(m_1\lambda_1+\cdots+m_s\lambda_s-11s^2M_s\delta)),\]
    and thus
    \[\liminf_{n\to\infty} \frac{1}{n}\log \inf_{E\in \G\Ccal_s(\eta)}\det(A^n|E)\geq \sum_{t=1}^s m_t\lambda_t - 11s^2M_s\delta.\]
    Since $\delta>0$ was arbitrary, the $\liminf$ must be bounded below by the desired sum. Moreover,
    \[\det(A^n|E)\leq V_{M_s}(A^n)\leq e^{H+n(m_1\lambda_1+\cdots+m_s\lambda_s+\delta)},\]
    so the desired limit converges uniformly to the desired sum.
\end{proof}

\begin{remark}
    Notice that \eqref{eq:FinalAngleOfImagesEstimate} is a weaker form of item (1) in \autoref{thm:refined_sequentialMET_oneSided}.
\end{remark}

\subsubsection*{Proof of \autoref{lem:cauchySeqSpaces}}
It will be useful to start at a higher level of generality in order to establish a basic one-step inequality for future proofs. As such, let $2\leq s'\leq s$ and suppose for some $\tC_{s'}\geq 1$ we also have a sequence $(\tE_{n,s'-1},\tE_{n,s'-1}',\tF_{n,s'},\tF_{n,s'}')\in \SVS_{M_{s'-1}}(\tC_{s'};A^n)$ for all $n$.

\begin{claim}\label{claim:oneStepGapBound}
    For all $n\geq 0$,
    \begin{equation}\label{eq:oneStepGapBound}
    \Gap(F_{n,s},\tF_{n+1,s'}) \leq  \tC_{s'}^{2} C_{s}\norm{A_{n}}\frac{c_{1+M_{s-1}}(A^n)}{c_{M_{s'-1}}(A^{n+1})}.
\end{equation}
\end{claim}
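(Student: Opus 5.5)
To bound $\Gap(F_{n,s},\tF_{n+1,s'})$, I will take an arbitrary unit vector $v\in F_{n,s}$ and estimate $d(v,\tF_{n+1,s'})$ through the projection onto the complementary fast space. Write $\tilde\pi = \pi_{\tE_{n+1,s'-1}//\tF_{n+1,s'}}$ for the projection of $\B_0$ onto $\tE_{n+1,s'-1}$ along $\tF_{n+1,s'}$. Since $v-\tilde\pi v\in \tF_{n+1,s'}$, we have
\[
d(v,\tF_{n+1,s'})\le \norm{\tilde\pi v}.
\]
So it suffices to show that $\norm{\tilde\pi v}$ is small. Equivalently, by \autoref{prop:gapProjEquiv}, it suffices to bound $\norm{\tilde\pi|_{F_{n,s}}}$.

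The key step is to use the minimal growth of $A^{n+1}$ on the fast space $\tE_{n+1,s'-1}$ to control the length of $\tilde\pi v$ by the length of its image. By property (d)(i) of \autoref{defn:fast-slowSplit},
\[
\norm{\tilde\pi v}\le \frac{\norm{A^{n+1}\tilde\pi v}}{m(A^{n+1}|_{\tE_{n+1,s'-1}})}\le \tC_{s'}\,\frac{\norm{A^{n+1}\tilde\pi v}}{c_{M_{s'-1}}(A^{n+1})}.
\]
Next I use equivariance, property (c), which gives $A^{n+1}\tE_{n+1,s'-1}=\tE'_{n+1,s'-1}$ and $A^{n+1}\tF_{n+1,s'}\subseteq \tF'_{n+1,s'}$. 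Together these yield the intertwining identity
\[
A^{n+1}\tilde\pi = \pi_{\tE'_{n+1,s'-1}//\tF'_{n+1,s'}}A^{n+1},
\]
exactly as in the proof of \autoref{claim:ImagesOfFastSpacesSqueeze}. Property (d)(iii) bounds this target projection by $\tC_{s'}$, so
\[
\norm{A^{n+1}\tilde\pi v}\le \tC_{s'}\norm{A^{n+1}v}.
\]

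Finally, I bound the growth on the slow space by factoring $A^{n+1}=A_nA^n$:
\[
\norm{A^{n+1}v}\le \norm{A_n}\,\norm{A^n|_{F_{n,s}}}\le \norm{A_n}\,C_s\,c_{1+M_{s-1}}(A^n),
\]
where the last inequality is property (d)(ii) for the splitting in $\SVS_{M_{s-1}}(C_s;A^n)$. Chaining the three displays gives exactly $\tC_{s'}^2C_s\norm{A_n}\,c_{1+M_{s-1}}(A^n)/c_{M_{s'-1}}(A^{n+1})$, and taking the supremum over unit $v$ gives \eqref{eq:oneStepGapBound}.

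There is no real obstacle; the only point requiring care is the intertwining identity. It holds because $v=e+f$ with $e\in\tE$ and $f\in\tF$ maps to $A^{n+1}e+A^{n+1}f$, where $A^{n+1}e\in\tE'$ and $A^{n+1}f\in\tF'$, so this is the unique decomposition of $A^{n+1}v$ in the target splitting. The degenerate case $n=0$, where $F_{0,s}$ and the spaces are trivial or the Gelfand numbers vanish, needs the convention that the bound is read as vacuous or trivially true.
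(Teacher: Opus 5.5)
Your proof is correct and follows the paper's argument step for step. You bound the gap by $\norm{\pi_{\tE_{n+1,s'-1}//\tF_{n+1,s'}}|_{F_{n,s}}}$, invert the minimum-norm bound (d)(i) on the fast space, and use the intertwining $A^{n+1}\pi_{\tE_{n+1,s'-1}//\tF_{n+1,s'}}=\pi_{\tE'_{n+1,s'-1}//\tF'_{n+1,s'}}A^{n+1}$ together with (d)(iii), then finish with $\norm{A^{n+1}v}\le\norm{A_n}\norm{A^n|_{F_{n,s}}}$ and (d)(ii). Your closing caveat about $n=0$ is unnecessary, since $A^0=\id$ is a legitimate operator and the argument runs uniformly in $n$.
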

\begin{proof}
By \autoref{prop:gapProjEquiv},
\[\Gap(F_{n,s},\tF_{n+1,s'})\leq\norm{\pi_{\tE_{n+1,s'-1}//\tF_{n+1,s'}}|_{F_{n,s}}},\]
Take $v\in F_{n,s}$ of norm $1$ and let
\[u = \pi_{\tE_{n+1,s'-1}//\tF_{n+1,s'}} v.\]
Then for all $n\geq 0$, we have
\[\norm{A^{n+1}u}\geq \tC_{s'}^{-1}c_{M_{s'-1}}(A^{n+1})\norm{u}\]
and
\[\norm{A^{n+1}v} \leq \norm{A_{n}}\norm{A^n v}\leq C_{s}\norm{A_{n}}c_{1+M_{s-1}}(A^n).\]
Clearly,
\[A^{n+1}u = \pi_{\tE'_{n+1,s'-1}//\tF'_{n+1,s'}}A^{n+1}v,\]
so $\norm{A^{n+1}u}\leq \tC_{s'}\norm{A^{n+1}v}$. Therefore,
\[\norm{u} \leq \tC_{s'}^{2} C_{s}\norm{A_{n}}\frac{c_{1+M_{s-1}}(A^n)}{c_{M_{s'-1}}(A^{n+1})}\]
In other words,
\[\Gap(F_{n,s},\tF_{n+1,s'})\leq \norm{\pi_{\tE_{n+1,s'-1}//\tF_{n+1,s'}}|_{F_{n,s}}}\leq  \tC_{s'}^{2} C_{s}\norm{A_{n}}\frac{c_{1+M_{s-1}}(A^n)}{c_{M_{s'-1}}(A^{n+1})}.\]
\end{proof}
\begin{remark}\label{remark:generalizationOfOneStepGapBound}
    The derivation of \eqref{eq:oneStepGapBound} actually only requires that 
    \[\norm{A^n|_{F_{n,s}}}\leq C_sc_{1+M_{s-1}}(A^n)\]
    for some constant $C_s$ independent of $n$.
\end{remark}

We may now prove the lemma. We note that this part is rather standard even for Banach space cocycles.
\begin{proof}[Proof of \autoref{lem:cauchySeqSpaces}]
    Now assume $s=s'$ and $(E_{n,s-1},E_{n,s-1}',F_{n,s},F_{n,s'})=(\tE_{n,s-1},\tE_{n,s-1}',\tF_{n,s},\tF_{n,s'})$. By \autoref{claim:equiv_of_vol_gelfand_pesin_funcs}, $\norm{A_{n}}c_{1+M_{s-1}}(A^n) \lesssim_{M_{s-1},H,B} \exp(n(\lambda_{s}+3\delta))$ and $c_{M_{s-1}}(A^{n+1})\gtrsim_{M_{s-1},H,\delta,\lambda_{s-1}} \exp(n(\lambda_{s-1}-2\delta))$, and so
    \[2M_{s-1}C_s^3\norm{A_{n}}c_{1+M_{s-1}}(A^n)\leq c_{M_{s-1}}(A^{n+1})\]
    for $n$ large enough. Thus, by \autoref{lem:equivOfGap_and_Hausdorff} and \autoref{claim:oneStepGapBound}, it follows that
    \[d_H(F_{n,s},F_{n+1,s})\leq 4M_{s-1}\Gap(F_{n,s},F_{n+1,s})\leq 4M_{s-1}C_{s}^{3}\norm{A_{n}}\frac{c_{1+M_{s-1}}(A^n)}{c_{M_{s-1}}(A^{n+1})}.\]
    By repeated application of the triangle inequality,
    \begin{align*}
        d_H(F_{n,s},F_{n+k,s}) \leq \sum_{j=0}^{k-1} d_H(F_{n+j,s},F_{n+j+1,s}) \leq 4M_{s-1}C_{s}^3 \sum_{j=n}^{\infty}\norm{A_{j}} \frac{c_{1+M_{s-1}}(A^j)}{c_{M_{s-1}}(A^{j+1})}.
    \end{align*}
    Of course, as we estimated above, we get that $\norm{A_{j}} \frac{c_{1+M_{s-1}}(A^j)}{c_{M_{s-1}}(A^{j+1})}\lesssim \exp(-j(\lambda_{s-1}-\lambda_s-5\delta))$ for $j\geq 0$. As such,
    \[d_H(F_{n,s},F_{n+k,s})\lesssim \exp(-n(\lambda_{s-1}-\lambda_s-5\delta))\]
    for $n$ large enough and $k\geq 1$. This proves that $(F_{n,s})_n$ is a Cauchy sequence. 

    If $(\tE_{n,s-1},\tE_{n,s-1}',\tF_{n,s},\tF_{n,s}')\in \SVS_{M_{s-1}}(\tC_{s};A^n)$  is another sequence of singular value splittings of order $\tC_s\geq 1$, then \autoref{cor:uniquenessOfSlowSpaces} gives us $\tF_{n,s}$ and $F_{n,s}$ have the same limit. This corollary is a sort of quantitative uniqueness result, and we prove it below.
\end{proof}

\begin{cor}[Uniqueness of Slow Spaces]\label{cor:uniquenessOfSlowSpaces}
    Let $2\leq s < r+2$, $C_s \geq 1$, and $(E_{n,s-1},E'_{n,s-1},F_{n,s},F_{n,s}')\in \SVS_{M_{s-1}}(C_s;A^n)$. Let $F_{s}$ be the $\G(\B_0)$-gap-topology limit of $F_{n,s}$ as $n\to\infty$. Take $0\leq \e < \lambda_{s-1}-\lambda_{s}$ and suppose $\tF_{n,s}\in \G^{M_{s-1}}(\B_0)$ is such that, for some $\tC_s\geq 1$,
    \[\norm{A^n|_{\tF_{n,s}}} \leq \tC_s e^{n\e} c_{1+M_{s-1}}(A^n)\]
    for all $n\geq 1$. Take $H,B,$ and $\delta>0$ as in \autoref{lem:cauchySeqSpaces}. Assume further that $5\delta < \lambda_{s-1}-\lambda_s - \e$ and set $\gamma \coloneqq \lambda_{s-1}-\lambda_s - (\e+5\delta) > 0$. Then there are integers $K,N\in \N$ and a constant $C\geq 1$, all only dependent on $C_s, \delta, H, B, \lambda_{s},\lambda_{s-1},$ and $M_{s-1}$ such that for all $n\geq \max(N, (K + \log \tC_s)/\gamma)$,
    \[d_H(\tF_{n,s},F_{s})\leq \tC_sC\exp(-n\gamma).\]
    In particular, $\tF_{n,s}\to F_s$ as $n\to\infty$ in the gap-topology.
\end{cor}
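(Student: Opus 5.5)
The plan is to compare $\tF_{n,s}$ directly with $F_{n,s}$ via the one-step gap mechanism of \autoref{claim:oneStepGapBound}, and then pass to the limit $F_s$ using \autoref{lem:cauchySeqSpaces}. By \autoref{remark:generalizationOfOneStepGapBound}, the derivation of \eqref{eq:oneStepGapBound} only needs a growth bound of the form $\norm{A^n|_{\tF_{n,s}}}\leq \tC_s e^{n\e}c_{1+M_{s-1}}(A^n)$ on the "source" space. The hypothesis of the corollary provides exactly this, with the harmless extra factor $e^{n\e}$. It does not even require using the one-step version: the same argument works with $n$ in place of $n+1$.

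\textbf{Step one.} I would show $\Gap(\tF_{n,s},F_{n,s})\lesssim \tC_s e^{-n\gamma'}$ with $\gamma'=\lambda_{s-1}-\lambda_s-\e-4\delta$.
\begin{itemize}
\item Take $v\in S_{\tF_{n,s}}$ and set $u=\pi_{E_{n,s-1}//F_{n,s}}v$.
\item By (d)(i) of \autoref{defn:fast-slowSplit}, $\norm{A^nu}\geq C_s^{-1}c_{M_{s-1}}(A^n)\norm{u}$.
\item By (d)(iii), $\norm{A^nu}=\norm{\pi_{E'_{n,s-1}//F'_{n,s}}A^nv}\leq C_s\tC_se^{n\e}c_{1+M_{s-1}}(A^n)$.
\item Hence $\norm{u}\leq C_s^2\tC_s e^{n\e}c_{1+M_{s-1}}(A^n)/c_{M_{s-1}}(A^n)$.
\item \autoref{claim:equiv_of_vol_gelfand_pesin_funcs}, with $H$ bounding $\Hplus$ at indices $M_{s-1}-1,M_{s-1},M_{s-1}+1$, bounds this ratio by a constant depending on $H,M_{s-1}$ times $e^{-n(\lambda_{s-1}-\lambda_s-4\delta)}$.
\item \autoref{prop:gapProjEquiv} then converts the bound on $\norm{u}$ into the gap bound.
\end{itemize}

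\textbf{Step two.} I would reverse the gap so as to control the Hausdorff distance. Both spaces lie in $\G^{M_{s-1}}(\B_0)$, so once $\Gap(\tF_{n,s},F_{n,s})<1/(2M_{s-1})$, item (3) of \autoref{lem:equivOfGap_and_Hausdorff} gives $\Gap(F_{n,s},\tF_{n,s})\leq 2M_{s-1}\Gap(\tF_{n,s},F_{n,s})$. Item (1) of the same lemma then gives $d_H\leq 4M_{s-1}\Gap(\tF_{n,s},F_{n,s})$.

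The smallness requirement $C'\tC_se^{-n\gamma'}<1/(2M_{s-1})$ is where the threshold $n\geq (K+\log\tC_s)/\gamma$ comes from. Here $K$ absorbs $\log(2M_{s-1}C')$ and uses $\gamma\leq\gamma'$. I expect this bookkeeping to be the only delicate point: the constant must be linear in $\tC_s$, and the threshold must depend on $\tC_s$ only through $\log\tC_s$.

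\textbf{Step three.} Letting $k\to\infty$ in \eqref{eq:cauchyInequality} gives $d_H(F_{n,s},F_s)\leq Ce^{-n(\lambda_{s-1}-\lambda_s-5\delta)}\leq Ce^{-n\gamma}$ for $n\geq N$. The triangle inequality then gives $d_H(\tF_{n,s},F_s)\leq \tC_sC''e^{-n\gamma}$, using $\tC_s\geq1$ and $e^{-n\gamma'}\leq e^{-n\gamma}$.

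Applying this with $\tF_{n,s}$ the slow spaces of another singular value splitting sequence ($\e=0$, $\tC_s$ its order) yields the uniqueness claim needed in \autoref{lem:cauchySeqSpaces}. This is not circular, since only the Cauchy part of that lemma is used.
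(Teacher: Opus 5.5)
Your proposal is correct and follows essentially the same route as the paper: a gap bound coming from the singular value splitting, reversal of the gap via \autoref{lem:equivOfGap_and_Hausdorff} once $n\geq (K+\log\tC_s)/\gamma$, and the triangle inequality with the Cauchy estimate of \autoref{lem:cauchySeqSpaces}. The only difference is that you compare $\tF_{n,s}$ with $F_{n,s}$ at the same time $n$, whereas the paper uses \autoref{claim:oneStepGapBound} to compare with $F_{n+1,s}$; your version therefore avoids the factor $\|A_n\|$ (and the use of $B$ in that step) and gets the slightly better intermediate rate $\gamma+\delta$, which does not change the final statement.
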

\begin{proof}
    In this proof, the constant in $\lesssim$ shall depend only on $C_s, \delta, H, B, \lambda_{s},\lambda_{s-1},$ and $M_{s-1}$ and let $C\geq 1$ be a generic constant depending on this data. By \autoref{claim:oneStepGapBound} and \autoref{remark:generalizationOfOneStepGapBound}, it follows that
    \[\Gap(\tF_{n,s},F_{n+1,s})\leq C_s^2 \tC_s e^{n\e} \norm{A_n}\frac{c_{1+M_{s-1}}(A^n)}{c_{M_{s-1}}(A^{n+1})}\leq  \tC_s C \exp(-n\gamma).\]
    for all $n\geq 0$. If $K \geq \log(2M_{s-1}C)$ and $n\geq [K + \log(\tC_s)]/\gamma$, we get by \autoref{lem:equivOfGap_and_Hausdorff} that
    \[d_H(\tF_{n,s},F_{n+1,s})\lesssim \tC_s\exp(-n\gamma).\]
    For $n$ large enough (dependent only on $C_s, \delta, H, B, \lambda_{s},\lambda_{s-1},$ and $M_{s-1}$), we know that
    \[d_H(F_{n+1,s},F_{s})\lesssim \exp(-n(\lambda_{s-1}-\lambda_s-5\delta)),\]
    so by the triangle inequality, we obtain the estimate we seek. Since $\e+5\delta < \lambda_{s-1}-\lambda_{s}$, we obtain exponential convergence, and thus uniqueness.
\end{proof}

\subsubsection*{Proof of \autoref{lem:forwardGapSepEstimate}}
\paragraph{Proof strategy} As we noted in the prologue of this subsection, we shall consider an arbitrary index $2\leq S < r+2$. Take $\delta, H,\text{ and } B$ as in the statement of the lemma. We shall proceed inductively, first assuming that there is a $0\leq T\leq S-2$ such that for all pairs $2\leq s'\leq s\leq S$ with $s-s'\leq T$, we have
\[\Gap(F_{n,s},F_{n+k,s'})\leq C \exp(-n(\lambda_{s'-1}-\lambda_{s}-5(1+T)\delta))\]
for all $n\geq N$, where $N\in \N$ and $C\geq 1$ depend on $\alpha_S, \delta, H, B$ and $\OSE_S$. We then derive this estimate for $T+1$, with the induction terminating at $T = S-2$. The base case $T=0$ is proven by \autoref{lem:cauchySeqSpaces}. The constants $N$ and $C$ may differ going from $T$ to $T+1$, but since our induction is finite, this is not an issue. From now on, unnamed constants shall depend solely on $\alpha_S, \delta, H, B$ and $\OSE_S$.

This proof follows along the lines of Ruelle's strategy for the proof of equation (1.8) in \cite{Ruelle1982}, although there is a difference in how we handle projection estimates. Fix a large $n$ (we shall drop $\delta$'s in the following exposition). By \autoref{prop:gapProjEquiv}, $\Gap(F_{n,s},F_{n+k,s'})\leq\norm{\pi_{E_{n+k,s'-1}//F_{n+k,s'}}|_{F_{n,s}}}$, so we would like to take $v\in F_{n,s}$ of unit norm and consider $w_{k} = \pi_{E_{n+k,s'-1}//F_{n+k,s'}} v$. Ruelle obtains the formula
\[w_{k+1} = \pi_{E_{n+k,s'-1}//F_{n+k,s'}} w_k + u_k\]
and, via the induction argument, derives the estimate $\norm{u_k}\lesssim e^{-n(\lambda_{s'-1}-\lambda_s)}e^{-\gamma k}$, $\gamma>0$, at each step $k$. As Ruelle has access to inner products, he can take $E_{n+k,s'-1}$ to be the orthogonal complement of $F_{n+k,s'}$, which means $\pi_{E_{n+k,s'-1}//F_{n+k,s'}}$ has unit operator norm, and so $\norm{w_{k+1}}\leq \norm{w_k} + Ce^{-n(\lambda_{s'-1}-\lambda_s)}e^{-\gamma k}$, giving us a convergent exponential series that bounds $\norm{w_k}$. In our case, however, we can only assume $\norm{\pi_{E_{n+k,s'}//F_{n+k,s'}}}\lesssim 1$, which means we must pay an extra factor of $\norm{\pi_{E_{n+k,s'}//F_{n+k,s'}}}$ at every step. Fortunately, since complementation is open, we may instead fix a single complement for all $\paren{F_{n+k,s'}}_{k\geq 0}$ by \autoref{lem:cauchySeqSpaces}. In particular, for $v\in F_{n,s}$, we can instead take $w_k = \pi_{E_{n,s'-1}//F_{n+k,s'}} v$. Since $w_k\in E_{n,s'-1}$ for all $k$, we obtain $w_{k+1} = w_k + u_k'$ where we estimate $\norm{u_k'}\lesssim e^{-n(\lambda_{s'-1}-\lambda_s)}e^{-\gamma k}$ via the induction argument. As before, we get a summable bounding exponential series, as we have avoided paying a factor of $\norm{\pi_{E_{n,s'-1}//F_{n+k,s'}}}$, and can use \autoref{prop:gapProjEquiv} to bound $\Gap(F_{n,s},F_{n+k,s'})$.

\begin{remark}
    We emphasize that the induction argument we refer to does not induct on $k$; it inducts on the number of spectral levels between the time-$n$ slow space and time-$(n+k)$ fast space.
\end{remark}

\paragraph{Preliminary constructions} Before we begin, we note the following consequence of \autoref{lem:cauchySeqSpaces} and \autoref{prop:complementationOpen}: for all $n$ large enough, $k\geq 0$, and $1\leq t\leq S$, $\B_0 = E_{n,t-1} \oplus F_{n+k,t}$ is a topological splitting with
\begin{equation}\label{eq:uniformTopSplitBound}
    \norm{\pi_{E_{n,t-1}//F_{n+k,t}}} \lesssim 1.
\end{equation}
Together with \autoref{prop:gapProjEquiv}, we get
\begin{equation}\label{eq:mixedGapBound}
    \norm{\pi_{E_{n,t'-1}//F_{n+k,t'}}|_{F_{n,t}}}\lesssim \Gap(F_{n,t},F_{n+k,t'}).
\end{equation}
Thus, for the sake of brevity, we will write
\[\pi_{n,k,t-1}^E = \pi_{E_{n,t-1}//F_{n+k,t}} \text{ and } \pi_{n,k,t}^F = \pi_{F_{n+k,t}//E_{n,t-1}}.\]
It will also be helpful to consider the linear operators $P_{n,k,t}:\B_0\to F_{n+k,t}\cap E_{n,t}$ defined by
\[P_{n,k,t} \coloneqq \pi_{n,k,t}^E - \pi_{n,k,t-1}^E = \pi_{n,k,t}^F - \pi_{n,k,t+1}^F\]
for $1\leq t\leq S-1$. Since $F_{n+k,t+1}\subset F_{n+k,t}$ and $E_{n,t-1}\subset E_{n,t}$, it is easy to see that
\[P_{n,k,t} = P_{n,k,t} \pi_{n,k,t}^E,\]
so by \eqref{eq:mixedGapBound},
\begin{equation}\label{eq:weirdProjBound}
    \norm{P_{n,k,t'}|_{F_{n,t}}} \lesssim \norm{\pi_{n,k,t'}^E|_{F_{n,t}}}\lesssim \Gap(F_{n,t},F_{n+k,t'+1}).
\end{equation}
We now proceed with the induction argument. 

\paragraph{Induction step}
Fix $2\leq s\leq S$. Let $s' = s - T - 1$ and assume $s'\geq 2$, as otherwise the result is trivial. Take $n$ large and $k\geq 0$ and let $v\in F_{n,s}$ be of unit norm. Write $w_k = \pi_{n,k,s'-1}^Ev$. One can easily see
\[v = \pi_{n,k,s}^F v +\paren{w_k + \sum_{t=s'}^{s-1} P_{n,k,t}v}.\]
Fix $s'\leq t\leq s-1$. Then, by the induction hypothesis and \eqref{eq:weirdProjBound},
\[\norm{P_{n,k,t}v}\lesssim \exp(-n(\lambda_{t}-\lambda_{s}-5(1+T)\delta))\]
Moreover, $P_{n,k,t}v\in F_{n+k,t}$, so by \autoref{claim:oneStepGapBound} (after replacing $n$ by $n+k$), \eqref{eq:uniformTopSplitBound}, and \autoref{prop:gapProjEquiv}
\begin{align*}
    \norm{\pi_{n,k+1,s'-1}^E P_{n,k,t}v} &\lesssim \exp(-(n+k)(\lambda_{s'-1}-\lambda_{t}-5\delta))\norm{P_{n,k,t}v} \\
    &\lesssim \exp(-n(\lambda_{s'-1}-\lambda_s-5(2+T)\delta))\exp(-k(\lambda_{s'-1}-\lambda_t-5\delta)).
\end{align*}
Similarly,
\[\norm{\pi_{n,k+1,s'-1}^E\pi_{n,k,s}^F v}\lesssim \exp(-(n+k)(\lambda_{s'-1}-\lambda_{s}-5\delta))\]
since $\pi_{n,k,s}^F v\in F_{n+k,s}$. Notice now that
\[w_{k+1} = w_{k} + \pi_{n,k+1,s'-1}^E\pi_{n,k,s}^F v + \sum_{t=s'}^{s-1} \pi_{n,k+1,s'-1}^E P_{n,k,t}v,\]
which means, by combining \eqref{eq:uniformTopSplitBound} and the previous two estimates, that
\[\norm{w_{k+1}} \leq \norm{w_k} + C\exp(-n(\lambda_{s'-1}-\lambda_s-5(2+T)\delta))\sum_{t=s'}^{s} \exp(-k(\lambda_{s'-1}-\lambda_t-5\delta))\]
where $C$ only depends on $\alpha_S, \delta, H, B$ and $\OSE_S$, and more importantly, is independent of $n$ and $k$. Since $w_0 = 0$, we get
\[\norm{\pi_{n,k,s'-1}^Ev} = \norm{w_k}\lesssim \exp(-n(\lambda_{s'-1}-\lambda_{s}-5(2+T)\delta)).\]
Taking a supremum over $v\in F_{n,s}$ of unit norm, we obtain the desired result for all $k\geq 0$.

\subsubsection*{Proof of \autoref{lem:backwardGapSepEstimate}}
\paragraph{Proof strategy} Fix $2\leq S < r+2$ as in \autoref{lem:forwardGapSepEstimate}, and let $\alpha_S,\delta,H, \text{ and } B$ be as in the statement of \autoref{lem:backwardGapSepEstimate}. Our argument will roughly follow along the lines of Ruelle \cite{Ruelle1982} via a cofactor-inversion argument on a finite-dimensional matrix. Since our ambient space is potentially infinite-dimensional, at times $n$ and $n+k$ we resolve the Lyapunov exponents down to $\lambda_{S-1}$ and then augment down by one vector in the next slow space. However, due to the lack of a global inner product structure, we cannot form a unitary change of basis matrix as done by Ruelle. Instead, we construct two $(M_{S-1}+1) \times (M_{S-1}+1)$-matrices which are approximate inverses of each other. \autoref{lem:forwardGapSepEstimate} will give us estimates on the upper triangular portion of one matrix, and through the approximate inversion relationship, we obtain estimates on the upper triangular portion of the other.

\begin{remark}
    We note that the following argument is more involved than the previous two lemmas, and so it shall be broken down into a variety of steps.
\end{remark}

\paragraph{Preliminary constructions} Let $(u_{n,q})_{q=1}^{M_{S-1}}$ and $(u_{n+k,q})_{q=1}^{M_{S-1}}$ be almost orthonormal right singular vectors for 
    \[A^n|_{E_{n,S-1}} \text{ and } A^{n+k}|_{E_{n+k,S-1}},\] 
respectively, from \autoref{prop:SVD}. By definition, we have for both $\ell=n\text{ and } n+k$ that $u_{\ell,q}\in U_{\ell,t}$ if $M_{t-1} < q \leq M_{t}$. Additionally, take an arbitrary auxiliary vector $f\in F_{n+k,S}$ of norm $1$ and label it $u_{n+k,M_{S-1}+1}$. Write
    \[u_{n,M_{S-1}+1} = \frac{\pi_{F_{n,S}//E_{n,S-1}}f}{\norm{\pi_{F_{n,S}//E_{n,S-1}}f}}.\]
We shall also consider new spaces
\begin{itemize}
    \item $\ES_{n+k,S} = \ES_{n+k,S-1} + \innerp{f}$ and $\FS_{n+k,S+1}\subsetneq \FS_{n+k,S}$ a complement of $\innerp{f}$ such that (guaranteed by \autoref{lem:closed_q_dim_complemented})
    \[\norm{\pi_{\innerp{f}//\FS_{n+k,S+1}}|_{\FS_{n+k,S}}} = 1\]
    \item $\ES_{n,S} = \ES_{n,S-1} + \innerp{u_{n,M_{S-1}+1}}$ and $\FS_{n,S+1} = \pi_{\FS_{n,S}//\ES_{n,S-1}} \FS_{n+k,S+1}\subsetneq \FS_{n,S}$
    \item $\US_{n+k,S} = \innerp{f}$ and $\US_{n,S} = \innerp{u_{n,M_{S-1}+1}}$
\end{itemize}
and denote $m_s'=\dim \US_{n,s}$ and $M_{s}' = \dim \ES_{n,s}$ for $1\leq s\leq S$. Note that $m_s'$ and $M_{s}'$ agree with $m_s$ and $M_s$ for $1\leq s\leq S-1$, respectively, while $m_{S}' = 1$ and $M_{S}' = M_{S-1}+1$. We will drop the $(S)$ superscript for the remainder of this proof.

\autoref{lem:cauchySeqSpaces} shows that
\begin{equation}\label{eq:nearIdentityOfProj}
    \norm{(\id - \pi_{F_{n+k,S}//E_{n+k,S-1}})|_{F_{n,S}}},\norm{(\id - \pi_{F_{n,S}//E_{n,S-1}})|_{F_{n+k,S}}}\lesssim \exp(-n(\lambda_{S-1}-\lambda_{S}-5\delta)).
\end{equation}
Since complementation is open, we can guarantee that $\pi_{F_{n,S}//E_{n,S-1}}|_{F_{n+k,S}}$ is boundedly invertible for all $n$ large enough (with inverse $\pi_{F_{n+k,S}//E_{n,S-1}}|_{F_{n,S}}$), and so $F_{n,S+1}$ is a topological complement to $\innerp{u_{n,M_{S-1}+1}}$.
\begin{remark}
    Should it be the case that $F_{n+k,S} = \innerp{0}$, which is only an issue if $\B_0$ is finite-dimensional and $M_{S-1} = \dim \B_0$, we simply omit this auxiliary choice of $f$.
\end{remark}

\vspace{2mm}
\noindent\textit{Extended inner product structure}
\vspace{2mm}

\noindent It will be useful to extend the inner product structures of $E_{n+k,S-1}$ and $E_{n,S-1}$ to $E_{n+k,S}$ and $E_{n,S}$. Naturally, we do this by inducing
\begin{align*}
    \paren{v_1,v_2}_{E_{n+k,S}} &= \paren{v_1,v_2}_{E_{n+k,S-1}} \text{ if } v_1,v_2\in E_{n+k,S-1} \\
    \paren{v,f}_{E_{n+k,S}} &= 0 \text{ if } v\in E_{n+k,S-1}\\
    \paren{f,f}_{E_{n+k,S}} &= 1
\end{align*}
and extending by linearity. The induced norm is denoted by $\abs{\cdot}_{E_{n+k,S}}$. Enforcing $\abs{u_{n,M_{S}'}}_{E_{n,S}} = 1$, the extension from $E_{n,S-1}$ to $E_{n,S}$ is analogous. We will suppress subscripts for $\paren{\cdot,\cdot}$ and $\abs{\cdot}$ when context is clear.

We must see how these new inner product structures compare with the original Banach space norm. Take $v\in E_{n+k,S}$. Then we may write $v = v_{S-1} + v_{S}$ where $v_{S-1}\in E_{n+k,S-1}$ and $v_{S}\in U_{n+k,S}$. One can see that
    \[\abs{v} \leq \abs{v_{S-1}}+\abs{v_{S}}\leq \paren{\sqrt{M_{S-1}}\norm{\pi_{E_{n+k,S-1}//F_{n+k,S}}}+\norm{\pi_{F_{n+k,S}//E_{n+k,S-1}}}}\norm{v}\leq 2\sqrt{M_{S-1}}C_S \norm{v}\]
and
    \[\norm{v}\leq \norm{v_{S-1}}+\norm{v_{S}}\leq \sqrt{M_{S-1}}\abs{v_{S-1}}+\abs{v_{S}}\leq \sqrt{2M_{S-1}}\abs{v}.\]
The same bounds clearly follow for $E_{n,S}$. Notice the constants in front of $\norm{v}$ and $\abs{v}$ are independent of $f$.

\begin{remark}
    Estimates made in this proof will be uniform over any choice of $f\in F_{n+k,S}$ of norm 1.
\end{remark}

\vspace{2mm}
\noindent\textit{Reduction to finite dimensions}
\begin{claim}\label{claim:projOfSingVecs}
    Let $1\leq t' < s\leq S$. Then there is a constant $C\geq 1$ and integer $N\in \N$, dependent on $\alpha_S, \delta, H, B$, and $\OSE_S$, such that for all $n\geq N$, $k\in \N$, all $f\in F_{n+k,S}$ of norm $1$ we have
    \begin{equation}\label{eq:projOfSingVecs}
        \norm{\pi_{U_{n,t'}//\bigoplus_{t\neq t'} U_{n,t} + F_{n,S+1}}|_{U_{n+k,s}}}\leq C\exp(-n(\lambda_{t'}-\lambda_{s}-5(S-1)(s-t')\delta)).
    \end{equation}
\end{claim}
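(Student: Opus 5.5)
The plan is to reduce \eqref{eq:projOfSingVecs} to the forward gap estimate of \autoref{lem:forwardGapSepEstimate}, applied with the index pair $(s,s')=(s,t'+1)$. Since $2\leq t'+1\leq s\leq S$, that lemma gives, for $n\geq N$ and $k\geq 0$,
\[\Gap(F_{n+k,s},F_{n,t'+1})\lesssim \exp(-n(\lambda_{t'}-\lambda_s-5(1+s-t'-1)\delta)),\]
after the roles of $n$ and $n+k$ are swapped appropriately. That swap is exactly the issue: the lemma bounds a gap from the time-$n$ slow space into the time-$(n+k)$ one, whereas here I need the reverse direction. The order of steps is as follows.

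\textbf{Step 1 (reduction to a gap).} The target projection $\pi_{U_{n,t'}//\bigoplus_{t\neq t'}U_{n,t}+F_{n,S+1}}$ annihilates
\[F_{n,t'+1}=U_{n,t'+1}\oplus\cdots\oplus U_{n,S}\oplus F_{n,S+1}.\]
Here I use that $F_{n,S+1}\oplus U_{n,S}=F_{n,S}$ by construction, and that the splitting is topological for large $n$ by \eqref{eq:nearIdentityOfProj} and the openness of complementation. The projection also factors as
\[\pi_{U_{n,t'}//\cdots}=\pi_{U_{n,t'}//\cdots}\,\pi_{E_{n,t'}//F_{n,t'+1}}.\]
Its norm restricted to $E_{n,t'}$ is $\lesssim 1$ by the almost orthonormality of the singular vectors (\autoref{prop:SVD}(2) together with \eqref{eq:SVDprojNorm}). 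Combining this with \autoref{prop:gapProjEquiv}, the left side of \eqref{eq:projOfSingVecs} is
\[\lesssim \norm{\pi_{E_{n,t'}//F_{n,t'+1}}|_{U_{n+k,s}}}\lesssim \Gap(U_{n+k,s},F_{n,t'+1}).\]
For $s=S$, where $U_{n+k,S}=\innerp{f}\subset F_{n+k,S}$, the same bound holds uniformly in $f$.

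\textbf{Step 2 (bounding the reverse gap).} Since $U_{n+k,s}\subset F_{n+k,s}$, it suffices to bound $\Gap(F_{n+k,s},F_{n,t'+1})$. This is a backward-type gap, and the one-step estimate of \autoref{claim:oneStepGapBound} goes the wrong way in time. My approach is to apply the growth bound directly to a vector $u\in U_{n+k,s}$ of unit norm and decompose it at time $n$ as $u=e+g$, with $e\in E_{n,t'}$ and $g\in F_{n,t'+1}$. Since $u$ is a singular vector at time $n+k$, we have $\norm{A^{n+k}u}\lesssim e^{(n+k)(\lambda_s+2\delta)}$ by \eqref{eq:SVDtime-nGrowth}.

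\textbf{Step 3 (pulling back to time $n$).} To extract $\norm{A^n u}$ from the time-$(n+k)$ bound, I write
\[A^{n+k}u=A^k_nA^n u.\]
Controlling $\norm{A^nu}$ from below through this factorization requires Assumption \ref{assump:refined_oneidedSequences}, which is not available here. So instead I use \autoref{lem:forwardGapSepEstimate} for the pair $(s',s)$ at all intermediate levels and induct on $s-t'$, exactly as in the forward lemma. Within $E_{n,S}$, the component of $u$ along $U_{n,t'}$ is controlled through its pairing with the dual basis $g_{t'}$ from the proof of \autoref{lem:dual_SVD}. This is the cofactor-type step: it realises the change of basis between the time-$n$ and time-$(n+k)$ singular vectors as an approximately invertible $(M_{S-1}+1)\times(M_{S-1}+1)$ matrix. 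The forward lemma bounds the entries of the inverse matrix, namely the $U_{n+k}$-components of the $U_n$-vectors, and inverting transfers those bounds. Each inversion costs a product of at most $S-1$ entries, which is where the factor $5(S-1)(s-t')\delta$ comes from.

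\textbf{Main obstacle.} I expect Step 3 to be the hard part. It requires establishing that the matrix of $U_{n+k,s}$-components of $U_{n,t}$-vectors is approximately block lower-triangular, with diagonal blocks uniformly invertible. Its upper-triangular off-diagonal blocks are bounded by \autoref{lem:forwardGapSepEstimate} via Step 1 applied with the times reversed. Lower bounds on the determinants of the diagonal blocks should come from \eqref{eq:nearIdentityOfProj} and \autoref{lem:cauchySeqSpaces}, and the cofactor expansion then yields \eqref{eq:projOfSingVecs}.
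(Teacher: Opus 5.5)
Your Steps 1--2 are valid but circular. They show that \eqref{eq:projOfSingVecs} would follow from $\Gap(F_{n+k,s},F_{n,t'+1})\lesssim \exp(-n(\lambda_{t'}-\lambda_s-5(S-1)(s-t')\delta))$. That is exactly \autoref{lem:backwardGapSepEstimate} with $s'=t'+1$, and the paper deduces that lemma from this very claim. Likewise, ``induct on $s-t'$ exactly as in the forward lemma'' has no engine here: that induction is powered by \autoref{claim:oneStepGapBound}, which only moves forward in time. So everything rests on your Step 3. It restates the cofactor-inversion strategy but leaves out the two ingredients that make it work, and in one respect it misdescribes the strategy.

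\textbf{First: the matrix you need is not the inverse of the one the forward lemma controls.} Write $s(i)$ for the spectral level of index $i$, and set $A=\pi_{E_{n+k,S}//F_{n+k,S+1}}|_{E_{n,S}}$. \autoref{lem:forwardGapSepEstimate} bounds the upper-triangular entries $[A]_{ij}=(Au_{n,j},u_{n+k,i})$ with $s(i)<s(j)$. However, $A^{-1}=\pi_{E_{n,S}//F_{n+k,S+1}}|_{E_{n+k,S}}$ projects along $F_{n+k,S+1}$. By contrast, \eqref{eq:projOfSingVecs} projects along $F_{n,S+1}$, so it concerns $A'=\pi_{E_{n,S}//F_{n,S+1}}|_{E_{n+k,S}}$.

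One has $[A']=[A^{-1}](I-[BC])$, where $B=\pi_{E_{n+k,S}//F_{n+k,S+1}}|_{F_{n,S+1}}$ and $C=\pi_{F_{n,S+1}//E_{n,S}}|_{E_{n+k,S}}$. Here $\norm{B}$ is controlled only by $d_H(F_{n,S+1},F_{n+k,S+1})$, i.e.\ at the last-gap rate $\lambda_{S-1}-\lambda_S$. Meanwhile $C$ is merely bounded, since nothing forces $E_{n+k,S}$ to be near $E_{n,S}$. So graded bounds on $[A^{-1}]$ pass to $[A']$ only up to an error $O(e^{-n(\lambda_{S-1}-\lambda_S-5\delta)})$. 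This is too weak whenever $\lambda_{t'}-\lambda_s>\lambda_{S-1}-\lambda_S$, for example when $t'<S-1$ and $s=S$.

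The missing observation is that $I-[BC]$ is itself graded. For $s(q)<s(j)$, $[BC]_{qj}$ is a time-$(n+k)$ coordinate of a vector in $F_{n,S+1}\subseteq F_{n,s(j)}$. Hence it is $\lesssim \Gap(F_{n,s(j)},F_{n+k,s(q)+1})$ by the forward lemma. A product of two graded matrices is then graded: split the sum over $q$ according to whether $s(q)\le s(i)$, $s(i)<s(q)<s(j)$, or $s(q)\ge s(j)$.

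\textbf{Second: ``the cofactor expansion then yields'' hides the combinatorial core.} Take $s(i)<s(j)$ and consider the Leibniz expansion of the $(j,i)$ cofactor. Each bijection $\sigma$ from the rows other than $j$ onto the columns other than $i$ must, for every level $s(i)\le s<s(j)$, send some row of level $\le s$ to a column of level $>s$; this follows by counting. So in each term the small factors $[A]_{p\sigma(p)}$ have level intervals $[s(p),s(\sigma(p)))$ that cover $[s(i),s(j))$. Their exponents therefore telescope to at least $\lambda_{s(i)}-\lambda_{s(j)}$; without this argument you only get a single adjacent gap.

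This is also where the factor $5(S-1)(s-t')\delta$ comes from. There are at most $s-t'$ small factors in the chain, each losing at most $5(S-1)\delta$; it is not a product of $S-1$ entries.

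Combine this with $\abs{\det[A]}\gtrsim\norm{A^{-1}}^{-M'_S}\gtrsim 1$, which holds because $\B_0=E_{n,S}\oplus F_{n+k,S+1}$ is uniformly topological. That bounds $[A^{-1}]$, and separate invertibility of the diagonal blocks is not needed.
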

With this claim, we are able to prove \autoref{lem:backwardGapSepEstimate}. We shall show this and provide the proof of \autoref{claim:projOfSingVecs} in the sequel.
\begin{proof}[Proof of \autoref{lem:backwardGapSepEstimate} given \autoref{claim:projOfSingVecs}]
    Let $2\leq s'\leq s \leq S$. Take $v\in F_{n+k,s}$ and decompose $v = u_{s}+\cdots+u_{S-1} + w$ where $w\in F_{n+k,S}$ and $u_{t}\in U_{n+k,t}$. If $w\neq 0$, we set $f = w/\norm{w}$. Otherwise, the choice of $f$ does not matter. For the sake of convenience, we write $u_{S} \coloneqq w$.
    
    If $1\leq t' \leq s'-1$ and $s\leq t\leq S$, then by \autoref{claim:projOfSingVecs} we have
    \[ \norm{\pi_{U_{n,t'}//\bigoplus_{t''\neq t'} U_{n,t''} + F_{n,S+1}} u_t}\leq C\exp(-n(\lambda_{t'}-\lambda_{s}-5(S-1)(s-t')\delta))\norm{u_t}.\]
    In particular, summing everything between $1\leq t'\leq s'-1$ and $s\leq t\leq S$ up, we get
    \[\norm{\pi_{E_{n,s'-1}//F_{n,s'}} v}\leq (S-s+1)(s'-1)C_SC\exp(-n(\lambda_{s'-1}-\lambda_s-5(S-1)(1+s-s')\delta)\norm{v}.\]
    Taking a supremum over $v\in F_{n+k,s}$ of unit norm, we obtain \eqref{eq:backwardGapSepEstimate}.
\end{proof}
\paragraph{Matrix calculations}
As we have reduced the lemma to proving a finite-dimensional statement, we may leverage the coordinate matrix representations of linear transformations. Let $\delta > 0$ and WLOG assume
    \[\delta<\min\curly{\lambda_{s'-1}-\lambda_s:2\leq s'\leq s\leq S}.\]
Take an arbitrary norm 1 vector $f\in F_{n+k,S}$, with all associated constructions previously defined. Consider the following operators
\begin{itemize}
    \item $A = \pi_{E_{n+k,S}//F_{n+k,S+1}}|_{E_{n,S}}:E_{n,S}\to E_{n+k,S}$
    \item $A' = \pi_{E_{n,S}//F_{n,S+1}}|_{E_{n+k,S}}:E_{n+k,S}\to E_{n,S}$
    \item $B = \pi_{E_{n+k,S}//F_{n+k,S+1}}|_{F_{n,S+1}}:F_{n,S+1}\to E_{n+k,S}$
    \item $C = \pi_{F_{n,S+1}//E_{n,S}}|_{E_{n+k,S}}:E_{n+k,S}\to F_{n,S+1}$
\end{itemize}
Then it is clear that
\begin{equation}\label{eq:futureFastSpaceIdentityDecomp}
    \id_{E_{n+k,S}} = AA' + BC.
\end{equation}
Our goal is to compute $A'$. Now, for $1\leq i,j\leq M_{S}'$, write $[A]_{ij} = \paren{Au_{n,j},u_{n+k,i}}$ and $[A']_{ij} = \paren{A'u_{n+k,j},u_{n,i}}$. We will also write $[A] = ([A]_{ij})_{ij}$ and $[A'] = ([A']_{ij})_{ij}$. These are the matrix representations of $A$ and $A'$ w.r.t. the orthonormal bases of $E_{n+k,S}$ and $E_{n,S}$ we've defined. We similarly let $[BC]$ be the matrix representation of $BC\in L(E_{n+k,S},E_{n+k,S})$ w.r.t. $(u_{n+k,q})_q$.

\vspace{2mm}
\noindent\textit{An identity for $A'$}
\vspace{2mm}

\noindent By \eqref{eq:nearIdentityOfProj}, it follows that
\[d_H(F_{n,S+1},F_{n+k,S+1})\lesssim \exp(-n(\lambda_{S-1}-\lambda_{S}-5\delta)).\]
Thus, from \autoref{prop:complementationOpen}, $\B_0 = E_{n,S}\oplus F_{n+k,S+1}$ is a topological splitting for $n$ large enough. It is easy to see that this implies 
\[\pi_{E_{n,S}//F_{n+k,S+1}} = \pi_{E_{n,S}//F_{n+k,S+1}}\pi_{E_{n+k,S}//F_{n+k,S+1}}\] 
so that $A^{-1} = \pi_{E_{n,S}//F_{n+k,S+1}}|_{E_{n+k,S}}$ and $\norm{A^{-1}}\lesssim 1$. So we aim to estimate some portion of the upper triangular part of
\begin{equation}\label{eq:approximateProjInversion}
    [A'] = [A^{-1}](I - [BC]).
\end{equation}

\vspace{2mm}
\noindent\textit{Estimates on the entries of $[A^{-1}]$}
\vspace{2mm}

\noindent From Cramer's rule we see
\[[A^{-1}] = \frac{1}{\det([A])}\adj([A]).\]
Since we coordinatized with almost orthogonal bases, we have that
\begin{equation}\label{eq:detBoundBelow}
    \abs{\det([A])}\geq \abs{A^{-1}}_{E_{n+k,S}\to E_{n,S}}^{-M_{S}'}\gtrsim \norm{A^{-1}}^{-M_{S}'}\gtrsim 1.
\end{equation}

From now on, we will write $s(i)$ to denote the unique integer between $1$ and $S$ such that $M_{s(i)-1}' < i \leq M_{s(i)}'$. To estimate the adjugate, notice for all pairs $(i,j)$ for which $1\leq s(i) < s(j)\leq S$, it follows from \autoref{lem:forwardGapSepEstimate}, \autoref{prop:gapProjEquiv}, and orthonormality that
\begin{align}
    \abs{[A]_{ij}} &= \abs{\pi_{\innerp{u_{n+k,i}}//\innerp{u_{n+k,q}:q\neq i}+F_{n+k,S+1}} u_{n,j}} \notag\\
    &\leq \abs{\pi_{E_{n+k,s(i)}//F_{n+k,s(i)+1}}u_{n,j}} \notag \\
    &\leq 4M_{S-1}C_S^2 \norm{\pi_{E_{n+k,s(i)}//F_{n+k,s(i)+1}}|_{F_{n,s(j)}}} \notag \\
    &\lesssim \exp\paren{-n(\lambda_{s(i)}-\lambda_{s(j)}-5(s(j)-s(i))\delta)}. \label{eq:AcoordsBound}
\end{align}
Now consider a pair $(i,j)$ such that $s(i)<s(j)$. As the adjugate is the transpose of the cofactor matrix of $A$, we see from Leibniz's formula that
\begin{equation}\label{eq:adjugateLeibnizBound}
    \abs{\adj([A])_{ij}} \leq \sum_{\sigma\in \Bij(\Gamma_j,\Gamma_i)}\abs{\prod_{p\in \Gamma_j}[A]_{p\sigma(p)}}
\end{equation}
where $\Bij(\Gamma_j,\Gamma_i)$ is the set of bijections from $\Gamma_j = \curly{1,\ldots,M'_S}\setminus\curly{j}$ to $\Gamma_i = \curly{1,\ldots,M'_S}\setminus\curly{i}$. Fix an arbitrary bijection $\sigma\in \Bij(\Gamma_j,\Gamma_i)$ and note the following claim.

\begin{claim}
    For all $s(i)\leq s < s(j)$, there is a $p\in \Gamma_j$ such that $s(p)\leq s < s(\sigma(p))$.
\end{claim}
\begin{proof}
    Indeed, since $s(i)\leq s<s(j)$, it follows that
    \[\card\curly{p\in\Gamma_j:s(p)\leq s} = \card\curly{p:s(p)\leq s}= M_s,\]
    while
    \[\card\curly{q\in\Gamma_i:s(q)\leq s} = \card\paren{\curly{q:s(q)\leq s}\setminus\curly{i}} = M_s - 1.\]
    But $\sigma$ is a bijection, so it must be the case that
    \[\card\paren{\curly{p\in\Gamma_j:s(p)\leq s}\cap\sigma^{-1}\curly{q\in\Gamma_i:s(q)\leq s}^{c}}\geq 1.\]
    That is, there must be a $p\in \Gamma_j$ such that $s(p)\leq s < s(\sigma(p))$.
\end{proof}
In particular, this claim implies there is a sequence of $R\leq s(j)-s(i)$ integers $p_{1},\ldots,p_{R}\in \Gamma_j$ such that $s(p_1) \leq s(i) < s(\sigma(p_1))$, $s(p_R) < s(j) \leq s(\sigma(p_R))$, and
    \[s(p_{\ell}) \leq s(\sigma(p_{\ell-1})) < s(\sigma(p_{\ell}))\]
for all $2\leq \ell\leq R$. Thus, letting $K$ be the constant in \eqref{eq:AcoordsBound}, we see by telescoping that
\begin{align*}
    \abs{\prod_{p\in \Gamma_j}[A]_{p\sigma(p)}} &\leq \abs{A}^{M_{S-1}-R}K^R\exp(5(S-1)R\delta n)\prod_{\ell=1}^{R}\exp\paren{-n(\lambda_{s(p_\ell)}-\lambda_{s(\sigma(p_\ell))})}\\
    &\leq \max(\abs{A},K)^{M_{S-1}} \exp\paren{-n(\lambda_{s(i)}-\lambda_{s(j)}-5(S-1)(s(j)-s(i))\delta)}.
\end{align*}
Substituting this into \eqref{eq:adjugateLeibnizBound}, we see 
    \[\abs{\adj([A])_{ij}}\leq M_{S-1}!\max(\abs{A},K)^{M_{S-1}} \exp\paren{-n(\lambda_{s(i)}-\lambda_{s(j)}-5(S-1)(s(j)-s(i))\delta)},\]
and as such, by \eqref{eq:detBoundBelow},
\begin{equation}\label{eq:upperTriangular-invA}
    \abs{[A^{-1}]_{ij}}\lesssim \exp(-n(\lambda_{s(i)}-\lambda_{s(j)}-5(S-1)(s(j)-s(i))\delta))
\end{equation}
for all $(i,j)$ such that $s(j)>s(i)$.
    
\vspace{2mm}
\noindent\textit{Estimates on the entries of $[A']$}
\vspace{2mm}

\noindent Having estimated $[A^{-1}]$, we may now consider the full product \eqref{eq:approximateProjInversion}. Write $[D] = I-[BC]$ and assume $s(j)>s(i)$. Recall that
    \[[A']_{ij} = \sum_{q=1}^{M'_{S}} [A^{-1}]_{iq}[D]_{qj}.\]
Firstly, we note that
    \[\abs{[D]_{qj}}\leq 1+4M_{S-1}C_S^2\norm{\pi_{E_{n+k,S}//F_{n+k,S+1}}}\norm{\pi_{F_{n,S+1}//E_{n,S}}}\lesssim 1\]
for all $q$ and $j$. Now, denote the smallest integer $q > 1$ such that $s(q)\geq s(j)$ by $q^*$. Then for $q\geq q^*$ we have
\[\lambda_{s(i)}-\lambda_{s(q)} - 5(S-1)(s(q)-s(i))\delta \geq \lambda_{s(i)}-\lambda_{s(j)} - 5 (S-1)(s(j)-s(i))\delta\] 
since $\lambda_{s(j)}-\lambda_{s(q)}\geq 5(S-1)(s(q)-s(j))\delta$. Thus, it follows from \eqref{eq:upperTriangular-invA} that
    \[\abs{[A^{-1}]_{iq}[D]_{qj}}\lesssim \exp(-n(\lambda_{s(i)}-\lambda_{s(j)}-5(S-1)(s(j)-s(i))\delta)).\]
If $q < q^*$, then by \autoref{lem:forwardGapSepEstimate} and the fact that $F_{n,S+1}\subseteq F_{n,s(j)}$ we have
\begin{align*}
    \abs{[D]_{qj}} &= \abs{\pi_{\innerp{u_{n+k,q}}//\innerp{u_{n+k,p}:p\neq q}+F_{n+k,S+1}}\pi_{F_{n,S+1}//E_{n,S}}u_{n+k,j}} \\
    &\leq \abs{\pi_{E_{n+k,s(q)}//F_{n+k,s(q)+1}}\pi_{F_{n,S+1}//E_{n,S}}u_{n+k,j}}\\
    &\leq 4M_{S-1}C_S^2\norm{\pi_{F_{n,S+1}//E_{n,S}}}\norm{\pi_{E_{n+k,s(q)}//F_{n+k,s(q)+1}}|_{F_{n,s(j)}}} \\
    &\lesssim \exp(-n(\lambda_{s(q)}-\lambda_{s(j)}-5(s(j)-s(q))\delta)).
\end{align*}
Denote the largest integer $q < M_{S}'$ such that $s(q) \leq s(i)$ by $q_*$. If, moreover, $q\leq q_*$,
\[\abs{[A^{-1}]_{iq}[D]_{qj}}\lesssim \abs{[D]_{qj}}\lesssim\exp(-n(\lambda_{s(i)}-\lambda_{s(j)}-5(S-1)\delta)).\]
Otherwise, for $q_*<q<q^*$ we have
\[\abs{[A^{-1}]_{iq}}\lesssim \exp(-n(\lambda_{s(i)}-\lambda_{s(q)}-5(S-1)(s(q)-s(i))\delta)),\]
and so
\[\abs{[A^{-1}]_{iq}[D]_{qj}}\lesssim \exp(-n(\lambda_{s(i)}-\lambda_{s(j)}-5(S-1)(s(j)-s(i))\delta)).\]
Combining these three cases, we obtain
\[\abs{[A']_{ij}} \leq \sum_{q=1}^{M_{S}'} \abs{[A^{-1}]_{iq}[D]_{qj}}\lesssim \exp(-n(\lambda_{s(i)}-\lambda_{s(j)}-5(S-1)(s(j)-s(i))\delta)).\]
Hence, we have shown that there is a constant $C'' = C''(\alpha_S, \delta, H, B, \OSE_S)\geq 1$, independent of $f$, such that if $s(j) > s(i)$, then
\begin{equation}\label{eq:basisProjections}
    \abs{\pi_{\innerp{u_{n,i}}//\innerp{u_{n,q}:q\neq i}+F_{n,S}} u_{n+k,j}}_{E_{n,S}} \leq C''\exp(-n(\lambda_{s(i)}-\lambda_{s(j)}-5(S-1)(s(j)-s(i))\delta))
\end{equation}
for all $n$ large enough (depending on the same data as $C''$) and $k\geq 1$. 

\begin{proof}[Proof of \autoref{claim:projOfSingVecs}]
    Since $E_{n,t'} = \innerp{u_{n,i}:s(i) = s'}$ and $U_{n+k,s} = \innerp{u_{n+k,j}:s(j) = s}$, we easily see that \eqref{eq:basisProjections} implies \eqref{eq:projOfSingVecs} for $n$ large enough with the constant $C = 4m_{t'}\sqrt{m'_s}M_{S-1} C_S^2C''$.
\end{proof}

\subsection{Proof of \autoref{thm:refined_sequentialMET_oneSided}}
We shall now prove \autoref{thm:refined_sequentialMET_oneSided}. Let $(A_n:\B_n \to \B_{n+1})_{n\geq 0}$ be a sequence of linear operators satisfying \autoref{assump:refined_oneidedSequences}. For $m\geq 0$, we write $A^n_m = A^n(m) = A_{m+n-1}\cdots A_{m}$. As before, we shall assume $l_{\alpha} > -\infty$ if $r < \infty$ and discuss necessary modifications in the final subsection. We note that \autoref{assump:refined_oneidedSequences} implies $\Hplus_q(m,\delta)$ is finite for all $q\in \N$, $m\in \Z_{\geq 0}$, and $\delta > 0$. Similarly, it implies that for every $1\leq q < 2 + M_r$ there is an $N^*_q\in \N$ such that $\Htplus_q(m,\delta,N)$ is finite for all $m\in\Z_{\geq 0}$, $\delta > 0$, and $N\geq N^*_q$. We can assume $N^*_1\leq N^*_2\leq \cdots$

Now, for the proof of \autoref{thm:refined_sequentialMET_oneSided}, we first note that the equivariance of the filtration follows immediately from the form given in \autoref{remark:limsupDefnOfSlowSpaces}. With this out of the way, we fix $m = 0$. 

\subsubsection*{Proof of Item (1)}
Consider now $0 < \eta \leq 1$ and $1\leq s < r+1$ and let $E\in \G\Ccal_s(\eta)$. Take $0<\delta < \frac{1}{9(s+1)^2+4}\min_{2\leq t \leq s+1} \lambda_{t-1}-\lambda_t$, $B \geq \Bplus(0,\delta)$, and
\[H\geq \max_{1\leq q\leq 1 + M_s} \Hplus_q(0,\delta) \vee \Htplus_{q}(0,\delta,N^*_{M_s}).\]
Write $E(n) = A_0^n E$ and $E_k(n) = A_n^k E(n) = E(n+k)$.

\begin{claim}\label{claim:forwardTemperedApproxSlowSpace}
    There are constants $C,C'\geq 1$ and integer $N\in \N$ such that for all $n,k\geq N\vee N^*_{M_s}$, there are complements $F_{k}(n)\in \G^{M_s}(\B_{n})$ and $F_{k}'(n)\in \G^{M_s}(\B_{n+k})$ to $E(n)$ and $E_k(n)$, respectively, such that
    \[\det(A^k_n|E(n))\geq C^{-1}e^{-(n+k)C'\delta} V_{M_s}(A^k_n) \text{ and } (E(n),E_k(n),F_k(n),F_{k}'(n))\in \SVS_{M_s}(A_n^k;Ce^{(n+k)C'\delta}).\]
    Here, $C$ and $N$ depend on $\eta, \delta, B, H,$ and $\OSE_{s+1}$, while $C'$ depends only on $M_s$.
\end{claim}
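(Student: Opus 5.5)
The plan is to reduce the claim to \autoref{prop:SVD}(1). It suffices to show that $E(n)=A^n_0E$ is an $\alpha$-almost maximizer of $M_s$-volume growth for $A^k_n$, with $\alpha = C^{-1}e^{-(n+k)C'\delta}$. \autoref{prop:SVD} then produces complements $F_k(n),F_k'(n)$ with $(E(n),E_k(n),F_k(n),F_k'(n))\in\SVS_{M_s}(D(\alpha,M_s);A^k_n)$. By \autoref{remark:formOfSVDconsts}, $D(\alpha,M_s)=G(M_s)\alpha^{-G(M_s)}=G\,C^{G}e^{(n+k)GC'\delta}$, which has the required form $Ce^{(n+k)C'\delta}$ after enlarging the constants; note that $C'$ still depends only on $M_s$. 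So everything comes down to the determinant lower bound.

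For that bound I would use multiplicativity of the determinant:
\[
\det(A^k_n|E(n))=\frac{\det(A^{n+k}_0|E)}{\det(A^n_0|E)}.
\]
Item (4) of \autoref{thm:sequentialMET_oneSided}, in its effective form from step three of that proof, gives the numerator. Since $E\in\G\Ccal_s(\eta)$ and $n+k\geq N$, we have
\[
\det(A^{n+k}_0|E)\gtrsim \exp\bigl((n+k)(l_{M_s}-11(s+1)^2M_s\delta)\bigr),
\]
with implicit constant depending on $\eta,\delta,H,B,\OSE_{s+1}$. This holds uniformly over $\G\Ccal_s(\eta)$, so the choice of $E$ does not matter. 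For the denominator, the trivial bound suffices:
\[
\det(A^n_0|E)\leq V_{M_s}(A^n_0)\leq e^{H}e^{n(l_{M_s}+\delta)},
\]
using $H\geq\Hplus_{M_s}(0,\delta)$. Dividing gives
\[
\det(A^k_n|E(n))\gtrsim e^{kl_{M_s}}e^{-(n+k)c\delta}
\]
with $c\lesssim s^2M_s\leq M_s^3$, a quantity depending only on $M_s$.

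Next I would compare with $V_{M_s}(A^k_n)$ using the shifted large-deviation quantity. Since $H\geq\Htplus_{M_s}(0,\delta,N^*_{M_s})$ and $n,k\geq N^*_{M_s}$,
\[
V_{M_s}(A^k_n)\leq e^{H}e^{kl_{M_s}+(n+k)\delta}.
\]
Combining the two displays gives $\det(A^k_n|E(n))\geq C^{-1}e^{-(n+k)C'\delta}V_{M_s}(A^k_n)$ with $C'=c+1$. Here $C$ collects $e^{2H}$ and the constant from item (4), so it depends on $\eta,\delta,B,H,\OSE_{s+1}$, while $C'$ depends only on $M_s$. Item (4) also shows $\det(A^k_n|E(n))>0$, so $A^k_n$ has rank at least $M_s$ and $A^k_n|_{E(n)}$ is injective, which are the hypotheses \autoref{prop:SVD} needs.

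The step I expect to be the main obstacle is the first one: making sure the lower bound from item (4) is truly effective. That means the threshold $N$ and the implicit constant must depend only on $\eta,\delta,H,B,\OSE_{s+1}$ and not on $E$ or on the sequence in any other way, and the exponent loss must be a multiple of $\delta$ with a coefficient depending only on $M_s$ (we get $11(s+1)^2M_s$, and $s+1\leq M_s+1$). This requires the admissible $\delta$ range of \autoref{claim:ImagesOfFastSpacesSqueeze} with $S=s+1$, which is exactly the hypothesis on $\delta$ here. If $s=r<\infty$, the same argument goes through with $F_{r+1}=F_\alpha$ and $\lambda_{r+1}=l_\alpha>-\infty$.
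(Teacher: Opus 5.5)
Your proposal is correct and follows essentially the same route as the paper. You factor $\det(A^{n+k}_0|E)=\det(A^k_n|E(n))\det(A^n_0|E)$, bound the left side below by the uniform estimate from step three of the proof of item (4) of \autoref{thm:sequentialMET_oneSided}, and bound $\det(A^n_0|E)$ above by $V_{M_s}(A^n_0)\le e^{H}e^{n(l_{M_s}+\delta)}$. You then compare with $V_{M_s}(A^k_n)$ via $\Htplus_{M_s}(0,\delta,N^*_{M_s})$ and finish with \autoref{prop:SVD} and \autoref{remark:formOfSVDconsts}.

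Two minor differences. Your exponent $11(s+1)^2M_s$ is a slightly more conservative version of the paper's $11s^2M_s$ (which becomes $13s^2M_s$ at the end). Your observation that positivity of the determinant supplies the rank and injectivity hypotheses of \autoref{prop:SVD} is a detail the paper leaves implicit.
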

\begin{proof}
    Notice that
    \[\det(A^{n+k}|E) = \det(A_n^k|E(n)) \det(A^n | E)\leq \det(A_n^k | E(n)) V_{M_s}(A^n).\]
    The key determinant estimate in the proof of item (4) of \autoref{thm:sequentialMET_oneSided} gives us
    \[\det(A^{n+k}|E)\gtrsim \exp((n+k)(l_{M_s}-11s^2M_s\delta))\]
    for large enough $n$ depending on $\delta, B, H$ and $\OSE_{s+1}$. Since $V_{M_s}(A^n) \leq e^H\exp(n(l_{M_s}+\delta))$, it follows that
    \[\det(A^k_n|E(n))\gtrsim \exp(kl_{M_s}-12s^2M_s(n+k)\delta).\]
    Now, \autoref{assump:refined_oneidedSequences} allows us to say that $V_{M_s}(A^k_n) \leq e^H\exp(kl_{M_s}+(n+k)\delta)$ for $n,k\geq N^*_{M_s}$, and thus
    \[\det(A^k_n|E(n))\gtrsim \exp(-13s^2M_s(n+k)\delta) V_{M_s}(A^k_n)\]
    for $k\geq N^*_{M_s}$ and $n$ large enough. The rest then follows by \autoref{prop:SVD} with the form of $D$ in that proposition as noted in \autoref{remark:formOfSVDconsts}.
\end{proof}

We shall shrink $\delta$ at various points in the following argument. Now, let $C'\geq 1$ be a generic constant that depends only on $M_s$, and whose value may change from line to line, and let $C$ and $N$ be as in \autoref{claim:forwardTemperedApproxSlowSpace}, assuming additionally that $N\geq N_{1+M_s}^*$. The constant in $\lesssim$ shall depend on the parameters in \autoref{claim:forwardTemperedApproxSlowSpace}.

By \autoref{claim:forwardTemperedApproxSlowSpace}, \autoref{claim:equiv_of_vol_gelfand_pesin_funcs}, and \autoref{claim:oneStepGapBound}, we obtain
\[\Gap(F_{k}(n),F_{k+1}(n))\lesssim \exp(-k(\lambda_{s}-\lambda_{s+1})+C'(n+k)\delta)\]
for all $n,k\geq N$. Let $\gamma = \lambda_s-\lambda_{s+1}$ and shrink $\delta$ so that $2C'\delta < \gamma$, and thus $\Gap(F_{k}(n),F_{k+1}(n))\lesssim \exp(-k(\gamma-2C'\delta))$ for all $k\geq n$. Since $C'$ is a generic constant, we absorb the factor of $2$ into $C'$. Clearly, this decays exponentially, so, taking $n$ large enough so we get by \autoref{lem:equivOfGap_and_Hausdorff} and the triangle inequality that
\begin{equation}\label{eq:moving_approx_cauchy_estimate}
    d_H(F_{k}(n),F_{k+j}(n))\lesssim \exp(-k(\gamma-C'\delta))
\end{equation}
for all $k\geq n$ and $j\geq 0$. By \autoref{cor:uniquenessOfSlowSpaces}, one can see $F_{k+j}(n)\to F_{s+1}(n)$ in the Grassmannian topology as $j\to\infty$, and so $d_H(F_{k}(n),F_{s+1}(n))$ also satisfies the same upper bound. Hence, by \autoref{lem:continuityOfDistToSpaces} and \autoref{claim:forwardTemperedApproxSlowSpace}, if we fix $k = n$ then we obtain
\begin{align}
    \sin\theta(E(n),F_{s+1}(n)) &\geq \sin\theta(E(n),F_{n}(n))-4d_H(F_{n}(n),F_{s+1}(n))\notag\\ 
    &\geq C^{-1}\exp(-C'\delta n) - C\exp(-n(\gamma-C'\delta)) \eqqcolon \eta_{n}\label{eq:effectiveForwardAngleTempering}
\end{align}
for all $n \geq N$. Shrinking $\delta$ so that $\gamma-C'\delta > C'\delta$, we get
\[\liminf_{n\to\infty}\frac{1}{n} \log\sin\theta(E(n),F_{s+1}(n))\geq -C'\delta.\] 
Since $\delta>0$ is arbitrarily small, item (1) follows.

\subsubsection*{Proof of Item (2)}
Item (2) follows immediately via the proof given in \autoref{lem:dual_SVD}. Let $1\leq s < r+1$. Fix any $0 < \eta \leq 1$ such that $\G\Ccal_s(\eta) \neq \varnothing$, and fix any $E\in \G\Ccal_{s}(\eta)$. With these choices, let all notation be as in the proof of item (1), with $\delta > 0$ small enough so that the above argument holds (including $\gamma - C'\delta > C'\delta)$.

For $m\geq 0$, write $\Ehat_{s}(m) = \ann{F_{s+1}(m)}$ and $\Fhat(m) = \ann{E(m)}$. Let $(u_{n,q})_{1\leq q\leq M_s} \text{ and } (v_{n,q})_{1\leq q\leq M_s}$ be the left and right singular vectors, respectively, of $A_0^n|_{E}:E\to E(n)$, and define $f_{n,q}(u) = \paren{\pi_{E(n)//F_{s+1}(n)} u, u_{n,q}}$ and $g_{n,q}(v) = \paren{\pi_{E(0)//F_{s+1}(0)} v, v_{n,q}}$ for $u\in \B_n$ and $v\in \B_0$. Clearly, $f_{n,q}\in \Ehat_{s}(n)$ and $g_{n,q}\in \Ehat_s(0)$, and writing $\sigma_{n,q} = \sigma_q(A^n_0|E)$, we get that $(A^n_0)^*f_{n,q} = \sigma_{n,q} g_{n,q}$. Now, the proof of \autoref{lem:dual_SVD} applies verbatim, except that the projection norm at time $n$ is no longer uniformly bounded through $n$. Item (1) instead gives that 
\[\norm{\pi_{E(n)//F_{s+1}(n)}} \leq \eta_n^{-1}\lesssim e^{C'\delta n}\]
for all $n$ large enough, and the proof of \autoref{thm:sequentialMET_oneSided}(4) gives us
\[\det(A^n_0|E(0))\gtrsim e^{-C'\delta n}V_{M_s}(A^n_0).\]
Therefore, we will obtain by the exact same pseudo-singular vector calculations in \autoref{lem:dual_SVD} that
\[\det((A^n_0)^*|\Ehat_s(n))\gtrsim e^{-C'\delta n} V_{M_s}((A^n_0)^*)\]
and
\[m((A^n_0)^*|_{\Ehat_s(n)})\gtrsim e^{-C'\delta n} c_{M_s}((A^n_0)^*)\gtrsim e^{-C'\delta n} c_{M_s}(A^n_0).\]
The matching upper bound for the determinant is given by $V_q((A^n_0)^*)\approx V_q(A^n_0)$. Similarly, $m((A^n_0)^*|_{\Ehat_s(n)})\leq \sqrt{M_s}\norm{(A^n_0)^* f_{n,M_s}}\lesssim \sigma_{n,M_s}\lesssim c_{M_s}(A^n_0)$. Thus, since $\delta>0$ was arbitrarily small, we obtain item (2).

\subsection*{The case $l_\alpha = -\infty$ when $r < \infty$}
We shall consider the modifications necessary for the proofs of both \autoref{thm:sequentialMET_oneSided} and \autoref{thm:refined_sequentialMET_oneSided} in the case that $r < \infty$ and $l_\alpha = -\infty$. Fix such $r<\infty$ and let $q_\alpha = 1+M_r$. All arguments for both theorems involving only the finite exceptional Lyapunov exponents $\lambda_1,\ldots,\lambda_r > -\infty$ remain unchanged, and the only necessary modifications occur when $\lambda_{r+1} = l_\alpha$ was used to bound growth on $F_{\alpha}$. To do this, one fixes an arbitrary $\mu < \lambda_{r}$ and writes $\Delta = -l_{M_r} - \mu$. We then define the $H$-type quantities
\begin{itemize}
    \item $\Hplus_\alpha(m,\Delta) = \sup_{n\geq 0} \log V_{q_\alpha}(A_m^n) + n\Delta$;
    \item $\Htplus_\alpha(m,\Delta,\delta,N) = \sup_{n,k\geq N} \log V_{q_\alpha}(A_{m+n}^k) + k\Delta - (n+k)\delta$.
\end{itemize}

\noindent \textbf{\autoref{thm:sequentialMET_oneSided}:} If $H\geq \Hplus_\alpha(0,\Delta), \Hplus_{M_r}(0,\delta)$, then $c_{q_\alpha}(A^n)\lesssim_{M_r} e^{2H} \exp(n(\mu + \delta))$ for all $n\geq 1$. Therefore, the proofs of \autoref{thm:sequentialMET_oneSided} and the associated lemmas go through replacing every occurrence of $\Hplus_{q_\alpha}(0,\delta)$ with $\Hplus_\alpha(0,\Delta)$, $l_{q_\alpha}$ with $-\Delta$, and $\lambda_{r+1}$ with $\mu$. Since $\mu$ was arbitrarily negative, for item (1) one should find that the approximate $F_{n,\alpha}$ obtained by \autoref{prop:SVD} converge to $F_\alpha$ superexponentially. Moreover, item (2) follows through with the $\limsup$ being $-\infty$. For items (3) and (4), the specific choice of $\mu$ does not change the end result, we just need to fix $\mu$ sufficiently negative.

\noindent \textbf{\autoref{thm:refined_sequentialMET_oneSided}:} If $H\geq \Htplus_\alpha(m,\Delta,\delta,N), \Htplus_{M_r}(m,\delta,N)$, then $c_{q_\alpha}(A^k_{m+n})\lesssim_{M_r} e^{2H} \exp(k\mu + 2(n+k)\delta))$ for all $n,k\geq N$. We of course must take $N$ large enough so that $\Htplus_\alpha$ is finite (which exists by item (iii) of \autoref{assump:refined_oneidedSequences}). Then, as before, the proofs of \autoref{thm:refined_sequentialMET_oneSided} and the associated sub-results go through replacing every occurrence of $\Htplus_{q_\alpha}(m,\delta,N)$ with $\Htplus_\alpha(m,\Delta,\delta,N)$, $l_{q_\alpha}$ with $-\Delta$, and $\lambda_{r+1}$ with $\mu$. As with items (3)-(4) of \autoref{thm:sequentialMET_oneSided}, we need only fix $\mu$ sufficiently negative.

\section{Splittings for Two-Sided Sequences}\label{sec:sequentialMET_twoSided}
In this section, we will be considering two-sided sequences of bounded linear operators, giving us information on history along with the future, which will enable us to construct equivariant Oseledets splittings. Before we start, we note that the ``midpoint construction" for approximate fast spaces in \autoref{subsubsec:items(1)(2)_twoSided} originates from Lee \cite{Lee2024}.

\subsection{Setting and Results}
Let $(\B_m)_{m\in\Z}$ be Banach spaces and $A_{m}:\B_m\to \B_{m+1}$ a sequence of bounded linear operators. For $m\in \Z$ and $n\geq 1$, define $A^n_m = A^n(m) = A_{m+n-1}\cdots A_{m}$ and $A^{*n}_m = (A^n_{m-n})^* = A^*_{m-n}\cdots A^*_{m-1}:\B_{m}^*\to\B_{m-n}^*$

\begin{assumption}\label{assump:twoSidedSequences}
    For $m\in \Z$ and $n\in \N$, write $A^n_m = A^n(m) = A_{m+n-1}\cdots A_{m}$.
    \begin{enumerate}[label = (\roman*)]
        \item $\displaystyle \limsup_{n\to\infty} \frac{1}{n}\log \norm{A_n}_{\B_n\to \B_{n+1}},\limsup_{n\to\infty} \frac{1}{n}\log \norm{A_{-n}}_{\B_{-n}\to \B_{-n+1}}\leq 0$; and
        \item for all $m\in \Z$ and $q\in \N$, we have 
        \[\infty>l_q = \lim_{n\to\infty} \frac{1}{n} \log V_q(A^n(m)) =\lim_{n\to\infty} \frac{1}{n} \log V_q(A^n(m-n)) = \lim_{n\to\infty}\frac{1}{2n} \log V_q(A^{2n}(m-n)).\]
    \end{enumerate}
\end{assumption}
Notice that \autoref{assump:twoSidedSequences} implies that the sequence $(A_{m+n}:\B_{m+n}\to\B_{m+n+1})_{n\geq 0}$ satisfies \autoref{assump:oneSidedSequences} for all $m\in \Z$, and that the Lyapunov exponents and their multiplicities are constant w.r.t. $m$. For $m\in\Z$, we shall let 
\[\B_m = F_1(m)\supsetneq F_2(m)\supsetneq\cdots\supsetneq F_\alpha(m)\supseteq\innerp{0}\]
denote the filtration of slow spaces for the sequence $(A_{m+n})_{n\geq 0}$.
\begin{thm}\label{thm:sequentialMET_twoSided}
    Let $(A_n)_{n\in\Z}$ be a sequence of operators subject to \autoref{assump:twoSidedSequences} and take the notations established in \autoref{thm:sequentialMET_oneSided}. Then the sequence of operators $(A_n^*)_{n\in\Z}$ satisfies \autoref{assump:twoSidedSequences} and its Lyapunov spectrum coincides with the Lyapunov spectrum of $(A_n)_{n\in\Z}$. For $m\in\Z$, we shall let 
    \[\B^*_m = \Fhat_1(m)\supsetneq \Fhat_2(m)\supsetneq\cdots\supsetneq \Fhat_\alpha(m)\supseteq\innerp{0}\]
    denote the filtration of slow spaces for the sequence $(A_{m-n}^*)_{n\geq 1}$.

    If $\chi_1 > l_\alpha$, then there is for all $m\in \Z$ and $1\leq s < r+1$ an equivariant splitting of $\B_m$ into closed subspaces
    \[\B_m = F_{s+1}(m) \oplus \bigoplus_{t=1}^s U_{t}(m)\]
    (where $F_{r+1} = F_\alpha$ if $r<\infty$) such that, if $1\leq t < r+1$, then
    \begin{equation}\label{eq:splittingForwardGrowth}
        \lim_{n\to\infty} \frac{1}{n}\log\norm{A^n(m)|_{U_t(m)}} = \lim_{n\to\infty} \frac{1}{n}\log m\paren{A^n(m)|_{U_t(m)}} = \lambda_t.
    \end{equation}
    The following also hold for all $m\in \Z$ and $1\leq s < r + 1$.
    \begin{enumerate}[label = (\arabic*)]
        \item $\dim U_s(m) = m_s$ and $F_{s}(m) = U_s(m) \oplus F_{s+1}(m)$. If $r<\infty$, then the analogous statements hold writing $s = r$ and identifying $F_{r+1} = F_\alpha$.
        \item Writing $E_{s}(m)\coloneqq \preann{\Fhat_{s+1}(m)}$, we have $\dim E_s(m) = M_s$ and $E_s(m) = U_1(m)\oplus\cdots\oplus U_s(m)$. If $r<\infty$, then the analogous statement holds writing $s = r$ and identifying $\Fhat_{r+1} = \Fhat_\alpha$.
        \item $\displaystyle \lim_{n\to\infty}\frac{1}{n} \log \norm{A^n(m-n)|_{F_{s}(m-n)}} = \lambda_s$ and $\displaystyle \limsup_{n\to\infty}\frac{1}{n} \log \norm{A^n(m-n)|_{F_{\alpha}(m-n)}}\leq l_\alpha$. If $r<\infty$, then the $\limsup$ is a convergent limit equal to $l_\alpha$.
    \end{enumerate}
\end{thm}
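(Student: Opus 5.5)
The plan has four stages: first the dual sequence, then the fast spaces by a midpoint construction, then the splitting, and finally the backward growth estimates.

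\textbf{Dual sequence.} By \autoref{cor:equiv_volGrowth_and_dual}(2), $V_q((A^n_{m-n})^*)\approx_q V_q(A^n_{m-n})$ with constants depending only on $q$. We also have $\norm{A_m^*}=\norm{A_m}$. Hence \autoref{assump:twoSidedSequences} for $(A_n)$ transfers verbatim to the reversed dual sequence $(A^*_{m-n})_{n\geq 1}$, with the same $l_q$, so the spectra coincide. Applying \autoref{thm:sequentialMET_oneSided} to this one-sided sequence at each base index $m$ gives the dual filtration $\Fhat_s(m)$. Its codimensions are $\operatorname{codim}\Fhat_{s+1}(m)=M_s$, and $\Fhat_{s+1}(m)$ is weak-* closed, being a limit of annihilators of finite-dimensional spaces. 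Therefore $E_s(m)=\preann{\Fhat_{s+1}(m)}$ has dimension $M_s$ by the remark on annihilators. Equivariance, $A_m E_s(m)\subseteq E_s(m+1)$, follows from the $\limsup$ characterization in \autoref{remark:limsupDefnOfSlowSpaces} applied to the dual sequence, together with $A_m^*\Fhat_{s+1}(m+1)\subseteq\Fhat_{s+1}(m)$.

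\textbf{Fast spaces via the midpoint construction.} Rather than working with $E_s(m)$ abstractly, I will build approximate fast spaces following Lee. Take $E^{(n)}\subset\B_{m-n}$ nearly maximizing $\det(A^{2n}(m-n)|\cdot)$ in dimension $M_s$, and set $E_{n}(m)=A^n(m-n)E^{(n)}$. The third equality in \autoref{assump:twoSidedSequences}(ii) ensures that $E_n(m)$ is a near-maximizer for the forward map $A^n(m)$ up to a factor $e^{o(n)}$. \autoref{prop:SVD} and \autoref{lem:dual_SVD} then give slow complements whose annihilators are near-optimal for $(A^n(m))^*$. By \autoref{cor:uniquenessOfSlowSpaces}, run with tolerance $e^{n\epsilon}$, these complements converge to $F_{s+1}(m)$. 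In the backward direction, the annihilators $\ann{F'}$ at time $m$ are near-optimal for the backward dual cocycle. Using \autoref{cor:reverse_dual_SVD} together with the analogue of \autoref{cor:uniquenessOfSlowSpaces} for the dual sequence, $\ann{(E_n(m))}$ converges to $\Fhat_{s+1}(m)$, hence $E_n(m)\to E_s(m)$.

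\textbf{Splitting and forward growth.} Transversality $E_s(m)\cap F_{s+1}(m)=\{0\}$ will come from a quantitative angle bound, $\sin\theta(E_n(m),F_n'(m))\gtrsim e^{-n\epsilon}$, uniform in $n$. Passing to the limit, with the errors decaying at rate $e^{-n(\lambda_s-\lambda_{s+1}-O(\delta))}$, keeps a positive angle. Since the codimensions match, $\B_m=E_s(m)\oplus F_{s+1}(m)$. I then set $U_s(m)=E_s(m)\cap F_s(m)$; dimension counting gives $\dim U_s=m_s$, and item (1) and item (2) follow. Equivariance of $U_t$ comes from the equivariance of $E_s$ and of $F_s$, the latter from \autoref{thm:refined_sequentialMET_oneSided}, or directly from the $\limsup$ characterization. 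For \eqref{eq:splittingForwardGrowth}: the upper bound is item (2) of \autoref{thm:sequentialMET_oneSided} since $U_t\subset F_t$. The lower bound is item (3), since $U_t(m)\subset\Ccal_t(\eta)$ for some $\eta>0$, which holds because $U_t\cap F_{t+1}=\{0\}$ and $U_t$ is finite-dimensional.

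\textbf{Backward growth and main obstacle.} Item (3) concerns $\norm{A^n(m-n)|_{F_s(m-n)}}$. I will bound it using the backward Gelfand numbers $c_{1+M_{s-1}}(A^n(m-n))\approx e^{n\lambda_s}$ together with the fact that $F_s(m-n)$ is close to the SVD slow space of $A^n(m-n)$. The latter comes from \autoref{cor:reverse_dual_SVD} applied to $L=\ann{F_s(m)}$-type dual fast spaces, which are near-maximizers by item (2) of \autoref{thm:refined_sequentialMET_oneSided} in its backward form. The matching lower bound follows from equivariance and $U_s(m-n)\subset F_s(m-n)$, which gives $\norm{A^n(m-n)|_{F_s(m-n)}}\geq \norm{A^n(m-n)u}$ for $u\in U_s$.

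The main obstacle is uniformity along the moving base point $m-n$. The angle between $E_s(m-n)$ and $F_{s+1}(m-n)$ may decay subexponentially, and this must be controlled only through the three limits in \autoref{assump:twoSidedSequences}(ii). I would first try to extract $e^{o(n)}$ angle lower bounds from the midpoint determinant comparison, mimicking \eqref{eq:effectiveForwardAngleTempering}.
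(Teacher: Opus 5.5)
\textbf{Stages one to three.} These follow the paper's proof of the splitting essentially step for step: the midpoint spaces $A^n(m-n)W$, the forward singular-value slow complements converging to $F_{s+1}(m)$ by \autoref{cor:uniquenessOfSlowSpaces}, the annihilators $\ann{E_n(m)}$ converging to $\Fhat_{s+1}(m)$, and an $e^{-Cn\delta}$ angle bound that beats the exponential convergence rate. One citation is off: the fact that $\ann{E_n(m)}$ is a slow space for $A^{*n}_m$ is the $K=\ann{E'}$ conclusion of \autoref{lem:dual_SVD}, not \autoref{cor:reverse_dual_SVD}.

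\textbf{The gap: item (3).} You do not actually prove item (3). You reduce it to controlling geometry at the moving base point $m-n$, and propose to do this via $e^{o(n)}$ angle bounds mimicking \eqref{eq:effectiveForwardAngleTempering} and via \autoref{thm:refined_sequentialMET_oneSided}. Both need uniformity over time shifts, namely the $\Htplus$ control of \autoref{assump:refined_oneidedSequences}, i.e.\ \autoref{assump:refined_twoSidedSequences}, and \autoref{assump:twoSidedSequences} does not supply it. The constants of \autoref{lem:cauchySeqSpaces} at base point $m-n$ depend on $\Hplus_q(m-n,\delta)$, which need not stay bounded in $n$. Backward temperedness of angles is item (3) of \autoref{thm:refined_sequentialMET_twoSided}, which the paper proves only under the stronger assumption. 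Your lower bound has the same defect: $\norm{A^n(m-n)u}$ for $u\in U_s(m-n)$ grows at rate $\lambda_s$ only if you already know backward growth along $U_s$. That is item (1) of \autoref{thm:refined_sequentialMET_twoSided}.

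\textbf{The missing idea.} No moving-base-point control is needed, because equivariance moves everything to the fixed time $m$ on the dual side. Since $A^n(m-n)F_s(m-n)\subseteq F_s(m)$, you have $A^{*n}_m\ann{F_s(m)}\subseteq \ann{F_s(m-n)}$.

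The paper proceeds as follows.
\begin{itemize}
\item By \autoref{lem:distance_vs_orthogonalProj}, $\norm{A^n(m-n)|_{F_s(m-n)}}=\sup_{\ell\in B_{\B_m^*}} d(A^{*n}_m\ell,\ann{F_s(m-n)})$.
\item It splits $\ell$ along the fixed decomposition $\B_m^*=\ann{F_s(m)}\oplus\Fhat_s(m)$, which is dual to $\B_m=E_{s-1}(m)\oplus F_s(m)$.
\item This bounds the quantity by $\norm{\pi_{\Fhat_s(m)//\ann{F_s(m)}}}\,\norm{A^{*n}_m|_{\Fhat_s(m)}}$. The growth rate is at most $\lambda_s$ by \autoref{thm:sequentialMET_oneSided}(2) applied to the dual sequence at base point $m$.
\end{itemize}

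Your \autoref{cor:reverse_dual_SVD} idea also works once repaired.
\begin{itemize}
\item By the inclusion above, $F_s(m-n)\subseteq\preann{(A^{*n}_m\ann{F_s(m)})}$.
\item $\ann{F_s(m)}$ is a topological complement of $\Fhat_s(m)$. So item (4) of \autoref{thm:sequentialMET_oneSided}, for the dual sequence at the fixed base point $m$ and needing only \autoref{assump:twoSidedSequences}, gives $\det(A^{*n}_m|\ann{F_s(m)})\geq e^{-o(n)}V_{M_{s-1}}(A^{*n}_m)$.
\item With constants of the form in \autoref{remark:formOfSVDconsts}, this yields $\norm{A^n(m-n)|_{F_s(m-n)}}\leq e^{o(n)}c_{1+M_{s-1}}(A^n(m-n))$.
\end{itemize}

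The lower bound needs no dynamics at all. Since $\codim F_s(m-n)=M_{s-1}$, the definition of Gelfand numbers gives $\norm{A^n(m-n)|_{F_s(m-n)}}\geq c_{1+M_{s-1}}(A^n(m-n))$. Its rate is $\chi_{1+M_{s-1}}=\lambda_s$ by the backward limit in \autoref{assump:twoSidedSequences}(ii). The $F_\alpha$ clauses then follow in the same way: take $s=r+1$ if $r<\infty$, and use $F_\alpha\subseteq F_s$ for all $s$ if $r=\infty$.
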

\begin{remark}
    Note that if we weaken \autoref{assump:twoSidedSequences} so that it only holds for $m = 0$, then items (1) and (2) and equation \eqref{eq:splittingForwardGrowth} continue to hold at $m = 0$.
\end{remark}

\begin{assumption}\label{assump:refined_twoSidedSequences}
    In addition to \autoref{assump:twoSidedSequences}, we have the following.
    \begin{enumerate}[label = (\roman*)]
        \item For $q\in \N$ such that $l_q > -\infty$, we have
        \[\lim_{n,k\to\infty}\frac{1}{n+k} \abs{\log V_{q}\paren{A^k_{\pm n}}-kl_{q}} = 0.\]
        \item If $q\in \N$ is the smallest integer such that $l_q = -\infty$, then for all $\Delta \in \R$,
        \[\limsup_{n,k\to\infty} \frac{1}{n+k}\paren{\log V_q(A^k_{\pm n}) + k\Delta} \leq 0.\]
    \end{enumerate}
\end{assumption}

\begin{thm}\label{thm:refined_sequentialMET_twoSided}
    Let $(A_{n})_{n\in\Z}$ be a sequence of operators subject to \autoref{assump:refined_twoSidedSequences} and take the notations established in \autoref{thm:sequentialMET_twoSided}. Then, for all $1\leq s < r+1$ and $m\in \Z$, we have the following.
    \begin{enumerate}[label = (\arabic*)]
        \item $\displaystyle \lim_{n\to\infty} \frac{1}{n} \log m\paren{A^n(m-n)|_{U_s(m-n)}} = \lim_{n\to\infty} \frac{1}{n} \log m\paren{A^n(m-n)|_{E_s(m-n)}} = \lambda_{s}$.
        \item $\displaystyle \lim_{n\to\infty} \frac{1}{n}\log\det\paren{A^n(m-n)|E_s(m-n)} = \sum_{t=1}^s m_{t}\lambda_t$.
        \item $\displaystyle \lim_{n\to \pm\infty}\frac{1}{n}\log \sin\theta\paren{E_s(n),F_{s+1}(n)} = 0$. If $r<\infty$, then the analogous statement holds writing $s = r$ and identifying $F_{r+1} = F_{\alpha}$.
    \end{enumerate}
    Moreover, for each $1\leq s < r+1$, $m\mapsto U_s(m)$ is uniquely characterized by equivariance, its dimension, and the growth identity
    \begin{equation}\label{eq:uniqueSplittingGrowthCharacterization}
        \lim_{n\to\infty} \frac{1}{n} \log m\paren{A^n(m-n)|_{U_s(m-n)}} = \lim_{n\to\infty} \frac{1}{n} \log \norm{A^n(m)|_{U_s(m)}} = \lambda_s.
    \end{equation}
\end{thm}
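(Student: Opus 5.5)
The plan is to treat the backward-time statements through the dual cocycle and then transfer them to the primal side, reusing the one-sided refined theorem in both directions. By \autoref{thm:sequentialMET_twoSided}, $E_s(m) = \preann{\Fhat_{s+1}(m)}$ is an $M_s$-dimensional equivariant space. Equivariance of the dual filtration under $A^*$ gives $A^n(m-n)E_s(m-n) = E_s(m)$; I will check this by annihilator duality together with injectivity on $E_s$, the latter following from the growth lower bounds below. The key observation is that $E_s(m-n)$ plays, for the sequence started at time $m-n$, the role of the space $E$ in \autoref{thm:refined_sequentialMET_oneSided} applied to $(A^*_{m-k})_{k\geq 1}$. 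More precisely, $\ann{E_s(m-n)} = \Fhat_{s+1}(m-n)$ is the slow space of the dual sequence. Item (2) of \autoref{thm:refined_sequentialMET_oneSided}, applied to the dual sequence, which satisfies the refined assumptions by \autoref{assump:refined_twoSidedSequences} and \autoref{cor:equiv_volGrowth_and_dual}, then gives
\[\tfrac1n\log m\big((A^{*n}_m)^*|_{\ann{\Fhat_{s+1}(m-n)}}\big) \to \lambda_s \quad\text{and}\quad \tfrac1n\log\det\big((A^{*n}_m)^*|\ann{\Fhat_{s+1}(m-n)}\big)\to \textstyle\sum_{t\le s} m_t\lambda_t .\]
Here $(A^{*n}_m)^* = (A^n(m-n))^{**}$ acts on $\B_{m-n}^{**}$, and $\ann{\Fhat_{s+1}(m-n)}$ is finite-dimensional, equal to $\Ev(E_s(m-n))$, because $\Fhat_{s+1}$ is weak-* closed of finite codimension. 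Since $A^{**}$ restricts to $A$ on $\B$ and $\Ev$ is isometric, items (1) and (2) follow for $E_s$.

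For $U_s$, I write $E_s = E_{s-1}\oplus U_s$ and use $m(A|_{U_s})\geq m(A|_{E_s})$ for the lower bound. For the upper bound, I combine $\det(A|E_s)\lesssim \det(A|E_{s-1})\det(A|U_s)/\sin\theta$ from \autoref{lem:determinantProdDecomp} with item (2) at levels $s$ and $s-1$, and with the bound $\det(A|U_s)\lesssim m(A|_{U_s})\norm{A|_{U_s}}^{m_s-1}$. The norm factor is controlled by $\norm{A^n(m-n)|_{F_s(m-n)}}\leq e^{n(\lambda_s+\e)}$ from item (3) of \autoref{thm:sequentialMET_twoSided}. This needs only sub-exponential control of the angle $\theta(E_{s-1}(m-n),U_s(m-n))$, which is not yet available. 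So I will instead use the images: the angle between $A^nE_{s-1}=E_{s-1}(m)$ and $A^nU_s=U_s(m)$ is a fixed positive number, and the left inequality in \autoref{lem:determinantProdDecomp} suffices.

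Item (3) is the main obstacle. For $n\to+\infty$, I apply item (1) of \autoref{thm:refined_sequentialMET_oneSided} with $E = E_s(0)\in\G\Ccal_s(\eta)$, where $\eta=\sin\theta(E_s(0),F_{s+1}(0))>0$ by the splitting, and use $A^n_0E_s(0)=E_s(n)$. For $n\to-\infty$, I apply the same item to the dual sequence starting at time $0$, with $\Lhat=\ann{F_{s+1}(0)}$ as the fast space. This gives subexponential decay of $\sin\theta\big((A^{*n}_0)\ann{F_{s+1}(0)},\Fhat_{s+1}(-n)\big)$. I then need two identifications. First, $(A^{*n}_0)\ann{F_{s+1}(0)} = \ann{F_{s+1}(-n)}$, which holds by equivariance plus dimension count. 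Second, $\sin\theta(\ann F,\ann E)\approx_{M_s}\sin\theta(E,F)$, which I get from $\pi_{E//F}^*=\pi_{\ann F//\ann E}$ and the identity $\sin\theta=\norm{\pi}^{-1}$. Together these give the negative-time angle bound. The upper bound $\leq 0$ in both directions is trivial since $\sin\theta\leq1$.

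Finally, uniqueness. Let $U'_s$ be equivariant, of dimension $m_s$, and satisfy \eqref{eq:uniqueSplittingGrowthCharacterization}. The forward bound $\norm{A^n|_{U'_s(m)}}\le e^{n(\lambda_s+\e)}$ forces $U'_s(m)\subseteq F_s(m)$ by \autoref{remark:limsupDefnOfSlowSpaces}. Next, decompose $v\in U'_s(m)$ as $u+f$ with $u\in U_s(m)$ and $f\in F_{s+1}(m)$, and pull back along $m-n$. The backward lower bound $m(A^n(m-n)|_{U'_s(m-n)})\geq e^{n(\lambda_s-\e)}$ says that preimages have norm at most $e^{-n(\lambda_s-\e)}$. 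On the other hand, by item (3) of \autoref{thm:sequentialMET_twoSided}, vectors in $F_{s+1}(m-n)$ grow at most at rate $\lambda_{s+1}$. Using the tempered projection norm from item (3) just proved, I deduce $f=0$, so $U'_s(m)\subseteq U_s(m)$, and equality follows by dimension.
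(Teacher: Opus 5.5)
Your proposal is correct and follows essentially the paper's route. Items (1)--(3) come from \autoref{thm:refined_sequentialMET_oneSided} applied to the primal sequence and to the dual sequence $(A^*_{m-1},A^*_{m-2},\ldots)$. This uses the bidual identification $\ann{\Fhat_{s+1}(m-n)}=\Ev(E_s(m-n))$ and the angle identity $\sin\theta(E,F)=\sin\theta(\ann{F},\ann{E})$, which is an exact equality since $\pi_{E//F}^*=\pi_{\ann{F}//\ann{E}}$. Uniqueness is the same pull-back argument, forcing $U_s'(m)\subseteq E_s(m)\cap F_s(m)=U_s(m)$.

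The one flaw is your determinant detour for the upper bound on $m\paren{A^n(m-n)|_{U_s(m-n)}}$. The inequalities you chain there bound the minimum norm from below, not above. The detour is also unnecessary: $m\paren{A^n(m-n)|_{U_s(m-n)}}\le\norm{A^n(m-n)|_{F_s(m-n)}}$, combined with item (3) of \autoref{thm:sequentialMET_twoSided}, gives the upper bound at once.
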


\subsection{Proof of \autoref{thm:sequentialMET_twoSided}}
We shall prove \autoref{thm:sequentialMET_twoSided} in this section. Firstly, the fact that $(A^*_n)_{n\in \Z}$ satisfies assumption \autoref{assump:twoSidedSequences} and that its Lyapunov spectrum coincides with that of $(A_n)_{n\in\Z}$ follows immediately from \autoref{cor:equiv_volGrowth_and_dual}. We now move on to proving the rest of the theorem.

Now, we shall first assume $l_\alpha > -\infty$ in the case that $r<\infty$ and describe the necessary modifications for $l_\alpha = -\infty$ at the end of this subsection. It will be helpful to write $\lambda_{r+1} \coloneqq l_\alpha$ in this case. There is a natural extension of the quantities $\Bplus(m,\delta)$ and $\Hplus_q(m,\delta)$ from \autoref{sec:sequentialMET_oneSided} for $m\leq -1$. We shall also define the analogous backward-time and centered quantities:
\begin{itemize}
    \item $\Bminus(m,\delta) = \sup_{n \geq 1} \log \norm{A_{m-n}} - n \delta$
    \item $\Hminus_q(m,\delta) = \sup_{n \geq 1} \abs{\log V_q(A^n_{m-n}) - nl_q} - n\delta$
    \item $\Hpm_q(m,\delta) = \sup_{n\geq 1} \abs{\log V_q(A^{2n}_{m-n}) - 2nl_q} - 2n\delta$
\end{itemize}

\subsubsection*{Proof of Items (1) and (2)}\label{subsubsec:items(1)(2)_twoSided}
For $1\leq s < r + 1$ and $m\in \Z$, let $W_{n,s}(m)\subset \B_m$ be an $M_s$-dimensional subspace such that
\begin{equation}\label{eq:defOfWSpaces}
    \det\paren{A_m^n|W_{n,s}(m)}\geq \frac{1}{2} V_{M_s}(A^n_m)
\end{equation}
and write $E_{n,s}(m) = A^{n}_{m-n} W_{2n,s}(m-n)$. Fix $m\in \Z$ and let
\begin{equation}\label{eq:twoSidedEffectivenessConsts}
    B\geq \Bminus(m,\delta),\Bplus(m,\delta) \text{ and } H\geq \max_{-1\leq j\leq 1}\Hpm_{j+M_s}(m,\delta),\Hminus_{j+M_s}(m,\delta),\Hplus_{j+M_s}(m,\delta).
\end{equation}
Then 
\[\det(A^n_m|E_{n,s}(m))\det(A^n_{m-n}|W_{2n,s}(m-n))\geq \frac{1}{2} V_{M_s}(A^{2n}_{m-n})\geq \frac{1}{2e^H}\exp(2n(l_{M_s}-\delta))\]
for all $n\geq 1$. In particular, this means
\begin{equation}\label{eq:backwardApproxDetBound}
    \det(A^n_{m-n}|W_{2n,s}(m-n))\geq \frac{\exp(2n(l_{M_s}-\delta))}{2e^H V_{M_s}(A^n_m)}\geq \frac{\exp(n(l_{M_s}-3\delta))}{2e^{2H}} \geq \frac{V_{M_s}(A^n_{m-n})}{2e^{3H+4\delta n}}
\end{equation}
and
\begin{equation}\label{eq:forwardApproxDetBound}
    \det(A^n_{m}|E_{n,s}(m))\geq \frac{\exp(2n(l_{M_s}-\delta))}{2e^H V_{M_s}(A^n_{m-n})}\geq \frac{\exp(n(l_{M_s}-3\delta))}{2e^{2H}} \geq \frac{V_{M_s}(A^n_m)}{2e^{3H+4\delta n}}.
\end{equation}
For the sake of brevity, we shall now fix $m = 0$ and suppress parentheses in front of the spaces (i.e. $W_{2n,s}$ instead of $W_{2n,s}(-n)$ and $E_{n,s}$ instead of $E_{n,s}(0)$) unless necessary, as the following argument requires a trivial adaptation for general $m\in \Z$.

\vspace{2mm}
\noindent\textit{Connection to the Adjoint Sequence} Let $G:\N\to\N$ be as in \autoref{remark:formOfSVDconsts} and write $D_s = G(M_s)(2e^{3H})^{G(M_s)}$. Then, by \eqref{eq:backwardApproxDetBound} and \autoref{lem:dual_SVD}, there are complements $V_{n,s+1}$ and $V_{n,s+1}'$ of $W_{2n,s}$ and $E_{n,s}$, respectively, such that 
\[(L_{n,s},\Lhat_{n,s},K_{n,s+1},\Khat_{n,s+1})\in \SVS_{M_s}(D_se^{4G(M_s)\delta n};A_0^{*n}),\]
where $L_{n,s} = \ann{V_{n,s+1}'}$, $\Lhat_{n,s} = V_{n,s+1}$, $K_{n,s+1} = \ann{E_{n,s}}$, and $\Khat_{n,s+1} = \ann{W_{2n,s}}$. Shrinking $\delta$ so that $5G(M_s)\delta < \lambda_s-\lambda_{s+1}$, it follows by \autoref{cor:uniquenessOfSlowSpaces} (applied to the adjoint sequence) that
\begin{equation}\label{eq:fastSpaceDistBound}
    d_{H}(K_{n,s+1},\Fhat_{s+1}(0)) \lesssim \exp(-n(\lambda_{s}-\lambda_{s+1}-5G(M_s)\delta))
\end{equation}
for all $n$ large enough. In particular, $K_{n,s+1}\to \Fhat_{s+1}(0)$ as $n\to\infty$ in the Grassmannian topology on $\B_0^*$. Hence, by \autoref{lem:equivOfGap_and_Hausdorff} it follows that $E_{n,s} = \preann{K_{n,s+1}}\to \preann{\Fhat_{s+1}(0)} = E_{s}(0)$.

\vspace{2mm}
\noindent\textit{Growth and Angle Estimates} Write $E'_{n,s} = A^n_0 E_{n,s}$. By \eqref{eq:forwardApproxDetBound} and \autoref{prop:SVD}, there are topological complements $F_{n,s+1}$ and $F'_{n,s+1}$ to $E_{n,s}$ and $E_{n,s}'$, respectively, such that
\[(E_{n,s},E_{n,s}',F_{n,s+1},F_{n,s+1}')\in \SVS_{M_s}(D_s e^{4G(M_s)\delta n};A^n_0)\]
for all $n\geq 1$. As before, by \autoref{cor:uniquenessOfSlowSpaces},
\[d_H(F_{n,s+1},F_{s+1}(0))\lesssim \exp(-n(\lambda_{s}-\lambda_{s+1}-5G(M_s)\delta))\]
for all $n$ large enough. Combining this and \eqref{eq:fastSpaceDistBound}, we get
\[\abs{\sin\theta(E_s(0),F_{s+1}(0))-\sin\theta(E_{n,s},F_{n,s+1})}\lesssim \exp(-n(\lambda_{s}-\lambda_{s+1}-5G(M_s)\delta))\]
for all $n$ large enough. Letting $C$ be the constant in the above inequality, this means
\begin{equation}\label{eq:equivariantSlowFastSplittingAngles}
    \sin\theta(E_s(0),F_{s+1}(0))\geq \sup_{n\geq N} \bracket{D_s^{-1}e^{-4G(M_s)\delta n} - C\exp(-n(\lambda_{s}-\lambda_{s+1}-5G(M_s)\delta))}.
\end{equation}
If we shrink $\delta>0$ so that $10 G(M_s)\delta < \lambda_{s}-\lambda_{s+1}$, then the above supremum is strictly positive. This proves $E_{s}(0)$ is a topological complement to $F_{s+1}(0)$. We may then define $U_s(0) = E_s(0)\cap F_s(0)$, giving us items (1)-(2). Moreover, using this effective angle estimate, we can apply \autoref{thm:sequentialMET_oneSided} (and the proof therein) to get effective volume growth estimates and derive \eqref{eq:splittingForwardGrowth}.

\vspace{2mm}
\noindent\textit{Equivariance}
We have proven that, for all $m\in \Z$ and $1\leq s < r+1$, $\Fhat_{s+1}(m) = \ann{E_{s}(m)}$. Now, as in \autoref{remark:limsupDefnOfSlowSpaces}, we know that $A^*_{m}\Fhat_{s+1}(m+1) \subseteq \Fhat_{s+1}(m)$. Applying the pre-annihilator to both sides, we get $(A^m)^{-1}E_{s}(m+1) = \preann{\paren{A^*_{m}\Fhat_{s+1}(m+1)}} \supseteq \preann{\Fhat_{s+1}(m)} = E_{s}(m)$, where $(A^m)^{-1}$ denotes the pre-image by $v\mapsto A_{m}v$. Of course, this means $A_{m}E_s(m) \subseteq E_s(m+1)$. But, as $A_{m}$ is injective on $E_s(m)$ and $\dim E_s(m) = \dim E_s(m+1)$, it follows that $A_m E_{s}(m) = E_{s}(m+1)$. Moreover, since $U_{s}(m) = E_{s}(m)\cap F_{s}(m)$, equivariance follows immediately.

\subsubsection*{Proof of Item (3)}
We will show that item (3) is essentially a corollary of items (1)-(2) and \autoref{thm:sequentialMET_oneSided}, so we will not spend much time explicitly writing effective estimates.

\begin{remark}
    One can make the following argument effective by applying \autoref{cor:equiv_volGrowth_and_dual} to $B$ and $H$ as in \eqref{eq:twoSidedEffectivenessConsts} (decreasing $s$ by 1) and
    \[H\geq \max_{1\leq q \leq M_{s-1}+1} \Hminus_q(m,\delta)\]
    for $\delta>0$ sufficiently small.
\end{remark}

Running the proofs of items (1)-(2) again for the dual sequence $(A^*_{m})_{m\in \Z}$, we get for each $m\in \Z$ and $1\leq s < r+1$ the equivariant splitting $\B_m^* = \Ehat_s(m) \oplus \Fhat_{s+1}(m)$. Here, equivariance is shifted by one and goes backwards in time (i.e. $A_{m-1}^*\Ehat_s(m) = \Ehat_s(m-1)$). If $F^{**}_{s+1}(m)$ is the $\lambda_{s+1}$-slow space for the bi-adjoint sequence $(A^{**}_{m},A^{**}_{m+1},\ldots)$, then $F_{s+1}(m) = \B\cap F_{s+1}^{**}(m)$, by \autoref{remark:limsupDefnOfSlowSpaces}, under the identification via the isometric pointwise evaluation map. Then item (2) gives $F_{s+1}^{**}(m)$ is the $\B^{**}$-annihilator of $\Ehat_{s}(m)$, which means $F_{s+1}(m) = \preann{\Ehat_s(m)}$ since $\codim F_{s+1}(m) = \dim \Ehat_{s}(m)$. In other words, we get $\Ehat_s(m) = \ann{F_{s+1}(m)}$, and we may write $\Uhat_s(m) = \Ehat_s(m)\cap \Fhat_s(m)$.

As before, $m = 0$ and $1\leq s < r+1$. Item (3) follows immediately for $s = 1$ since $F_1(-n) = \B_{-n}$ for all $n\geq 0$, so assume $s\geq 2$. Then, by \autoref{lem:distance_vs_orthogonalProj}, we get that
\[\norm{A^n_{-n}|_{F_{s}(-n)}} = \sup_{\ell\in B_{\B_0^*}} d(A^{*n}_0\ell,\Ehat_{s-1}(-n)).\]
Now, for all $\ell\in \B_0^*$,
\[d(A^{*n}_0\ell,\Ehat_{s-1}(-n)) = d(A^{*n}_0\pi_{\Fhat_s(0)//\Ehat_{s-1}(0)}\ell,\Ehat_{s-1}(-n))\leq \norm{A^{*n}_0|_{\Fhat_{s}(0)}}\norm{\pi_{\Fhat_s(0)//\Ehat_{s-1}(0)}}\norm{\ell},\]
which gives us
\[\limsup_{n\to\infty} \frac{1}{n} \log \norm{A^n_{-n}|_{F_{s}(-n)}} \leq \lambda_s\]
by \autoref{thm:sequentialMET_oneSided}. Moreover, applying \eqref{eq:FinalAngleOfImagesEstimate} to the adjoint sequence, we see that if we take $\ell\in \Uhat_s(0)$ of unit norm, then
\[\liminf_{n\to\infty} \frac{1}{n} \log \norm{A^n_{-n}|_{F_{s}(-n)}}\geq\liminf_{n\to\infty} \frac{1}{n} \log d(A^{*n}_0\ell,\Ehat_{s-1}(-n))\geq \lambda_s.\]
\begin{remark}
    Notice $\sin\theta(\Ehat_{s-1}(0),\Fhat_{s}(0)) = \sin\theta(E_{s-1}(0),F_{s}(0))$, so angles can be estimated effectively from pre-adjoint angle estimates.
\end{remark}

Now, the above work provides $\limsup_{n\to\infty} \frac{1}{n} \log\norm{A^n_{-n} |_{F_\alpha(-n)}}\leq l_\alpha$ when $r < \infty$. For the equality clause, notice that since $r<\infty$, \autoref{assump:twoSidedSequences} implies
\[\liminf_{n\to\infty}\frac{1}{n} \log \norm{A^n_{-n}|_{F_\alpha(-n)}} \geq \lim_{n\to\infty} \frac{1}{n} \log c_{1+M_r}(A^n_{-n}) = l_\alpha.\]

Lastly, if $r = \infty$, then we get
\[\limsup_{n\to\infty} \frac{1}{n} \log\norm{A^n_{-n} |_{F_\alpha(-n)}}\leq \inf_{s\geq 1}  \lim_{n\to\infty} \frac{1}{n} \log\norm{A^n_{-n} |_{F_s(-n)}} = \inf_{s\geq 1} \lambda_s = l_\alpha.\]

\subsubsection*{The case $l_\alpha = -\infty$ when $r<\infty$}
Suppose now $r<\infty$ and $l_\alpha = -\infty$. The modifications are analogous to those described in \autoref{sec:sequentialMET_oneSided}. Fix a sufficiently negative $\mu < \lambda_{r}$ and write $\Delta = -l_{M_r} - \mu$. Writing $q_\alpha = 1 + M_r$, extend $\Hplus_\alpha(m,\Delta)$ to $m\in \Z$. We also analogously define $H^-_\alpha(m,\Delta) = \sup_{n\geq 1} \log V_{q_\alpha}(A^n_{m-n}) + \Delta n$ and
\[\Hpm_\alpha(m,\Delta) = \sup_{n\geq 1} \log V_{M_r+1}(A^{2n}_{m-n}) + 2n\Delta.\]
Then the same arguments as before proceed when $\lambda_{r+1}$ is replaced by $\mu$ and $l_{M_r+1}$ is replaced by $-\Delta$.

\subsection{Proof of \autoref{thm:refined_sequentialMET_twoSided}}
We now assume $(A_n:\B_n\to\B_{n+1})_{n\in\Z}$ satisfies \autoref{assump:refined_twoSidedSequences} and prove \autoref{thm:refined_sequentialMET_twoSided}. Here, we do not need to distinguish between the case $l_\alpha > -\infty$ and $l_\alpha = -\infty$. 

It will turn out that items (1)-(3) are immediate corollaries of \autoref{thm:sequentialMET_twoSided} and \autoref{thm:refined_sequentialMET_oneSided}. Indeed, by \autoref{cor:equiv_volGrowth_and_dual}, it is clear that $(A_n^*:\B_{n+1}^*\to\B_{n}^*)_{n\in\Z}$ satisfies \autoref{assump:refined_twoSidedSequences}, and so we may just apply \autoref{thm:refined_sequentialMET_oneSided} to the sequence $(A_{m-1}^*,A_{m-2}^*,A_{m-3}^*,\ldots)$ and get the desired results, noting that
\[\sin\theta(E_{s}(n), F_{s+1}(n)) = \sin\theta(\Ehat_{s}(n), \Fhat_{s+1}(n))\]
for all $n\in \Z$. We will not attempt to explicitly make effective estimates, as the form of such estimates is clear upon reading the proofs of \autoref{thm:sequentialMET_twoSided} and \autoref{thm:refined_sequentialMET_oneSided}.

Therefore, all that is left to show is uniqueness of the splitting. In particular, suppose we have another collection of equivariant spaces $\{m\mapsto U_{s}'(m)\}_{1\leq s < r+1}$ such that $U_s'$ satisfies \eqref{eq:uniqueSplittingGrowthCharacterization} for each $1\leq s < r+1$. Notice first that the second limit in the characterization given by \eqref{eq:uniqueSplittingGrowthCharacterization} implies $U_s'(m)\subseteq F_{s}(m)$, so if we can show that $U_s'(m) \subseteq E_{s}(m)$, we get $U'_s(m)\subseteq U_s(m) = E_s(m)\cap F_s(m)$. Then, the fact that $\dim U_{s}'(m) = m_s$ forces $U_s'(m) = U_s(m)$.

Notice first that \eqref{eq:uniqueSplittingGrowthCharacterization} implies $A_m$ is injective on $U_s'(m)$, so equivariance and dimensionality imply $A_m U_s'(m) = U_s'(m+1)$. Let $\delta > 0$ and fix $m\in \Z$. By the first limit in \eqref{eq:uniqueSplittingGrowthCharacterization}, it follows that there is an integer $N = N(m,\delta)$ such that $m(A^n_{m-n}|_{U_s'(m-n)}) \geq \exp(n(\lambda_s-\delta))$ for all $n\geq N$. Take $u_0\in U_s'(m)$ of norm $1$ and let $u_{-n} = (A^n_{m-n}|_{U_s'(m-n)})^{-1} u_0$. Then, by the lower bound on the minimum norm,
\[\norm{u_{-n}}\leq \exp(-n(\lambda_{s}-\delta))\]
for all $n\geq N$. But then
\begin{align*}
    \norm{\pi_{F_{s+1}(m)//E_{s}(m)}u_0} &= \norm{A^n_{m-n}\pi_{F_{s+1}(m-n)//E_{s}(m-n)} u_{-n}}\\
    &\leq \norm{A^n_{m-n}|_{F_{s+1}(m-n)}}\norm{\pi_{F_{s+1}(m-n)//E_s(m-n)}} \exp(-n(\lambda_s-\delta)).
\end{align*}
Take any $\mu \in (\lambda_{s+1}, \lambda_s)$ (where $\lambda_{r+1} \coloneqq l_\alpha$ even if $l_\alpha = -\infty$). By \autoref{thm:sequentialMET_twoSided} item (3), enlarging $N$ if necessary, we get
\[\norm{A^n_{m-n}|_{F_{s+1}(m-n)}}\leq \exp(n\mu)\]
for all $n\geq N$. In particular, taking a supremum over all $v\in U_s'(m)$ of norm one, we get
\[\Gap(U_s'(m),E_{s}(m))\leq \norm{\pi_{F_{s+1}(m-n)//E_{s}(m-n)}}\exp(-n(\lambda_{s}-\mu-\delta)).\]
Since $n\mapsto\norm{\pi_{F_{s+1}(m-n)//E_{s}(m-n)}}$ is subexponential, if we take $\delta < \lambda_s-\mu$, it follows that 
\[\lim_{n\to\infty}\norm{\pi_{F_{s+1}(m-n)//E_{s}(m-n)}}\exp(-n(\lambda_{s}-\mu-\delta)) = 0.\] 
In particular, $\Gap(U_s'(m),E_{s}(m)) = 0$, which implies $U_s'(m)\subseteq E_s(m)$. Hence, $U_s'(m) = U_s(m)$ for all $m\in \Z$ and $1\leq s < r+1$.

\section{The Multiplicative Ergodic Theorems}\label{sec:MET}
There are three measurability issues we must contend with if we want to formulate a proper ergodic theorem suitable for most applications. These are the existence of measurable volume functions, measurability of the filtration, and (in the semi-invertible case) measurability of the splitting and associated projection operators. Let $(\Omega,\F,\p)$ be a probability space with mpt $\sigma:\Omega\to\Omega$ and let $(\B,\norm{\cdot})$ be a Banach space. Let $A:\Omega\mapsto L(\B)$ be an operator-valued map and write $A^n_\omega = A_{\sigma^{n-1}\omega}\cdots A_\omega$. 

If for every $q\in \N$ we can find a map $G_q:L(\B)\to [0,\infty)$ such that $G_q(A)\approx_q  V_q(A)$ for all $A\in L(\B)$ and $\omega\mapsto G_q(A^n_\omega)$ is measurable for all $n\in \N$, then so long as $\log^+ G_1(A_\omega)\in L^1$, Kingman's subadditive ergodic theorem tells us that the cocycle $\omega\mapsto A_\omega$ $\p$-generically satisfies sufficient conditions for the multiplicative growth theorems we developed in the preceding sections to hold. 

However, for many applications, it is not just enough to get a measurable Lyapunov spectrum and existence of a filtration/splitting; one often needs the Oseledets subspaces to satisfy measurability conditions themselves. This is another one of the advantages of our approach; we just need to feed minimal topological information to build measurable approximants to the filtration and fast spaces.

\subsection{Setting and Results}
Our setting will be applicable to Banach spaces $\B$ for which the unit ball $B_{\B^*}$ in the continuous dual of $\B$ is weak-* separable. With this first topological assumption, we formulate the following measurability condition.

\begin{defn}
    Let $S\subseteq \B$ and $S^*\subseteq \B^*$ be arbitrary subsets. An operator-valued map $A:\Omega\to L(\B)$ is said to be $(S,S^*)$\textit{-measurable} if the map
    \[\omega\mapsto \innerp{A_\omega v,\ell}\]
    is Borel measurable for all $v\in S$ and $\ell\in S^*$.
\end{defn}

\begin{measAssumption*}[M]\label{ass:measurability}
    There is a vector space topology $\tau$ on $\B$, coarser than the norm topology, such that
    \begin{enumerate}[label = (\roman*)]
        \item the norm-closed unit ball $B_\B$ is $\tau$-closed and $\tau$-separable;
        \item for almost every $\omega\in \Omega$, $A_\omega|_{B_{\B}}:(B_\B,\tau|_{B_\B})\to(\B,\tau)$ is $\tau$-continuous; and
        \item there is a countable weak-* dense set $D\subset B_{\B^*}$ such that
        \begin{enumerate}
            \item the subspace $\tau$-topology $\tau|_{B_{\B}}$ is finer than $\sigma(B_{\B},D)$; and
            \item for all $n\geq 1$, $A^n:\Omega\to L(\B)$ is $(\B,D)$-measurable.
        \end{enumerate}
    \end{enumerate}
    If any such $\tau$ and $D$ exist, we say $A$ satisfies condition $\measurable_\sigma$; we also say $(\tau;D)\in \measurable_{\sigma,A}$.
\end{measAssumption*}
\begin{remark}\label{remark:lowersemicontinuityOfNorm}
    The unit ball $B_\B$ is $\tau$-closed if and only if the norm $\norm{\cdot}$ is lower semi-continuous w.r.t. $\tau$.
\end{remark}
Item (iii)(a) is equivalent to the following statement: suppose $(x_\alpha)_{\alpha}$ is a norm-bounded net of vectors in $\B$ such that $x_\alpha$ converges in $\tau$ to some $x\in \B$. Then, for all $\ell\in D$, we have that
\[\ell(x) = \lim_{\alpha} \ell(x_\alpha).\]
Notice that a similar interpretation is true for item (ii), as it is equivalent to the statement that $Ax_\alpha\to Ax$ in $\tau$ for norm bounded nets such that $x_\alpha\to x$ in $\tau$. Thus, it will be a useful bookkeeping tool to define the set
\[BL_\tau(V) \coloneqq \curly{T\in L(\B,V) \text{ such that } T|_{B_{\B}}:(B_{\B},\tau|_{B_\B})\to V \text{ is continuous}},\]
for a topological vector space $V$. Then item (iii)(a) says $D\subset BL_{\tau}(\R,\abs{\cdot})$ and item (ii) says $A_{\omega}\in BL_{\tau}(\B,\tau)$ a.s.
\begin{remark}
    Notice that if $A\in L(\B,\norm{\cdot})\cap BL_\tau(\B,\tau)$ and $\ell\in BL_\tau(\R)$, then $A^*\ell\in BL_\tau(\R)$. Moreover, by \autoref{appLem:closednessOfBLtau}, $BL_\tau(\R)$ is dual norm closed in $\B^*$, and $L(\B)\cap BL_\tau(\B,\tau)$ is operator norm closed in $L(\B)$.
\end{remark}

Using our newly formulated measurability condition in conjunction with \autoref{thm:refined_sequentialMET_oneSided}, we obtain a multiplicative ergodic theorem for non-invertible Banach space cocycles. By \autoref{prop:norm_and_volume_measurability}, $\omega\mapsto \norm{A^n_{\omega}},\norm{A^n_\omega v}$ is measurable for all $n\in \N$ and $v\in \B$. Denote
\[\lambda(\omega,v) = \limsup_{n\to\infty}\frac{1}{n} \log \norm{A^n_\omega v}\]
for $\omega\in\Omega$ and $v\in \B$. Moreover, for $\lambda\in \R\cup\{-\infty\}$, we write
\[F_\lambda(\omega) = \curly{v\in \B:\lambda(\omega,v)\leq \lambda}.\]

Lastly, for an operator $A\in L(\B)$, define the Kuratowski measure of noncompactness (see \cite{Akhmerov1992}) by
\[\norm{A}_\alpha = \inf\curly{d>0:AB_{\B} \text{ admits a finite cover by sets of diameter } d}.\]

\begin{thm}[The Non-invertible Multiplicative Ergodic Theorem]\label{thm:non-invMET}
    Let $A:\Omega\to L(\B)$ be a cocycle that satisfies condition \hyperref[ass:measurability]{$\measurable_\sigma$} and fix $(\tau; D) \in \measurable_{\sigma,A}$. Suppose
    \[\int \log^+\norm{A_\omega} \,d\p(\omega) < \infty.\]
    There is then a measurable $\sigma$-invariant set $\Gamma\subseteq \Omega$ of full probability such that for all $\omega\in \Gamma$, the sequence of operators $(A_\omega,A_{\sigma\omega},A_{\sigma^2\omega},\ldots)$ satisfies \autoref{assump:refined_oneidedSequences}, and writing $\omega\mapsto l_q(\omega),l_\alpha(\omega),\chi_q(\omega), r(\omega)$ for $q\in \N$ as in \autoref{thm:sequentialMET_oneSided},
    \begin{equation}\label{eq:MNC_for_MET}
        l_\alpha(\omega)\coloneqq \inf_{q} \chi_q(\omega) = \lim_{n\to\infty} \frac{1}{n}\log\norm{A^n_\omega}_{\alpha}.
    \end{equation}
    For $\omega\in \Gamma$ such that $r(\omega) \geq 1$, write $\omega\mapsto m_s(\omega)$, $M_{s}(\omega)$, and $\lambda_s(\omega)$ for $1\leq s < r(\omega) + 1$ as in \autoref{thm:sequentialMET_oneSided} applied to $(A_\omega,A_{\sigma\omega},A_{\sigma^2\omega},\ldots)_{\omega\in \Gamma}$. Then these maps are measurable and $\sigma$-invariant. Moreover, we have the following.
    \begin{enumerate}[label = (\arabic*)]
        \item The maps $\omega\mapsto \lambda(\omega,v)$ for $v\in \B$ are measurable and $\lambda(\sigma\omega,A_\omega v) = \lambda(\omega,v)$ for all $\omega\in \Gamma$ and $v\in \B$.
        \item The maps $\omega\mapsto d(v,F_{\lambda_s}(\omega)),\ d(v,F_{l_\alpha}(\omega))\indic_{r<\infty}$ for $v\in \B$ and $1\leq s < r(\omega) + 1$ are measurable.
        \item The maps $\omega\mapsto \norm{A^n_\omega |_{F_{\lambda_s}(\omega)}},\ \norm{A^n_\omega |_{F_{l_\alpha}(\omega)}}\indic_{r<\infty}$ for $n\in \N$ and $1\leq s < r(\omega) + 1$ are measurable.
        \item For almost every $\omega\in \Omega$, $B_{\B}\cap F_{\lambda_s}(\omega)$ and $B_{\B}\cap F_{l_\alpha}(\omega)$ are $\tau$-closed.
    \end{enumerate}
\end{thm}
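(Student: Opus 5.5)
The plan is to reduce everything to the sequential \autoref{thm:refined_sequentialMET_oneSided}, applied to $(A_{\sigma^n\omega})_{n\geq 0}$ for a fixed typical $\omega$. The measure theory then only has to produce (a) measurable volume functions and (b) countable measurable approximants to the slow spaces.

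\textbf{Step 1: measurable volume functions.} Using (M)(i), fix a countable $\tau$-dense set $Q\subset B_\B$, and recall the countable weak-* dense set $D\subset B_{\B^*}$. For $v_1,\ldots,v_q$ in the ball, I write $\vol_q(A v_1,\ldots,Av_q)$ as a product of distances $d(Av_i,\innerp{Av_{i+1},\ldots,Av_q})$. Each such distance is a supremum over functionals, and I would like to restrict that supremum to the countable set $D$ and to rational coefficients. The tool for this is \autoref{lem:distance_vs_orthogonalProj}: $d(x,F)=\sup\{\innerp{x,\ell}\}$ over unit annihilating $\ell$, which I want to approximate by linear combinations of elements of $D$.

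Next I pass from $Q$ to the whole ball in the supremum \eqref{eq:formsOfLianLuVolNorm}. By (M)(ii) and (iii)(a), the map $v\mapsto\innerp{A_\omega v,\ell}$ is $\tau$-continuous on $B_\B$ for $\ell\in D$. I then use lower semicontinuity of the resulting distance functional (\autoref{remark:lowersemicontinuityOfNorm}) to get
\[\VLL_q(A^n_\omega)=\sup_{v_i\in Q}(\text{countable expression}).\]
This should give measurability of $\omega\mapsto \VLL_q(A^n_\omega)$, and hence of $\norm{A^n_\omega}$ and $\norm{A^n_\omega v}$. By \autoref{lem:equivalenceOfVolNorms}, $\VLL_q\approx_q V_q$, and $\VLL_q$ is submultiplicative. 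I expect this step to be the main obstacle. Specifically, I need an upper semicontinuity or approximation argument showing that the countable supremum does not undershoot. If the naive restriction fails, I would first try replacing the sup over $\ell$ by a sup over finite rational combinations of $D$, using weak-* density.

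\textbf{Step 2: ergodic theory.} Apply Kingman's theorem, in its tempered strengthening \autoref{appThm:temperedSET}, to the subadditive sequences $\log\VLL_q(A^n_\omega)$, using the integrability of $\log^+\norm{A_\omega}$. On a $\sigma$-invariant full-measure $\Gamma$ this yields:
\begin{itemize}
\item the limits $l_q(\omega)$, which are $\sigma$-invariant;
\item the two-parameter defect control of \autoref{assump:refined_oneidedSequences}(ii)--(iii);
\item $\frac1n\log^+\norm{A_{\sigma^n\omega}}\to0$, from Birkhoff.
\end{itemize}
Measurability of $l_q$, $\chi_q$, $r$, $m_s$, $M_s$, $\lambda_s$ is then immediate, since these are countable combinations of measurable functions. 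The identity \eqref{eq:MNC_for_MET} I would obtain by a separate subadditive argument for $\norm{\cdot}_\alpha$, which is submultiplicative. I would then compare $\norm{A^n}_\alpha$ with the Gelfand numbers $c_q(A^n)$ for large $q$, following Thieullen/Blumenthal's argument. Measurability of $\norm{A^n_\omega}_\alpha$ can be handled similarly via $Q$, or sidestepped by proving that $\lim$ equals $\inf_q\chi_q$ pointwise.

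\textbf{Step 3: the filtration.} Item (1) follows from the measurability in Step 1 and the identity $\lambda(\sigma\omega,A_\omega v)=\lambda(\omega,v)$, which is trivial. By \autoref{remark:limsupDefnOfSlowSpaces}, $F_{\lambda_s}(\omega)=F_s(\omega)$.

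For items (2)--(3), I approximate $F_s(\omega)$ by measurable choices of approximate slow spaces, namely the pre-annihilators $\preann{((A^n_\omega)^*L_n)}$ of \autoref{cor:reverse_dual_SVD}. Here $L_n$ is spanned by near-maximizing functionals chosen from the countable set of rational combinations of $D$. By \autoref{cor:uniquenessOfSlowSpaces}, these converge in $d_H$ to $F_s(\omega)$. The distances $d(v,\cdot)$ and restricted norms to such pre-annihilators are measurable, by \autoref{lem:distance_vs_orthogonalProj} and the $(\B,D)$-measurability of $A^n$. Passing to the limit via \autoref{lem:continuityOfDistToSpaces} then gives items (2) and (3).

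\textbf{Step 4: $\tau$-closedness.} Each approximant is an intersection of kernels of functionals lying in $BL_\tau(\R)$, since $(A^n)^*D\subset BL_\tau(\R)$ and that set is norm closed (\autoref{appLem:closednessOfBLtau}). The slow space itself is the intersection of $\{v:\innerp{v,\ell}=0\}$ over the limiting annihilator, whose elements are norm-limits of such functionals and hence also lie in $BL_\tau(\R)$. Its intersection with $B_\B$ is therefore $\tau$-closed, which gives item (4).
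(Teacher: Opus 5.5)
\textbf{Overall.} Your architecture is the paper's: measurable volume functions built from a countable $\tau$-dense $Q\subset B_\B$ and the set $D$; Kingman together with \autoref{appThm:temperedSET}; slow-space approximants $\preann{((A^n_\omega)^*L_n)}$ controlled by \autoref{cor:reverse_dual_SVD} and \autoref{cor:uniquenessOfSlowSpaces}; and $\tau$-closedness from \autoref{appLem:closednessOfBLtau}. Picking $L_n$ as the first admissible tuple in an enumeration of $D^{M_{s-1}}$ is a fine, simpler replacement for the measurable selection theorem, because $\VLL_q((A^n_\omega)^*)$ is already a supremum over tuples from $D$.

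\textbf{The gap: the identity \eqref{eq:MNC_for_MET}.} The upper bound is deterministic, from $\norm{A}_\alpha\le 4c(A)\le 4c_q(A)$. The lower bound needs ergodic theory, and neither of your routes delivers it.

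The ``pointwise'' sidestep is impossible. On $\ell^2$ let $A_k=\operatorname{diag}(d_{k,i})_{i\ge 1}$ with $d_{k,i}=e^{-1}$ for $i>k$ and $d_{k,i}=1$ for $i\le k$. Then $\norm{A_k}=1$, and for fixed $m,q$ the Gelfand numbers $c_q(A^n_m)$ are independent of $n$ once $n$ is large. So every $l_q=0$ and \autoref{assump:refined_oneidedSequences} holds. Yet $A^n_0=e^{-n}I+(\text{finite rank})$, so $\frac1n\log\norm{A^n_0}_\alpha\to-1<0=l_\alpha$.

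Running Kingman directly on $\log\norm{A^n_\omega}_\alpha$ requires measurability of $\omega\mapsto\norm{A^n_\omega}_\alpha$. This is not ``similar'' to the volume case: the dense-set trick handles suprema of $\tau$-lsc functions, whereas $\norm{\cdot}_\alpha$ is an infimum over finite covers. The paper deliberately avoids this question.

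The missing idea is a measurable submultiplicative proxy. Set $\gamma(A)=\inf_q\VLL_q(A)^{1/q}$. It is measurable along the cocycle because each $\VLL_q(A^n_\omega)$ is, and $\gamma(A)\le 2\norm{A}_\alpha$ (Lian--Lu, Lemma 4.6). The map $q\mapsto\VLL_q(A)^{1/q}$ is decreasing (Lian--Lu, Lemma 4.5) and bounded by $\norm{A}$. Conditional monotone convergence then lets you exchange the infima, almost surely:
\[
l_\alpha=\inf_q\inf_n\frac{1}{nq}\E\bracket{\log\VLL_q(A^n)|\Ical}=\inf_n\frac1n\E\bracket{\log\gamma(A^n)|\Ical}=\lim_{n\to\infty}\frac1n\log\gamma(A^n_\omega)\le\liminf_{n\to\infty}\frac1n\log\norm{A^n_\omega}_\alpha .
\]
Comparing with $c_q(A^n)$ for large $q$, as you suggest, does not substitute for this. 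The sequence $\log c_q(A^n)$ is not subadditive, and the constants relating $\prod_{k\le q}c_k$ to $V_q$ are not uniform in $q$.

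\textbf{Step 1, which you flagged.} Two corrections.
\begin{enumerate}
    \item Do not compute $d(A_\omega v_1,\innerp{A_\omega v_2,\ldots,A_\omega v_q})$ through annihilating functionals. The annihilator moves with $\omega$, so no fixed countable family of functionals computes it. Write it instead as $\inf_{t\in\Q^{q-1}}\norm{A_\omega(v_1-\sum_k t_kv_k)}$, with $\norm{x}=\sup_{\ell\in D}\abs{\innerp{x,\ell}}$.
    \item What keeps the supremum over $Q$ from undershooting is lower semicontinuity, not upper: an lsc function has the same supremum over any dense set. Your observation that $v\mapsto\innerp{A_\omega v,\ell}$ is $\tau$-continuous on $B_\B$ gives lsc of $v\mapsto\norm{A_\omega v}$. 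It does not give lsc of $(v_1,\ldots,v_q)\mapsto\vol_q(A_\omega v_1,\ldots,A_\omega v_q)$, because distance to a moving span is an infimum of lsc functions. For that you need the argument of \autoref{appLem:LianLuVolLSC}: at a point of positive volume the coefficients of near-minimizers stay bounded, so you pass to a subnet with convergent coefficients and use $\tau$-lsc of the norm.
\end{enumerate}
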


\begin{remark}
    We will not directly prove $\omega\mapsto \norm{A^n_\omega}_\alpha$ is measurable as the topological data we feed is too weak. Regardless, $l_\alpha$ is measurable.
\end{remark}

Under the same measurability conditions, if the base dynamics are invertible, then we may also apply \autoref{assump:refined_twoSidedSequences} to obtain a semi-invertible multiplicative ergodic theorem for Banach space cocycles. Consider the set
\[\Mcal^*_{\sigma,A} \coloneqq \curly{\ell\in \B^*:A^n \text{ is } (\B,\ell)\text{-measurable}\ \forall n\in\N}.\]

\begin{thm}[The Semi-Invertible Multiplicative Ergodic Theorem]\label{thm:semi-invMET}
    In addition to the assumptions of \autoref{thm:non-invMET}, suppose $\sigma:\Omega\to\Omega$ is measurably invertible. There is then a measurable, $\sigma$-invariant set $\Delta\subseteq \Gamma$ of full probability such that for all $\omega\in \Delta$, the two-sided sequence of operators $(\ldots,A_{\sigma^{-1}\omega},A_\omega,A_{\sigma\omega},\ldots)$ satisfies \autoref{assump:refined_twoSidedSequences}. Write the fast spaces $\omega\mapsto E_{s}(\omega)$ for $1\leq s < r(\omega) + 1$ as in \autoref{thm:sequentialMET_twoSided}. We have the following measurability conditions.
    \begin{enumerate}[label = (\arabic*)]
        \item The maps $\omega\mapsto d(v,E_s(\omega))$ for $v\in \B$, and $1\leq s < r(\omega) + 1$ are measurable.
        \item The maps $\omega\mapsto \norm{A^n_\omega|_{E_s(\omega)}},\ m(A^n_\omega|_{E_s(\omega)})$ for $n\geq 1$ are measurable.
        \item The $L(\B,\norm{\cdot})$-valued maps $\omega\mapsto \pi_{E_s(\omega)//F_{\lambda_{s+1}}(\omega)},\ \indic_{r<\infty}\pi_{E_r(\omega)//F_{l_\alpha}(\omega)}$ are $(\B,\Mcal^*_{\sigma,A})$-measurable and $BL_\tau(\B,\tau)$-valued.
        \item The maps $\omega\mapsto \sin\theta\paren{E_s(\omega),F_{\lambda_{s+1}}(\omega)},\ \sin\theta\paren{E_{r}(\omega),F_{l_\alpha}(\omega)}\indic_{r<\infty}$ are measurable.
    \end{enumerate}
\end{thm}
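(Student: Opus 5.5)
The plan is to reduce the statement to \autoref{thm:non-invMET} applied on both sides, and then obtain measurability of the fast spaces from the duality $E_s(\omega)=\preann{\Fhat_{s+1}(\omega)}$ established in the proof of \autoref{thm:sequentialMET_twoSided}.

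\textbf{Step 1: the full-measure set $\Delta$.} By \autoref{prop:norm_and_volume_measurability} there are measurable volume-type functions comparable to $V_q$. I will apply the tempered strengthening of Kingman's theorem (\autoref{appThm:temperedSET}) to the subadditive processes $\log V_q(A^n_\omega)$ along $\sigma$. This gives the forward limits $l_q$ and the uniform-in-shift bounds of \autoref{assump:refined_twoSidedSequences} at positive indices. For negative indices I apply the same theorem along $\sigma^{-1}$ to the processes $\log V_q(A^n_{\sigma^{-n}\omega})$. Since $\sigma$ preserves $\p$, these have the same distributions as the forward ones, so the backward limits coincide with the forward ones. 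The centered limit $\frac{1}{2n}\log V_q(A^{2n}_{\sigma^{-n}\omega})$ follows from the same distributional identity together with the tempered estimates. I then intersect these countably many full-measure sets over their $\sigma^k$-images, $k\in\Z$, to obtain a $\sigma$-invariant set $\Delta\subseteq\Gamma$. Condition (i) of \autoref{assump:twoSidedSequences} is the usual Birkhoff/Borel--Cantelli consequence of $\log^+\norm{A}\in L^1$.

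\textbf{Step 2: items (1)--(2).} The proof of \autoref{thm:sequentialMET_twoSided} gives $E_s(\omega)=\lim_n A^n_{\sigma^{-n}\omega}W_{2n,s}(\sigma^{-n}\omega)$. I would select the $W$'s measurably. Using $(\tau;D)$ and $\tau$-separability of $B_\B$, fix a countable $\tau$-dense set $Q\subset B_\B$. Choose $W_{2n,s}$ to be spanned by $M_s$-tuples from $Q$, taking the first tuple in an enumeration for which the (measurable, by \autoref{prop:norm_and_volume_measurability}) quantity $\detLL$ exceeds $\frac{1}{2}q^{-2q}$ times the measurable volume proxy. This choice is measurable because each candidate event is measurable. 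Density of $Q$ in the relevant sense is what makes such a near-maximizer exist; this uses lower semicontinuity of the norm (\autoref{remark:lowersemicontinuityOfNorm}) and $\tau$-continuity of $A_\omega$ on $B_\B$. The images $A^n v_i$ of the chosen vectors are then measurable in the $(\B,D)$ sense. Hence $d(v,E_{n,s}(\omega))$, written as a supremum over $D$ via \autoref{lem:distance_vs_orthogonalProj}, or as an infimum over rational combinations, is measurable. Passing to the limit using \eqref{eq:fastSpaceDistBound} and \autoref{lem:continuityOfDistToSpaces} gives item (1). Item (2) follows similarly, because $E_s$ is spanned by measurable vectors (the limits of normalized $A^n$-images). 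The norms $\norm{A^n|_{E_s}}$ and $m(A^n|_{E_s})$ are therefore a supremum or infimum over countably many rational combinations of these vectors.

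\textbf{Step 3: items (3)--(4).} For the projection I write $\pi_{E_s//F_{\lambda_{s+1}}}v=\lim_n\pi_{E_{n,s}//F_{n,s+1}}v$, which converges by the angle estimate \eqref{eq:equivariantSlowFastSplittingAngles} together with \autoref{prop:complementationOpen}. Its coordinates relative to the measurable basis are $\ell_i(v)$, where the $\ell_i$ span $\ann{F_{s+1}}=\Ehat_s$. I would obtain the $\ell_i$ from the dual pseudo-singular vectors of \autoref{lem:dual_SVD}, which are weak-* limits of functionals in $\Mcal^*_{\sigma,A}$ built from $D$ and the adjoint cocycle. This gives $(\B,\Mcal^*_{\sigma,A})$-measurability. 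The values lie in $BL_\tau(\B,\tau)$ because $F_{\lambda_{s+1}}\cap B_\B$ is $\tau$-closed (item (4) of \autoref{thm:non-invMET}) and the range is finite-dimensional, so the coordinate functionals lie in $BL_\tau(\R)$ by \autoref{appLem:closednessOfBLtau}. Item (4) then follows from $\sin\theta=\norm{\pi}^{-1}$, with the norm computed as a supremum over $Q$ and $D$.

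\textbf{Main obstacle.} I expect the main obstacle to be the measurable near-maximizing selection in Step 2. It requires showing that $\tau$-density of $Q$ suffices to approximate $V_q$, even though the volumes are only lower semicontinuous in $\tau$. I would first try the approximation through the $\detLL$-form \eqref{eq:formsOfLianLuVolNorm}, combined with Gelfand numbers computed over $D$.
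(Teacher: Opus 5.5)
Your Step 1 is fine: taking $k=2n$ in the invertible form of \autoref{appThm:temperedSET} already gives the centered limit. Your first-hit selection from a countable $\tau$-dense $Q\subset B_\B$ is also a valid, more elementary substitute for the multifunction selection in \autoref{prop:measurableFastSpace}. What you flag as the main obstacle is not one, because lower semicontinuity is exactly what is needed. The map $(v_i)\mapsto\vol_q(A^n_\omega v_1,\ldots,A^n_\omega v_q)$ is $\tau$-lsc on $B_\B^q$ (\autoref{appLem:LianLuVolLSC}), so its supremum over $Q^q$ equals $\VLL_q(A^n_\omega)$ (\autoref{appLem:LSCimpliesSupOverDense}). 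With that, item (1) goes through as you describe.

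The genuine gap is in items (2)--(3), where you replace the approximants by bases of the limit spaces. You assert that $E_s(\omega)$ is spanned by limits of the normalized vectors $A^n_{\sigma^{-n}\omega}w_i$, and that $\ann{F_{\lambda_{s+1}}(\omega)}$ is spanned by weak-* limits of the pseudo-singular functionals of \autoref{lem:dual_SVD}.
\begin{itemize}
\item Gap convergence $E_{n,s}\to E_s$ gives no control over any particular basis, and here it can genuinely fail, because normalized images of a near-maximizing tuple collapse onto the top directions. Take the constant cocycle $\mathrm{diag}(e,1)$ on $\R^2$ with $s=2$ and the maximizing pair $w_{1,2}=(1,\pm1)/\sqrt{2}$: both normalized images tend to $(1,0)$, whose span is not $E_2=\R^2$.
\item The functionals $f_k$ of \autoref{lem:dual_SVD} are built from the existentially chosen complement $F'$ and John inner products. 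They are neither measurably selected nor known to converge.
\end{itemize}

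The paper avoids this by only taking limits of scalars, or of operators in norm.
\begin{itemize}
\item For (2), $\norm{A^n_\omega|_{E_{k,s}(\omega)}}$ and $m(A^n_\omega|_{E_{k,s}(\omega)})$ are countable sup/inf over rational combinations of the finite-$k$ basis. \autoref{lem:lipContOfRestrictedSubspaceNorms} then passes to the limit $k\to\infty$.
\item For (3), pair $E_{n,s}$ with the measurable slow approximants $F_{n,s+1}(\omega)=\preann{((A^n_\omega)^*L_{n,s}(\omega))}$ from the non-invertible proof. Their annihilators have the explicit basis $(A^n_\omega)^*\ell_{n,i}$.
\item Write $\pi_{E_{n,s}//F_{n,s+1}}$ via \autoref{lem:projectionOperatorDecomposition}, using the measurable matrix $\innerp{A^{2n}_{\sigma^{-n}\omega}w_{n,j},\ell_{n,i}}$, inverted beyond the measurable time $\nt_s(\omega)$. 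Note that the coordinates are $\sum_j[M^{-1}]_{ij}\innerp{v,\ell_j}$, not $\ell_i(v)$, unless the systems are biorthogonal.
\item Take the operator-norm limit. This preserves $(\B,\Mcal^*_{\sigma,A})$-measurability, and by \autoref{appLem:closednessOfBLtau} it preserves $BL_\tau$-membership.
\end{itemize}

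Your $BL_\tau$ conclusion is in fact true, but not via \autoref{appLem:closednessOfBLtau}. That lemma handles norm limits only, and $BL_\tau(\R)$ is not weak-* closed. A correct direct route is a subnet argument on the bounded coordinates of $\pi x_\alpha$, using $\tau$-closedness of $F_{\lambda_{s+1}}\cap B_\B$.
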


\subsection{Preliminary Results}
    For the rest of this subsection, we will make the following standing assumption. Let $(B,\norm{\cdot})$ be a Banach space such that $B_{\B^*}$ is weak-* separable and let $(\Omega,\F,\p)$ be a probability space. Let $\tau$ be a vector space topology on $\B$ that is coarser than the norm topology for which $B_\B$ is $\tau$-closed and separable. Let $D\subset B_{\B^*}\cap BL_{\tau}(\R)$ be countable and weak-* dense in $B_{\B^*}$. Suppose the operator-valued map $A:\Omega\to L(\B)\cap BL_{\tau}(\B,\tau)$ is $(\B,D)$-measurable.

\subsubsection*{Measurable Volume Functions}
The key to measurability of the volume functions is lower semi-continuity (lsc), for which we point to \autoref{appSubSec:LSC} for a basic overview.

\begin{prop}\label{prop:norm_and_volume_measurability}
    The following real-valued functions are Borel measurable.
    \begin{enumerate}[label = $\bullet$]
        \item $\omega\mapsto \norm{A_\omega v},\norm{A_\omega^*\ell}$ for all $v\in \B$ and all $\ell\in \overline{\vspan(D)}$, where the closure is taken w.r.t. the dual norm.
        \item $\omega\mapsto \norm{A_\omega}$.
        \item $\omega\mapsto \vol_q(A_\omega v_1,\ldots,A_\omega v_q),\vol_q(A_\omega^* \ell_1,\ldots,A_\omega^* \ell_q)$ for all $q\in \N$, $v_1,\ldots,v_q\in \B$, and $\ell_1,\ldots,\ell_q\in \overline{\vspan(D)}$.
        \item $\omega\mapsto \VLL_q(A_\omega),\VLL_q(A_\omega^*)$ for all $q\in \N$.
    \end{enumerate}
\end{prop}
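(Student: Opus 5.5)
The plan is to write every function in the statement as a countable supremum or infimum of maps that are measurable directly from the $(\B,D)$-measurability of $A$. Two facts make the reduction to countable index sets possible. First, since $D$ is weak-* dense in $B_{\B^*}$, we have $\norm{x} = \sup_{\ell\in D}\innerp{x,\ell}$ for every $x\in\B$. Second, $B_\B$ contains a countable $\tau$-dense set $Q$. If $f:B_\B\to\R$ is $\tau$-lower semicontinuous, then $\sup_{B_\B} f = \sup_Q f$: whenever $f(x)>c$, the set $\{f>c\}$ is a $\tau$-open neighbourhood of $x$ and therefore meets $Q$.

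\textbf{Norms.} For $v\in\B$ we have $\norm{A_\omega v} = \sup_{\ell\in D}\innerp{A_\omega v,\ell}$, a countable supremum of measurable maps. The map $\omega\mapsto\innerp{A_\omega v,\ell}$ is also measurable for $\ell\in\vspan(D)$, by linearity. For $\ell$ in the norm closure, take $\ell_k\in\vspan(D)$ with $\ell_k\to\ell$ in norm; then $\innerp{A_\omega v,\ell_k}\to\innerp{A_\omega v,\ell}$ for each $\omega$, so the limit is measurable. For $\norm{A_\omega}$, observe that $v\mapsto\norm{A_\omega v} = \sup_{\ell\in D}\innerp{A_\omega v,\ell}$ is $\tau$-lower semicontinuous on $B_\B$. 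Indeed, $A_\omega\in BL_\tau(\B,\tau)$ and $D\subset BL_\tau(\R)$, so each map $v\mapsto\innerp{A_\omega v,\ell}$ is $\tau$-continuous on $B_\B$. Hence $\norm{A_\omega} = \sup_{v\in Q}\norm{A_\omega v}$, which is measurable. For the dual norm $\norm{A^*_\omega\ell} = \sup_{v\in B_\B}\innerp{A_\omega v,\ell}$ with $\ell\in\overline{\vspan(D)}$, note that $A_\omega^*\ell\in BL_\tau(\R)$, using the remark after condition $\measurable$ together with \autoref{appLem:closednessOfBLtau}. So the supremum may be taken over $Q$, and each term is measurable by the previous step.

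\textbf{Parallelepiped volumes.} Each factor of $\vol_q$ is a distance to a finite-dimensional span. For vectors $x,y_1,\ldots,y_p$,
\[d(x,\innerp{y_1,\ldots,y_p}) = \inf_{t\in\mathbb{Q}^p}\Big\|x-\sum_j t_jy_j\Big\|,\]
by continuity of the right-hand side in $t$. With $x=A_\omega v_i$ and $y_j=A_\omega v_j$, each term equals $\norm{A_\omega(v_i-\sum_j t_jv_j)}$, which is measurable by the first step. Hence each factor, and so $\vol_q(A_\omega v_1,\ldots,A_\omega v_q)$, is measurable. The dual version is identical, since $\ell_i-\sum_j t_j\ell_j$ stays in $\overline{\vspan(D)}$.

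\textbf{The volume functions $\VLL_q$; the main obstacle.} By \eqref{eq:formsOfLianLuVolNorm}, $\VLL_q(A_\omega)=\sup_{v_i\in B_\B}\vol_q(A_\omega v_1,\ldots,A_\omega v_q)$, and the goal is to restrict this supremum to $Q^q$. This requires $\tau$-lower semicontinuity, on $B_\B^q$, of $\Phi(v_1,\ldots,v_q)=\vol_q(A_\omega v_1,\ldots,A_\omega v_q)$ at every tuple where $\Phi>0$. Establishing this is where I expect the real work, because distances to spans are infima and infima of lower semicontinuous functions need not be lower semicontinuous. The approach I would try first is to confine the infimum to a compact set of coefficients. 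Write the factors as $d(A v_i,\innerp{Av_{i+1},\ldots,Av_q})=\inf_t\norm{A(v_i-\sum_j t_jv_j)}$. Only coefficients with $\norm{\sum_j t_jAv_j}\le 2\norm{A}$ matter, and for those $|t_j|\le 2\norm{A}/d(Av_j,\innerp{Av_p:p\ne j,\ p>i})$. Near a tuple with $\Phi>c>0$, these denominators are bounded below in terms of $c$ and $\norm{A}$, via the factorisation of $\vol_q$ and \autoref{lem:equivalenceOfVolNorms}. Since the factors are themselves what we are trying to control, I would handle this by downward induction on $i$, starting from the last factor $\norm{Av_q}$, which is already lower semicontinuous.

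On a compact coefficient box $K$, the map $(v_i,\ldots,v_q,t)\mapsto\norm{A(v_i-\sum_j t_jv_j)}$ is jointly lower semicontinuous for $\tau\times$Euclidean. This uses continuity of the vector operations for $\tau$, $\tau$-continuity of $A$ on bounded sets (after rescaling), and the $\tau$-lower semicontinuity of the norm (\autoref{remark:lowersemicontinuityOfNorm}). An infimum over a compact parameter set of a jointly lower semicontinuous function is lower semicontinuous. This gives the lower semicontinuity of $\Phi$ on $\{\Phi>c\}$ that is needed to replace $B_\B^q$ by $Q^q$, so that $\VLL_q(A_\omega)=\sup_{Q^q}\Phi$ is a countable supremum of the measurable maps from the previous step.

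For $\VLL_q(A_\omega^*)$, I would reduce to functionals in the countable set $\vspan_{\mathbb{Q}}(D)$, in place of $Q$, exploiting the weak-* density of $D$ in the same way. Should the coefficient-bound argument fail, the fallback would be to verify this weak-* approximation step directly via \autoref{lem:distance_vs_orthogonalProj}.
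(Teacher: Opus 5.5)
Your proposal is correct and takes essentially the paper's route. The ingredients match: norming by $D$; reducing suprema to a countable $\tau$-dense set via lower semicontinuity; rational coefficients for the distance factors of $\vol_q$; and an inductive lower-semicontinuity argument for the volumes that is exactly \autoref{appLem:LianLuVolLSC}. The one cosmetic difference is that your compact coefficient box, with ``the infimum of a jointly lsc function over a compact set is lsc,'' replaces the paper's net and Heine--Borel subnet extraction. For $\VLL_q(A_\omega^*)$, running the same argument in the weak-* topology on $B_{\B^*}$ gives the supremum over $D^q$, as in the paper. That topology works because $A_\omega^*$ is continuous there and the dual norm is lower semicontinuous.
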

\begin{proof}
    We start with the norms. Take an arbitrary vector $v\in \B$. Then, since the adjoint of a bounded linear operator is weak-* continuous, it follows that
    \[\norm{A_\omega v} = \sup_{\ell\in D} \abs{\innerp{A_\omega v,\ell}},\]
    which implies $\omega\mapsto \norm{A_\omega v}$ is measurable for all $v$. If $\ell\in D$, then $\ell\in BL_\tau(\R)$. Clearly, since $A_\omega\in  BL_\tau(\B,\tau)$, it follows that $A_\omega^*\ell\in BL_\tau(\R)$. Hence, since $B_\B$ is $\tau$-separable, we can take any countable, $\tau$-dense set $S\subset B_\B$ and get that
    \[\sup_{x\in S} \abs{\innerp{x,A_\omega^*\ell}} = \norm{A_\omega^*\ell}\]
    is measurable. The extension to $\ell\in \overline{\vspan(D)}$ follows by continuity of $\ell\mapsto A_\omega^*\ell$ w.r.t. the dual norm.

    To get measurability of the operator norm, let $S\subset B_{B}$ be $\tau$-dense. By \autoref{appLem:LianLuVolLSC} and \autoref{appLem:LSCimpliesSupOverDense}, it follows that
    \[\norm{A_\omega} = \sup_{v\in S} \norm{A_\omega v}.\]
    Since $\omega\mapsto \norm{A_\omega v}$ is measurable for all $v\in S$, and $S$ is countable, it follows that $\omega\mapsto \norm{A_\omega}$ is measurable.

    We now move on to the volume functions. To simplify the proof, we only demonstrate the case for the parallelepiped volume on $\B$, as the case for the dual follows in a similar fashion.
    
    For $q\in \N$ and $v_1,\ldots,v_q$, we shall prove the measurability of
    \[\omega\mapsto \vol_q(A_\omega v_1,\ldots,A_\omega v_q)\]
    through a series of reductions. First, since the product of measurable functions is measurable, it suffices to show
    \[\omega\mapsto d(A_\omega v_1,\innerp{A_\omega v_2,\ldots,A_\omega v_q})\]
    is Borel-measurable for all $q\in\N$ and $v_1,\ldots,v_q\in \B$. Notice that
    \[d(A_\omega v_1,\innerp{A_\omega v_2,\ldots,A_\omega v_q}) = \inf_{t_2,\ldots,t_q\in \Q}\norm{A_\omega \paren{v_1 - \sum_{k=2}^q t_kv_k}},\]
    which is an infimum over a countable set of maps. We have just shown that these maps are measurable, which means the distance function is measurable. Therefore, for all $v_1,\ldots,v_q\in \B$, the map
    \[\omega\mapsto \vol_q(A_\omega v_1,\ldots,A_\omega v_q)\]
    is measurable.
    
    Now, consider any countable set $S\subset B_\B$ that is $\tau$-dense. By \autoref{appLem:LianLuVolLSC} and \autoref{appLem:LSCimpliesSupOverDense},
    \[\VLL_q(A_\omega) = \sup_{v_1,\ldots,v_q\in S} \vol_q(A_\omega v_1,\ldots,A_\omega v_q),\]
    which implies it is a measurable map. Note that $\norm{\cdot}_{\B^*}$ is l.s.c. w.r.t. the weak-* topology, analogously to how $\norm{\cdot}_{\B}$ is l.s.c. w.r.t. $\tau$. This allows us to use \autoref{appLem:LianLuVolLSC} for weak-* l.s.c. of the parallelepiped volumes on $\B^*$. So we may similarly write
    \[\VLL_q(A^*_\omega) = \sup_{\ell_1,\ldots,\ell_q\in D} \vol_q(A^*_\omega \ell_1,\ldots,A^*_\omega \ell_q),\]
    and thus $\omega\mapsto \VLL_q(A^*_\omega)$ is measurable.
\end{proof}

\begin{lem}\label{lem:measOfOperatorActionOnStrMeas}
    Suppose $M\subseteq \B^*$ is any set of linear functionals such that $A$ is $(\B,M)$-measurable. Let $v:\Omega\to \B$ and $\ell:\Omega\to M$ be strongly measurable (in the sense of \autoref{appDefn:strong_meas}). Then $\omega\mapsto \innerp{A_\omega v_\omega,\ell_\omega}$ is Borel-measurable. Moreover, $\omega\mapsto\norm{A_\omega v_\omega}$ is Borel-measurable, and if $M\subseteq BL_\tau(\R)$, then $\omega\mapsto \norm{A^*_\omega \ell_\omega}$ is also Borel-measurable.
\end{lem}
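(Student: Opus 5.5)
We will reduce everything to the countably-valued case via the standard approximation of strongly measurable maps. By \autoref{appDefn:strong_meas}, there are simple (finitely-valued, measurable) maps $v^{(k)}:\Omega\to\B$ and $\ell^{(k)}:\Omega\to M$ with $v^{(k)}_\omega\to v_\omega$ in norm and $\ell^{(k)}_\omega\to\ell_\omega$ in dual norm for every $\omega$. We take the approximants to be $M$-valued, which is what strong measurability into $M$ means in the appendix. If only norm-limits of simple maps into $\B^*$ were available, we would instead take $M$ closed under the relevant operations, or use countably-valued approximants drawn from a countable subset of $M$.

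For a simple map $v^{(k)}=\sum_i v_i\indic_{\Omega_i}$ and $\ell^{(k)}=\sum_j \ell_j\indic_{\Omega'_j}$, we write
\[\innerp{A_\omega v^{(k)}_\omega,\ell^{(k)}_\omega}=\sum_{i,j}\indic_{\Omega_i\cap\Omega'_j}(\omega)\innerp{A_\omega v_i,\ell_j}.\]
This is measurable because each $\omega\mapsto\innerp{A_\omega v_i,\ell_j}$ is measurable by $(\B,M)$-measurability. For each fixed $\omega$, the map $(v,\ell)\mapsto\innerp{A_\omega v,\ell}$ is jointly norm-continuous, since it is bounded by $\norm{A_\omega}\norm{v}\norm{\ell}$. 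Hence $\innerp{A_\omega v_\omega,\ell_\omega}$ is the pointwise limit of measurable functions, and is therefore Borel-measurable.

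For $\norm{A_\omega v_\omega}$, we approximate $v_\omega$ by the simple $v^{(k)}$ as before. On each piece $\Omega_i$ the function equals $\norm{A_\omega v_i}$, which is measurable by \autoref{prop:norm_and_volume_measurability} (using the countable set $D$, and not $M$). Norm-continuity of $v\mapsto\norm{A_\omega v}$ then passes to the limit.

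For $\norm{A^*_\omega\ell_\omega}$ with $M\subseteq BL_\tau(\R)$, we argue as follows. For fixed $\ell\in M$, $A_\omega\in BL_\tau(\B,\tau)$ a.s. gives $A^*_\omega\ell\in BL_\tau(\R)$. Fix a countable $\tau$-dense $S\subset B_\B$; $\tau$-continuity on $B_\B$ yields $\norm{A^*_\omega\ell}=\sup_{x\in S}\abs{\innerp{A_\omega x,\ell}}$, a countable supremum of measurable functions. We then pass from constant $\ell$ to simple $\ell^{(k)}$ piecewise, and to $\ell_\omega$ by the dual-norm continuity of $\ell\mapsto\norm{A^*_\omega\ell}$. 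The null set where $A_\omega\notin BL_\tau$ is handled by completeness, or by assuming the standing hypothesis that $A$ takes values in $BL_\tau(\B,\tau)$ everywhere.

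The main obstacle is the last bullet. We need the $\tau$-continuity of $A^*_\omega\ell$ on the ball to replace the supremum over $B_\B$ by a countable one, and we must check that the approximants remain in $M$, or at least in $BL_\tau(\R)$, for the per-piece argument to apply.
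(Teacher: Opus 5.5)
Your proposal is correct and follows the paper's proof essentially step for step. Like the paper, you approximate $v$ and $\ell$ by simple $M$-valued maps and get measurability on each piece from $(\B,M)$-measurability. You then use the countable suprema over $D$ and over a $\tau$-dense $S\subset B_\B$ for the two norm statements, and pass to the limit by norm continuity. Your worry about the approximants staying in $M$ is settled by the appendix definition of strong measurability for $M$-valued maps, which is exactly what the paper invokes; your countable-dense-subset fallback would also work.
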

\begin{proof}
    For the first statement, since $v$ and $\ell$ are strongly-measurable, we may assume they are separably valued. So, $v:\Omega\to\B$ and $\ell:\Omega\to M$ can be written as the limit of a sequence of finite-valued measurable maps $v_n:\Omega\to\B$ and $\ell_n:\Omega\to M$, respectively. As such, it follows immediately that 
    \[\omega\mapsto\innerp{A_\omega v_\omega,\ell_\omega} = \lim_{n\to\infty} \innerp{A_{\omega} v_n(\omega),\ell_n(\omega)}\]
    is Borel-measurable. For the second statement, we can write
    \[\norm{A_\omega v_\omega} = \lim_{n\to\infty} \sup_{\ell\in D} \innerp{A_{\omega}v_n(\omega),\ell},\]
    which means it is measurable. For the third statement, if $M\subseteq BL_\tau(\R)$, then we can take any countable $\tau$-dense set $S\subseteq B_\B$ to get that
    \[\norm{A^*_\omega \ell_\omega} = \lim_{n\to\infty} \sup_{v\in S} \innerp{A_{\omega}v,\ell_n(\omega)}\]
    is measurable.
\end{proof}

\subsubsection*{Measurability of Subspaces}
It will be useful to write $G = \overline{\vspan(D)}$, where the closure is taken w.r.t. the dual norm on $\B^*$. We will use the following proposition for the purposes of ascertaining slow-space measurability properties.
\begin{prop}\label{prop:measurableSlowSpace}
    Let $q\in \N$ and $\alpha\in (0,1)$. Then the following hold.
    \begin{enumerate}[label=(\arabic*)]
        \item There is a strongly measurable selection of maps $\ell_1,\ldots,\ell_q:\Omega\to B_{G}$ such that
        \[\vol_q(A_\omega^*\ell_1(\omega),\ldots,A_\omega^*\ell_q(\omega))\geq \alpha\VLL_q(A_\omega^*).\]
        \item Writing $F(\omega) = \preann{\innerp{A_\omega^*\ell_1(\omega),\ldots,A_\omega^*\ell_q(\omega)}}$, the map
        \[\omega\mapsto d(v,F(\omega))\]
        from $\Omega\to \R$ is Borel-measurable for all $v\in \B$.
    \end{enumerate}
\end{prop}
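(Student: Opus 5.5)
For item (1), I would build the selection from a countable dense family and a first-index rule. Let $D_\Q$ be the countable set of finite $\Q$-linear combinations of elements of $D$ that lie in $B_G$. Two facts are needed.

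\begin{enumerate}[label=(\alph*)]
    \item The proof of \autoref{prop:norm_and_volume_measurability} shows that the supremum defining $\VLL_q(A^*_\omega)$ can be taken over $q$-tuples from $D$, hence over $q$-tuples from $D_\Q$. This uses the weak-* lower semicontinuity of $\vol_q$ (\autoref{appLem:LianLuVolLSC}) together with \autoref{appLem:LSCimpliesSupOverDense}. The point is that $B_{G}$ is norming for $A^*_\omega$ in the weak-* sense, since $D$ is weak-* dense in $B_{\B^*}$.
    \item For each fixed tuple $(\ell_1,\ldots,\ell_q)\in D_\Q^q$, the map $\omega\mapsto\vol_q(A_\omega^*\ell_1,\ldots,A_\omega^*\ell_q)$ is measurable, again by \autoref{prop:norm_and_volume_measurability}, and so is $\omega\mapsto\VLL_q(A^*_\omega)$.
\end{enumerate}

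Now enumerate $D_\Q^q=\{\mathbf{t}^{(j)}\}_{j\ge1}$. On the set $\{\VLL_q(A^*_\omega)>0\}$, let $j(\omega)$ be the least $j$ with $\vol_q(A_\omega^*\mathbf{t}^{(j)})\ge\alpha\VLL_q(A_\omega^*)$. Such a $j$ exists because the supremum is approached strictly once $\alpha<1$. On the complement, set $j(\omega)=1$; there the inequality holds trivially, since both sides are $0$ (or the right side is $0$). The set $\{j(\omega)=j\}$ is a finite Boolean combination of measurable sets, so $\omega\mapsto \mathbf{t}^{(j(\omega))}$ is countably valued and measurable, hence strongly measurable into $B_G^q$.

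For item (2), I would use \autoref{lem:distance_vs_orthogonalProj}. When $L(\omega)\coloneqq\innerp{A_\omega^*\ell_1(\omega),\ldots,A_\omega^*\ell_q(\omega)}$ is finite-dimensional, it is weak-* closed, so $\ann{F(\omega)}=L(\omega)$. Therefore
\[d(v,F(\omega))=\sup\{\innerp{v,\ell}:\ell\in L(\omega),\ \norm{\ell}=1\}.\]
Parametrize $L(\omega)$ by rational coefficients, writing $\ell=\sum_k t_kA^*_\omega\ell_k(\omega)$ with $t\in\Q^q$. This gives
\[d(v,F(\omega))=\sup_{t\in\Q^q\setminus\{0\}}\frac{\bigl|\innerp{A_\omega v,\sum_k t_k\ell_k(\omega)}\bigr|}{\norm{A^*_\omega\sum_k t_k\ell_k(\omega)}},\]
with terms having a zero denominator omitted. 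The supremum over rationals agrees with the supremum over all of $\R^q$ by continuity of both numerator and denominator in $t$ on the finite-dimensional span. In each term, the numerator is measurable by \autoref{lem:measOfOperatorActionOnStrMeas}, since $\omega\mapsto\sum t_k\ell_k(\omega)$ is strongly measurable with values in $G$ and $A$ is $(\B,G)$-measurable. For the latter, $(\B,D)$-measurability passes to $\vspan D$ by linearity and to norm limits pointwise. The denominator is measurable by the third statement of \autoref{lem:measOfOperatorActionOnStrMeas}, using $G\subseteq BL_\tau(\R)$, which holds by the norm-closedness in \autoref{appLem:closednessOfBLtau}. Each term is therefore measurable, and so is the countable supremum. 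The indicator of the set where the denominator vanishes is itself measurable, so omitting those terms preserves measurability.

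The main obstacle is the reduction of the supremum to the countable family $D_\Q$ when $A^*_\omega$ is evaluated off $G$. I would handle it with the weak-* lower semicontinuity argument already used for $\VLL_q(A^*_\omega)$, applied on $B_{\B^*}$, and then note that the selected elements can be taken in $D\subseteq B_G$.
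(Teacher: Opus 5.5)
Your proof is correct. Item (2) follows the paper's argument: the same duality formula from \autoref{lem:distance_vs_orthogonalProj}, the same rational parametrization of the span, and the same appeal to \autoref{lem:measOfOperatorActionOnStrMeas}. Item (1), however, takes a genuinely different and more elementary route.

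The paper forms the open-valued multifunction $\Psi(\omega)$ of tuples in $B_G^q$ with $\vol_q(A_\omega^*\ell_1,\ldots,A_\omega^*\ell_q)>\alpha\VLL_q(A_\omega^*)$. It checks weak measurability through a countable dense subset of $G$ and then invokes the Kuratowski--Ryll-Nardzewski selection theorem, via \autoref{appCor:LianLuMeasSelection}, on $\overline{\Psi}$. Finally it uses separability of $G$ to pass from Borel to strong measurability.

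You instead use the fact that $\VLL_q(A_\omega^*)$ is already a supremum over the countable family $D^q\subseteq D_\Q^q$, which is what the proof of \autoref{prop:norm_and_volume_measurability} establishes. A first-index rule over an enumeration then produces the selection directly. This gives a countably-valued selection with values in a fixed countable set, so strong measurability is immediate and no selection theorem is needed. The same device would also handle \autoref{prop:measurableFastSpace}, using a countable $\tau$-dense subset of $B_\B$. In both proofs, the strict inequality $\alpha<1$ is exactly what guarantees nonemptiness: of $\Psi(\omega)$ in the paper, and of the set of admissible indices in yours.

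One harmless difference: on $\{\VLL_q(A_\omega^*)=0\}$ you select $\mathbf{t}^{(1)}$ rather than the zero tuple, so $F(\omega)$ there may have codimension below $q$. Your treatment of vanishing denominators, reading the empty supremum as $0$, keeps the formula in (2) valid there.
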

\begin{proof}
    Partition $\Omega$ into $\Omega_0 = \{\VLL_q(A_\omega^*) = 0\}$ and $\Omega' = \Omega_0^c$. On $\Omega_0$, we can simply define $\Psi(\omega) = \{(0,\ldots,0)\}$, so we focus on $\omega\in \Omega'$. 
    
    Notice that $G\subset \B^*$ is a norm-closed and separable subspace of $\B^*$, and hence, it is a Polish vector space. In particular, this means $B_G^q = B_G\times\cdots\times B_G$ is a Polish space. Moreover,
    \[\VLL_q(A^*_\omega) = \sup_{\ell_1,\ldots,\ell_q\in B_G} \vol_q(A_\omega^* \ell_1,\ldots,A_\omega^*\ell_q),\]
    by \autoref{appLem:LianLuVolLSC}. As such, the multifunction
    \[\Psi(\omega) = \curly{(\ell_1,\ldots,\ell_q)\in B^q_G:\vol_q(A_\omega^*\ell_1,\ldots,A_\omega^*\ell_q) > \alpha\VLL_q(A_\omega^*)}\]
    is non-empty and open-valued for all $\omega\in \Omega'$. To show it is weakly measurable (see \autoref{appDefn:measurableMultifunction}), consider a norm-open set $U\subseteq B_G^q$. Let $D_G$ be a countable, dense set in $G$. Then, by the openness of $U$ and $\Psi$, $D_G^q\cap U$ is dense in $U$ and
    \[\Psi^{-}(U) \coloneqq \curly{\omega:\Psi(\omega)\cap U\neq\varnothing}=\bigcup_{(\ell_1,\ldots,\ell_q)\in D_G^q\cap U} \curly{\omega:\vol_q(A_\omega^*\ell_1,\ldots,A_\omega^*\ell_q) > \alpha\VLL_q(A_\omega^*)}.\]
    The right-most set is the countable union of measurable sets, and as such, is itself measurable, proving $\Psi$ is weakly-measurable. Hence, by \autoref{appCor:LianLuMeasSelection}, the norm-closure $\omega\mapsto \overline{\Psi}(\omega)$ admits a measurable selection. That is, there is a Borel-measurable selection of maps $\ell_k:\Omega'\to B_{G}$ such that
    \[\vol_q(A_\omega^*\ell_1(\omega),\ldots,A_\omega^*\ell_q(\omega))\geq \alpha\VLL_q(A_\omega^*).\]
    Since $G$ is separable, Borel measurability of $G$-valued maps is equivalent to strong measurability. 
    
    Now, write $F(\omega) = \preann{\innerp{A_\omega^*\ell_1(\omega),\ldots,A_\omega^*\ell_q(\omega)}}$ as in the lemma statement and let $v\in \B$. Then we know by \autoref{lem:distance_vs_orthogonalProj} that
    \[d(v,F(\omega)) = \sup_{t_1,\ldots,t_q\in \Q\setminus\{0\}}\frac{\innerp{A_\omega v,t_1\ell_1(\omega)+\cdots+t_q\ell_q(\omega)}}{\norm{t_1A^*_\omega\ell_1(\omega)+\cdots+t_qA^*_\omega\ell_q(\omega)}}\]
    for $\omega\in \Omega'$. As the above is a countable supremum of measurable maps (by \autoref{lem:measOfOperatorActionOnStrMeas}), and $F(\omega) = \B$ for $\omega\in \Omega_0$, this shows $\omega\mapsto d(v,F(\omega)) = d(v,F(\omega))\indic_{\Omega'}$ is measurable.
\end{proof}

A similar proposition may be formulated for the purposes of ascertaining fast-space measurability properties.
\begin{prop}\label{prop:measurableFastSpace}
    Let $q\in \N$ and $\alpha\in (0,1)$. Then the following hold.
    \begin{enumerate}[label=(\arabic*)]
        \item There is a strongly measurable selection of maps $v_1,\ldots,v_q:\Omega\to \B$ such that
        \[\vol_q(A_\omega v_1(\omega),\ldots,A_\omega v_q(\omega))\geq \alpha\VLL_q(A_\omega).\]
        \item Writing $E(\omega) = \innerp{A_\omega v_1(\omega),\ldots,A_\omega v_q(\omega)} $, the map
        \[\omega\mapsto d(v,E(\omega))\]
        from $\Omega\to \R$ is Borel-measurable for all $v\in \B$.
    \end{enumerate}
\end{prop}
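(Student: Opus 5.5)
The plan is to mirror the proof of \autoref{prop:measurableSlowSpace}, now working in $\B$ itself rather than in the dual. Partition $\Omega$ into $\Omega_0=\{\VLL_q(A_\omega)=0\}$ and $\Omega'=\Omega_0^c$; both are measurable by \autoref{prop:norm_and_volume_measurability}. On $\Omega_0$ set $v_1=\cdots=v_q=0$.

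\textbf{Step 1: a countable parameter space.} The obstacle is that $\B$ need not be separable, so there is no Polish space of $q$-tuples over which to select directly. Fix a countable $\tau$-dense set $S\subset B_\B$. By \autoref{appLem:LianLuVolLSC} and \autoref{appLem:LSCimpliesSupOverDense}, as in the proof of \autoref{prop:norm_and_volume_measurability}, we have
\[\VLL_q(A_\omega)=\sup_{v_1,\ldots,v_q\in S}\vol_q(A_\omega v_1,\ldots,A_\omega v_q).\]
Because the parameter set $S^q$ is countable, no selection theorem is needed. Enumerate $S^q=\{\xi_1,\xi_2,\ldots\}$. For $\omega\in\Omega'$, let $j(\omega)$ be the least $j$ with
\[\vol_q(A_\omega\xi_j)>\alpha\VLL_q(A_\omega).\]
Such a $j$ exists because $\alpha<1$ and the supremum is positive. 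Each set $\{\omega:\vol_q(A_\omega\xi_j)>\alpha\VLL_q(A_\omega)\}$ is measurable by \autoref{prop:norm_and_volume_measurability}, so $\omega\mapsto j(\omega)$ is measurable. Define $(v_1(\omega),\ldots,v_q(\omega))=\xi_{j(\omega)}$. This map is countably valued and measurable, hence strongly measurable, and it gives item (1).

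\textbf{Step 2: the distance function.} Fix $v\in\B$ and $\omega\in\Omega'$. Then $A_\omega v_1(\omega),\ldots,A_\omega v_q(\omega)$ are linearly independent, since their volume is positive. Write $E(\omega)$ for their span. I would use
\[d(v,E(\omega))=\inf_{t_1,\ldots,t_q\in\Q}\Big\|v-\sum_k t_kA_\omega v_k(\omega)\Big\|,\]
which holds because rational combinations are dense in the finite-dimensional space $E(\omega)$. For fixed rationals $t$, the norm equals $\sup_{\ell\in D}|\langle v,\ell\rangle-\sum_k t_k\langle A_\omega v_k(\omega),\ell\rangle|$, using weak-* density of $D$ in $B_{\B^*}$. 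Each pairing $\omega\mapsto\langle A_\omega v_k(\omega),\ell\rangle$ is measurable by \autoref{lem:measOfOperatorActionOnStrMeas} with $M=D$. Alternatively, it is directly measurable: it is countably valued through $j(\omega)$, and $\omega\mapsto\langle A_\omega\xi,\ell\rangle$ is measurable for each fixed $\xi$. Hence $d(v,E(\omega))$ is a countable infimum of countable suprema of measurable maps, so it is measurable on $\Omega'$.

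On $\Omega_0$ we have $E(\omega)=\langle 0\rangle$ and $d(v,E(\omega))=\|v\|$, which is constant. Gluing the two pieces gives item (2).

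The only delicate point is Step 1, namely ensuring the supremum over $B_\B$ is attained up to the factor $\alpha$ on the countable $\tau$-dense set. This is exactly what the lower semicontinuity lemma supplies, so I expect no real obstacle beyond citing it correctly.
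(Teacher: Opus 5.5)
Your proof is correct. Item (2) is essentially the paper's argument: a rational infimum over the span, the norm written as a countable supremum over $D$, and \autoref{lem:measOfOperatorActionOnStrMeas}.

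For item (1) you take a genuinely different and more elementary route. The paper transplants the proof of \autoref{prop:measurableSlowSpace}. It passes to the Polish space $B_G^q$, where $G$ is the closed span of a countable $\tau$-dense subset of $B_\B$. It shows that $\omega\mapsto\{(v_1,\ldots,v_q)\in B_G^q:\vol_q(A_\omega v_1,\ldots,A_\omega v_q)>\alpha\VLL_q(A_\omega)\}$ is nonempty, norm-open and weakly measurable. It then applies a Kuratowski--Ryll-Nardzewski type selection theorem (\autoref{appCor:LianLuMeasSelection}) to the closure of this multifunction, which is why only the non-strict inequality comes out.

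You instead use the lower semicontinuity identity $\VLL_q(A_\omega)=\sup_{(v_i)\in S^q}\vol_q(A_\omega v_1,\ldots,A_\omega v_q)$, already established in the proof of \autoref{prop:norm_and_volume_measurability}. It guarantees that the admissible set meets the countable set $S^q$, so a first-hitting-index choice is measurable outright. This buys several things:
\begin{itemize}
\item it dispenses with the selection theorem and the Polish structure;
\item it gives the strict inequality;
\item it produces a countably valued selection with values in the fixed set $S\subset B_\B$, so strong measurability, and the measurability of the pairings in item (2), are immediate.
\end{itemize}

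The same trick would work verbatim for \autoref{prop:measurableSlowSpace} with $D$ in place of $S$, using $\VLL_q(A^*_\omega)=\sup_{\ell_i\in D}\vol_q(A^*_\omega\ell_1,\ldots,A^*_\omega\ell_q)$. The selection-theorem route is the standard Lian--Lu template and is what one needs for genuinely closed-valued constraints. Here, though, the constraint is open and its supremum is approached on a countable dense set, so your argument is the leaner one.
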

\begin{proof}
    The proof of item (1) follows the same logic as the proof of item (1) of \autoref{prop:measurableSlowSpace}, with the only difference being that the analogue to $G$ in that proof can be given in this proof by the closed span of any countable $\tau$-dense set in $B_\B$. Then, for item (2) we can write
    \[d(v,E(\omega)) = \inf_{t_1,\ldots,t_q\in \Q} \norm{v - \sum_{t=1}^q t_i A_\omega v_i(\omega)}.\]
    Applying \autoref{lem:measOfOperatorActionOnStrMeas} and the fact that the above norm can be written as a countable supremum over $\ell\in D$ gives us measurability.
\end{proof}

One last result that will be useful is the following.
\begin{lem}\label{lem:lipContOfRestrictedSubspaceNorms}
    Let $A\in L(\B)$ and take $E,F\in \G(\B)$. Then
    \[\abs{m(A|_E)-m(A|_F)}\leq \norm{A} d_H(E,F) \text{ and } \abs{\norm{A|_E} - \norm{A|_F}}\leq \norm{A} d_H(E,F).\]
\end{lem}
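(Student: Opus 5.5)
The plan is to reduce both inequalities to a single pointwise estimate comparing $\norm{Ae}$ for a unit vector $e\in S_E$ with $\norm{Af}$ for a nearby unit vector $f\in S_F$. By definition of the Hausdorff metric $d_H$ on unit spheres, for every $e\in S_E$ and every $\epsilon>0$ there is $f\in S_F$ with $\norm{e-f}\leq d(e,S_F)+\epsilon\leq d_H(E,F)+\epsilon$. Then
\[\abs{\norm{Ae}-\norm{Af}}\leq \norm{A(e-f)}\leq \norm{A}\paren{d_H(E,F)+\epsilon}.\]
This is the only analytic input; everything else is taking suprema and infima.

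For the norm statement: given $e\in S_E$, choose $f$ as above. Then
\[\norm{Ae}\leq \norm{Af}+\norm{A}(d_H(E,F)+\epsilon)\leq \norm{A|_F}+\norm{A}(d_H(E,F)+\epsilon).\]
Take the supremum over $e$ and let $\epsilon\to0$ to get $\norm{A|_E}\leq\norm{A|_F}+\norm{A}d_H(E,F)$. By the symmetry of $d_H$ (use the other half of the max in its definition), the reverse inequality also holds.

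For the minimum norm: given $f\in S_F$, pick $e\in S_E$ with $\norm{e-f}\leq d_H(E,F)+\epsilon$. Then
\[m(A|_E)\leq\norm{Ae}\leq\norm{Af}+\norm{A}(d_H(E,F)+\epsilon).\]
Taking the infimum over $f$ and letting $\epsilon\to0$ gives $m(A|_E)\leq m(A|_F)+\norm{A}d_H(E,F)$, and symmetry again gives the other direction.

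The only point needing care is the degenerate convention involving $\innerp{0}$. If $E=F=\innerp{0}$, both sides vanish. If exactly one of them is $\innerp{0}$, say $E$, then $d_H=1$, $\norm{A|_E}=0$, and $\norm{A|_F}\leq\norm{A}$. For $m$, the convention $\inf\varnothing$ should be read so that the bound still holds, for instance by setting $m(A|_{\innerp{0}})=0$ or by noting $m(A|_F)\leq\norm{A}$. I expect no genuine obstacle beyond stating this convention explicitly.
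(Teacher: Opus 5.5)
Your proposal is correct and is exactly the straightforward argument the paper leaves to the reader. Because $d_H$ is defined through distances between unit spheres, you can pick a unit vector $f\in S_F$ within $d_H(E,F)+\epsilon$ of any $e\in S_E$ (and vice versa), and the bound $\abs{\norm{Ae}-\norm{Af}}\leq\norm{A}\norm{e-f}$ then passes directly to suprema and infima with constant exactly $\norm{A}$.

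You are also right that the case where exactly one space is $\innerp{0}$ needs a convention such as $m(A|_{\innerp{0}})=0$; with $\inf\varnothing=+\infty$ the first inequality would fail. This case never arises where the paper applies the lemma, since there $E_{k,s}(\omega)$ and $E_s(\omega)$ both have dimension $M_s\geq 1$.
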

\begin{proof}
    The proof is straightforward and details are left to the reader.
\end{proof}

\begin{remark}
    For the rest of this section, we shall take the convention that $0\cdot\text{undefined} = 0$. This is for the sake of brevity, as we would like to use indicator maps to give a global definition to maps only defined on a partial region of $\Omega$.
\end{remark}

\subsection{The Non-Invertible MET}
In this section, we shall prove \autoref{thm:non-invMET}. Let $(\Omega, \F, \p)$ be a probability space, and let $\sigma:\Omega\to\Omega$ be an mpt. Let $\B$ be a Banach space such that $B_{\B^*}$ is weak-* separable. Suppose $A$ satisfies condition \hyperref[ass:measurability]{$\measurable_\sigma$} so that $\measurable_{\sigma,A}$ is non-empty and fix some $(\tau;D)\in \measurable_{\sigma,A}$. As before, we write $G = \overline{\vspan(D)} \subseteq \Mcal_{\sigma,A}^*\cap BL_\tau(\R)$. As the values that $A$ takes on a measurable null set do not matter, and $\sigma$ preserves $\p$, we may WLOG assume $A_{\omega} \in BL_{\tau}(\B,\tau)$ for all $\omega\in \Omega$.

\subsubsection*{The Lyapunov Spectrum}
Recall, by \autoref{prop:norm_and_volume_measurability}, the map $\omega\mapsto \norm{A_\omega}$ is measurable. By assumption, $\log^+\norm{A_\omega}\in L^1$, and so the Birkhoff ergodic theorem implies
\[\lim_{n\to\infty} \frac{1}{n}\sum_{k=0}^{n-1} \log^+\norm{A_{\sigma^k\omega}}\]
exists and is finite almost surely. It is a routine exercise to show that this implies
\begin{equation}\label{eq:assumption_i_MET}
    \limsup_{n\to\infty}\frac{1}{n} \log \norm{A_{\sigma^n\omega}}\leq \lim_{n\to\infty} \frac{1}{n} \log^+ \norm{A_{\sigma^n\omega}} = 0
\end{equation}
for almost every $\omega\in \Omega$. Now, for $q,n\in \N$, write
\[\phiq_n(\omega) = \log \VLL_q(A^n_\omega),\]
which is measurable by \autoref{prop:norm_and_volume_measurability}. Since $d(v,E)\leq\norm{v}$ for any subspace $E$, it is clear from the definition of $\vol_q$ that
\[\paren{\phiq_1(\omega)}^+ \leq q\log^+\norm{A_\omega} \in L^1.\]
By submultiplicativity of $\VLL_q$, we can see for $n,m\in \N$ that
\[\phiq_{n+m}(\omega)\leq \log \paren{\VLL_q(A^m_{\sigma^n\omega})\VLL_q(A^n_\omega)} = \phiq_n(\omega)+\phiq_m(\sigma^n \omega),\]
which means it is a subadditive family satisfying the assumptions of Kingman's subadditive ergodic theorem (\autoref{appThm:KingmanSET}). Thus, by Kingman's theorem,
\[l_q(\omega) \coloneqq \lim_{n\to\infty} \frac{1}{n} \vphi_n(\omega) = \lim_{n\to\infty} \frac{1}{n} \log \VLL_q(A^n_\omega)\]
exists almost surely, is measurable, and is $\sigma$-invariant. Combining the above with \autoref{lem:equivalenceOfVolNorms} and \autoref{appThm:temperedSET}, it follows that there is a measurable $\sigma$-invariant set $\Gamma\subseteq \Omega$ of full probability such that $(A_\omega,A_{\sigma\omega},A_{\sigma^2\omega},\ldots)$ satisfies \autoref{assump:refined_oneidedSequences} for all $\omega\in \Gamma$. 

We may now take all the notations of \autoref{thm:non-invMET}. As the $(l_q:\Omega\to\R\cup\{-\infty\})_{q\in\N}$ are now just standard $\R\cup\{-\infty\}$-valued maps, measurability and $\sigma$-invariance of $l_q$ imply measurability and $\sigma$-invariance of $\chi_q, r,l_\alpha = \inf_{q\in \N} \chi_q$ for $q\in \N$ and $m_s, M_{s} = m_1+\cdots+m_s,\lambda_s$ for $1\leq s < r+1$. For item (1), since $\omega\mapsto \norm{A^n_\omega v}$ is measurable for all $n\in\N$ and $v\in \B$, we have that $\lambda(\omega,v) = \limsup_{n\to\infty} \frac{1}{n}\log\norm{A^n_\omega v}$ is measurable for all $v\in \B$. Moreover, the fact that
\[\lambda(\sigma\omega,A_\omega v) = \lambda(\omega,v)\]
follows directly from the definition of $\lambda(\omega,v)$. All that remains is to prove \eqref{eq:MNC_for_MET}.

\paragraph{The Measure of Noncompactness}
Our proof follows along the lines of the proof of Lemma 3.2(3) in \cite{Blumenthal2016}. We note by Corollary 2.5.5 and Theorem 1.1.7 in \cite{Akhmerov1992} that 
\[\frac{1}{4}\norm{A}_\alpha \leq c(A)\leq 4\norm{A}_\alpha,\]
where $c(A) = \inf_{q\in \N} c_q(A)$. Notice that for $\omega\in \Gamma$,
\[\limsup_{n\to\infty}\frac{1}{n}\log \norm{A^n_\omega}_\alpha \leq \inf_{q\in \N} \lim_{n\to\infty} \frac{1}{n} \log (4c_q(A^n_\omega)) = \inf_{q\in \N} \chi_q = l_\alpha. \]
To show the reverse inequality, note first that for an operator $A\in L(\B)$, it follows by Lemma 4.5 of \cite{LianLu2010} that the sequence $q\mapsto (\VLL_q(A))^{1/q}$ is monotonically decreasing. Define the quantity
\[\gamma(A) \coloneqq \inf_{q\in \N} (\VLL_q(A))^{1/q}.\]
By Lemma 4.6 of \cite{LianLu2010}, $\gamma(A)\leq 2\norm{A}_\alpha$. Notice, by submultiplicativity of $\VLL_q$, $\gamma$ is a submultiplicative quantity. Moreover, by \autoref{prop:norm_and_volume_measurability}, $\omega\mapsto \gamma(A^n_\omega)$ is measurable for all $n$, and we also have $\log^+\gamma(A_\omega)\leq \log^+\norm{A_\omega}\in L^1(\Omega)$. Therefore, writing $\phi_n(\omega) = \log \gamma(A_\omega^n)$, we have by \autoref{appThm:KingmanSET} that
\[\lim_{n\to\infty}\frac{1}{n} \phi_n = \inf_{n\in \N} \frac{1}{n}\E\bracket{\log\gamma(A^n_\omega)|\Ical} \text{ a.s.},\]
where $\Ical = \{E\in\F:\sigma^{-1}E = E\}$ is the sub-$\sigma$-algebra of $\sigma$-invariant sets. Thus, for $\p$-a.e. $\omega\in \Gamma$,
\[l_\alpha(\omega) = \inf_{q\in \N} \frac{1}{q} l_q(\omega) = \inf_{n\in \N} \inf_{q\in \N} \frac{1}{nq}\E\bracket{\log \VLL_q(A^n_\omega)|\Ical}(\omega) = \lim_{n\to\infty} \frac{1}{n} \phi_n(\omega)\leq\liminf_{n\to\infty} \frac{1}{n}\log\norm{A^n_\omega}_\alpha.\]
Restricting $\Gamma$ by a null set so that the above holds for all $\omega \in \Gamma$, it follows that $\Gamma$ is $\sigma$-invariant, $\p(\Gamma) = 1$, and \eqref{eq:MNC_for_MET} holds for all $\omega\in \Gamma$.

\subsubsection*{Measurability of the Filtration}\label{subsubsec:nonInvMET_measurability}
Write $G = \overline{\vspan(D)}$. For $2\leq s < r(\omega)+1$ and $n\in \N$, we may apply \autoref{prop:measurableSlowSpace} to get a measurable selection of maps $\ell_{1,n},\ldots,\ell_{M_{s-1},n}:\Omega\to B_{G}$ such that
\begin{equation}\label{eq:slowSpaceApproxMeasSelection}
    \vol_{M_{s-1}}((A_\omega^n)^*\ell_1(\omega),\ldots,(A_\omega^n)^*\ell_{M_s-1}(\omega))\geq \frac{1}{2}\VLL_{M_{s-1}}((A_\omega^n)^*)
\end{equation}
\begin{remark}
    Since $M_{s-1}$ is random, we technically must restrict the domain to the $\sigma$-invariant strata $\{M_{s-1} = m\}$ for $m\in \N$. This shall be the convention we take in this section and the next without further mention.
\end{remark}
Write $L_{n,s-1}(\omega) = \innerp{\ell_{n,1}(\omega),\ldots,\ell_{n,M_{s-1}}(\omega)}$ and $F_{n,s}(\omega) = \preann{\paren{(A_\omega^n)^*L_{n,s-1}(\omega)}}$. By \autoref{lem:equivalenceOfVolNorms} and \autoref{cor:reverse_dual_SVD}, $F_{n,s}$ is a closed $M_{s-1}$-codimensional subspace of $\B$ such that
\[\norm{A^n_\omega |_{F_{n,s}(\omega)}}\leq Cc_{M_{s-1}+1}(A^n_\omega),\]
for some $C\geq 1$ that depends only on $M_{s-1}$. By \autoref{cor:uniquenessOfSlowSpaces}, $F_{n,s}(\omega)\to F_{\lambda_s}(\omega)$ a.s. in the gap-topology. 

If $r(\omega) < \infty$, then the exact same logic holds. By \autoref{prop:measurableSlowSpace}, there is a measurable selection for a basis of $L_{n,r}(\omega)\subset G$ such that \eqref{eq:slowSpaceApproxMeasSelection} holds for $q = M_{r}(\omega)$, and thus $F_{n,\alpha}(\omega) \coloneqq \preann{(A^n_\omega)^*L_{n,r}}(\omega)\to F_{l_\alpha}(\omega)$ in the gap-topology as $n\to\infty$.

\noindent \textbf{Item (2):} If $v\in \B$, then $\omega\mapsto d(v,F_{n,s}(\omega))$ is measurable by item (2) of \autoref{prop:measurableSlowSpace}. Hence, the map
\[d(v,F_{\lambda_s}(\omega)) = \lim_{n\to\infty} d(v,F_{n,s}(\omega))\]
is measurable for all $v\in \B$. If $r(\omega)<\infty$, then the same reasoning applies to $F_{l_\alpha}(\omega)$.

\noindent \textbf{Item (3):} Since $F_{\lambda_1}(\omega) = \B$, measurability in the case $s = 1$ is satisfied by \autoref{prop:norm_and_volume_measurability}, so we may assume $s\geq 2$. Notice that
\[\norm{A^n_\omega|_{F_{\lambda_{s}}(\omega)}} = \sup_{\ell\in D} \norm{\paren{A^n_\omega}^*\ell|_{F_{\lambda_{s}}}} = \sup_{\ell\in D} d(\paren{A^n_\omega}^*\ell,\ann{F_{\lambda_s}})\]
by \autoref{appLem:LianLuVolLSC} and \autoref{lem:distance_vs_orthogonalProj}. As this is a countable supremum, it suffices to show the arguments in the supremum are measurable.

Since $G = \overline{\vspan(D)}\subseteq BL_\tau(\R)$, it follows for $\ell\in D$, $n\in \N$, and $2 \leq s < r(\omega) + 1$ that
\begin{align*}
    d((A^n_\omega)^*\ell,\ann{F_{\lambda_s}(\omega)}) &= \lim_{m\to\infty}d((A^n_\omega)^*\ell,(A^m_\omega)^*L_{m,s-1}(\omega)) \\
    &= \lim_{m\to\infty}\inf_{t_1,\ldots,t_{M_{s-1}}\in \Q} \sup_{v\in S}\bracket{\innerp{A_{\omega}^n v,\ell} - \innerp{A^m_\omega v,t_1\ell_{m,1}+\cdots+t_{M_{s-1}}\ell_{m,M_{s-1}}}},
\end{align*}
where $S$ is any countable $\tau$ dense subset of $B_{\B}$. Thus, the above map is measurable, which gives us measurability of $\omega\mapsto \norm{A^n_\omega|_{F_{\lambda_{s}}(\omega)}}$ for all $n\geq 1$. If $r(\omega)<\infty$, then the same reasoning applies to $F_{l_\alpha}$.

\noindent \textbf{Item (4):} For $1\leq s < r(\omega) + 1$, we know that $\ann{F_{n,s}}\subseteq BL_\tau(\R)$. Since $\ann{F_{n,s}}\to \ann{F_{\lambda_s}}$ in the Grassmannian topology, \autoref{appLem:closednessOfBLtau} implies $\ann{F_{\lambda_s}} \subseteq BL_\tau(\R)$, and hence, that $B_\B\cap F_{\lambda_s}$ is $\tau$-closed. If $r(\omega) < \infty$, the same logic applies to $F_{l_\alpha}$. Otherwise, we can write
\[B_\B \cap F_{l_\alpha} = \bigcap_{s=1}^\infty B_\B\cap F_{\lambda_{s}},\]
which is $\tau$-closed as the intersection of $\tau$-closed sets.

\subsection{The Semi-Invertible MET}
In this section, we prove \autoref{thm:semi-invMET}. We shall take the same setting as for the non-invertible MET except that we also assume $\sigma:\Omega\to\Omega$ has measurable inverse.

\subsubsection*{The Lyapunov Spectrum}
As before, Birkhoff's ergodic theorem may be used to show that
\[\limsup_{n\to\infty} \frac{1}{n}\log \norm{A_{\sigma^{-n}\omega}} \leq 0\]
for almost every $\omega\in \Omega$. Moreover, by Kingman's theorem and \autoref{appThm:balancedKingmanSET} (cf. \cite{Lee2024}), we have for $q\in \N$ and almost every $\omega\in \Omega$ that
\[l_q(\omega) = \lim_{n\to\infty} \frac{1}{n} \phiq_n(\sigma^{-n}\omega) = \lim_{n\to\infty} \frac{1}{2n} \phiq_{2n}(\sigma^{-n}\omega).\]
Combining the above with \autoref{lem:equivalenceOfVolNorms} and \autoref{appThm:temperedSET}, it follows that there is a measurable $\sigma$-invariant set $\Delta\subseteq \Omega$ of full probability such that, for all $\omega\in \Delta$, the two-sided sequence $(\ldots,A_{\sigma^{-1}\omega},A_\omega,A_{\sigma\omega},\ldots)$ satisfies \autoref{assump:refined_twoSidedSequences}.

\subsubsection*{Measurability of the Fast Spaces}
Now, let $n\in \N$. For $1\leq s < r(\omega) + 1$, we may apply \autoref{prop:measurableFastSpace} to get a strongly measurable selection of maps $w_{1,n},\ldots,w_{M_s,n}:\Omega\mapsto B_\B$ such that
\[\vol_{M_s}(A^n_\omega w_{1,n}(\omega),\ldots,A^n_\omega w_{M_s,n}(\omega))\geq \frac{1}{2} \VLL_{M_s}(A^n_\omega).\]

Write $W_{n,s}(\omega) = \innerp{w_{1,n}(\omega),\ldots,w_{M_s,n}(\omega)}$ and
\[E_{n,s}(\omega) \coloneqq A^n_{\sigma^{-n}\omega} W_{n,s}(\sigma^{-n}\omega).\]
Denoting $A^{*n}_{\omega} = (A^n_{\sigma^{-n}\omega})^* = A_{\sigma^{-n}\omega}^*\cdots A_{\sigma^{-1}\omega}^*$, it follows by \autoref{lem:equivalenceOfVolNorms} and \autoref{lem:dual_SVD} that
\[\norm{A^{*n}_{\omega}|_{\Fhat_{n,s+1}(\omega)}}\lesssim_{M_{s}} c_{1+M_{s}}(A^{*n}_{\omega}),\]
where $\Fhat_{n,s+1}(\omega) = \ann{E_{n,s}(\omega)}$. Thus, by \autoref{cor:uniquenessOfSlowSpaces} (applied to the adjoint cocycle), and utilizing the connection between slow spaces for the adjoint cocycle and fast spaces for the original cocycle by \autoref{thm:sequentialMET_twoSided}, we get that $E_{n,s}(\omega) \to E_{s}(\omega)$ as $n\to\infty$ in the gap-topology.

\vspace{2mm}
\noindent \textbf{Item (1):} By \autoref{prop:measurableFastSpace}, $\omega\mapsto d(v,E_{n,s}(\omega))$ is measurable for all $v\in \B$, so
\[\omega\mapsto d(v,E_{s}(\omega)) = \lim_{n\to\infty} d(v,E_{n,s}(\omega))\]
is measurable for all $v\in \B$.

\vspace{2mm}
\noindent \textbf{Item (2):} Since $E_{k,s}(\omega)\to E_{s}(\omega)$ as $k\to\infty$ in the Grassmannian topology, it follows by \autoref{lem:lipContOfRestrictedSubspaceNorms} that
\[\norm{A^n_{\omega}|_{E_{k,s}(\omega)}}\underset{k\to\infty}{\to}\norm{A^n_{\omega}|_{E_{s}(\omega)}} \text{ and } m\paren{A^n_{\omega}|_{E_{k,s}(\omega)}}\underset{k\to\infty}{\to}m\paren{A^n_{\omega}|_{E_{s}(\omega)}}\]
for all $n\in \N$. To simplify the argument, let us focus on the minimum norm. Notice that
\[m\paren{A^n_{\omega}|_{E_{k,s}(\omega)}} = \inf_{t_1,\ldots,t_{M_s}\in \Q\setminus \{0\}} \frac{\norm{t_1 A_{\sigma^{-k}\omega}^{n+k} w_{k,1}+\cdots+t_{M_s} A_{\sigma^{-k}\omega}^{n+k} w_{k,M_s}}}{\norm{t_1 A_{\sigma^{-k}\omega}^{k} w_{k,1}+\cdots+t_{M_s} A_{\sigma^{-k}\omega}^{k} w_{k,M_s}}}.\]
This is an infimum of a countable collection of measurable functions (by \autoref{lem:measOfOperatorActionOnStrMeas}), so the infimum is measurable. Thus, by the limit identity, $\omega\mapsto m\paren{A^n_{\omega}|_{E_{s}(\omega)}}$ is measurable for all $n$. Measurability of the operator norm follows exactly similarly as it is simply a countable supremum of the same measurable functions instead of an infimum.

\vspace{2mm}
\noindent \textbf{Items (3) and (4):} 
To determine measurability of the projection operators, we require the following lemma.
\begin{lem}\label{lem:projectionOperatorDecomposition}
    Let $q\in \N$ and suppose $E\in\G_q(\B)$ and $F\in \G^q(\B)$ are complements. Let $(v_1,\ldots,v_q)$ be a basis of $E$ and $(\ell_1,\ldots,\ell_q)$ a basis of $\ann{F}\in \G_q(\B^*)$. Consider the matrix $M\in \R^{q\times q}$ defined by
    \[M_{ij} = \innerp{v_j,\ell_i}.\]
    Then, $M$ is invertible and for all $v\in \B$,
    \begin{equation}\label{eq:projectionCoordinateForm}
        \pi_{E//F}v = \sum_{i = 1}^q \sum_{j = 1}^q [M^{-1}]_{ij}\innerp{v,\ell_{j}} v_i.
    \end{equation}
    Conversely, invertibility of $M$ implies $E$ and $F$ are topological complements.
\end{lem}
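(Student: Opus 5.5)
The plan is pure linear algebra: use the splitting to identify $\ann{F}$ with $E^*$, and let the pairing matrix $M$ do the work.

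\textbf{Invertibility of $M$.} Suppose $Mc = 0$ for some $c\in\R^q$, and set $w = \sum_j c_j v_j\in E$. The condition $Mc=0$ says $\innerp{w,\ell_i} = 0$ for every $i$. Since the $\ell_i$ span $\ann{F}$, this gives $w\in\preann{(\ann{F})} = F$, where we use that $F$ is closed. So $w\in E\cap F = \innerp{0}$. Linear independence of the $v_j$ then forces $c=0$, and $M$ is invertible.

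\textbf{The coordinate formula.} Let $P$ denote the right-hand side of \eqref{eq:projectionCoordinateForm}. Each $\ell_j$ is bounded, so $P$ is a bounded operator of rank at most $q$ with range in $E$. It is enough to check two things.
\begin{itemize}
\item On $F$: if $v\in F$ then every $\innerp{v,\ell_j}$ vanishes, so $Pv = 0$.
\item On $E$: for $v = v_k$, the $i$-th coefficient is $\sum_j [M^{-1}]_{ij}M_{jk} = \delta_{ik}$, so $Pv_k = v_k$.
\end{itemize}
Because $\B = E\oplus F$, this shows $P = \pi_{E//F}$. This computation also shows that, once $M$ is invertible, the formula always defines a bounded projection onto $E$ whose kernel is $\preann{\innerp{\ell_1,\ldots,\ell_q}}$.

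\textbf{The converse.} I read the hypotheses here as: $E\in\G_q(\B)$, $F\in\G^q(\B)$, and $M$ invertible, with no splitting assumed. Define $P$ by the formula above. By the computation in the previous step, $P$ is a bounded idempotent with range $E$ and kernel $\preann{(\ann{F})} = F$.
\begin{itemize}
\item Every $v$ decomposes as $v = Pv + (v-Pv)$, with $Pv\in E$ and $v - Pv\in\ker P = F$.
\item $E\cap F = \innerp{0}$, since $P$ acts as the identity on $E$ and as zero on $F$.
\end{itemize}
Hence $\B = E\oplus F$, and the projection is bounded, so the splitting is topological. Equivalently, $\sin\theta(E,F) = \norm{P}^{-1} > 0$ by Lemma 2.2 of \cite{Blumenthal2016}.

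\textbf{Possible obstacle.} No step here is substantive. The only point needing care is identifying $\ker P$ with $F$, which uses $\preann{(\ann{F})} = F$ for closed $F$, as stated in \autoref{sec:geometry}.
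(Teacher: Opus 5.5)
Your proof is correct. The coordinate formula and the converse go as in the paper: check $P|_E=\mathrm{id}_E$ and $P|_F=0$ for the operator $P$ given by the right-hand side.

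The invertibility step is where you differ. The paper uses Hahn--Banach to produce functionals $f_i\in\ann{(\innerp{v_j:j\neq i}+F)}$ with $f_i(v_i)=1$. It then expands $f_i=\sum_j n_{ij}\ell_j$ in the basis of $\ann{F}$ and checks $NM=I$. You instead show $\ker M=\innerp{0}$ directly: $Mc=0$ puts $\sum_j c_j v_j$ in $\preann{(\ann{F})}=F$, hence in $E\cap F=\innerp{0}$.

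Your route is shorter and reuses the identity $\preann{(\ann{F})}=F$ for closed $F$, already recorded in \autoref{sec:geometry}. That identity is itself a Hahn--Banach consequence, so the analytic input is the same. The paper's route is constructive. It identifies $M^{-1}$ as the change of basis from $(\ell_j)$ to the coordinate functionals $(f_i)$ of $\pi_{E//F}$, which is really why the formula has the shape it does.

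In the converse, you explicitly identify $\ker P=\preann{\innerp{\ell_1,\ldots,\ell_q}}=F$. This fills in a step the paper leaves implicit when it concludes $T=\pi_{E//F}$ from $T|_E=\mathrm{id}_E$ and $T|_F=0$, namely that $\B=E+F$.
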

\begin{proof}
    Let us first do the converse direction. Suppose $M$ is invertible. Let $T:\B\to\B$ be the linear operator defined by the right side of \eqref{eq:projectionCoordinateForm}. Then it is immediately clear that $T|_{E} = \id_E$, $(I-T)|_{F} = \id_F$, and $\norm{T} \leq \sum_{i,j} \abs{[M^{-1}]_{ij}}\norm{l_j}\norm{v_i} < \infty$. Hence, $T = \pi_{E//F}$ is well-defined and bounded.
    
    We now do the forward-direction. By Hahn-Banach, there must exist a linear functional $f_i\in \ann{\paren{\innerp{v_j:j\neq i}+F}}$ such that $f_i(v_i) = 1$. Since $f_i\in \ann{F}$, it follows that there must exist unique $n_{ij}\in \R$ such that
    \[f_i = \sum_{j=1}^q n_{ij}\ell_j.\]
    Let $N$ be the matrix defined by $N_{ij} = n_{ij}$. Then
    \[[NM]_{ik} = \sum_{j=1}^q N_{ij} M_{jk} = \sum_{j=1}^q n_{ij}\innerp{v_k,\ell_j} = f_i(v_k) = \delta_{ik},\]
    which means $NM = \id_{\R^q}$. Hence, $M$ is invertible and $N = M^{-1}$. Now that we know $M$ is invertible, the fact that \eqref{eq:projectionCoordinateForm} holds follows as in the converse direction.
\end{proof}

In the case $r(\omega) < \infty$, it will be notationally convenient to write $\lambda_{r+1}(\omega) = l_\alpha(\omega)$. Now, let $L_{n,s}(\omega) = \innerp{\ell_{n,1}(\omega),\ldots,\ell_{n,M_{s}}(\omega)}$ and $F_{n,s+1}(\omega) = \preann{\paren{(A_\omega^n)^*L_{n,s}(\omega)}}$ be as in \autoref{subsubsec:nonInvMET_measurability}. For the sake of brevity, write $v_{n,i} = A^n_{\sigma^{-n}\omega} w_{n,i}$ and $f_{n,i} = (A_\omega^n)^*\ell_{n,i}$. Then the matrix-valued map
\[[M^{n,s}]_{ij} = \innerp{v_{n,j},f_{n,i}} = \innerp{A^{2n}_{\sigma^{-n}\omega} w_{n,j},\ell_{n,i}}\]
is measurable by \autoref{lem:measOfOperatorActionOnStrMeas}. Since invertibility of $M^{n,s}$ determines whether $E_{n,s}$ and $F_{n,s+1}$ are complements, and $M^{n,s}$ is a measurable real $(M_s\times M_s)$-matrix valued function, it follows that there is a measurable map $\nt_s:\{s < r(\omega)+1\}\to \N$ such that $E_{n,s}$ and $F_{n,s+1}$ are topological complements for all $n\geq \nt_s(\omega)$; for example,
\[\nt_s(\omega) = \min\curly{N:\det M^{n,s} \neq 0\ \forall n\geq N}.\]
Writing $N^{n,s} = \paren{M^{n,s}}^{-1}$ for $n\geq \nt_s(\omega)$, this means the coefficient maps $\omega\mapsto [N^{n,s}(\omega)]_{ij}$ are measurable. 

Now, since $E_{n,s}\to E_s$ and $F_{n,s+1}\to F_{\lambda_{s+1}}$ in the Grassmannian topology and $\curly{\nt_s < \infty} = \curly{s < r+1}$, it follows that 
\[\pi_{E_s//F_{\lambda_{s+1}}} = \lim_{n\to\infty} \indic_{n\geq \nt_s}\pi_{E_{n,s}//F_{n,s+1}}\]
in the $L(\B,\norm{\cdot})$-operator norm topology.
If $\ell\in \Mcal_{\sigma,A}^*$ and $v\in \B$, then by \autoref{lem:projectionOperatorDecomposition}, we have
\[\innerp{\pi_{E_{n,s}//F_{n,s+1}}v,\ell} = \sum_{i=1}^{M_s}\sum_{j=1}^{M_s} [N^{n,s}]_{ij} \innerp{A_{\omega}^n v, \ell_{n,j}}\innerp{A^{n}_{\sigma^{-n}\omega}w_{n,i},\ell},\]
which is clearly measurable for $n\geq \nt_s$. As such
\[\innerp{\pi_{E_s//F_{\lambda_{s+1}}} v, \ell} = \lim_{n\to\infty} \indic_{n\geq \nt_s}\innerp{\pi_{E_{n,s}//F_{n,s+1}}v,\ell}\]
is measurable for $1\leq s < r + 1$. In particular, this implies $\omega\mapsto \pi_{E_s(\omega)//F_{\lambda_{s+1}}(\omega)}$ for $1\leq s < r(\omega) + 1$ is $(\B,\Mcal_{\sigma,A}^*)$-measurable. Additionally, \eqref{eq:projectionCoordinateForm} in \autoref{lem:projectionOperatorDecomposition} implies $\pi_{E_{n,s}//F_{n,s+1}}\in BL_{\tau}(\B,\tau)$, so by \autoref{appLem:closednessOfBLtau}, $\pi_{E_s//F_{\lambda_{s+1}}}$ is $ BL_\tau(\B,\tau)$-valued. Item 4 then immediately follows by \autoref{prop:norm_and_volume_measurability}.

\section{Applications}\label{sec:applications}
In this section, we shall first explore some examples of Banach spaces which satisfy the topological needs of our measurability setting. We end the section with an application of our framework to give a multiplicative ergodic theory interpretation to the work of Buzzi on exponential decay of correlations for random Lasota-Yorke maps \cite{Buzzi1999}.

\subsection{Admissible Banach Spaces}
\begin{defn}
    A Banach space $(\B,\norm{\cdot})$ will be \textit{admissible} if it has a weaker (read: coarser than the $\norm{\cdot}$-topology) vector space topology $\tau$ for which the norm-closed unit ball is $\tau$-closed and separable, and for which there is a countable collection of linear functionals in $BL_\tau(\R)$ that is weak-* dense in $B_{\B^*}$. If $\tau$ is such a topology and $D\subset BL_\tau(\R)\cap B_{\B^*}$ is such a collection of linear functionals, we shall call $(\tau; D)$ an \textit{admissions ticket} for $\B$.
\end{defn}

\begin{prop}[Admissibility by Embedding]\label{prop:AdmissibleEmbedding}
    Let $(\B,\norm{\cdot})$ be a Banach space and assume there is a weaker locally convex topology $\tau$ on $\B$ such that $B_\B$ is $\tau$-closed and separable. Then for any countable set $D\subset (\B,\tau)^*\cap B_{\B^*}$ that is $\sigma((\B,\tau)^*,\B)$-dense, $(\tau; D)$ is an admissions ticket for $\B$. Moreover, if $\tau$ is metrizable and locally convex, then there exists at least one such $D$.
\end{prop}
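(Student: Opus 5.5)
The plan is to check the three requirements in the definition of an admissions ticket directly, and then to build a suitable countable $D$ when $\tau$ is metrizable. The requirement that $B_\B$ be $\tau$-closed and $\tau$-separable is part of the hypothesis. So only two things need checking: that $D\subset BL_\tau(\R)$, and that $D$ is weak-* dense in $B_{\B^*}$.

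For the first, let $\ell\in D$. Then $\ell\in(\B,\tau)^*$, so $\ell:(\B,\tau)\to\R$ is continuous. Its restriction to $(B_\B,\tau|_{B_\B})$ is therefore continuous, which says exactly that $\ell\in BL_\tau(\R)$.

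For density I will use the lower semicontinuity of the norm. Since $B_\B$ is $\tau$-closed and convex, and $\tau$ is locally convex, the Hahn--Banach separation theorem applied in $(\B,\tau)$ gives
\[\norm{v}=\sup\{\ell(v):\ell\in(\B,\tau)^*,\ \ell(B_\B)\le 1\}=\sup\{\ell(v):\ell\in (\B,\tau)^*\cap B_{\B^*}\}.\]
The main step is to show from this that $K:=(\B,\tau)^*\cap B_{\B^*}$ is weak-* dense in $B_{\B^*}$.

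Suppose not. Let $\overline{K}^{w*}$ be the weak-* closure of $K$. It is convex, balanced and weak-* closed. If it misses some $\ell_0\in B_{\B^*}$, then weak-* separation gives $v\in\B$ with
\[\ell_0(v)>\sup_{\ell\in K}\ell(v)=\norm{v}\ge \ell_0(v),\]
which is a contradiction. So $K$ is weak-* dense in $B_{\B^*}$.

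Next I pass from $K$ to $D$. The hypothesis says $D$ is $\sigma((\B,\tau)^*,\B)$-dense. I read this as density inside $K$, i.e. $D$ is weak-* dense in $K$; I expect this interpretive step to be the main subtlety. Given that reading, $D$ is weak-* dense in $B_{\B^*}$ by transitivity of closures.

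For the existence claim, assume $\tau$ is metrizable and locally convex. Since $B_\B$ is $\tau$-separable, $\B=\bigcup_n nB_\B$ is $\tau$-separable. Take a countable $\tau$-dense set $S\subset\B$. The aim is a countable $D\subset K$ that is $\sigma(\B^*,\B)$-dense in $K$. I would first try the following route.
\begin{enumerate}
\item Fix a countable base $(U_n)$ of convex balanced $\tau$-neighbourhoods of $0$.
\item Let $U_n^\circ$ be the polars. Each $U_n^\circ$ is $\sigma((\B,\tau)^*,\B)$-compact by Alaoglu--Bourbaki.
\item On $U_n^\circ$, the topology $\sigma((\B,\tau)^*,\B)$ coincides with $\sigma((\B,\tau)^*,S)$, by equicontinuity and density of $S$. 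That topology is metrizable, and compact metrizable spaces are separable.
\item Since $(\B,\tau)^*=\bigcup_n U_n^\circ$, the space $(\B,\tau)^*$ is weak-* separable.
\end{enumerate}
The remaining difficulty is intersecting with $B_{\B^*}$. The sets $K\cap U_n^\circ$ are weak-* compact and, by the same argument, separable. Choose countable dense subsets $D_n$ of them and set $D=\bigcup_n D_n$. Then $D$ is weak-* dense in $K=\bigcup_n (K\cap U_n^\circ)$. This $D$ is countable, lies in $(\B,\tau)^*\cap B_{\B^*}$, and has the required density, which completes the existence claim.
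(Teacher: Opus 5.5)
Your proposal is correct and follows the same route as the paper. You show that $(\B,\tau)^*\cap B_{\B^*}$ is norming via Hahn--Banach separation in $(\B,\tau)$, deduce its weak-* density in $B_{\B^*}$ (the bipolar step, which you carry out by hand with a weak-* separation), and read the density hypothesis on $D$ as density in $K$, exactly as the paper does. For the existence claim you simply inline the paper's appendix lemma on weak-* separability of subsets of the dual of a separable metrizable locally convex space, using the compact metrizable polars $U_n^\circ$.
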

\begin{proof}
    By assumption, we already have that $B_\B$ is $\tau$-closed and separable, and clearly $(\B,\tau)^*\subseteq BL_\tau(\R)$. Suppose there is a countable $\sigma((\B,\tau)^*,\B)$-dense set $D\subset B' = (\B,\tau)^*\cap B_{\B^*}$. If we can show that $B'$ is norming, then the Bipolar theorem implies $B_{\B}$ is the $\sigma(\B^*,\B)$-closure of $B'$, and thus $B_\B$ is weak-* separable and the weak-* closure of $D$. Indeed, if $B'$ is norming, then the $\sigma(\B^*,\B)$-polar of $B'$ is $B_{\B}$, and so its bipolar is $B_{\B^*}$. The bipolar is the weak-* closed convex balanced hull. Since $B'$ is already convex and balanced, its $\sigma(\B^*,\B)$-bipolar is its $\sigma(\B^*,\B)$-closure.

    Now, $B'$ is clearly norming for $x = 0$, so take any $x\in \B\setminus \innerp{0}$. Let $0<\e< 1$ and notice that $(1-\e)\norm{x} B_{\B}$ is a $\tau$-closed and convex set disjoint from $\{x\}$. Thus, by the Hahn-Banach separation theorem, there is a $\tau$-continuous linear functional $f:\B\to\R$ such that
    \[(1-\e)\norm{x} < \innerp{\frac{f}{\norm{f}},x} \leq\sup_{\ell\in B'}\innerp{\ell,x} .\]
    Taking $\e \to 0$, it follows that $B'$ is norming.

    Now, if $\tau$ is metrizable and locally convex, then \autoref{appLem:hereditaryWeak*SeparabilityOfDual} implies $B' = (\B,\tau)^*\cap B_{\B^*}$ is $\sigma((\B,\tau)^*,\B)$-separable. So we are done.
\end{proof}

From the above proposition, we have the following three corollaries, the first two of which are immediate.
\begin{cor}\label{cor:separableIsAdmissible}
    If $\B$ is separable, then $\B$ is admissible.
\end{cor}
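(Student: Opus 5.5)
We apply \autoref{prop:AdmissibleEmbedding} with $\tau$ taken to be the norm topology itself. This $\tau$ is a metrizable, locally convex vector space topology, and it is trivially coarser than (equal to) the norm topology. The closed unit ball $B_\B$ is norm-closed. Since $\B$ is separable, $B_\B$ is norm-separable as a subset of a separable metric space. So the standing hypotheses of the proposition hold.

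By the ``moreover'' clause of \autoref{prop:AdmissibleEmbedding}, metrizability and local convexity of $\tau$ give a countable set $D\subset(\B,\tau)^*\cap B_{\B^*} = B_{\B^*}$ that is $\sigma(\B^*,\B)$-dense. The first part of the proposition then shows that $(\tau;D)$ is an admissions ticket, so $\B$ is admissible.

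If one prefers to avoid citing \autoref{appLem:hereditaryWeak*SeparabilityOfDual}, $D$ can be built directly. Take a countable dense set $\{x_k\}\subset\B$. By Hahn--Banach, choose norming functionals $\ell_k\in S_{\B^*}$ with $\innerp{x_k,\ell_k}=\norm{x_k}$. Let $D$ be the set of rational convex combinations of $\pm\ell_k$. The set $D$ is norming, so by the bipolar argument in the proof of the proposition its weak-* closure is $B_{\B^*}$.

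There is no real obstacle here. The only point to check is that $(\B,\norm{\cdot})^*\subseteq BL_\tau(\R)$, which is immediate because norm-continuous functionals are continuous on $B_\B$ with the norm topology.
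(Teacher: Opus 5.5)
Your proposal is correct and matches the paper's intended argument: the paper records this corollary as an immediate consequence of \autoref{prop:AdmissibleEmbedding} with $\tau$ taken to be the norm topology, exactly as you do. Your optional direct construction of $D$ also works, and it avoids citing \autoref{appLem:hereditaryWeak*SeparabilityOfDual}; the one detail to state is that the bipolar theorem should be applied to the absolutely convex hull of $\{\pm\ell_k\}$, in which your rational convex combinations are norm-dense, since that set of combinations is not itself convex.
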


\begin{cor}\label{cor:admissibleViaSepBanachEmbedding}
    If $(X,\norm{\cdot}_X)$ is a separable Banach space such that $\B\subseteq X$ continuously embeds into $X$ and $\norm{\cdot}_\B$ is lower semi-continuous w.r.t. the $\norm{\cdot}_X$-topology, then $\B$ is admissible and $(\norm{\cdot}_X,D)$ is an admissions ticket for any countable $\sigma(X^*,X)$-dense set $D\subset X^*\cap B_{\B^*}$.
\end{cor}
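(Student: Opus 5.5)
The plan is to deduce the corollary directly from \autoref{prop:AdmissibleEmbedding}. We take $\tau$ to be the topology on $\B$ induced by $\norm{\cdot}_X$, i.e. the subspace topology of the embedding $\B\subseteq X$. We then check the three hypotheses of that proposition one at a time: $\tau$ is a locally convex vector space topology coarser than the norm topology of $\B$; $B_\B$ is $\tau$-closed; and $B_\B$ is $\tau$-separable. The proposition then says that any countable set $D\subset(\B,\tau)^*\cap B_{\B^*}$ which is $\sigma((\B,\tau)^*,\B)$-dense gives an admissions ticket $(\tau;D)$. The last step is to match this with the sets $D\subset X^*\cap B_{\B^*}$ appearing in the corollary.

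First, $\tau$ is induced by a norm, so it is a metrizable locally convex vector topology. It is coarser than the $\norm{\cdot}_\B$-topology because the embedding is continuous, i.e. $\norm{v}_X\le C\norm{v}_\B$. Second, lower semicontinuity of $\norm{\cdot}_\B$ with respect to $\norm{\cdot}_X$ means every sublevel set $\{v\in\B:\norm{v}_\B\le 1\}=B_\B$ is $\tau$-closed; this is the equivalence in \autoref{remark:lowersemicontinuityOfNorm}. Third, $B_\B$ is a subset of the separable metric space $(X,\norm{\cdot}_X)$, hence is itself separable in the induced metric, so it is $\tau$-separable. Thus \autoref{prop:AdmissibleEmbedding} applies. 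Since $\tau$ is metrizable and locally convex, the proposition also supplies at least one such $D$, so $\B$ is admissible.

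For the identification of $D$, each $\ell\in X^*$ restricts to an element of $(\B,\tau)^*$. Conversely, Hahn--Banach extends every $\tau$-continuous functional on $\B$ to an element of $X^*$, so $(\B,\tau)^*$ is exactly $X^*|_\B$. This is the step I expect to need the most care, since the topologies must be matched. If $D\subset X^*\cap B_{\B^*}$ (restricted to $\B$) is $\sigma(X^*,X)$-dense in the relevant set, then it is $\sigma((\B,\tau)^*,\B)$-dense. The reason is that $\sigma(X^*,X)$ is finer than the topology of pointwise convergence on the points of $\B\subseteq X$, and restriction from $X^*$ to $X^*|_\B$ is continuous from $\sigma(X^*,X)$ to $\sigma((\B,\tau)^*,\B)$. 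Denseness therefore transfers through this continuous surjection. This gives the claim that $(\norm{\cdot}_X,D)$ is an admissions ticket.
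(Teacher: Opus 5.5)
Your proposal is correct and follows the paper's route: the paper treats this corollary as an immediate consequence of \autoref{prop:AdmissibleEmbedding}, applied with $\tau$ the topology on $\B$ induced by $\norm{\cdot}_X$. Your checks of the hypotheses, the Hahn--Banach identification $(\B,\tau)^*=X^*|_{\B}$, and the transfer of density through the continuous restriction map (which is onto $(\B,\tau)^*\cap B_{\B^*}$) make explicit what the paper leaves unstated.
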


\begin{cor}\label{cor:dualOfSepIsAdmissible}
    Suppose $(X,\norm{\cdot}_X)$ is a separable Banach space and let $\B = X^*$. Then $\B$ is admissible. If $D\subseteq B_X$ is a countable $\norm{\cdot}_X$-dense subset, then $(\sigma(\B,X);D)$ is an admissions ticket for $\B$, where we identify $X$ as a subspace of $\B^* = X^{**}$ via the isometric pointwise evaluation map.
\end{cor}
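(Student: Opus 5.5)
The plan is to apply \autoref{prop:AdmissibleEmbedding} with $\tau = \sigma(\B,X)$, the weak-* topology on $\B = X^*$. This is a locally convex vector space topology coarser than the norm topology. Four things then need checking: (a) $B_\B$ is $\tau$-closed; (b) $B_\B$ is $\tau$-separable; (c) $(\B,\tau)^* = X$, so that $D\subseteq B_X$ lies in $(\B,\tau)^*\cap B_{\B^*}$ under the evaluation embedding; and (d) $D$ is $\sigma((\B,\tau)^*,\B) = \sigma(X,X^*)$-dense in $(\B,\tau)^*\cap B_{\B^*}$.

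For (a), Banach--Alaoglu shows $B_{X^*}$ is weak-* compact, hence weak-* closed. For (c), the standard fact is that the weak-* continuous linear functionals on $X^*$ are exactly the evaluations at points of $X$. Since $\Ev$ is isometric, $(\B,\tau)^*\cap B_{\B^*}$ is identified with $B_X$, which contains $D$.

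For (b), I will use that $X$ is separable: then $B_{X^*}$ with the weak-* topology is compact and metrizable. A compact metric space is separable, which gives $\tau$-separability of $B_\B$. Alternatively, I can note that $\tau$ restricted to $B_\B$ agrees with the topology of pointwise convergence on a countable dense subset of $X$.

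For (d), $D$ is norm-dense in $B_X$. Since the weak topology $\sigma(X,X^*)$ is coarser than the norm topology, $D$ is also $\sigma(X,X^*)$-dense in $B_X = (\B,\tau)^*\cap B_{\B^*}$.

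With (a)--(d) established, \autoref{prop:AdmissibleEmbedding} yields that $(\sigma(\B,X);D)$ is an admissions ticket, so $\B$ is admissible. There is essentially no obstacle. The only point requiring care is (c), the identification of $(X^*,\text{weak-*})^*$ with $X$; I will cite it as standard rather than reprove it. Note also that $\tau$ is not metrizable on all of $\B$, which is why $D$ is given explicitly instead of invoking the metrizable clause of the proposition.
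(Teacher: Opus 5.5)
Your proposal is correct and follows the paper's own proof. You use Banach--Alaoglu for $\tau$-closedness of $B_\B$, and metrizability of $(B_\B,\sigma(\B,X))$ via pointwise convergence on a countable dense subset of $B_X$ for separability (the paper writes this metric out using the given $D$). You then identify $(\B,\sigma(\B,X))^* = X$, note that norm density of $D$ in $B_X$ gives $\sigma(X,X^*)$-density, and conclude by \autoref{prop:AdmissibleEmbedding}.
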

\begin{proof}
    By Banach-Alaoglu, $B_\B$ is $\sigma(\B,X)$-compact, and thus closed. If $D = \{x_j\}_{j\geq 1}\subset B_X$ is countable and $\norm{\cdot}_X$-dense, then compactness implies that
    \[d(v,u) = \sum_{j=1}^\infty \frac{1}{2^j}\frac{\abs{x_j[v-u]}}{1+\abs{x_j[v-u]}}\]
    forms a $\sigma(\B,X)$-compatible metric on $B_\B$, and thus $B_\B$ is $\sigma(\B,X)$-separable. As $(\B,\sigma(\B,X))^* = (X,\sigma(X,\B))$, norm density of $D$ in $B_X = X\cap B_{\B^*}$ implies weak-* density in $B_X$, so we are done by \autoref{prop:AdmissibleEmbedding}.
\end{proof}

\subsubsection*{Examples}
Many nonseparable Banach spaces that arise in dynamics are admissible:
\begin{itemize}
    \item For any standard probability space, $L^\infty$ is the dual of $L^1$. It can also be embedded in $L^p$ for any $p\geq 1$, and the $L^\infty$-norm is lsc w.r.t. the $L^p$-norm.
    \item $C^k_{b}(\R)$ with the uniform-norm $f\mapsto \sup_{0\leq i\leq k} \norm{f^{(i)}}_\infty$ embeds into the separable Fr\'{e}chet topology of convergence on compact subsets, where the uniform-norm is lsc.
    \item On a compact metric space $M$, the spaces of Lipschitz and $\alpha$-H\"{o}lder continuous functions (under the standard norms) embed in the space of continuous functions, and their norms are lsc w.r.t. the uniform-norm.
    \item For sufficiently regular bounded domains $\Omega\subseteq \R^d$, $W^{k,\infty}(\Omega)$ and $\BV(\Omega)$ have separable preduals. Moreover, $W^{k,\infty}$ can be embedded into the separable spaces $W^{k,p}$ for $p<\infty$, while $\BV$ can be embedded in $L^p$, with their respective norms being lsc in the respective weak topologies.
\end{itemize}

We shall not prove all these points in detail, many are standard or folklore results. However, since our main motivating example is transfer operators on $\BV$, we shall establish its admissibility as the dual of a separable Banach space and through its embedding in $L^1$.

\subsubsection*{The Space of Functions of Bounded Variation}
We shall focus on the domain $\Omega = [0,1]$, though the following work goes through similarly for other domains of sufficient regularity \cite{ambrosio2000bv}. For a map $u\in L^1([0,1],\Leb)$, define its total variation to be
\[\var(u) \coloneqq \sup_{\substack{\phi\in C^1_c(0,1) \\ \norm{\phi}_\infty \leq 1}} \int_0^1 u(x)\phi'(x)\,dx.\]
Note that this is exactly equal to Buzzi's \cite{Buzzi1999} partition based definition of variation which assigns two a.s. equal maps the same variation. The space of bounded variation functions on $[0,1]$ is the space $\BV[0,1] = \curly{u\in L^1[0,1] : \var(u) < \infty}$, and the norm we assign on $\BV[0,1]$ is $\norm{u}_{\BV} \coloneqq \norm{u}_{L^1} + \var(u)$. This is a nonseparable Banach space.

\begin{lem}[see Remark 3.12 in \cite{ambrosio2000bv}]
    There is a separable Banach space $X$ such that $\BV[0,1]$ is isometrically isomorphic to $X^*$.
\end{lem}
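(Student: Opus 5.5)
The plan is to realize $\BV[0,1]$ as a dual by representing each $u$ through the pair $(u, Du)$, where $Du$ is its distributional derivative. The key observation is that $u\in \BV[0,1]$ exactly when $Du$ is a finite signed Radon measure on $(0,1)$ with $\var(u) = \abs{Du}(0,1)$. Consider the map
\[J:\BV[0,1]\to L^1[0,1]\times \mathcal{M}(0,1),\qquad u\mapsto (u,Du),\]
where $L^1[0,1]\times \mathcal{M}(0,1)$ carries the $\ell^1$-sum norm $\norm{(f,\mu)} = \norm{f}_{L^1}+\abs{\mu}(0,1)$. By the definition of $\norm{\cdot}_{\BV}$ this map is a linear isometry.

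The ambient space is itself a subspace of a dual. Indeed, $L^1 \subset (C_0(0,1))^*$ isometrically (as absolutely continuous measures), and $\mathcal{M}(0,1) = C_0(0,1)^*$ by Riesz. Hence $L^1\times\mathcal{M}\subset Y^*$ with $Y \coloneqq C_0(0,1)\oplus_\infty C_0(0,1)$, which is separable. The dual of an $\infty$-sum is the $1$-sum of the duals, so this embedding is isometric. Now let $Z\coloneqq J(\BV)\subset Y^*$.

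The main step is to show $Z$ is weak-* closed in $Y^*$; once that holds, $Z \cong (Y/\preann{Z})^*$ isometrically, and $X\coloneqq Y/\preann{Z}$ is separable as a quotient of a separable space. By Krein–Smulian, it suffices to check that $Z\cap B_{Y^*}$ is weak-* closed. Since $Y$ is separable, the ball is weak-* metrizable, so it is enough to take a sequence $(u_n,Du_n)$ with $\norm{u_n}_{\BV}\le 1$ converging weak-* to some $(\nu,\mu)$.

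To identify the limit, I would first use the compactness of $\BV\hookrightarrow L^1$ (Helly's selection theorem) to pass to a subsequence with $u_n\to u$ in $L^1$. Then $u_n\,dx\to u\,dx$ weak-* against $C_0$, so $\nu=u\,dx$ and in particular $\nu$ lies in $L^1$. Testing $Du_n$ against $\phi\in C^1_c(0,1)$ gives
\[\int\phi\,d\mu = \lim_n\int \phi\, dDu_n = -\lim_n\int u_n\phi' = -\int u\phi',\]
so $\mu = Du$. Combined with lower semicontinuity of $\var$ under $L^1$ convergence, this shows $u\in\BV$ and $(\nu,\mu)=J(u)\in Z$.

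The one delicate point I expect is the $L^1$ component: the weak-* limit of $L^1$ functions in $\mathcal{M}$ need not lie in $L^1$ in general. Here this is handled precisely by the Helly compactness step, and I would also need to check that the full sequence, not just the subsequence, has this limit; this follows because the weak-* limit is unique. An alternative I would keep in reserve is to take $X$ as the closure in $Y$ of a countable set and invoke the argument of Ambrosio–Fusco–Pallara directly, but the Krein–Smulian route seems cleanest.
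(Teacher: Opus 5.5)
Your argument is correct, but it reaches weak-* closedness of the image by a different route than the paper. Both proofs use the same isometric embedding $u\mapsto(u\,dx,Du)$ into $\Mcal(0,1)^2=(C_0(0,1)\oplus_\infty C_0(0,1))^*$.

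The paper exhibits the image directly as an annihilator. With $E=\{(\phi',\phi):\phi\in C^1_c(0,1)\}$, it shows $\ann{\overline{E}}=\BV[0,1]$. Weak-* closedness is then automatic, since annihilators are always weak-* closed, and the predual comes out explicitly as $C_0(0,1)^2/\overline{E}$. The price is the inclusion $\ann{\overline{E}}\subseteq\BV[0,1]$. This needs the one-dimensional structure fact that a finite measure $\mu_1$ whose distributional derivative is a finite measure $\mu_2$ must equal $(u+c)\,dx$ with $u(x)=\mu_2((0,x))$. That is exactly the ``$L^1$-component'' issue you flag.

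You avoid that fact by using Helly compactness, at the cost of heavier functional-analytic machinery: Krein--Smulian and weak-* metrizability of the dual ball. You also get the predual only abstractly, as $Y/\preann{Z}$. The two preduals in fact coincide: since $\overline{E}$ is norm closed, $\ann{\overline{E}}=Z$ forces $\preann{Z}=\overline{E}$. So the paper's route buys an explicit description of $X$ for free, while yours buys an argument that never has to identify a general annihilating pair.

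A small remark: your concern about the full sequence versus the subsequence is moot. The full sequence converges weak-* to $(\nu,\mu)$ by hypothesis, and the subsequence is used only to identify $(\nu,\mu)$ as $J(u)$.
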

\begin{proof}
    We claim that $\BV[0,1]$ may be identified as a closed subspace of the space of finite $\R^2$-valued Radon measures $(\Mcal(0,1))^2$. Indeed, since $C^1_c(0,1)$ is dense in $C_0(0,1)$ and 
    \[\sup_{\substack{\phi\in C^1_c(0,1) \\ \norm{\phi}_\infty \leq 1}} \innerp{Du,\phi} = \var(u)<\infty,\]
    where $Du$ is the distributional derivative of $u$, it follows that $\phi\mapsto \innerp{Du,\phi}$ has a continuous linear extension to all of $C_0(0,1)$. Since $C_0(0,1)^* = \Mcal(0,1)$, it follows that $Du$ is a finite Radon measure. In particular, this means that the map $u\mapsto (u\, d\Leb, Du)$ from $\BV[0,1]\to (\Mcal(0,1))^2 = (C_0(0,1))^{2*}$ is an isometry when $(C_0(0,1))^{2}$ is equipped with the $\sup$-norm. Thus, $\BV[0,1]$ is a closed subspace of $(\Mcal(0,1))^2$. 

    Take $E = \{(\phi_1,\phi_2)\in C_0(0,1)\times C_c^1(0,1): \phi_1 = \phi_2'\}$ and let $Y$ be the $(C_0(0,1))^2$ closure of $E$. Then $Y$ is a closed linear subspace and it follows immediately that $\ann{Y} \supseteq \BV[0,1]$ under the identification into $(\Mcal(0,1))^2$. If $(\mu_1,\mu_2)\in \ann{Y}$, then by definition we have $D\mu_1 = \mu_2$ in the sense of distributional derivatives. Writing $u(x) = \mu_2(0,x)$, it follows that $d\mu_1 = (u+c)\, d\Leb$ for some constant $c\in \R$ and $u\in \BV[0,1]$. Therefore, $\ann{Y} = \BV[0,1]$. By standard Banach space theory (see, e.g., theorem V.2.2 in \cite{ConwayFunctionalAnalysis}), it follows that $((C_0(0,1))^2/Y)^*$ is isometrically isomorphic to $\BV[0,1]$. Separability of $X \coloneqq (C_0(0,1))^2/Y$ follows from separability of $C_0(0,1)$ and the fact that quotients of separable Banach spaces are separable Banach spaces.
\end{proof}

We shall write $[\phi_1,\phi_2]$ for the equivalence class in $X$ containing $(\phi_1,\phi_2)\in (C_0(0,1))^2$. The induced norm on $X$ is 
\[\norm{[\phi_1,\phi_2]}_X = \inf_{(\psi_1,\psi_2)\in Y} \max\paren{\norm{\phi_1-\psi_1}_\infty,\norm{\phi_2-\psi_2}_\infty}.\]

\begin{prop}\label{prop:admissibilityOfBV}
    The space $\BV[0,1]$ is admissible. If $D\subseteq B_X$ is countable and norm dense, then $(\sigma(BV[0,1],X); D)$ is an admission ticket. Additionally, if $D\subseteq L^\infty\cap B_{\BV[0,1]^*}$ is countable and $\sigma(L^\infty,L^1)$-dense, then $(\norm{\cdot}_{L^1};D)$ is an admission ticket.
\end{prop}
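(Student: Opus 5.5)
The proposition has two assertions, and I would prove them separately.

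The first admission ticket comes from \autoref{cor:dualOfSepIsAdmissible}. By the preceding lemma, $\BV[0,1]$ is isometrically isomorphic to $X^*$ with $X=(C_0(0,1))^2/Y$ separable. Any countable norm-dense $D\subseteq B_X$, viewed inside $\BV[0,1]^*=X^{**}$ through the evaluation map, therefore gives the admissions ticket $(\sigma(\BV[0,1],X);D)$. This also shows that $\BV[0,1]$ is admissible, so nothing more is needed for the first part.

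For the second ticket I would work through \autoref{prop:AdmissibleEmbedding} with $\tau$ the $L^1$-norm topology on $\BV[0,1]$. This topology is coarser than the $\BV$ norm topology because $\norm{u}_{L^1}\le\norm{u}_{\BV}$, and it is metrizable and locally convex. I then need to check three things.
\begin{enumerate}
\item \emph{$B_{\BV}$ is $L^1$-closed.} Equivalently, by \autoref{remark:lowersemicontinuityOfNorm}, $\norm{\cdot}_{\BV}$ is $L^1$-lower semicontinuous. The $L^1$ part is continuous. For a fixed $\phi\in C^1_c(0,1)$, the map $u\mapsto\int u\phi'$ is $L^1$-continuous, so $\var$ is a supremum of continuous functionals and hence lower semicontinuous.
\item \emph{$B_{\BV}$ is $L^1$-separable.} This holds because it is a subset of the separable metric space $L^1[0,1]$.
\item \emph{The dual pairing.} The dual $(\BV,\tau)^*$ consists of the functionals $u\mapsto\int ug$ with $g\in L^\infty$. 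This follows by Hahn--Banach extension to $L^1$, together with the fact that $\BV$ contains the step functions and so is $L^1$-dense, which makes the extension unique.
\end{enumerate}
Thus $(\BV,\tau)^*\cap B_{\BV^*}=L^\infty\cap B_{\BV^*}$. The weak topology $\sigma((\BV,\tau)^*,\BV)$ on this set is the topology of pointwise convergence against $u\in\BV$.

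The remaining issue is that the hypothesis gives $\sigma(L^\infty,L^1)$-density, whereas \autoref{prop:AdmissibleEmbedding} asks for $\sigma((\BV,\tau)^*,\BV)$-density. Since $\BV\subseteq L^1$, the topology $\sigma(L^\infty,L^1)$ is finer than $\sigma(L^\infty,\BV)$ on $L^\infty$. So $\sigma(L^\infty,L^1)$-density of $D$ in $L^\infty\cap B_{\BV^*}$ implies $\sigma(L^\infty,\BV)$-density in the same set, and \autoref{prop:AdmissibleEmbedding} yields the ticket $(\norm{\cdot}_{L^1};D)$.

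The existence of some such $D$ is also covered by the metrizable clause of \autoref{prop:AdmissibleEmbedding}. Density is to be read relative to $L^\infty\cap B_{\BV^*}$, since the ticket requires $D\subseteq B_{\BV^*}$, and I would state this interpretation explicitly.

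I expect the main obstacle to be the identification $(\BV,\norm{\cdot}_{L^1})^*=L^\infty$, specifically the uniqueness of the $L^1$-extension of a functional. The argument rests on $L^1$-density of $\BV$ in $L^1$, which I would cite via step functions or $C^1$ functions. Everything else is a direct application of earlier results.
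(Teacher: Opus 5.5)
Your proof is correct and follows the paper's route. The first ticket comes from \autoref{cor:dualOfSepIsAdmissible}. For the second you apply \autoref{prop:AdmissibleEmbedding} directly rather than citing \autoref{cor:admissibleViaSepBanachEmbedding}, which makes explicit the identification $(\BV[0,1],\norm{\cdot}_{L^1})^*=L^\infty$ and the comparison $\sigma(L^\infty,\BV[0,1])\subseteq\sigma(L^\infty,L^1)$ that the corollary leaves implicit. The one local difference is that you obtain $L^1$-closedness of $B_{\BV[0,1]}$ from lower semicontinuity of the total variation, as a supremum of the $L^1$-continuous functionals $u\mapsto\int_0^1 u\phi'$, whereas the paper invokes Helly's selection theorem; your argument is more elementary and suffices, since compactness of the ball is never used.
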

\begin{proof}
    The first two assertions follow immediately from \autoref{cor:dualOfSepIsAdmissible}. For the last, notice that Helly's Selection Theorem implies $B_{\BV[0,1]}$ is $\norm{\cdot}_{L^1}$-closed (and compact, even). Then, by \autoref{cor:admissibleViaSepBanachEmbedding}, the assertion follows.
\end{proof}
\begin{remark}
    Notice that $L^\infty[0,1]$ is isometrically identified with $(\BV[0,1],\norm{\cdot}_{L^1})^*$ via the $L^2$-pairing since $\BV[0,1]$ is $\norm{\cdot}_{L^1}$-dense in $L^1[0,1]$.
\end{remark}

\subsection{Random Lasota-Yorke Maps}\label{subsec:randomLYmaps}
Let $(\Omega,\F,\p)$ be a probability space, and let $\sigma:\Omega\to \Omega$ be an invertible measure preserving transformation; for the sake of simplicity, we shall assume $\sigma$ is ergodic. We will consider the random dynamical system $f:\Omega\times [0,1] \to [0,1]$, writing $f^n_{\omega} = f_{\sigma^{n-1}\omega}\circ \cdots \circ f_\omega$. Following Buzzi, we require that the map $(\omega,x)\mapsto f_\omega(x)$ is $(\F\otimes \Bor([0,1]))-\Bor([0,1])$-measurable and assume the following conditions on $f$:
\begin{enumerate}[label=(\roman*)]
    \item \textit{piecewise monotonic}: there are measurable maps $\Nt:\Omega\to\N$ and $(a_i)_{0\leq i\leq \Nt}$ such that, for a.e. $\omega$, $0 = a_0<a_1<\cdots<a_{\Nt} = 1$ and $f_\omega|_{(a_{i-1}(\omega),a_{i}(\omega))}$ is a homeomorphism onto its image for $1\leq i \leq \Nt(\omega)$;
    \item \textit{non-singular}: for a.e. $\omega$ and $1\leq i \leq \Nt(\omega)$,  $f_\omega|_{(a_{i-1}(\omega),a_{i}(\omega))}$ is absolutely continuous;
    \item \textit{covering}: for each nontrivial subinterval $I\subset [0,1]$, $\p$-a.e. $\omega\in\Omega$, there is $n_c = n_c(\omega,I) \in \N$ such that 
    \[\essinf_{x\in [0,1]}(\Lcal^n_\omega \indic_{I})>0 \text{ for all } n\geq n_c,\]
    where $\Lcal_\omega^n$ is the Perron-Frobenius operator (a.k.a. transfer operator) associated to $f_\omega^n$;
    \item $\displaystyle \lim_{K\to\infty} \int_{\Omega} \log \min\paren{\essinf_{x\in [0,1]} \abs{f'_\omega(x)},K}\, d \p(\omega) > 0$;
    \item both $\omega\mapsto\log^+\bracket{\Nt(\omega)/(\essinf_{x\in [0,1]} \abs{f'_\omega(x)})}$ and $\omega\mapsto\log^+ \var(1/\abs{f'_\omega})$ are integrable.
\end{enumerate}
\begin{remark}
    For a more detailed discussion of these conditions, we point to \cite{Buzzi1999}. A few comments: (1) Discarding $\omega$, any deterministic map $f$ satisfying (i) and (ii) is called a piecewise monotonic and non-singular (p.m.n.s.) map; (2) Buzzi explicitly asks for the measurability of $\essinf_x \abs{f'_\omega(x)}$ and $\var(1/\abs{f'_\omega})$, but this is immediate from joint measurability of $f$ and our equivalent reformulation of $\var$; (3) Transfer operators associated to RDS satisfying (i)-(v) are \textit{good transfer operators} as defined in \cite{Buzzi1999}.
\end{remark}

Items (i)-(v) define the notion of a \textit{random Lasota-Yorke} map on $[0,1]$. The transfer operator associated to $f^n_\omega$ is defined by
\[u\mapsto \Lcal^n_\omega u(x) = \sum_{y\in (f^n_\omega)^{-1}(x)} \frac{u(y)}{\abs{(f_\omega^n)'(y)}}.\]
Notice that $\Lcal^n_\omega = \Lcal_{\sigma^{n-1}\omega}\cdots \Lcal_\omega$. Moreover, the transfer operator is dual to the Koopman operator on $L^\infty[0,1]$.

\subsubsection*{Measurability of Transfer Operators}
\begin{lem}\label{lem:dualityPairingMeasurability_TransferOps}
    For all $n\geq 1$, the map $\omega\mapsto \Lcal_\omega^n$ is $(\BV[0,1],L^\infty[0,1])$ and $(\BV[0,1],X)$-measurable.
\end{lem}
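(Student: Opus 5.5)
The plan is to first prove $(\BV[0,1],L^\infty[0,1])$-measurability, i.e.\ that for fixed $u\in\BV[0,1]$, $g\in L^\infty[0,1]$ and $n\geq 1$ the map
\[\omega\mapsto \innerp{\Lcal^n_\omega u,g} = \int_0^1 \Lcal^n_\omega u(x)\,g(x)\,dx\]
is Borel measurable. I will use the duality between the transfer operator and the Koopman operator:
\[\int_0^1 \Lcal^n_\omega u\cdot g\,dx = \int_0^1 u(x)\, g(f^n_\omega(x))\,dx.\]
This identity holds for a.e.\ $\omega$ because $f^n_\omega$ is p.m.n.s.\ (conditions (i)--(ii)), which makes $g\circ f^n_\omega$ a well-defined element of $L^\infty$. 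Since $f$ is $(\F\otimes\Bor([0,1]))$-$\Bor([0,1])$-measurable, I will show by induction on $n$ that $(\omega,x)\mapsto f^n_\omega(x)=f(\sigma^{n-1}\omega,f^{n-1}_\omega(x))$ is jointly measurable: it is the composition of the jointly measurable map $f$ with the measurable map $(\omega,x)\mapsto(\sigma^{n-1}\omega,f^{n-1}_\omega(x))$.

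The main subtlety is that $g$ is only an a.e.\ equivalence class, so I cannot simply compose a Borel representative $\tilde g$ with $f^n_\omega$ and assume the result is independent of the representative. Nonsingularity handles this. For a.e.\ $\omega$, the pushforward of Lebesgue measure under $f^n_\omega$ is absolutely continuous (its density is $\Lcal^n_\omega\indic$), so changing $\tilde g$ on a null set changes $\tilde g\circ f^n_\omega$ only on a null set. After fixing a bounded Borel representative $\tilde g$, the integrand $(\omega,x)\mapsto u(x)\tilde g(f^n_\omega(x))$ is jointly measurable and dominated by $\norm{g}_\infty\abs{u(x)}\in L^1(dx)$. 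Tonelli/Fubini then shows that $\omega\mapsto\int u\,\tilde g\circ f^n_\omega\,dx$ is measurable, and by the identity above it agrees a.e.\ with $\omega\mapsto\innerp{\Lcal^n_\omega u,g}$. The exceptional null set is harmless if $\F$ is complete; otherwise I will redefine the map on that set, which is permitted since measurability statements are modulo null sets in the framework of condition $\measurable_\sigma$.

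For $(\BV[0,1],X)$-measurability, I take $[\phi_1,\phi_2]\in X$ and use the isometric identification $u\mapsto(u\,d\Leb,Du)$ to write
\[\innerp{\Lcal^n_\omega u,[\phi_1,\phi_2]} = \int_0^1 \Lcal^n_\omega u\,\phi_1\,dx + \int_0^1 \phi_2\, d(D\Lcal^n_\omega u).\]
The first term is covered by the $L^\infty$ case, since $\phi_1\in C_0\subset L^\infty$. For the second term I approximate $\phi_2\in C_0(0,1)$ uniformly by $\psi_k\in C^1_c(0,1)$. Then
\[\int\psi_k\,d(D\Lcal^n_\omega u) = -\int_0^1 \Lcal^n_\omega u\,\psi_k'\,dx,\]
which is measurable in $\omega$ by the $L^\infty$ case. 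Its difference from the $\phi_2$-term is at most $\norm{\phi_2-\psi_k}_\infty\var(\Lcal^n_\omega u)$. Here $\var(\Lcal^n_\omega u)<\infty$ a.s.\ because $\Lcal^n_\omega$ is bounded on $\BV$ for good transfer operators, so the measurable approximants converge pointwise a.s.\ and the limit is measurable. An alternative that avoids the approximation is to note that every element of $X$ induces a weak-* continuous functional on $\BV$, hence lies in the norm closure of $\vspan$ of functionals of the form $u\mapsto\int u\,\phi$ and $u\mapsto\int u\,\psi'$, and pointwise limits of measurable functions are measurable.

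I expect the main obstacle to be the well-definedness of $g\circ f^n_\omega$ and the joint-measurability bookkeeping. The sum defining $\Lcal^n_\omega$ over preimages is not obviously jointly measurable, and routing the argument through the Koopman duality and nonsingularity is what circumvents this.
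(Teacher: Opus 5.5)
Your proposal is correct and follows the same route as the paper. For the $L^\infty$ pairing you use Koopman duality, $\innerp{\Lcal^n_\omega u,g}=\int_0^1 u\,(g\circ f^n_\omega)\,dx$, together with joint measurability of $f$; for the $X$ pairing you use the integration-by-parts identity for $\phi_2\in C^1_c(0,1)$ and then approximate a general $\phi_2\in C_0(0,1)$ uniformly. The extra bookkeeping you supply fills in details the paper leaves implicit without changing the argument: the induction for joint measurability of $f^n$, the bounded Borel representative of $g$ with nonsingularity, Tonelli, finiteness of the total variation of $\Lcal^n_\omega u$ for pointwise convergence, and the treatment of the exceptional null set.
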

\begin{proof}
    Take $u\in \BV[0,1]$ and $g\in L^\infty[0,1]$. Then, by duality with the Koopman operator,
    \[\innerp{\Lcal_\omega^n u,g} = \int_0^1 u(x)g(f^n_\omega(x))\,dx.\]
    Since $f$ is jointly measurable, it follows that the above is a measurable map of $\omega$. Now, take $[\phi_1,\phi_2]\in X$. Suppose first $\phi_2\in C_c^1(0,1)$. Then
    \[\innerp{\Lcal_\omega^n u, [\phi_1,\phi_2]} = \int_0^1 \Lcal^n_\omega u(x) \phi_1(x)\, dx + \int_0^1 \phi_2(x)\,dD(\Lcal^n_\omega u)(x) = \int_0^1 u(x)(\phi_1(f_\omega^n(x)) - \phi_2'(f_\omega^n(x)))\,dx.\]
    As before, this is a measurable map of $\omega$. If $\phi_2\notin C_c^1(0,1)$, there is a sequence of maps in $C_c^1(0,1)$ that converges in the uniform-norm to $\phi_2$. This gives us a sequence of measurable maps that converges pointwise to $\innerp{\Lcal_\omega^n u, [\phi_1,\phi_2]}$, so we are done.
\end{proof}

\begin{lem}\label{lem:weakContOfTransferOps}
    For a.e. $\omega\in \Omega$, $\Lcal_\omega\in L(\BV[0,1],\norm{\cdot}_{L^1})$ and $\Lcal_\omega\in L(\BV[0,1],\sigma(\BV[0,1],X))$.
\end{lem}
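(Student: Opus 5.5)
The statement says two things about $\Lcal_\omega$ restricted to the unit ball $B_{\BV}$ (this is the meaning of $BL_\tau$ in condition (M)(ii)). First, it should be continuous from the $L^1$-subspace topology into $L^1$. Second, it should be continuous from the weak-* topology $\sigma(\BV[0,1],X)$ into the same weak-* topology. I will fix $\omega$ in the full-measure set where (i)--(v) hold, so that $f_\omega$ is p.m.n.s. with finitely many branches $\Nt(\omega)$ and $\var(1/\abs{f_\omega'})<\infty$. The argument is deterministic after that.

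For the $L^1$ statement, I will use the standard fact that the transfer operator of a nonsingular map is an $L^1$-contraction:
\[\norm{\Lcal_\omega u}_{L^1}\le \norm{u}_{L^1},\]
which follows from duality with the Koopman operator and $\norm{g\circ f_\omega}_\infty\le\norm{g}_\infty$. Hence $\Lcal_\omega$ extends to a bounded operator on $L^1$. Its restriction to $\BV$ is then norm-continuous for $\norm{\cdot}_{L^1}$ on all of $\BV$, and in particular on $B_{\BV}$. I also need $\Lcal_\omega\BV\subseteq \BV$ with a Lasota--Yorke-type bound $\norm{\Lcal_\omega u}_{\BV}\le C_\omega\norm{u}_{\BV}$. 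This holds because $\Nt(\omega)<\infty$ and $\var(1/\abs{f_\omega'})<\infty$, as in Buzzi \cite{Buzzi1999}, and I will simply cite it. This finishes the first claim.

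For the weak-* statement, I will reduce to the first claim using compactness. By Helly's theorem, on the norm-bounded set $B_{\BV}$ the weak-* topology $\sigma(\BV,X)$ coincides with the $L^1$-norm topology. Indeed, $B_{\BV}$ is compact in $L^1$ and also $\sigma(\BV,X)$-compact by Banach--Alaoglu. The identity map from $(B_{\BV},\sigma(\BV,X))$ to $(B_{\BV},L^1)$ is continuous, because weak-* convergence of bounded nets implies $L^1$ convergence via the pairing with $[\phi_1,0]$ and compactness. A continuous bijection from a compact space onto a Hausdorff space is a homeomorphism. This step is the main obstacle; the point to verify is that weak-* convergence plus boundedness in $\BV$ gives $L^1$ convergence, which is where Helly/compactness is used.

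Now take a bounded net $u_\alpha\to u$ weak-* in $B_{\BV}$. Then $u_\alpha\to u$ in $L^1$, so $\Lcal_\omega u_\alpha\to \Lcal_\omega u$ in $L^1$ by the first part. The images stay in the ball of radius $C_\omega$, and on that ball the $L^1$ and weak-* topologies again agree by the same compactness argument after rescaling. Therefore $\Lcal_\omega u_\alpha\to \Lcal_\omega u$ weak-*, which gives continuity of $\Lcal_\omega|_{B_{\BV}}$ into $(\BV,\sigma(\BV,X))$. As a final check, the pairing $\innerp{\Lcal_\omega u,[\phi_1,\phi_2]}$ computed in \autoref{lem:dualityPairingMeasurability_TransferOps} is consistent with this conclusion: for $\phi_2\in C^1_c$ it is an integral of $u$ against a bounded function, so it is $L^1$-continuous in $u$, and density in $X$ together with the uniform bound $C_\omega$ extends this to all of $X$.
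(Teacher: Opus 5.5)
Your argument is correct for continuity on the unit ball, and it reaches that by a different route from the paper's. The paper works with sequences. By Proposition 3.13 of \cite{ambrosio2000bv}, weak-* convergent sequences in $\BV[0,1]$ are exactly the $L^1$-convergent sequences with bounded variation. So $u_n\to u$ weak-* gives $\Lcal_\omega u_n\to\Lcal_\omega u$ in $L^1$ with $\sup_n\norm{\Lcal_\omega u_n}_{\BV}<\infty$, hence weak-*. The paper then cites Corollary V.12.8 of \cite{ConwayFunctionalAnalysis}, a Krein--\v{S}mulian consequence for separable preduals, to upgrade sequential weak-* continuity to weak-* continuity on all of $\BV[0,1]$. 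You instead use Banach--Alaoglu, Helly, and the compact-to-Hausdorff homeomorphism principle to show that $\sigma(\BV[0,1],X)$ and the $L^1$ topology coincide on every norm ball. Continuity on $B_{\BV}$ then follows from $L^1$-continuity and the $\BV$ bound. Your route is elementary, handles nets directly, needs no Krein--\v{S}mulian, and gives exactly what condition (M)(ii) requires. It also shows that the two admissions tickets in \autoref{prop:admissibilityOfBV} induce the same topology on $B_{\BV}$.

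The gap is in how you read the statement. In the paper's conventions, $L(V_1,V_2)$ means linear maps continuous on the whole space. So the second assertion is global weak-* continuity, not membership in $BL_\tau$. The global version is used later: in the proof of \autoref{thm:MET_applied_to_transferOps} the paper deduces $\Lcal_\omega^*X\subseteq X$ from it. Your argument stops at continuity on $B_{\BV}$, and the remaining step is short. For each $x\in X$, the functional $u\mapsto\innerp{\Lcal_\omega u,x}$ on $X^*\cong\BV[0,1]$ is weak-* continuous on the unit ball by what you proved. By the Banach--Dieudonn\'e (Krein--\v{S}mulian) theorem it is then weak-* continuous on all of $X^*$, so $\Lcal_\omega^*x\in X$. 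The weak-* topology on the target is the initial topology of these pairings, so $\Lcal_\omega$ is globally weak-* continuous. Add that step and your proof covers the full lemma.
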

\begin{proof}
    Let $\omega\in \Omega$. That $\Lcal_\omega\in L(\BV[0,1],\norm{\cdot}_{L^1})$ is immediate from duality with the Koopman operator. For the second claim, note that $\sigma(\BV[0,1],X)$-convergence of a sequence is equivalent to $\norm{\cdot}_{L^1}$-convergence and boundedness of the total variation (see Proposition 3.13 in \cite{ambrosio2000bv}). So, if $u_n$ is a sequence that weakly-* converges to $u$, then it converges in $L^1$ to $u$. Thus, $\Lcal_\omega u_n\to \Lcal_\omega u$ in $L^1$. Moreover, $\sup_{n\geq 0} \norm{u_n}_{\BV} = M <\infty$, so $\sup_{n\geq 0} \norm{\Lcal_\omega u_n}_{\BV} \leq \norm{\Lcal_\omega}_{\BV} M < \infty$. This proves that $\Lcal_\omega$ is sequentially weak-* continuous. For the weak-* topology on the dual of a separable Banach space, sequential weak-* continuity and full weak-* continuity of a linear operator are equivalent (see Corollary V.12.8 of \cite{ConwayFunctionalAnalysis}), so we are done.
\end{proof}

\begin{prop}
    The map $\Lcal:\Omega\to L(\BV[0,1])$ satisfies condition \hyperref[ass:measurability]{$\measurable_\sigma$}. If $D\subset B_X$ is countable and $\norm{\cdot}_X$-dense, then $(\sigma(\BV[0,1]);D)\in \Mcal_{\sigma,\Lcal}$. Similarly, if $D\subset L^\infty\cap B_{\BV[0,1]^*}$ is countable and $\sigma(\norm{\cdot}_{L^\infty},\norm{\cdot}_{L^1})$-dense, then $(\norm{\cdot}_{L^1};D)\in \Mcal_{\sigma,\Lcal}$.
\end{prop}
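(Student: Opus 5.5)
The plan is to verify the three items of condition \hyperref[ass:measurability]{$\measurable_\sigma$} in turn, once for each of the two proposed pairs $(\tau;D)$. Both pairs are already admissions tickets for $\BV[0,1]$ by \autoref{prop:admissibilityOfBV}. That proposition gives item (i): $B_{\BV}$ is $\tau$-closed and $\tau$-separable, for $\tau=\sigma(\BV[0,1],X)$ and for $\tau=\norm{\cdot}_{L^1}$. It also gives item (iii)(a): $D\subset BL_\tau(\R)$ is countable and weak-* dense in $B_{\BV^*}$, which is exactly the statement $\tau|_{B_{\BV}}$ is finer than $\sigma(B_{\BV},D)$. So what remains is item (ii), $\tau$-continuity of $\Lcal_\omega$ on the unit ball, and item (iii)(b), $(\BV,D)$-measurability of every iterate $\omega\mapsto\Lcal^n_\omega$.

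For (iii)(b) I will invoke \autoref{lem:dualityPairingMeasurability_TransferOps}, which gives $(\BV[0,1],X)$- and $(\BV[0,1],L^\infty)$-measurability for all $n\geq 1$. In the weak-* ticket, $D\subset B_X\subset X$, viewed inside $\BV^*=X^{**}$ via $\Ev$. In the $L^1$ ticket, $D\subset L^\infty$, acting through the $L^2$-pairing. Measurability on the larger sets immediately gives it on $D$.

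For (ii) I will use \autoref{lem:weakContOfTransferOps}. For a.e. $\omega$, $\Lcal_\omega$ is continuous from $(\BV,\norm{\cdot}_{L^1})$ to $(\BV,\norm{\cdot}_{L^1})$; I read the lemma's notation $L(\BV,\norm{\cdot}_{L^1})$ as continuity with the $L^1$ topology on both sides, which is what the duality with the Koopman operator gives. The same lemma gives weak-* continuity of $\Lcal_\omega$ from $\BV$ to $\BV$. Restricting either continuous map to $B_{\BV}$ gives $\Lcal_\omega|_{B_{\BV}}\in BL_\tau(\BV,\tau)$ for the corresponding $\tau$.

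The main obstacle is checking that the topologies match exactly. For the $L^1$ ticket, $\tau$ must be a vector space topology on $\BV$ coarser than the $\BV$-norm topology; this holds because $\norm{\cdot}_{L^1}\le\norm{\cdot}_{\BV}$. For the weak-* ticket, the functionals in $D$ must be $\tau$-continuous, and they are, since elements of $X$ are by definition $\sigma(\BV,X)$-continuous. I would also note explicitly that the a.e. qualifier in \autoref{lem:weakContOfTransferOps} is precisely what item (ii) permits.
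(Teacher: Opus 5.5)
Your proposal is correct and matches the paper's proof. The paper says only that the proposition follows immediately from \autoref{prop:admissibilityOfBV}, \autoref{lem:dualityPairingMeasurability_TransferOps}, and \autoref{lem:weakContOfTransferOps}, and you have correctly identified which item of condition $\measurable_\sigma$ each of these results supplies for each of the two tickets.
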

\begin{proof}
    This follows immediately from \autoref{prop:admissibilityOfBV}, \autoref{lem:dualityPairingMeasurability_TransferOps}, and \autoref{lem:weakContOfTransferOps}.
\end{proof}

\begin{remark}
    It is worth noting that because $\BV[0,1]$ has a separable predual $X$ for which the transfer operators $\Lcal_\omega$ are weak-* continuous, it follows that the transfer operators admit a bounded preadjoint, i.e., a linear operator $T_\omega\in L(X)$ such that $T^*_\omega = \Lcal_\omega$ (in fact, $T_\omega = \Lcal_\omega^*|_{X}$). This opens up another potential approach to studying the Oseledets structure by applying the MET to the cocycle generated by $\omega\mapsto T_{\sigma^{-1}\omega}$ w.r.t. $\sigma^{-1}$ on $X$, as $\omega\mapsto T_\omega$ is indeed strongly measurable.

    To the best of our knowledge, this predual route has not been used to obtain a multiplicative ergodic theorem interpretation of \cite{Buzzi1999}. Nevertheless, we do not pursue these possibilities here, as we intend to show that the multiplicative ergodic theorem can be applied directly to transfer operator cocycles on $\BV[0,1]$. Moreover, our measurability framework does not require that our Banach space admit a predual, nor that the cocycle be weak-* continuous w.r.t. such a predual.
\end{remark}

\subsubsection*{Lyapunov Spectrum and Oseledets Spaces}
\begin{thm}\label{thm:MET_applied_to_transferOps}
    The operator cocycle $\omega\mapsto \Lcal_\omega$ on $\BV[0,1]$ is quasicompact. Take the notations of \autoref{thm:non-invMET} and \autoref{thm:semi-invMET} applied to the random transfer operator cocycle. Then we have $0 = \lambda_1 > \cdots > l_\alpha$ and that $\lambda_1$ is simple, i.e., $m_1 = 1$. For $1\leq s < r+1$, the map
    \[E_s:\Omega\to \G_{M_s}(\BV[0,1])\subset \G_{M_s}(L^1[0,1])\]
    is strongly-measurable under the $L^1[0,1]$-gap-topology. Moreover, for $2\leq s < r+2$ (where we write $\lambda_{r+1} = l_\alpha$ if $r<\infty$), the map
    \[F_{\lambda_s}:\Omega\to \G^{M_{s-1}}(\BV[0,1])\]
    is strongly-measurable under the $\BV[0,1]$-gap-topology, and $F_{\lambda_{s}}$ is almost surely $\sigma(\BV[0,1],X)$-closed. We also have the following:
    \begin{enumerate}[label=(\arabic*)]
        \item If $h \in L^1(\Omega\times [0,1])$ denotes the unique invariant density of the random dynamical system induced by $f$, then $E_1(\omega) = \innerp{h_\omega}$ almost surely.
        \item For a.e. $\omega\in \Omega$, $F_{\lambda_2}(\omega) = \BV_0[0,1] \coloneqq \curly{u\in \BV[0,1] : \int_0^1 u(x) \, dx = 0}$.
    \end{enumerate}
\end{thm}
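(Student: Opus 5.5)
The plan is to combine the semi-invertible MET (\autoref{thm:semi-invMET}) with Buzzi's spectral information on good transfer operators. First, the measurability condition was verified in the preceding proposition with admission tickets $(\sigma(\BV[0,1],X);D)$ and $(\norm{\cdot}_{L^1};D')$. Integrability $\log^+\norm{\Lcal_\omega}_{\BV}\in L^1$ follows from the Lasota--Yorke inequality $\var(\Lcal_\omega u)\le C_\omega \var(u) + K_\omega\norm{u}_{L^1}$. Here $\log^+ C_\omega,\log^+K_\omega$ are controlled by condition (v) and $\Nt$. So \autoref{thm:non-invMET} and \autoref{thm:semi-invMET} apply. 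Quasicompactness should come from \eqref{eq:MNC_for_MET}. Iterating the Lasota--Yorke inequality gives $\Lcal^n_\omega = $ (contraction of norm $\lesssim \prod C_{\sigma^k\omega}$ in variation) $+$ (a term factoring through $L^1$, compact from $\BV$ by Helly). Hence $\norm{\Lcal^n_\omega}_\alpha\le 2\prod_{k<n}C_{\sigma^k\omega}$, and by Birkhoff and condition (iv) (after passing to a power so that the contraction constant is $\sim 2/\essinf|f'|$), $l_\alpha<0$.

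Next I identify $\lambda_1=0$. The bound $\norm{\Lcal^n_\omega u}_{L^1}\le\norm{u}_{L^1}$ together with the iterated Lasota--Yorke bound gives $\sup_n\norm{\Lcal^n_\omega}_{\BV}$ subexponential, so $\lambda_1\le 0$. Since $\int\Lcal^n_\omega \indic=1$, we get $\norm{\Lcal^n_\omega\indic}_{\BV}\ge1$, so $\lambda_1\ge0>l_\alpha$, which gives quasicompactness independently. For item (2), observe that $\BV_0$ is $\Lcal_\omega$-invariant. Buzzi's exponential decay of correlations (which uses covering (iii)) gives $\norm{\Lcal^n_\omega u}_{\BV}\le C_\omega e^{-\gamma n}\norm{u}$ for $u\in\BV_0$. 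By \autoref{remark:limsupDefnOfSlowSpaces}, $\BV_0\subseteq F_{\lambda_2}(\omega)$. Both spaces are closed of codimension one ($\indic\notin F_{\lambda_2}$), so they are equal, and $m_1=1$. For item (1), Buzzi's invariant density $h_\omega\in\BV$ satisfies $\Lcal_\omega h_\omega=h_{\sigma\omega}$ and $\int h_\omega=1$, and it is tempered. Hence $\innerp{h_\omega}$ is an equivariant line with $\lambda=0$, and the backward growth in \eqref{eq:uniqueSplittingGrowthCharacterization} holds since $\norm{h}$ is tempered. The uniqueness statement in \autoref{thm:refined_sequentialMET_twoSided} then gives $E_1=U_1=\innerp{h_\omega}$.

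For the measurability claims, I use \autoref{thm:non-invMET}(2). With the $\BV$-norm, $\omega\mapsto d_{\BV}(v,F_{\lambda_s}(\omega))$ is measurable. To upgrade to strong measurability into the $\BV$-Grassmannian, I note that each $F_{\lambda_s}(\omega)$ is the preannihilator of the $M_{s-1}$-dimensional space $\ann{F_{\lambda_s}(\omega)}$. That space is the gap-limit of $(\Lcal^n_\omega)^*L_{n,s-1}(\omega)$, with $L_{n,s-1}$ strongly measurable in the separable space $G=\overline{\vspan D}\subseteq X$. Consequently $\ann{F_{\lambda_s}}$ lies in the norm-closed separable space $X$ (by \autoref{appLem:closednessOfBLtau} and item (4)), and it is a pointwise limit of strongly measurable finite-dimensional-subspace-valued maps in a separable space. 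By \autoref{lem:equivOfGap_and_Hausdorff}(2), $F\mapsto\ann{F}$ is a gap isometry, which transfers strong measurability to $F_{\lambda_s}$. Weak-* closedness of $F_{\lambda_s}$ comes from \autoref{thm:non-invMET}(4) with $\tau=\sigma(\BV,X)$ and Krein--Smulian. For $E_s$, the approximants $E_{n,s}(\omega)=\Lcal^n_{\sigma^{-n}\omega}W_{n,s}$ are spanned by vectors that are strongly measurable into $\BV$ viewed in $L^1$ (use the ticket $(\norm{\cdot}_{L^1};D')$, so the selections are $L^1$-separably valued). $E_{n,s}\to E_s$ in the $\BV$-gap, hence in the $L^1$-gap, because the $\BV$ and $L^1$ norms are equivalent on finite-dimensional spaces with uniformly controlled constants along the convergent sequence. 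The span map is continuous by Corollary 5.5 of \cite{LianLu2010}.

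The main obstacle I expect is the quasicompactness/$l_\alpha<0$ step and the identification $F_{\lambda_2}=\BV_0$. Both require importing Buzzi's quenched Lasota--Yorke and decay estimates with the right temperedness of constants; I would try to cite Buzzi's Theorem on good transfer operators directly rather than reprove them.
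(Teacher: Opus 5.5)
Your proposal is correct in outline. On the identification $E_1=\innerp{h_\omega}$ and on the measurability clauses it follows the paper's proof; the genuine difference is how you obtain quasicompactness.

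\textbf{The paper's route.} The paper never uses the measure of noncompactness here.
\begin{enumerate}
\item It gets $\chi_1\le 0$ from Buzzi's Lemma 2.1, $\sup_n\norm{\Lcal^n_{\sigma^{-n}\omega}}_{\BV}<\infty$, read through the backward limits of \autoref{assump:twoSidedSequences}.
\item It imports the vector-wise decay $\limsup_n\frac{1}{n}\log\norm{\Lcal^n_\omega u}_{\BV}\le\log\eta<0$ for $u\in\BV_0[0,1]$. This is equation (30) of Dragi\v{c}evi\'c--Sedro, not a statement taken directly from Buzzi.
\item It upgrades this, by the uniform boundedness principle, to $\norm{\Lcal^n_\omega|_{\BV_0[0,1]}}\le M_\varepsilon(\omega)\eta^{(1-\varepsilon)n}$. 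Since $\BV_0[0,1]$ is closed of codimension one, this bounds $c_2(\Lcal^n_\omega)$ and gives $\chi_2\le\log\eta<0=\chi_1$.
\end{enumerate}
That yields quasicompactness and simplicity at once, and $F_{\lambda_2}=\BV_0[0,1]$ follows.

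\textbf{Your route.} You get $l_\alpha<0$ from \eqref{eq:MNC_for_MET} together with a Hennion-type bound of $\norm{\Lcal^n_\omega}_\alpha$ by the contraction coefficient of a Lasota--Yorke inequality. You then need the decay on $\BV_0[0,1]$ only vector by vector, to identify $F_{\lambda_2}$ by a codimension count.

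\textbf{What each buys.} Your route shows that quasicompactness follows from the expansion condition (iv) alone, without the covering condition (iii). It also gives an explicit upper bound on $l_\alpha$ in terms of the mean logarithmic expansion, which the paper's argument does not. The paper's route is more economical: one imported decay estimate plus Buzzi's Lemma 2.1 does everything.

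Your route has two costs:
\begin{itemize}
\item A quenched Lasota--Yorke inequality for the iterates $f^n_\omega$ themselves, with contraction coefficient of order $\norm{1/(f^n_\omega)'}_\infty$. A one-step coefficient of size $2\norm{1/f'_\omega}_\infty$ need not have negative logarithmic mean under (iv), which is why you must pass to iterates.
\item For your forward proof of $\lambda_1\le 0$, tempered additive constants. This is exactly what Buzzi's Lemma 2.1 packages.
\end{itemize}
Your Krein--\v{S}mulian argument for weak-* closedness of $F_{\lambda_s}$ is a valid alternative to the paper's observation that $\ann{F_{\lambda_s}}$ is a norm limit of subspaces of the closed subspace $X$.

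\textbf{Two steps need a small repair.}

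First, $\lambda(\omega,u)\le-\gamma$ for $u\in\BV_0[0,1]$ does not give $\BV_0[0,1]\subseteq F_{\lambda_2}$ via \autoref{remark:limsupDefnOfSlowSpaces} alone, because nothing guarantees $-\gamma\le\lambda_2$. Use instead that $F_{\lambda_2}=\{v:\lambda(\omega,v)<\lambda_1\}$. This follows from item (3) of \autoref{thm:sequentialMET_oneSided}: any $v\notin F_2$ lies in $\Ccal_1(\eta)$ with $\eta=d(v,F_2)/\norm{v}>0$, and therefore grows at rate exactly $\lambda_1$.

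Second, strong measurability of the approximants $(\Lcal^n_\omega)^*L_{n,s-1}(\omega)$ as maps into $\G(X)$ does not follow from $L_{n,s-1}\subseteq G\subseteq X$ alone. You need the invariance $\Lcal^*_\omega X\subseteq X$, which follows from the weak-* continuity in \autoref{lem:weakContOfTransferOps}. Then apply Pettis's theorem in the separable space $X$; weak measurability there is exactly the $(\BV[0,1],X)$-measurability of \autoref{lem:dualityPairingMeasurability_TransferOps}. This invariance is precisely the point the paper's proof makes explicit.
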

We now prove \autoref{thm:MET_applied_to_transferOps}, which is a more detailed version of \autoref{summaryThm:transferOps_cocycle}. We shall first show $\chi_1 = 0 > \chi_2$ (adapting an argument in \cite{DS21quenchedLimThms}), which shall give us $\lambda_1 = 0$, quasicompactness, and simplicity of $\lambda_1$. We then obtain items (1) and (2). The proof is then complete once we show the measurability clauses. Throughout this proof, $\omega\in \Omega$ shall be an Oseledets regular point.

First, if we take any nonnegative non-zero function $u\in \BV[0,1]$, then by mass preservation of transfer operators, we get
\[\chi_1 \geq \limsup_{n\to\infty} \frac{1}{n}\log \norm{\Lcal^n_\omega u}_{\BV}\geq \lim_{n\to\infty} \frac{1}{n}\log \norm{\Lcal^n_\omega u}_{L^1} = 0.\]
On the other hand, by Lemma 2.1 in \cite{Buzzi1999}, $\sup_{n\geq 0}\norm{\Lcal^n_{\sigma^{-n}\omega}}_{\BV} < \infty$ for a.e. $\omega\in \Omega$. As such, by \autoref{thm:semi-invMET}, $\chi_1(\omega) \leq 0$, and hence, $\chi_1 = 0$ a.s. Now, following Dragi{\v{c}}evi{\'c} and Sedro's proof of Proposition 24 (specifically, equation (30)) in \cite{DS21quenchedLimThms}, we claim that there is an $\eta\in (0,1)$ such that, for a.e. $\omega\in \Omega$ and all $u\in \BV_0[0,1]$,
\begin{equation}\label{eq:massFreeSpaceGrowth}
    \limsup_{n\to\infty} \frac{1}{n}\log\norm{\Lcal_\omega^n u}_{\BV} \leq \log \eta < 0.
\end{equation}
Indeed, although in their setting they take Buzzi's setting in \cite{Buzzi1999} with an additional continuity assumption on $\omega\mapsto \Lcal_\omega$ as a map from $\Omega\to L(\BV[0,1])$ (the so called $\p$-continuity, due to Ma\~n\'e \cite{Mane1983}), their derivation of \eqref{eq:massFreeSpaceGrowth} (equation 30 in \cite{DS21quenchedLimThms}) only utilizes properties of \textit{good transfer operators} from \cite{Buzzi1999} and Lemma C.5 from \cite{GonzalezTokmanQuas2014}. In particular, this Lemma C.5 is applicable in Buzzi's more general setting, and as such, is applicable in our weak measurability setting. The only place where Dragi{\v{c}}evi{\'c} and Sedro require $\p$-continuity is for when they apply their equation (30) to the multiplicative ergodic theorem, so while we may not use the full power of their Proposition 24, we may use their equation 30.

Let $\e>0$. Having established \eqref{eq:massFreeSpaceGrowth}, we get for a.e. $\omega\in \Omega$, all $u\in \BV_0[0,1]$, and all $n\geq 0$ that
\[\sup_{n\geq 1} \norm{\eta^{-(1-\e)n}\Lcal_{\omega}^n u}_{\BV} < \infty.\]
Hence, by the uniform boundedness principle, we get $M_\e = M_\e(\omega)$ such that $\norm{L^n_{\omega}|_{\BV_0[0,1]}} \leq M_{\e}(\omega) \eta^{(1-\e)n}$ for all $n\geq 1$. Since $\e>0$ was arbitrarily small and $\BV_0[0,1]$ is a closed 1-codimensional subspace of $\BV[0,1]$, it follows that
\[\chi_2(\omega) \leq \limsup_{n\to\infty} \frac{1}{n}\log\norm{\Lcal_\omega^n|_{\BV_0[0,1]}}_{\BV} \leq \inf_{\e>0} \lim_{n\to\infty} \frac{1}{n}\log (M_\e(\omega)\eta^{(1-\e)n}) = \log \eta\]
for a.e. $\omega\in \Omega$, giving us quasicompactness and simplicity of $\lambda_1 = 0$. By \autoref{thm:sequentialMET_oneSided}, it follows that $F_{\chi_2}(\omega) = \BV_0[0,1]$ a.s., which gives us item (2).

That $E_1(\omega) = \innerp{h_\omega}$ follows immediately from the fact that $\Lcal_\omega h_\omega = h_{\sigma \omega}$,
\[\lim_{n\to\pm\infty}\frac{1}{n}\log \norm{h_{\sigma^{n}\omega}}_{\BV} = 0 = \lambda_1,\]
and the uniqueness clause of \autoref{thm:refined_sequentialMET_twoSided}. Logarithmic temperedness of $\omega\mapsto \norm{h_\omega}_{\BV}$ follows by Lemma 4.2 in \cite{Buzzi1999}.

\paragraph{Measurability of the Oseledets Subspaces} 
Since $L^1[0,1]$ is separable, density of $\BV[0,1]$ in $L^1[0,1]$, \autoref{lem:dualityPairingMeasurability_TransferOps}, and \autoref{appThm:pettisMeasurabilityTheorem} imply that $\Lcal_\omega$ is strongly measurable as an operator on $L^1$. For $1\leq s < r+1$, the construction of $E_s(\omega)$ in \autoref{thm:semi-invMET} is as the $\G(\BV[0,1])$-limit of the spans of a sequence of bases of $L^1[0,1]$-strongly-measurable maps. If $E_{n,s}(\omega)$ are spans, then by continuity of the span map it follows that $E_{n,s}:\Omega\to \G_{M_s}(L^1[0,1])$ is $\G(L^1[0,1])$-strongly-measurable for all $n\geq 1$. Since the $\G_{M_s}(\BV[0,1])$-gap-topology is stronger than the $\G_{M_s}(L^1[0,1])$-gap-topology, it follows that $E_s:\Omega\to \G_{M_s}(\BV[0,1])\subset \G_{M_s}(L^1[0,1])$ is $\G(L^1[0,1])$-strongly-measurable.

The argument actually proceeds similarly for the slow spaces, except we first move to the dual of $\BV[0,1]$. In particular, identifying $X$ as a subspace of $(\BV[0,1])^*$, it follows by \autoref{lem:weakContOfTransferOps} and basic Banach space theory (see exercise VI.1.11 of \cite{ConwayFunctionalAnalysis}) that $\Lcal_\omega^*X\subseteq X$ for all $\omega$. Now, reasoning as before, in the proof of \autoref{thm:non-invMET}, (since $X$ is $\Lcal_\omega^*$-invariant) we obtain for $2\leq s < r+2$ a $\G(X)$-strongly-measurable sequence of maps $L_{n,s}:\Omega\to \G_{M_{s-1}}(X)\subseteq \G_{M_{s-1}}(\BV[0,1]^*)$ that converges to $L_s(\omega) = \ann{F_{\lambda_s}(\omega)}$ in the $\G(\BV[0,1]^*)$-gap-topology. By \autoref{lem:equivOfGap_and_Hausdorff}, $F_{n,s}(\omega) = \preann{L_{n,s}(\omega)}$ is $\G(\BV[0,1])$-strongly-measurable and converges to $F_{\lambda_s}(\omega)$ in the $\G(\BV[0,1])$-gap-topology. Hence, $F_{\lambda_s}:\Omega\to \G(\BV[0,1])$ is $\G(\BV[0,1])$-strongly-measurable. Of course, since $X$ is a dual-norm-closed subspace of $\BV[0,1]^*$, it follows that $L_{s}(\omega)\subseteq X$, which implies $F_{\lambda_s}(\omega) = \preann{L_{s}(\omega)}$ is weak-* closed. Strong-measurability implies $F_{\lambda_s}$ is a.s. separably-valued.

\begin{appendix}
\section{Fine Properties of Coarse Topologies}
\subsection{Closedness of \texorpdfstring{$BL_\tau(V)$}{BL tau}}
Let $\tau$ be a vector space topology on $(\B,\norm{\cdot})$ coarser than the $\norm{\cdot}$-topology. For a topological vector space $V$, let $BL_\tau(V)$ be defined as in \autoref{sec:MET}.
\begin{lem}\label{appLem:closednessOfBLtau}
    Let $(V,\norm{\cdot}_V)$ be a normed vector space, and let $\tau_V$ be any vector space topology on $V$ coarser than the $\norm{\cdot}_V$ topology. Then
    \[L((\B,\norm{\cdot}),(V,\norm{\cdot}_V))\cap BL_\tau(V,\tau_V)\]
    is closed in the $L((\B,\norm{\cdot}),(V,\norm{\cdot}_V))$-operator norm topology.
\end{lem}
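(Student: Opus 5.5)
We prove the closedness by an $\varepsilon/3$ argument on the unit ball: operator-norm convergence gives uniform convergence on $B_\B$, and uniform limits of continuous maps are continuous. Let $(T_k)_k$ be a sequence in $L((\B,\norm{\cdot}),(V,\norm{\cdot}_V))\cap BL_\tau(V,\tau_V)$ converging in operator norm to some $T\in L((\B,\norm{\cdot}),(V,\norm{\cdot}_V))$. Since the operator norm topology on $L(\B,V)$ is metrizable, sequential closedness suffices. We must show that $T|_{B_\B}:(B_\B,\tau|_{B_\B})\to(V,\tau_V)$ is continuous.

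Fix $x\in B_\B$ and a $\tau_V$-neighborhood $W$ of $Tx$. Because $\tau_V$ is a vector space topology, we may choose a $\tau_V$-neighborhood $W_0$ of $0$ such that $Tx+W_0+W_0+W_0\subseteq W$. Because $\tau_V$ is coarser than the norm topology, $W_0$ is also a norm neighborhood of $0$. Hence it contains a norm ball $\{v:\norm{v}_V<\e\}$ for some $\e>0$.

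Choose $k$ with $\norm{T-T_k}<\e$. Then for every $y\in B_\B$ we have $\norm{Ty-T_ky}_V<\e$, so $Ty-T_ky\in W_0$. The same bound at the point $x$ gives $T_kx-Tx\in W_0$.

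By $\tau$-continuity of $T_k|_{B_\B}$ at $x$, there is a $\tau$-neighborhood $O$ of $x$ such that $T_ky-T_kx\in W_0$ for all $y\in O\cap B_\B$. For such $y$ we decompose
\[Ty = Tx + (T_kx-Tx) + (T_ky-T_kx) + (Ty-T_ky)\in Tx+W_0+W_0+W_0\subseteq W.\]
This shows that $T|_{B_\B}$ is continuous at $x$. Since $x\in B_\B$ was arbitrary, $T\in BL_\tau(V,\tau_V)$, and the set is closed.

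The only step needing care is the choice of $W_0$. It uses only continuity of addition in $(V,\tau_V)$, so no local convexity of $\tau_V$ is required. It also uses only that $\tau_V$-open sets are norm-open, which is exactly the hypothesis that $\tau_V$ is coarser than the norm topology. The argument works directly with neighborhoods rather than sequences, so it does not need $\tau$ to be metrizable. We stated the convergent family as a sequence only because the operator norm topology is metrizable; the same proof applies verbatim to nets.
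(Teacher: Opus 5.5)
Your proof is correct and is essentially the paper's argument. In both, operator-norm convergence gives uniform closeness on $B_\B$, which lands inside a $\tau_V$-neighborhood of $0$ because $\tau_V$-open sets are norm-open, and this is combined with $\tau$-continuity of the approximant $T_k$ through continuity of addition in $\tau_V$. The only cosmetic difference is that the paper works with a $\tau$-convergent net $x_\alpha\to x$ and uses linearity to write $T(x_\alpha-x)=T_n(x_\alpha-x)+(T-T_n)(x_\alpha-x)$, which needs only a two-fold sum $W+W\subseteq U$ (using $\|x_\alpha-x\|\le 2$); you instead use neighborhoods and a three-term split.
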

\begin{proof}
    For the sake of brevity, we write $L(\B,V) = L((\B,\norm{\cdot}),(V,\norm{\cdot}_V))$. Let $T_n\in L(\B,V)\cap BL_\tau(V,\tau_V)$ be a sequence of operators converging in the operator norm topology to some $T\in L(\B,V)$. Let $x_\alpha\in B_{\B}$ be a net converging to some $x\in B_\B$ w.r.t. $\tau$. We need to show $Tx_\alpha\to Tx$ w.r.t. $\tau_V$. Let $U$ be a $\tau_V$-open neighborhood around $0$. By the continuity of addition, there is a $\tau_V$-open neighborhood $W$ around $0$ such that $W+W\subset U$. Since $\tau_V$ is coarser than the $\norm{\cdot}_V$-topology, $W$ is $\norm{\cdot}_V$-open. Thus, we can fix an $n$ large enough so that $(T-T_n)(x_\alpha-x) \in W$. Since $T_n\in BL_\tau(V,\tau_V)$, it follows that $T_n(x_\alpha-x)\in W$ for all $\alpha$ large enough. Therefore,
    \[T(x_\alpha-x) = T_n(x_\alpha-x) + (T-T_n)(x_\alpha-x) \in W + W \subseteq U.\]
    That is, $Tx_\alpha$ eventually enters and stays in every $\tau_V$-open neighborhood of $Tx$. Hence, $T\in BL_\tau(V,\tau_V)$.
\end{proof}

\subsection{Lower Semi-Continuity}\label{appSubSec:LSC}
\begin{lem}\label{appLem:LSCimpliesSupOverDense}
    Let $(X,\tau)$ be a topological space. Suppose $f:X\to \R\cup\{\pm\infty\}$ is lower semi-continuous (lsc) and let $D\subset X$ be $\tau$-dense. Then
    \[\sup_{x\in X} f(x) = \sup_{x\in D} f(x).\]
\end{lem}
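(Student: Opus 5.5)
The inequality $\sup_{x\in D} f(x)\le \sup_{x\in X} f(x)$ is immediate because $D\subset X$. The whole proof is therefore the reverse inequality, and I plan to obtain it directly from the definition of lower semi-continuity. I will use lsc in its topological form: for every $t\in\R$, the set $\{x\in X: f(x)>t\}$ is $\tau$-open. I will use nothing beyond this and the density of $D$; no metrizability or countability of the topology is needed.

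Put $s=\sup_{x\in X} f(x)$. If $s=-\infty$, then both suprema equal $-\infty$ and there is nothing to prove. Otherwise, take any real $t<s$; when $s=+\infty$, this means any real $t$. By the definition of supremum there is some $x_0\in X$ with $f(x_0)>t$. Hence the set $U_t=\{x: f(x)>t\}$ is nonempty, and by lower semi-continuity it is open.

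Since $D$ is $\tau$-dense, it meets every nonempty open set, so there is some $d\in D\cap U_t$. Then $\sup_{x\in D} f(x)\ge f(d)>t$. Letting $t\uparrow s$ gives $\sup_{x\in D} f(x)\ge s$, which together with the trivial inequality proves the equality.

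The only step needing care is the convention for extended-real-valued lsc functions: the sets $\{f>t\}$ must be open for every real $t$. This is exactly the form in which the lemma is invoked earlier in the paper, for the norm and for $\vol_q$ with respect to $\tau$. I do not expect any real obstacle.
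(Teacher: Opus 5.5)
Your proof is correct and complete. The paper states this lemma without proof, treating it as standard, and your argument is the standard justification: for each real $t<\sup_X f$ the set $\{f>t\}$ is a nonempty open set, so it meets $D$. You also handle the extended-real cases properly. In the paper's uses (e.g.\ $X=B_{\B}$ with the subspace $\tau$-topology and a countable $\tau$-dense subset), lower semi-continuity is supplied by \autoref{appLem:LianLuVolLSC}, so your formulation applies verbatim.
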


\begin{lem}\label{appLem:LianLuVolLSC}
    Let $(X_1,\tau_1)$ and $(X_2,\tau_2)$ be topological vector spaces, and let $\norm{\cdot}_2$ be any lower semi-continuous norm on $X_2$. For $q\in \N$, let $\vol_q:X_2^q\to \R$ denote the $q$-dimensional parallelepiped volume given by $\norm{\cdot}_2$. If $Y\subseteq X_1$ is a $\tau_1$-closed set and $A:X_1\to X_2$ is a linear map such that $A|_{Y}:(Y,\tau_{1}|_{Y})\to (X_2,\tau_2)$ is continuous, then the map from $X_1^q\to \R$ defined by
    \[(x_1,\ldots,x_q)\mapsto \vol_q(Ax_1,\ldots,Ax_q)\]
    is lower semi-continuous on $\{x\in  Y:\norm{Ax}_2\leq M\}^q$ w.r.t. $\tau_1$ for all $M\geq 0$.
\end{lem}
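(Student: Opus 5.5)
Lower semi-continuity means that for every $c\in\R$ the set $\{(x_1,\ldots,x_q): \vol_q(Ax_1,\ldots,Ax_q)>c\}$ is relatively open in $K_M^q$, where $K_M \coloneqq \{x\in Y:\norm{Ax}_2\leq M\}$. Equivalently, for every net $x^{(\alpha)}=(x^{(\alpha)}_1,\ldots,x^{(\alpha)}_q)\to x$ in $K_M^q$ (w.r.t. $\tau_1$) we need
\[\liminf_\alpha \vol_q(Ax^{(\alpha)}_1,\ldots,Ax^{(\alpha)}_q)\geq \vol_q(Ax_1,\ldots,Ax_q).\]
Since $A|_Y$ is continuous into $(X_2,\tau_2)$, the images $y^{(\alpha)}_k\coloneqq Ax^{(\alpha)}_k$ converge to $y_k\coloneqq Ax_k$ in $\tau_2$. (The limit $x$ lies in $Y$ because $Y$ is $\tau_1$-closed, and in $K_M$ by lower semi-continuity of $\norm{\cdot}_2$.) It therefore suffices to prove the following: $\vol_q$ is lsc on $\{y\in X_2:\norm{y}_2\leq M\}^q$ w.r.t. $\tau_2$.

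I would do this by induction on $q$, using the recursive form
\[\vol_q(y_1,\ldots,y_q)=d(y_1,\innerp{y_2,\ldots,y_q})\,\vol_{q-1}(y_2,\ldots,y_q).\]
The case $q=1$ is exactly lsc of $\norm{\cdot}_2$. For the inductive step, a product of two nonnegative lsc functions is lsc, so the heart of the matter is to show that $(y_1,\ldots,y_q)\mapsto d(y_1,\innerp{y_2,\ldots,y_q})$ is lsc on the bounded set, at least at points where $\vol_{q-1}(y_2,\ldots,y_q)>0$. At the remaining points the product vanishes at the limit, so there is nothing to prove.

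The main obstacle is this distance term. It is an infimum over coefficients $t\in\R^{q-1}$ of the lsc functions $\norm{y_1-\sum t_k y_k}_2$, and an infimum of lsc functions need not be lsc. The fix is to reduce to a compact set of coefficients. Suppose $\vol_{q-1}(y_2,\ldots,y_q)=v>0$, and assume by induction that along the net $\liminf \vol_{q-1}(y^{(\alpha)}_2,\ldots,y^{(\alpha)}_q)\geq v$, so that eventually this quantity exceeds $v/2$. For near-minimizing coefficients $t$ one has $\norm{\sum t_k y^{(\alpha)}_k}_2\leq 2\norm{y^{(\alpha)}_1}_2\leq 2M$. A lower bound of parallelepiped type, $\norm{\sum t_k z_k}_2\geq |t_j|\,\vol_{q-1}(z)/\prod_{k\neq j}\norm{z_k}_2$ (valid for suitable orderings; Lemma~1.2 of Blumenthal or \eqref{eq:parallelepipedVolumeIdentity} can supply it up to constants in $q$), combined with $\norm{z_k}\leq M$, then forces $|t|\leq R$ for some $R=R(M,v,q)$.

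On the compact set $[-R,R]^{q-1}$ the argument closes. The map $(y,t)\mapsto \norm{y_1-\sum t_ky_k}_2$ is jointly lsc, since addition and scalar multiplication are $\tau_2$-continuous and the norm is lsc. An infimum over a compact parameter set of a jointly lsc function is lsc; this is the standard argument of passing to a convergent subnet of the near-minimizing coefficients $t^{(\alpha)}$. This gives the liminf inequality for the distance term. Multiplying by the inductive liminf bound for $\vol_{q-1}$ completes the step.
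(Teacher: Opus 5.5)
Your proof is correct and follows essentially the same route as the paper's. Both argue by induction on $q$ through the product formula $\vol_q(y_1,\ldots,y_q)=d(y_1,\langle y_2,\ldots,y_q\rangle)\,\vol_{q-1}(y_2,\ldots,y_q)$. The inductive lower bound on $\vol_{q-1}$ and the norm bound $M$ force the (near-)minimizing coefficients into a compact set, and a subnet argument with lower semi-continuity of $\norm{\cdot}_2$ closes the step.

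Your write-up is slightly cleaner in two respects. First, you pull back through $A$ so the claim reduces to $\tau_2$-lower semi-continuity of $\vol_q$ on $\{\norm{y}_2\le M\}^q$. Second, you make the coefficient bound explicit, whereas the paper only says the angles are bounded away from zero.
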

\begin{proof}
    First, we note that the set
    \[C_M = \{x\in Y:\norm{Ax}_2\leq M\}\]
    is $\tau_1$-closed for all $M\geq 0$ since it is the continuous pre-image of $M B_{X_2}$ where
    \[B_{X_2} = \curly{x\in X_2 : \norm{x}_2 \leq 1},\]
    which is $\tau_2$-closed by assumption. Therefore, the map $x\mapsto \norm{Ax}_2$ is lsc on $Y$ w.r.t. $\tau_1$. This proves the lemma for $q=1$, providing a base case from which we may proceed inductively. Assume now $(x_1,\ldots,x_k)\mapsto \vol_k(Ax_1,\ldots,Ax_k)$ is lower semi-continuous for each $k\leq q-1$. We shall prove it for $q$. 

    Firstly, if $\vol_q(Ax_1,\ldots,Ax_q) = 0$, then we are done by non-negativity of volume, so we may assume this is not the case. Consider a net of vectors $(x_{1,\alpha},\ldots,x_{q,\alpha})\in C_{M}^q$ that converges to some vector $(x_1,\ldots,x_q)\in C_M^q$ in the product topology $\tau_1^q$. Now, this implies
    \[(Ax_{1,\alpha},\ldots,Ax_{q,\alpha})\overset{\tau_2}{\to} (Ax_1,\ldots,Ax_q).\]
    Let $w_\alpha\in \innerp{Ax_{2,\alpha},\ldots,Ax_{q,\alpha}}$ denote any minimizer of 
    \[\norm{Ax_{1,\alpha} - w}_2\]
    over $w\in \innerp{Ax_{2,\alpha},\ldots,Ax_{q,\alpha}}$. Then we can write
    \[w_\alpha = t_{2,\alpha}Ax_{2,\alpha} + \cdots + t_{q,\alpha}A x_{q,\alpha}\]
    for some $t_{2,\alpha},\ldots,t_{q,\alpha}\in \R$. Our assumption tells us that $\norm{Ax_{k,\alpha}}$ is bounded away from 0 for large enough $\alpha$. Moreover, our assumption of non-zero volume and the induction give us that 
    \[\vol_{q-1}(Ax_{2,\alpha},\ldots,Ax_{q,\alpha})\]
    is bounded below by $\frac{1}{2}\vol_{q-1}(Ax_2,\ldots,Ax_{q}) > 0$ for all large enough $\alpha$. Thus, the product formula for the volumes implies that the angles between the spaces $\curly{\innerp{Ax_{k,\alpha}}}_{2\leq k\leq q}$ are uniformly bounded away from 0 for large enough $\alpha$, and so $\abs{t_{k,\alpha}}$ for all $2\leq k\leq q$ is uniformly bounded for large enough $\alpha$, which implies by the Heine-Borel property of $\R^{q-1}$ that there is a subnet $\beta$ such that $t_{k,\beta}\to t_k\in \R$, and so
    \[w_\beta\to w = t_2Ax_2+\cdots+t_q Ax_q.\]
    It is no issue to assume $\beta$ is a subnet such that
    \[\lim_{\beta} d(Ax_{1,\beta},\innerp{Ax_{2,\beta},\ldots,Ax_{q,\beta}}) = \liminf_{\alpha} d(Ax_{1,\alpha},\innerp{Ax_{2,\alpha},\ldots,Ax_{q,\alpha}}).\]
    Hence,
    \[d(Ax_1,\innerp{Ax_2,\ldots, Ax_q}) \leq \norm{Ax_1-w} \leq \liminf_\alpha d(Ax_{1,\alpha},\innerp{Ax_{2,\alpha},\ldots,Ax_{q,\alpha}}).\]
    As the product of nonnegative lsc functions is lsc, we are done.
\end{proof}

\section{The Subadditive Ergodic Theorem}
Let $(\Omega, \F, \p)$ be a probability space and let $\sigma:\Omega\to\Omega$ be a measure preserving transformation (mpt). Let $\Ical = \{E\in\F:\sigma^{-1} E = E\}$ denote the sub-$\sigma$-algebra of invariant sets.

\begin{thm}[Kingman's Subadditive Ergodic Theorem \cite{kingman1973subadditive}]\label{appThm:KingmanSET}
    Let $\{f_{n}\}_{n\geq 0}$ be a subadditive sequence of measurable maps (i.e. $f_{n+m}(\omega)\leq f_n(\omega) + f_{m}(\sigma^n \omega)$) such that $f_{1}^{+}\in L^1(\Omega,\p)$. Then there is a $\sigma$-invariant measurable map $f:\Omega\to \R\cup\{-\infty\}$ such that $\frac{1}{n}f_{n}\to f$ almost surely as $n\to\infty$. Moreover,
    \[f = \inf_{n\geq 1} \frac{1}{n}\E[f_n|\Ical] \text{ and } \E[f] = \inf_{n\geq 1} \frac{1}{n}\E[f_n].\]
    If $\E[f] > -\infty$, then $\frac{1}{n}f_n\to f$ as $n\to\infty$ in $L^1$ as well.
\end{thm}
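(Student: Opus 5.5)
The statement to prove is Kingman's subadditive ergodic theorem. I would follow Steele's elementary proof, using Birkhoff's ergodic theorem as the main tool. All statements should be read conditionally on $\Ical$; equivalently, one may reduce to the ergodic case via the ergodic decomposition. Below I write the argument without conditioning and note where $\E[\cdot|\Ical]$ enters.

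\textbf{Step 1: the upper bound.} Set
\[f^+ \coloneqq \limsup_n \tfrac1n f_n, \qquad f^- \coloneqq \liminf_n \tfrac1n f_n.\]
Subadditivity gives $f_{n+1}(\omega)\le f_1(\omega)+f_n(\sigma\omega)$. Dividing by $n$ and using $f_1^+\in L^1$ (so $\tfrac1n f_1^+\to0$ a.s.), we get $f^\pm\le f^\pm\circ\sigma$ a.s. Since $\sigma$ preserves $\p$, the standard argument comparing $\p(f^\pm>t)$ with $\p(f^\pm\circ\sigma>t)$ then shows $f^\pm$ is $\sigma$-invariant mod null sets, hence $\Ical$-measurable.

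Next fix $k$ and write $n=mk+j$ with $0\le j<k$. Iterating subadditivity,
\[f_n\le \sum_{i=0}^{m-1} f_k\circ\sigma^{ik} + \sum_{i=0}^{j-1} f_1^+\circ\sigma^{mk+i}.\]
The second sum is $o(n)$ a.s. by Birkhoff's theorem applied to $f_1^+$. The first sum is controlled by Birkhoff's theorem for $\sigma^k$ applied to $f_k$; note $f_k^+\le\sum_{i<k}f_1^+\circ\sigma^i$ is integrable. This gives
\[f^+\le \tfrac1k\E[f_k\mid\Ical_{\sigma^k}] \quad\text{a.s.}\]
Averaging over the $k$ shifts $\sigma^i$, $0\le i<k$, and using invariance of $f^+$, this improves to $f^+\le \tfrac1k\E[f_k\mid\Ical]$. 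Taking the infimum over $k$ yields $f^+\le\inf_k\tfrac1k\E[f_k\mid\Ical]$.

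\textbf{Step 2: the lower bound $f^-\ge\inf_k\tfrac1k\E[f_k|\Ical]$.} This is the main obstacle. I would use Steele's stopping-time covering argument.

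First truncate: fix $M>0$ and let $g=\max(f^-,-M)$, which is invariant. Fix $\e>0$ and $N$. Call $\omega$ good if some $1\le \ell\le N$ satisfies $f_\ell(\omega)\le \ell(g(\omega)+\e)$. The bad set $B_N$ has $\p(B_N)\to0$ as $N\to\infty$.

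Now cover $\{0,\dots,n-1\}$ greedily. Start at $i=0$. If $\sigma^i\omega$ is good, jump by the corresponding $\ell$; otherwise jump by $1$. Subadditivity along this cover bounds $f_n$ by three contributions:
\begin{itemize}
\item the good blocks, contributing at most $(g+\e)\cdot(\text{their total length})$, using invariance of $g$;
\item the bad single steps, which fall in $B_N$ and contribute $f_1^+$ there, plus a compensating $g^-\le M$ term;
\item a tail of length at most $N$.
\end{itemize}
Divide by $n$, integrate, and apply the Birkhoff theorem to $\indic_{B_N}$ and $f_1^+\indic_{B_N}$. This gives
\[\tfrac1n\E[f_n]\le \E[g]+\e+\E[(|f_1^+|+M+|g|)\indic_{B_N}]+o(1).\]
Let $n\to\infty$, then $N\to\infty$, then $\e\to0$. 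This gives $\inf_n\tfrac1n\E f_n\le\E[g]$; performing the same argument restricted to invariant sets gives the conditional version. The delicate points are that integrability is available only for $f_1^+$, which is why the truncation by $M$ is needed, and that conditioning on $\Ical$ must hold throughout.

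Let $M\to\infty$ by monotone convergence. Combined with $f^-\le f^+\le\inf_k\tfrac1k\E[f_k|\Ical]$, this forces
\[f^-=f^+=\inf_k\tfrac1k\E[f_k|\Ical] \quad\text{a.s.}\]

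\textbf{Step 3: consequences.} The identity $\E f=\inf_n\tfrac1n\E f_n$ follows from Step 2 before conditioning; the inequality $\E f\le\inf$ holds by Fatou's lemma for the upper bound, applicable since the positive parts are dominated by Birkhoff sums of $f_1^+$. For the $L^1$ claim when $\E f>-\infty$, set $h_n=\sum_{i<n}f_1\circ\sigma^i-f_n\ge0$. By Birkhoff's theorem $\tfrac1n h_n$ converges a.s., and $\E[\tfrac1n h_n]\to\E f_1-\E f$, the same value as the integral of the a.s. limit. Scheffé's lemma for nonnegative functions then gives $L^1$ convergence of $\tfrac1n h_n$, and hence of $\tfrac1n f_n$.
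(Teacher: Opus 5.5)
The paper does not prove this statement. It is quoted from Kingman \cite{kingman1973subadditive} and used as a black box, for example together with \autoref{appThm:SET_decomp} in the proof of \autoref{appThm:temperedSET}. So there is no in-paper argument to compare with. Your Steele-style covering proof is a correct, standard route. It is self-contained and needs only Birkhoff's theorem. Its main advantage is that it works on an arbitrary probability space, which is the generality the paper wants: the appendix stresses that its tempered and balanced versions hold for general measure-preserving systems, not only Lebesgue spaces. For that reason, drop the aside about reducing to the ergodic case via the ergodic decomposition. Such a decomposition need not exist without standard-Borel hypotheses, so the conditional route is the one you must carry out.

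Three points need tightening.

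\textbf{Step 2, the conditional lower bound.} Write $\bar f\coloneqq\inf_k\frac1k\E[f_k\mid\Ical]$. The unconditional chain $\E f^-\ge\inf_n\frac1n\E f_n\ge\E\bar f\ge\E f^+\ge\E f^-$ forces $f^-=f^+=\bar f$ a.s.\ only when the common value is finite. The cleanest fix is not to integrate, but to take $\E[\cdot\mid\Ical]$ of your pointwise covering bound. Since $g\ge-M$ gives $-(g+\e)\le M$, the greedy cover yields, on the full-measure set where $g\circ\sigma^i=g$ for all $i$,
\[
f_n\le (g+\e)n+\sum_{i<n}(f_1^++M)\indic_{B_N}\circ\sigma^i+MN+\sum_{i=n-N}^{n-1}f_1^+\circ\sigma^i .
\]
Using $\E[h\circ\sigma^i\mid\Ical]=\E[h\mid\Ical]$, this gives
\[
\bar f\le\frac1n\E[f_n\mid\Ical]\le g+\e+\E\bigl[(f_1^++M)\indic_{B_N}\bigm|\Ical\bigr]+\frac{N}{n}\bigl(M+\E[f_1^+\mid\Ical]\bigr).
\]
Now let $n\to\infty$, then $N\to\infty$ (conditional dominated convergence), then $\e\to0$ and $M\to\infty$. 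This gives $\bar f\le f^-$ a.s., including on $\{f^-=-\infty\}$, with no test set needed. Your ``restrict to invariant sets'' version also works, but you still need a test set such as $A=\{\bar f>\max(f^-,-M)+\e\}$ to turn the integral inequalities into an a.s.\ one.

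\textbf{Step 3, the Fatou remark.} The inequality $\E f\le\inf_n\frac1n\E f_n$ needs no Fatou. It is just the integral of $f\le\frac1n\E[f_n\mid\Ical]$. A reverse-Fatou argument with the dominating Birkhoff sums of $f_1^+$ gives the \emph{opposite} inequality, which you already have from Step 2.

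\textbf{The $L^1$ step.} Three justifications are missing or misattributed:
\begin{itemize}
\item the a.s.\ convergence of $\frac1n h_n$ uses Step 2, not Birkhoff alone;
\item $\frac1n\E f_n\to\inf_n\frac1n\E f_n=\E f$ is Fekete's lemma applied to the subadditive sequence $\E f_n$;
\item integrability of $f_1$, needed for $h_n\in L^1$, comes from $\E f_1\ge\E f>-\infty$.
\end{itemize}
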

\begin{remark}
    The assumptions that $f_1^+\in L^1$ and $\E[f] > -\infty$ imply $f,f_n\in L^1$ for all $n\geq 1$.
\end{remark}
For the rest of this appendix, we shall assume $(f_n)_{n\geq1}$ is a subadditive family such that $f_1^+\in L^1$.

\begin{cor}\label{appCor:invertibleSET}
    Suppose $\sigma:\Omega\to\Omega$ is invertible. Then we have that
    \[\lim_{n\to\infty} \frac{1}{n}f_n(\sigma^{-n}\omega) = f(\omega)\]
    for almost every $\omega\in \Omega$.
\end{cor}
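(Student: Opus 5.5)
The natural route is to reduce the statement to Kingman's theorem applied to the invertible system $\sigma^{-1}$, followed by a comparison of two subadditive families. Set $g_n(\omega) \coloneqq f_n(\sigma^{-n}\omega)$. Since $\sigma$ preserves $\p$ and is invertible, so does $\sigma^{-1}$. The family $(g_n)$ is subadditive with respect to $\sigma^{-1}$. Indeed, by subadditivity of $(f_n)$ at the point $\sigma^{-(n+m)}\omega$,
\[g_{n+m}(\omega) = f_{m+n}(\sigma^{-(n+m)}\omega)\leq f_m(\sigma^{-(n+m)}\omega) + f_n(\sigma^{m}\sigma^{-(n+m)}\omega) = g_m(\sigma^{-n}\omega) + g_n(\omega),\]
which is exactly the Kingman condition $g_{n+m}\leq g_n + g_m\circ(\sigma^{-1})^n$. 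Moreover $g_1^+ = f_1^+\circ\sigma^{-1}\in L^1$. Hence \autoref{appThm:KingmanSET} applies to $(g_n)$ over $\sigma^{-1}$ and yields a $\sigma^{-1}$-invariant limit $g$ with $\frac1n g_n\to g$ almost surely, and
\[g = \inf_n \frac1n\E[g_n\mid\Ical'],\]
where $\Ical'$ is the $\sigma^{-1}$-invariant $\sigma$-algebra. Since $\sigma^{-1}E=E$ if and only if $\sigma E = E$, we have $\Ical'=\Ical$.

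It then remains to identify $g$ with $f$. Measure preservation gives $\E[g_n\mid\Ical] = \E[f_n\mid\Ical]$: for $E\in\Ical$,
\[\int_E f_n\circ\sigma^{-n}\,d\p = \int_{\sigma^{-n}E} f_n\,d\p = \int_E f_n\,d\p.\]
Extended-valued conditional expectations are legitimate here because $f_n^+\in L^1$, which follows from subadditivity: $f_n^+\leq\sum_{k<n} f_1^+\circ\sigma^k$. Taking infima over $n$ and using the formula for $f$ in \autoref{appThm:KingmanSET} then gives
\[g = \inf_n \frac1n\E[f_n\mid\Ical] = f \quad\text{a.s.},\]
which proves the corollary.

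The main obstacle is bookkeeping rather than anything deep. The subadditivity must be set up in the correct order of $m,n$ so that the shift lands on $g_m$ composed with $(\sigma^{-1})^n$. The conditional expectation identity must be justified for possibly $-\infty$-valued $f_n$, which is handled by truncating at $-K$ and applying monotone convergence.
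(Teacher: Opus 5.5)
Your proof is correct. The paper leaves this corollary to the reader, and your argument supplies exactly the standard details: apply \autoref{appThm:KingmanSET} to $g_n = f_n\circ\sigma^{-n}$ over $\sigma^{-1}$, note that $\sigma^{-1}$ has the same invariant $\sigma$-algebra $\Ical$, and identify the two limits through $\E[g_n\mid\Ical]=\E[f_n\mid\Ical]$. The order of the subadditivity split and the truncation at $-K$ for possibly $-\infty$-valued conditional expectations are both handled properly.
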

\begin{proof}
    The proof is straightforward and details are left to the reader.
\end{proof}

For our purposes, we shall need a refinement of the subadditive ergodic theorem that uniformly tempers growth rates along a whole orbit. To do this, we use the following due to Kingman \cite{kingman1968ergodic}.
\begin{thm}[Kingman's Subadditive Decomposition Theorem \cite{kingman1968ergodic}]\label{appThm:SET_decomp}
    Assume that $\E[f] > -\infty$. Then there is a map $\vphi\in L^1$ and a purely subadditive family $(E_n)_{n\geq 1}$, i.e., $E_n$ is subadditive and nonnegative, such that $\E[\vphi|\Ical] = f$ and
    \[f_n(\omega) = S_n[\vphi](\omega) + E_n(\omega),\]
    where $S_n[\vphi](\omega) = \vphi(\omega)+\cdots+\vphi(\sigma^{n-1}\omega)$.
\end{thm}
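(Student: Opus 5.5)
This is Kingman's decomposition theorem (the statement immediately above). I would prove it by the classical route: first produce an additive minorant, then check that the defect is nonnegative and subadditive.

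\textbf{Step 1: reduction.} Replace $f_n$ by $f_n - S_n[f_1]$. Subadditivity gives $f_n \le S_n[f_1]$, so $g_n \coloneqq S_n[f_1]-f_n\ge 0$ is superadditive. Since $\E[f]>-\infty$, \autoref{appThm:KingmanSET} and the Remark after it give $f_n\in L^1$. Hence $g_n\in L^1$, with $\E[g_n]/n\to \E[f_1]-\E[f]<\infty$.

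It then suffices to find $\psi\in L^1$ with $S_n[\psi]\ge g_n$ for all $n$ and $g_n' \coloneqq S_n[\psi]-g_n$ subadditive. Then $\vphi=f_1-\psi$ and $E_n=f_n-S_n[\vphi]=S_n[\psi]-g_n\ge 0$ work.

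\textbf{Step 2: constructing $\psi$.} Set
\[\psi_N \coloneqq \frac1N\sum_{k=1}^{N}\big(g_k\circ\sigma - g_{k-1}\big)\]
(a Cesàro average of increments, with $g_0=0$), or more simply use $h_N\coloneqq g_{N}-g_{N-1}\circ\sigma$ averaged. Superadditivity makes these nonnegative with $L^1$ norms bounded by $\sup_n \E[g_n]/n<\infty$.

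Take a weak-$*$ limit point $\mu$ of $\psi_N\,d\p$ as finite measures. Finite additivity is an issue, so I would use Komlós' theorem instead: it gives a subsequence whose Cesàro means converge a.s. to some $\psi\in L^1$.

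Telescoping then shows $S_n[\psi_N]\ge g_n - (\text{error}_N)$ with $\text{error}_N\to0$ in $L^1$. Passing to the limit via Fatou yields $S_n[\psi]\ge g_n$ a.s. for each $n$.

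\textbf{Step 3: identifications.} The defect $S_n[\psi]-g_n$ is subadditive because $S_n$ is additive and $-g_n$ is subadditive. Its nonnegativity is Step 2.

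For $\E[\vphi\mid\Ical]=f$: on one side, $E_n\ge0$ gives $\E[\vphi\mid\Ical]\le \E[f_n\mid\Ical]/n$, hence $\le f$. On the other side, $\E[\psi]\le \lim \E[g_n]/n$ by construction through Fatou, which gives $\E[\vphi]\ge\E[f]$. Applying the same argument on invariant sets localizes this to the conditional version, so equality holds.

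\textbf{Main obstacle.} The obstacle is Step 2: obtaining a genuine $L^1$ function $\psi$ rather than a finitely additive limit, while keeping the inequality $S_n[\psi]\ge g_n$ for every $n$. I would try Komlós' subsequence theorem first, and fall back on the Dunford–Pettis/uniform integrability of the truncations $\min(\psi_N,K)$ if that fails.
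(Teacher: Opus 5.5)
The paper does not prove this statement. It is quoted from Kingman \cite{kingman1968ergodic} and used only as a black box in the proof of \autoref{appThm:temperedSET}, so there is no internal argument to compare against. Your proposal should be read as a self-contained replacement for the citation, and as such it is correct. It gives a short proof, valid on any probability space, that uses only Komlós' theorem and avoids finitely additive limits. That suits \autoref{appThm:temperedSET}, which is stated for general measure-preserving systems. Komlós alone suffices, so you can drop the weak-$*$ detour and the truncation fallback; truncating $\psi_N$ would destroy the inequality you need anyway.

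Three corrections are needed.

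\textbf{The increment.} Your first candidate $\frac1N\sum_{k}(g_k\circ\sigma-g_{k-1})$ is the wrong increment and is not nonnegative in general. Take the superadditive family $g_k=\sum_{j=1}^{k-1}\phi\circ\sigma^j$ with $\phi\ge 0$, so that $g_1=0$. Its $k$-th term is $\phi\circ\sigma^{k-1}+\phi\circ\sigma^{k}-\phi\circ\sigma$ for $k\ge 3$, which can be negative. Use your alternative instead: set $h_k=g_k-g_{k-1}\circ\sigma$, which is $\ge g_1=0$ by superadditivity, and $\psi_N=\frac1N\sum_{k=1}^N h_k$. Then $\E[\psi_N]=\E[g_N]/N$.

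\textbf{The telescoping step.} This should be written out, since it is the heart of the argument. For $n\le m$,
\[\sum_{j=0}^{n-1}h_{m-j}\circ\sigma^j=g_m-g_{m-n}\circ\sigma^n\ge g_n .\]
Discarding boundary terms using $h_k\ge 0$ then gives
\[S_n[\psi_N]\ge\frac1N\sum_{m=n}^{N}\sum_{j=0}^{n-1}h_{m-j}\circ\sigma^j\ge\frac{N-n+1}{N}\,g_n .\]
Take the Komlós Cesàro means $\bar\psi_M\to\psi$ a.s. Since $\sigma$ preserves $\p$, also $S_n[\bar\psi_M]\to S_n[\psi]$ a.s. This yields $S_n[\psi]\ge g_n$ a.s. directly, without Fatou. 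Fatou is needed only for $\E[\psi]\le\lim_N\E[g_N]/N=\E[f_1]-\E[f]$.

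\textbf{The identification.} Localizing to invariant sets is unnecessary. You already have $\E[\varphi\mid\Ical]\le f$ pointwise and $\E[\varphi]\ge\E[f]$. Moreover $f\in L^1$, because $f\le\E[f_1\mid\Ical]$ and $\E[f]>-\infty$. These facts together force $\E[\varphi\mid\Ical]=f$ a.s.
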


\begin{thm}[A Tempered Subadditive Ergodic Theorem]\label{appThm:temperedSET}
    Then there is an invariant measurable set $\Gamma\subseteq \Omega$ of full probability such that for all $\omega\in \Gamma$,
    \[\lim_{n,k\to\infty} \frac{1}{n+k}\abs{f_k(\sigma^{n}\omega)-kf(\omega)} = 0\]
    if $f(\omega) > -\infty$, or
    \[\limsup_{n,k\to\infty} \frac{f_{k}(\sigma^n\omega) + k \Delta}{n+k} \leq 0\]
    for all $\Delta \in \R$ if $f(\omega) = -\infty$. If $\sigma$ is invertible, then the same statements hold writing $\sigma^{-n}$ instead of $\sigma^{n}$.
\end{thm}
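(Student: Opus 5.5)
We use Kingman's decomposition (\autoref{appThm:SET_decomp}) and treat the Birkhoff part and the purely subadditive part separately. Assume first $\E[f]>-\infty$, restricting to the invariant set $\{f>-\infty\}$ if necessary (see the last paragraph). Write $f_n=S_n[\vphi]+E_n$ with $\vphi\in L^1$, $\E[\vphi|\Ical]=f$, and $E_n\ge 0$ subadditive. Then
\[
f_k(\sigma^n\omega)-kf(\omega)=\bigl(S_{n+k}[\vphi](\omega)-S_n[\vphi](\omega)-kf(\omega)\bigr)+E_k(\sigma^n\omega).
\]
By Birkhoff, $S_m[\vphi]=mf+o(m)$ a.s., using $f(\sigma\omega)=f(\omega)$. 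So the first term is $(n+k)f+o(n+k)-nf-o(n)-kf=o(n+k)$ as $n,k\to\infty$ jointly, since $o(n)\le o(n+k)$ in the relevant sense. This term is therefore harmless.

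The main obstacle is the purely subadditive remainder: we must show $E_k(\sigma^n\omega)/(n+k)\to 0$ along a whole orbit, uniformly in $n$. Kingman's theorem applied to $E_n$ gives $\frac1kE_k\to 0$ a.s. and in $L^1$, since its limit is $f-\E[\vphi|\Ical]=0$. That is only a pointwise statement, though. To get uniformity we fix a block length $p$. Subadditivity lets us dominate $E_k(\sigma^n\omega)$ by
\[
\sum_{j} E_p(\sigma^{n+jp+i}\omega)\;+\;(\text{boundary terms of }E_1\text{ at both ends}),
\]
with the sum over the roughly $k/p$ length-$p$ blocks, after averaging over the offset $i\in\{0,\ldots,p-1\}$ as in the standard proof of Kingman's theorem. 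The block sum is bounded by a Birkhoff sum of $g_p=\frac1p\sum_{i<p}E_p\circ\sigma^i$ over the window $[n,n+k)$, plus $O(p)$ terms of $E_1$. Birkhoff applied to $g_p$ gives $S_{n+k}[g_p]-S_n[g_p]=k\E[g_p|\Ical]+o(n+k)$, where $\E[g_p|\Ical]\lesssim\frac1p\E[E_p|\Ical]$. The boundary terms are values of $E_1\in L^1$ at single points $\sigma^m\omega$ with $m\le n+k$, and these are $o(m)$ a.s. by the standard Borel--Cantelli/Birkhoff argument ($\frac1mh(\sigma^m\omega)\to0$ for $h\in L^1$). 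Combining,
\[
\limsup_{n,k\to\infty}\frac{E_k(\sigma^n\omega)}{n+k}\le\frac{C}{p}\E[E_p|\Ical](\omega).
\]
Kingman's theorem gives $\frac1p\E[E_p|\Ical]\to 0$ a.s. as $p\to\infty$. Taking $p\to\infty$ along a countable set and intersecting the corresponding full-measure sets gives the limit $0$. The set $\Gamma$ is then the intersection of countably many full-measure sets. To make it invariant, we replace it by $\bigcap_{j\in\Z}\sigma^{-j}\Gamma$, or in the noninvertible case by $\bigcap_{j\ge0}\sigma^{-j}\Gamma$ together with the points whose orbit eventually lies in it. The statement is shift-stable because replacing $n$ by $n+1$ does not affect joint limits.

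For the case $f=-\infty$ we truncate. For $\Delta\in\R$, consider $f_n^{(\Delta)}=\max(f_n,-n\Delta')$ with $\Delta'>\Delta$, or more simply the subadditive family $f_n\vee(-n M)$ for integer $M$. This family is subadditive, and its limit $\max(f,-M)$ is finite, so the first case gives $f_k(\sigma^n\omega)\le -kM+o(n+k)$ on $\{f\le -M\}$. Taking $M>\Delta$ over countably many $M$ gives the stated limsup. On $\{f>-\infty\}$ with $\E[f]=-\infty$ globally, we restrict to invariant sets $\{f>-M\}$ on which the family is integrable and apply the above there. The invertible version follows by running the identical argument with Birkhoff's theorem for $\sigma^{-1}$ and \autoref{appCor:invertibleSET}, with windows $[-n,-n+k)$.
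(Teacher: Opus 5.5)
Your proposal is correct and follows essentially the same route as the paper's proof: Kingman's decomposition $f_n=S_n[\vphi]+E_n$, Birkhoff for the additive part, and an offset-averaged block (Euclidean-division) bound on the purely subadditive remainder, controlled by Birkhoff's theorem with a large block length chosen via Kingman, together with the same reductions for the non-integrable cases (restriction to the invariant sets $\{f>-i\}$ and truncation by $\max(f_k,-k\Delta)$). One small slip: the averaged block sum is bounded by the window Birkhoff sum of $\frac{1}{p}E_p$, not of $g_p=\frac{1}{p}\sum_{i<p}E_p\circ\sigma^i$, whose conditional expectation is $\E[E_p|\Ical]$ rather than $\frac{1}{p}\E[E_p|\Ical]$; your final bound $\frac{C}{p}\E[E_p|\Ical]$ is exactly what the correct function $\frac{1}{p}E_p$ gives.
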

\begin{proof}
    We shall only consider the non-invertible case, as the invertible case is a straightforward adaptation of the proof. Notice also that, for both results, we may actually reduce to the case of $\E[f] > -\infty$. Indeed, for $\omega\in \{f>-\infty\}$, cover $\{f>-\infty\}$ by a countable union of $\sigma$-invariant sets $\{f>-i\}_{i\geq 1}$. For $\omega\in \{f = -\infty\}$, consider instead the subadditive families $g_k^\Delta = \max(-k\Delta, f_k)$ for $\Delta \in \N$. Thus, we assume $\E[f] > -\infty$. The details are straightforward and left to the reader.
    
    By \autoref{appThm:SET_decomp}, there is an integrable map $\vphi\in L^1$ such that $\E[\vphi|\Ical]=f$ and a purely subadditive family $E_n:\Omega\to \R$ such that $f_n = S_n[\vphi]+E_n$. 
    For the additive term, Birkhoff's ergodic theorem gives that for all $\delta'>0$, there is an integer $N = N(\omega,\delta') > 0$ such that 
    \[\abs{S_{n}[\vphi]-nf}\leq n\delta'\]
    for all $n\geq N(\omega,\delta')$. In particular, for $n,k\in \N$, notice that
    \[S_{k}[\vphi](\sigma^{n}\omega) = S_{n+k}[\vphi](\omega) - S_n[\vphi](\omega),\]
    which means
    \[\abs{S_k[\vphi](\sigma^{n}\omega)-kf(\omega)}\leq 2(n+k)\delta'\]
    for all $n\geq N(\omega,\delta')$ and $k\geq 1$.

    We now consider the purely subadditive error term. Fix an integer $1\leq K \leq k$ and let $0\leq j \leq K-1$. Performing Euclidean division, we may write $k-j = q_j K + r_j$ for some integers $q_j\geq 0$ and $0\leq r_j \leq K-1$. For convenience, we write $E_0 = 0$. By subadditivity,
    \[E_{k}(\omega) \leq E_{j}(\omega) + E_{q_jK}(\sigma^j\omega) + E_{r_j}(\sigma^{j+q_jK} \omega),\]
    By subadditivity (again) and non-negativity of $(E_n)_n$, uniqueness of quotients and remainders from the Euclidean algorithm, we get after substituting $\omega\mapsto \sigma^n \omega$ that
    \begin{align*}
        \frac{1}{n+k} E_{k}(\sigma^{n}\omega)\leq \frac{1}{n+k}\sum_{j=0}^{n+k-1}\frac{1}{K}E_K(\sigma^{j}\omega) + \frac{1}{n+k}\frac{1}{K}\sum_{j=0}^{K-1} E_j(\sigma^{n}\omega) + E_{r_j}(\sigma^{n+j+q_jK}\omega).
    \end{align*}
    Then by the Birkhoff ergodic theorem, for $n$ and $k$ large enough, the right summand in the upper bound is bounded above by $\delta'$, while the left summand is bounded  above by $\frac{1}{K} \E[E_K |\I] + \delta'$. By the subadditive ergodic theorem, we may fix $K$ large enough so that $\E[E_K|\Ical]\leq K\delta'$. Combining these estimates, we get
    \[\frac{1}{n+k}E_{k}(\sigma^{n}\omega)\leq 3\delta'\]
    for $n,k$ large. If we take $0<\delta'<\delta/6$ and combine the above estimate with the estimate for the additive term, we get
    \[\abs{f_k(\sigma^{n}\omega)-kf(\omega)}\leq \abs{S_k[\vphi](\sigma^n\omega)-kf(\omega)} + E_{k}(\sigma^{n}\omega)\leq (n+k)\delta\]
    for all $n,k$ large enough. Thus, the desired limit is attained.
\end{proof}

We note that the above generalizes the following ergodic theorem due to Lee \cite{Lee2024} to general measure preserving systems (Lee requires the underlying probability space to be Lebesgue).
\begin{thm}[Kingman's Theorem for Balanced Intervals, c.f. \cite{Lee2024}]\label{appThm:balancedKingmanSET}
    Suppose $\sigma:\Omega\to\Omega$ is invertible. Then we have that
    \[\lim_{n\to\infty} \frac{1}{2n}f_{2n}(\sigma^{-n}\omega) = f(\omega)\]
    for almost every $\omega\in \Omega$.
\end{thm}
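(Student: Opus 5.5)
We deduce the balanced statement from \autoref{appThm:temperedSET} (its invertible form, with $\sigma^{-n}$ in place of $\sigma^n$), together with \autoref{appCor:invertibleSET}. As in the proof of \autoref{appThm:temperedSET}, we first reduce to the case $\E[f]>-\infty$. On $\{f>-\infty\}$ we restrict to the invariant sets $\{f>-i\}$. On $\{f=-\infty\}$ we replace $f_n$ by the subadditive truncations $g^\Delta_n=\max(-n\Delta,f_n)$, $\Delta\in\N$. Each truncated family has limit $\max(-\Delta,f)>-\infty$ and dominates $f_n$, so a $\limsup$ bound of $-\Delta$ for every $\Delta$ gives the limit $-\infty$. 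In that case only the upper bound is needed, and the second clause of \autoref{appThm:temperedSET} supplies it.

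Assume now $f(\omega)>-\infty$ and note the identity $f_{2n}(\sigma^{-n}\omega)=f_{k}(\sigma^{-n'}\omega)$ with $k=2n$ and $n'=n$. The invertible tempered theorem, read with base point $\sigma^{-n}$ and time $k$ going forward, gives directly on a full-measure invariant set $\Gamma$ that
\[\frac{1}{n+2n}\abs{f_{2n}(\sigma^{-n}\omega)-2nf(\omega)}\to 0.\]
Multiplying by $3/2$ yields $\frac{1}{2n}f_{2n}(\sigma^{-n}\omega)\to f(\omega)$.

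The one point needing care is that \autoref{appThm:temperedSET} in the invertible case really controls $f_k(\sigma^{-n}\omega)$, that is, forward blocks of length $k$ starting at $\sigma^{-n}\omega$. For this we retrace its proof with $\sigma^{-n}$. The additive part uses
\[S_k[\vphi](\sigma^{-n}\omega)=S_{n+k}[\vphi](\sigma^{-n}\omega)-S_n[\vphi](\sigma^{-n}\omega).\]
The first term can be rewritten as $S_{k}[\vphi](\omega)+S_n[\vphi](\sigma^{-n}\omega)$ when $k\ge n$, so the difference reduces to forward and backward Birkhoff sums based at $\omega$. Each of these is $mf+o(m)$ by Birkhoff's theorem applied to $\sigma$ and to $\sigma^{-1}$, with the same invariant conditional expectation. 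In our case $k=2n$, so $S_{2n}[\vphi](\sigma^{-n})=S_n[\vphi](\sigma^{-n}\omega)+S_n[\vphi](\omega)=2nf+o(n)$.

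The error term $E_{2n}(\sigma^{-n}\omega)$ is handled by the block decomposition. Its bound is by $\frac1K\sum_j E_K(\sigma^j\sigma^{-n}\omega)$ over a window straddling $0$, which splits into a backward and a forward ergodic average of $E_K$. Each average is at most $\frac1K\E[E_K|\Ical]+\delta'\le2\delta'$. The boundary terms are $o(n)$ because $E_j\in L^1$ for the finitely many $j<K$, and $E_j(\sigma^{m}\omega)/m\to0$ along orbits in both directions. This two-sided control of the error averages is the only real obstacle. Since $E_n\ge0$, the lower bound needs only the additive part, and this finishes the proof.
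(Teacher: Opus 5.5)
Your proposal is correct and follows the paper's route. The paper gives no separate argument: it treats the statement as the diagonal case $k=2n$ of the invertible form of \autoref{appThm:temperedSET}. Your retracing of that invertible case is sound: two-sided Birkhoff averages for $\vphi$ and for $E_K$, with boundary terms controlled in both directions, and the $f=-\infty$ case handled by truncation.

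There is one harmless slip. The general identity $S_k[\vphi](\sigma^{-n}\omega)=S_{n+k}[\vphi](\sigma^{-n}\omega)-S_n[\vphi](\sigma^{-n}\omega)$ is false, since its right side equals $S_k[\vphi](\omega)$. The decomposition you actually use, $S_{2n}[\vphi](\sigma^{-n}\omega)=S_n[\vphi](\sigma^{-n}\omega)+S_n[\vphi](\omega)$, is correct, so the argument stands.
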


\section{Miscellaneous Measure Theory and Analysis}\label{app:MeasureTheory}
\subsection{Functional Analysis}

Let $(X,\tau)$ be a topological vector space.

\begin{lem}\label{appLem:hereditaryWeak*SeparabilityOfDual}
    Suppose $X$ is a separable and metrizable LCTVS. Then every subset of $X^*$ is weak-* separable.
\end{lem}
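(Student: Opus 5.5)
The plan is to decompose $X^*$ into countably many weak-* compact metrizable pieces and use the fact that subspaces of second countable spaces are separable. Since $X$ is a metrizable locally convex space, it has a countable base $(U_n)_{n\geq 1}$ of absolutely convex neighborhoods of the origin. Every continuous linear functional is bounded on some $U_n$, so after rescaling it lies in the polar
\[\ann{U_n}\coloneqq \curly{\ell\in X^*: \abs{\innerp{x,\ell}}\leq 1\ \forall x\in U_n}\]
for some neighborhood of the base; hence $X^* = \bigcup_{n} \bigcup_{k\in\N} k\,\ann{U_n}$. This is a countable family of equicontinuous sets.

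\emph{Each piece is compact and metrizable.} By the Alaoglu--Bourbaki theorem, each $k\,\ann{U_n}$ is weak-* compact. Fix a countable dense set $\{x_j\}_{j\geq 1}\subseteq X$, which exists by separability. On an equicontinuous set $K$, I claim the weak-* topology coincides with the topology of pointwise convergence on $\{x_j\}$. The latter is induced by the metric
\[d(\ell,\ell') = \sum_j 2^{-j}\min(1,\abs{\innerp{x_j,\ell-\ell'}}).\]
To prove the claim, let $(\ell_\alpha)$ be a net in $K$ with $\ell_\alpha(x_j)\to\ell(x_j)$ for every $j$, and fix $x\in X$ and $\e>0$. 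By equicontinuity there is a neighborhood $V$ of $0$ with $\abs{\innerp{v,m}}<\e$ for all $v\in V$ and $m\in K-K$. Choose $x_j\in x+V$. Then
\[\abs{\ell_\alpha(x)-\ell(x)}\leq \abs{(\ell_\alpha-\ell)(x-x_j)}+\abs{(\ell_\alpha-\ell)(x_j)} < 2\e\]
eventually. So the identity map from $(K,\text{weak-*})$ to $(K,d)$ is a continuous bijection, and it is a homeomorphism. (Alternatively, it is a continuous bijection from a compact space onto a Hausdorff space.) Thus each $k\,\ann{U_n}$ is a compact metric space, hence second countable.

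\emph{Conclusion.} Let $S\subseteq X^*$ be arbitrary. Each $S\cap k\,\ann{U_n}$ is a subset of a second countable space, so it has a countable subset $C_{n,k}$ that is dense in it for the relative weak-* topology. Then $C=\bigcup_{n,k} C_{n,k}$ is countable, and its weak-* closure contains every $S\cap k\,\ann{U_n}$, hence contains $S$. So $S$ is weak-* separable.

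The only step needing care is the identification of topologies on equicontinuous sets, and it is handled by the $\e/2$ argument above. Note that the covering must be countable, which is exactly where metrizability of $X$ (a countable neighborhood base) is used. Without it, $X^*$ itself need not be a countable union of equicontinuous sets.
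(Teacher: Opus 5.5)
Your proof is correct and follows the same route as the paper. You cover $X^*$ by the countably many weak-* compact polars $kU_n^\circ$ (Alaoglu--Bourbaki), metrize each one using a countable dense set, and deduce hereditary separability from second countability. The only cosmetic difference is that you take the dense set in all of $X$, use a truncated metric, and check directly via equicontinuity that the topologies agree; the paper instead takes the dense set inside $U_n$ and uses compactness to upgrade the continuous bijection to a homeomorphism.
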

\begin{proof}
    Since $X$ is locally convex and metrizable, its topology is generated by a countable increasing family of seminorms $\{p_n\}_{n\geq 1}$. Let $U_n = \{x\in X : p_n(x)\leq 1\}$. By the Alaoglu-Bourbaki theorem, the set
    \[U^*_n = \curly{f\in X^*:\sup_{x\in U_n} \abs{f(x)} \leq 1}\]
    is weak-* compact.

    Now, fix $n\geq 1$ and let $\{x_k\}_{k\geq 1}$ be a dense set of points in $U_n$. Define the metric
    \[\rho_n(f,g) = \sum_{k=1}^\infty \frac{1}{2^k} \abs{\innerp{x_k,f-g}}\]
    for $f,g\in U_n^*$. It is easy to see that $U_n^*$ under the weak-* topology continuously embeds in $U_n^*$ with the topology generated by $\rho_n$. Since $U_n^*$ is weak-* compact, it must be the case that $\rho_n$ generates the weak-* subspace topology on $U_n^*$.

    Now, it follows from the definition of continuity that
    \[X^* = \bigcup_{k,n\in \N} k U_n^*.\]
    We have shown that $kU_n^*$ is metrizable for all $k,n\in \N$, and since the countable union of hereditarily separable spaces is hereditarily separable, it follows that $X^*$ is hereditarily separable. That is, every subset of $X^*$ is weak-* separable.
\end{proof}

\subsection{Pettis Measurability}
Let $(\Omega,\F,\p)$ be a probability space. The following provides a measurability criterion for maps in a metric space. It is a generalization of the Pettis measurability theorem to abstract metric spaces, and the proof is an adapted version of the proof of Pettis' theorem in \cite{Pettis1938}.
\begin{defn}\label{appDefn:strong_meas}
    Let $(X,\tau)$ be a Hausdorff topological space and $f:\Omega\mapsto X$ a map. We say $f$ is \textit{strongly measurable} if $f$ is the a.s. limit of a sequence of simple functions.
\end{defn}
\begin{defn}\label{appDefn:separablyValued}
    Let $(X,\tau)$ be a topological space and $f:\Omega\to X$ a map. We say $f$ is \textit{separably-valued} if $f(\Omega)$ is separable. We say $f$ is \textit{almost surely separably-valued} if there is a measurable null set $N\subseteq \Omega$ such that $f(\Omega\setminus N)$ is separable.
\end{defn}

\begin{defn}\label{appDefn:weak_meas}
    Let $(X,\tau)$ be a Hausdorff topological vector space and $f:\Omega\mapsto X$ a map. We say that $f$ is \textit{weakly measurable} if, for all linear functionals $\ell\in (X,\tau)^*$, the map $\omega\mapsto \innerp{f(\omega),\ell}$ from $\Omega\mapsto \R$ is $\F-\Bor(\R)$ measurable.
\end{defn}

\begin{thm}[Pettis Measurability Theorem \cite{Pettis1938}]\label{appThm:pettisMeasurabilityTheorem}
    Let $X$ be a Banach space and $f:\Omega\to \B$ a map. Then $f$ is strongly measurable if and only if $f$ is almost surely separable valued and weakly measurable.
\end{thm}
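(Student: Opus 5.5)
The plan is to prove the two implications separately. The forward direction is bookkeeping with simple functions. The reverse direction is the classical construction of approximating simple functions by nearest points from a countable dense set, made measurable by a countable norming family of functionals.

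\emph{Strongly measurable $\Rightarrow$ a.s. separably valued and weakly measurable.} Let $f_n$ be simple functions and $N$ a measurable null set with $f_n(\omega)\to f(\omega)$ for $\omega\notin N$.
\begin{itemize}
\item The set $C=\bigcup_n f_n(\Omega)$ is countable, since each simple function has finite range.
\item Hence $\overline{C}$ is separable and contains $f(\Omega\setminus N)$, which gives a.s. separable-valuedness.
\item For $\ell\in\B^*$, each $\omega\mapsto\innerp{f_n(\omega),\ell}$ is a real simple measurable function.
\item By continuity of $\ell$, these converge to $\innerp{f(\omega),\ell}$ off $N$.
\item A pointwise a.s. limit of measurable real functions is measurable, at least modulo a null set. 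I will note that we either work with the completed measure or redefine $f$ on $N$, for instance to be $0$; this is harmless, since all properties in question are a.s. properties.
\end{itemize}

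\emph{Converse.} Let $N$ be null with $f(\Omega\setminus N)$ separable, let $Y$ be the closed linear span of $f(\Omega\setminus N)$, and set $f=0$ on $N$. Then $Y$ is a separable closed subspace and $f$ is $Y$-valued.

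First I construct a countable norming family for $Y$.
\begin{itemize}
\item Take a dense sequence $(y_j)_j$ in $Y$.
\item By Hahn--Banach, choose $\ell_j\in S_{\B^*}$ with $\innerp{y_j,\ell_j}=\norm{y_j}$.
\item Then $\norm{y}=\sup_j\abs{\innerp{y,\ell_j}}$ for every $y\in Y$. Given $y$, pick $y_j$ close to $y$; the chain $\abs{\innerp{y,\ell_j}}\geq\norm{y_j}-\norm{y-y_j}\geq\norm{y}-2\norm{y-y_j}$ shows this.
\end{itemize}

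Next I get measurable distance functions.
\begin{itemize}
\item For each fixed $y\in Y$, the map $\omega\mapsto\norm{f(\omega)-y}=\sup_j\abs{\innerp{f(\omega),\ell_j}-\innerp{y,\ell_j}}$ is a countable supremum of measurable functions.
\item It is therefore measurable, by weak measurability of $f$.
\end{itemize}

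Finally I build the approximating simple functions.
\begin{itemize}
\item Define $f_n(\omega)=y_{j_n(\omega)}$, where $j_n(\omega)$ is the smallest index $j\leq n$ minimizing $\norm{f(\omega)-y_j}$ over $1\leq j\leq n$.
\item Each set $\{j_n=j\}$ is a finite intersection of sets of the form $\{\norm{f-y_j}<\norm{f-y_i}\}$ or $\{\norm{f-y_j}\leq\norm{f-y_i}\}$, hence measurable. So $f_n$ is a simple function.
\item We have $\norm{f_n(\omega)-f(\omega)}=\min_{j\leq n}\norm{f(\omega)-y_j}$, which decreases to $d(f(\omega),\{y_j\}_j)=0$ because $f(\omega)\in Y$.
\item So $f_n\to f$ everywhere, in particular a.s., and $f$ is strongly measurable.
\end{itemize}

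\emph{Main obstacle.} The only genuinely nontrivial point is the converse's reduction of norm measurability to weak measurability. This requires the countable norming family, which is available precisely because we pass to the separable subspace $Y$; on the full nonseparable $\B$, no countable subset of $B_{\B^*}$ need be norming.

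A smaller point is the measurability of the nearest-point selection. I handle it with the deterministic tie-breaking rule, lowest index, so that each level set of $j_n$ is a finite Boolean combination of measurable sets.
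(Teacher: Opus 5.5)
The paper gives no proof of this statement; it is simply quoted from Pettis, so there is no internal argument to compare against. Your proof is the standard one and it is correct. The countable norming family on the separable span $Y$ is exactly what turns weak measurability into measurability of $\omega\mapsto\norm{f(\omega)-y}$. Your lowest-index nearest-point rule then produces measurable simple functions converging everywhere, so no separate step from countably-valued to finitely-valued approximants is needed. Setting $f=0$ on $N$ in the converse is harmless: $N$ is measurable, so the modified map is still weakly measurable, and a.s. convergence to it is a.s. convergence to $f$.

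One correction concerns your forward direction. You say all the properties involved are a.s. properties, but that is not true of weak measurability as the paper defines it. The definition asks for exact $\F$-measurability of $\omega\mapsto\innerp{f(\omega),\ell}$, which is insensitive to null-set modifications only when $\F$ is complete. If $\F$ is not complete, take a non-measurable $E$ inside a measurable null set, and let $f$ equal a fixed $x\neq 0$ on $E$ and $0$ elsewhere. Then $f$ is the a.s. limit of the zero function, hence strongly measurable. But $\innerp{f,\ell}$ is not measurable for any $\ell$ with $\innerp{x,\ell}\neq 0$. So redefining $f$ on $N$ only proves the claim for a modification of $f$. Passing to the completion of $\F$ is what actually makes the implication true. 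This is a defect in the statement as written, not in your argument, but the completion route is the one you should commit to.
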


\begin{defn}\label{appDefn:weak_and_strong_OpMeas}
    Let $(X,\tau_X)$ and $(Y,\tau_Y)$ be topological vector spaces and $A:\Omega\mapsto L(X,Y)$ a $\tau_X-\tau_Y$ continuous operator-valued map. We say $A$ is \textit{weakly measurable} if, for all fixed $x\in X$, the map $f(\omega) = A_\omega x$ from $\Omega\to Y$ is weakly measurable. Similarly, we say $A$ is \textit{strongly measurable} if, for all fixed $x\in X$, the map $f(\omega) = A_\omega x$ from $\Omega\to Y$ is strongly measurable.
\end{defn}

\subsection{Measurable Multifunctions}
Most of the material here may be found in Chapter III of \cite{CastaingValadier2006}. Let $(X,d)$ be a metric space. A multifunction is a map $\Psi:\Omega\to 2^X$, where $2^X$ denotes the powerset of $X$. If $S\subseteq X$ is a set and $\Psi$ a multifunction, then we define
\[\Psi^{-}(S) = \curly{\omega\in \Omega: \Psi(\omega) \cap S\neq \varnothing}.\]
\begin{defn}\label{appDefn:measurableMultifunction}
    A multifunction $\Psi:\Omega\to 2^X$ is \textit{weakly measurable} if, for all open sets $U\subset X$, the set $\Psi^-(U)\subset \Omega$ is measurable.
\end{defn}
\begin{thm}\label{appThm:KRN_measSelection}
    Suppose $(X,d)$ is complete and separable and $\Psi$ is a closed-valued and non-empty multifunction. Then, if $\Psi$ is weakly measurable, it admits a Borel-measurable selection.
\end{thm}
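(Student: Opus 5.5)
The plan is the classical Kuratowski--Ryll-Nardzewski successive-approximation construction. We build measurable maps $f_n:\Omega\to X$ taking values in a fixed countable dense set. They will be uniformly Cauchy and increasingly close to $\Psi(\omega)$, and the selection is their limit. Fix a dense sequence $(x_k)_{k\geq 1}$ in $X$ and write $B(x,\rho)$ for the open ball. Weak measurability of $\Psi$ gives that every set $\Psi^-(B(x_k,\rho))$ is measurable. Since $\Psi(\omega)\neq\varnothing$ and $(x_k)$ is dense, for each $\omega$ and $\rho>0$ some $k$ satisfies $\omega\in\Psi^-(B(x_k,\rho))$.

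\textbf{Construction.} Define $f_0(\omega)=x_k$, where $k$ is the least index with $\omega\in\Psi^-(B(x_k,1))$. Then $\{f_0=x_k\}=\Psi^-(B(x_k,1))\setminus\bigcup_{j<k}\Psi^-(B(x_j,1))$ is measurable, and $d(f_0(\omega),\Psi(\omega))<1$. Inductively, suppose $f_n$ is countably valued in $\{x_k\}$ and measurable, with $d(f_n(\omega),\Psi(\omega))<2^{-n}$. Define $f_{n+1}(\omega)=x_k$, where $k$ is the least index such that
\[\omega\in\Psi^-\bigl(B(x_k,2^{-(n+1)})\bigr)\quad\text{and}\quad d(x_k,f_n(\omega))<2^{-n+1}.\]
Such a $k$ exists. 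Pick $y\in\Psi(\omega)$ with $d(y,f_n(\omega))<2^{-n}$ and then $x_k$ with $d(x_k,y)<2^{-(n+1)}$. The triangle inequality gives $d(x_k,f_n(\omega))<3\cdot 2^{-(n+1)}<2^{-n+1}$.

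\textbf{Measurability.} This is the only real bookkeeping step. Let
\[G_{k}:=\bigcup_{j:\,d(x_k,x_j)<2^{-n+1}}\{f_n=x_j\}.\]
Then $G_k$ is a countable union of measurable sets. The set $\{f_{n+1}=x_k\}$ equals $\Psi^-(B(x_k,2^{-(n+1)}))\cap G_k$ minus the analogous sets for indices $j<k$, so it is measurable. Hence each $f_{n+1}$ is a countably valued measurable map, and in particular Borel measurable.

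\textbf{Passage to the limit.} By construction $d(f_n(\omega),f_{n+1}(\omega))<2^{-n+1}$ for all $\omega$, so $(f_n(\omega))_n$ is uniformly Cauchy. Completeness of $X$ gives a pointwise limit $f(\omega)$. A pointwise limit of Borel measurable maps into a metric space is Borel measurable: for closed $C$, one has $f^{-1}(C)=\bigcap_m\bigcup_N\bigcap_{n\geq N}f_n^{-1}(\{d(\cdot,C)<1/m\})$. Finally, $d(\cdot,\Psi(\omega))$ is $1$-Lipschitz, so
\[d(f(\omega),\Psi(\omega))=\lim_n d(f_n(\omega),\Psi(\omega))=0.\]
Since $\Psi(\omega)$ is closed, $f(\omega)\in\Psi(\omega)$, and $f$ is the desired selection.

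The main obstacle is purely the measurability bookkeeping in the inductive step. We handle it by forcing all approximants to take values in the countable dense set, so that every relevant event is a countable Boolean combination of the measurable sets $\Psi^-(B(x_k,\rho))$.
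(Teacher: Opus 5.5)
Your argument is correct and is the classical Kuratowski--Ryll-Nardzewski successive-approximation proof, which is exactly what the paper relies on: it states this theorem without proof and cites Chapter III of Castaing--Valadier. One cosmetic point: if the dense sequence $(x_k)$ has repeated entries, your formula for $\{f_{n+1}=x_k\}$ should be read as the set where $k$ is the \emph{least admissible index} (alternatively, take the $x_k$ distinct); each value set $\{f_{n+1}=x\}$ is then a countable union of these index sets, so measurability is unaffected.
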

\begin{cor}[Corollary 6.16 in \cite{LianLu2010}]\label{appCor:LianLuMeasSelection}
    Suppose $(X,d)$ is complete and separable. If the multifunction $\Psi:\Omega\to 2^X$ is weakly measurable and non-empty, then its closure $\omega\mapsto \overline{\Psi}(\omega)\coloneqq \overline{\Psi(\omega)}$ is weakly measurable, and thus admits a Borel-measurable selection.
\end{cor}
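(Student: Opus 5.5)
The plan is to derive Corollary~\ref{appCor:LianLuMeasSelection} from Theorem~\ref{appThm:KRN_measSelection} by showing that passing to the closure does not change the lower inverse images of open sets. Once that is established, the closure is weakly measurable, and it is also non-empty and closed-valued, so the measurable selection theorem applies directly.

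\textbf{Key step.} For any open set $U\subseteq X$ we claim
\[
\overline{\Psi}^{-}(U)=\Psi^{-}(U).
\]
The inclusion $\Psi^{-}(U)\subseteq\overline{\Psi}^{-}(U)$ is immediate, since $\Psi(\omega)\subseteq\overline{\Psi(\omega)}$. For the converse, suppose $\overline{\Psi(\omega)}\cap U\neq\varnothing$ and pick $x$ in this intersection. Since $U$ is an open neighborhood of $x$, and $x$ lies in the closure of $\Psi(\omega)$, the set $U$ must meet $\Psi(\omega)$ itself. Hence $\omega\in\Psi^{-}(U)$.

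\textbf{Conclusion.} By the key step, $\overline{\Psi}^{-}(U)=\Psi^{-}(U)$ is measurable for every open $U$, because $\Psi$ is weakly measurable. So $\overline{\Psi}$ is weakly measurable. It is closed-valued by construction, and non-empty because $\Psi$ is. Since $(X,d)$ is complete and separable, Theorem~\ref{appThm:KRN_measSelection} (the Kuratowski--Ryll-Nardzewski selection theorem) yields a Borel-measurable selection of $\overline{\Psi}$.

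There is no genuine obstacle here. The only point needing care is the key step, and it uses nothing more than the definition of closure together with openness of $U$.
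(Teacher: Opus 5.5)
Your proposal is correct. The paper gives no proof of its own and simply cites Lian--Lu, and your argument is the standard one behind that citation: the identity $\overline{\Psi}^{-}(U)=\Psi^{-}(U)$ for every open $U$, combined with the Kuratowski--Ryll-Nardzewski theorem (\autoref{appThm:KRN_measSelection}) applied to the closed-valued, non-empty multifunction $\overline{\Psi}$.
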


\end{appendix}

\section*{Declaration of Generative AI Use}
The work presented here is the result of human-led efforts, with assistance from large language models (LLMs) in some specific capacities listed below. In particular, for the first four sections of this paper, LLMs provided no novel mathematical content. All the results and proofs therein not otherwise cited were developed by the author, with LLMs assisting with proof verification, exposition, and copyediting issues such as revising expository and mathematical typos, grammar, and readability.

Regarding Sections \ref{sec:MET}, \ref{sec:applications}, and the appendices, LLMs assisted with certain mathematical arguments used in proofs. In particular, an LLM supplied the application of the bipolar theorem in \autoref{prop:AdmissibleEmbedding}. An LLM also supplied the proof of \autoref{appLem:closednessOfBLtau}. Lastly, an LLM supplied the portion of the proof of \autoref{appThm:temperedSET} that estimated the purely subadditive error term.

Besides those listed above, LLMs made no other mathematical contributions to the present paper. All arguments supplied by LLMs were independently verified. The entire manuscript, including any argument provided by an LLM, was written by the author. The author takes full responsibility for correctness of the present work.

\printbibliography

@book{Akhmerov1992,
  author = {Akhmerov, R. R. and Kamenskii, M. I. and Potapov, A. S. 
            and Rodkina, A. E. and Sadovskii, B. N.},
  title = {Measures of noncompactness and condensing operators},
  series = {Operator Theory: Advances and Applications},
  volume = {55},
  publisher = {Birkh\"{a}user},
  year = {1992}
}

@article {Blumenthal2016,
    AUTHOR = {Blumenthal, Alex},
     TITLE = {A volume-based approach to the multiplicative ergodic theorem
              on {B}anach spaces},
   JOURNAL = {Discrete Contin. Dyn. Syst.},
  FJOURNAL = {Discrete and Continuous Dynamical Systems},
    VOLUME = {36},
      YEAR = {2016},
    NUMBER = {5},
     PAGES = {2377--2403},
   MRCLASS = {37H15 (37L30 46B07)},
  MRNUMBER = {3485402},
MRREVIEWER = {Ricardo\ F.\ Vila Freyer},
       DOI = {10.3934/dcds.2016.36.2377},
}

@article {BlumenthalMorris2015,
    AUTHOR = {Blumenthal, Alex and Morris, Ian D.},
     TITLE = {Characterization of dominated splittings for operator cocycles
              acting on {B}anach spaces},
   JOURNAL = {J. Differential Equations},
  FJOURNAL = {Journal of Differential Equations},
    VOLUME = {267},
      YEAR = {2019},
    NUMBER = {7},
     PAGES = {3977--4013},
   MRCLASS = {37D30 (37H15 46B20)},
  MRNUMBER = {3959484},
MRREVIEWER = {Mauricio\ Poletti},
       DOI = {10.1016/j.jde.2019.04.033},
}

@article {BlumenthalYoung2017,
    AUTHOR = {Blumenthal, Alex and Young, Lai-Sang},
     TITLE = {Entropy, volume growth and {SRB} measures for {B}anach space
              mappings},
   JOURNAL = {Invent. Math.},
  FJOURNAL = {Inventiones Mathematicae},
    VOLUME = {207},
      YEAR = {2017},
    NUMBER = {2},
     PAGES = {833--893},
   MRCLASS = {37H15 (37C40)},
  MRNUMBER = {3595937},
MRREVIEWER = {Zhiming\ Li},
       DOI = {10.1007/s00222-016-0678-0},
}

@article {BochiGourmelon2009,
    AUTHOR = {Bochi, Jairo and Gourmelon, Nicolas},
     TITLE = {Some characterizations of domination},
   JOURNAL = {Math. Z.},
  FJOURNAL = {Mathematische Zeitschrift},
    VOLUME = {263},
      YEAR = {2009},
    NUMBER = {1},
     PAGES = {221--231},
   MRCLASS = {37D30},
  MRNUMBER = {2529495},
MRREVIEWER = {Alexander\ Arbieto},
       DOI = {10.1007/s00209-009-0494-y},
}

@book {Bollobas1999,
    AUTHOR = {Bollob\'as, B\'ela},
     TITLE = {Linear analysis},
    SERIES = {Cambridge Mathematical Textbooks},
      NOTE = {An introductory course},
 PUBLISHER = {Cambridge University Press, Cambridge},
      YEAR = {1990},
     PAGES = {xii+240},
      ISBN = {0-521-65577-3},
   MRCLASS = {46-01 (47-01)},
  MRNUMBER = {1087297},
MRREVIEWER = {Ioana\ Cior\u anescu},
}

@book {CastaingValadier2006,
    AUTHOR = {Castaing, C. and Valadier, M.},
     TITLE = {Convex analysis and measurable multifunctions},
    SERIES = {Lecture Notes in Mathematics},
    VOLUME = {580},
 PUBLISHER = {Springer-Verlag, Berlin-New York},
      YEAR = {1977},
     PAGES = {vii+278},
   MRCLASS = {46G99 (26A51 28A05 49A50 54C60)},
  MRNUMBER = {467310},
MRREVIEWER = {Vladimir\ L.\ Levin},
}

@article {FroylandLloydQuas2010,
    AUTHOR = {Froyland, Gary and Lloyd, Simon and Quas, Anthony},
     TITLE = {Coherent structures and isolated spectrum for
              {P}erron-{F}robenius cocycles},
   JOURNAL = {Ergodic Theory Dynam. Systems},
  FJOURNAL = {Ergodic Theory and Dynamical Systems},
    VOLUME = {30},
      YEAR = {2010},
    NUMBER = {3},
     PAGES = {729--756},
   MRCLASS = {37H15 (37C30 37D20 37M25)},
  MRNUMBER = {2643709},
MRREVIEWER = {Hans\ Henrik\ Rugh},
       DOI = {10.1017/S0143385709000339},
}

@article {FroylandLloydQuas2013,
    AUTHOR = {Froyland, Gary and Lloyd, Simon and Quas, Anthony},
     TITLE = {A semi-invertible {O}seledets theorem with applications to
              transfer operator cocycles},
   JOURNAL = {Discrete Contin. Dyn. Syst.},
  FJOURNAL = {Discrete and Continuous Dynamical Systems},
    VOLUME = {33},
      YEAR = {2013},
    NUMBER = {9},
     PAGES = {3835--3860},
   MRCLASS = {37H15 (37A30 37L55)},
  MRNUMBER = {3038042},
MRREVIEWER = {Luciana\ A.\ Alves},
       DOI = {10.3934/dcds.2013.33.3835},
}

@article {QuasThieullenZarrabi2019,
    AUTHOR = {Quas, Anthony and Thieullen, Philippe and Zarrabi, Mohamed},
     TITLE = {Explicit bounds for separation between {O}seledets subspaces},
   JOURNAL = {Dyn. Syst.},
  FJOURNAL = {Dynamical Systems. An International Journal},
    VOLUME = {34},
      YEAR = {2019},
    NUMBER = {3},
     PAGES = {517--560},
   MRCLASS = {37L30 (37H15 47A35)},
  MRNUMBER = {3980636},
MRREVIEWER = {Rui\ Zou},
       DOI = {10.1080/14689367.2019.1571562},
}

@article {GonzalezTokmanQuas2014,
    AUTHOR = {Gonz\'alez-Tokman, Cecilia and Quas, Anthony},
     TITLE = {A semi-invertible operator {O}seledets theorem},
   JOURNAL = {Ergodic Theory Dynam. Systems},
  FJOURNAL = {Ergodic Theory and Dynamical Systems},
    VOLUME = {34},
      YEAR = {2014},
    NUMBER = {4},
     PAGES = {1230--1272},
   MRCLASS = {47A35 (37A30 37H15)},
  MRNUMBER = {3227155},
MRREVIEWER = {Wojciech\ Bartoszek},
       DOI = {10.1017/etds.2012.189},
}

@article {GonzalezTokmanQuas2015,
    AUTHOR = {Gonz\'alez-Tokman, Cecilia and Quas, Anthony},
     TITLE = {A concise proof of the multiplicative ergodic theorem on
              {B}anach spaces},
   JOURNAL = {J. Mod. Dyn.},
  FJOURNAL = {Journal of Modern Dynamics},
    VOLUME = {9},
      YEAR = {2015},
     PAGES = {237--255},
   MRCLASS = {37A30 (37D30 37H15 37L55)},
  MRNUMBER = {3400385},
MRREVIEWER = {Ricardo\ F.\ Vila Freyer},
       DOI = {10.3934/jmd.2015.9.237},
}

@book {Kato1995,
    AUTHOR = {Kato, Tosio},
     TITLE = {Perturbation theory for linear operators},
    SERIES = {Die Grundlehren der mathematischen Wissenschaften},
    VOLUME = {Band 132},
 PUBLISHER = {Springer-Verlag New York, Inc., New York},
      YEAR = {1966},
     PAGES = {xix+592},
   MRCLASS = {47.00 (47.48)},
  MRNUMBER = {203473},
MRREVIEWER = {L.\ de Branges},
}

@article {LianLu2010,
    AUTHOR = {Lian, Zeng and Lu, Kening},
     TITLE = {Lyapunov exponents and invariant manifolds for random
              dynamical systems in a {B}anach space},
   JOURNAL = {Mem. Amer. Math. Soc.},
  FJOURNAL = {Memoirs of the American Mathematical Society},
    VOLUME = {206},
      YEAR = {2010},
    NUMBER = {967},
     PAGES = {vi+106},
   MRCLASS = {37H15 (34D08 37A55 37C45 37D10 37H10 37L25)},
  MRNUMBER = {2674952},
MRREVIEWER = {Bj\"orn\ Schmalfuss},
       DOI = {10.1090/S0065-9266-10-00574-0},
}

@incollection {Mane1983,
    AUTHOR = {Ma\~n\'e, Ricardo},
     TITLE = {Lyapounov exponents and stable manifolds for compact
              transformations},
 BOOKTITLE = {Geometric dynamics ({R}io de {J}aneiro, 1981)},
    SERIES = {Lecture Notes in Math.},
    VOLUME = {1007},
     PAGES = {522--577},
 PUBLISHER = {Springer, Berlin},
      YEAR = {1983},
   MRCLASS = {58F15},
  MRNUMBER = {730286},
MRREVIEWER = {Charles\ C.\ Conley},
       DOI = {10.1007/BFb0061433},
}

@article {Oseledets1968,
    AUTHOR = {Oseledets, V. I.},
     TITLE = {A multiplicative ergodic theorem. {C}haracteristic {L}japunov exponents of dynamical systems},
   JOURNAL = {Trudy Moskov. Mat. Ob\v s\v c.},
  FJOURNAL = {Trudy Moskovskogo Matemati\v ceskogo Ob\v s\v cestva},
    VOLUME = {19},
      YEAR = {1968},
     PAGES = {179--210},
      ISSN = {0134-8663},
   MRCLASS = {28.70 (34.00)},
  MRNUMBER = {240280},
MRREVIEWER = {J\'ozsef\ Sz\H ucs},
}

@book {Pietsch1987,
    AUTHOR = {Pietsch, A.},
     TITLE = {Eigenvalues and {$s$}-numbers},
    SERIES = {Mathematik und ihre Anwendungen in Physik und Technik
              [Mathematics and its Applications in Physics and Technology]},
    VOLUME = {43},
 PUBLISHER = {Akademische Verlagsgesellschaft Geest \& Portig K.-G.,
              Leipzig},
      YEAR = {1987},
     PAGES = {360},
      ISBN = {3-321-00012-1},
   MRCLASS = {47B10 (45-02 46-02 47-02)},
  MRNUMBER = {917067},
MRREVIEWER = {Hermann\ K\"onig},
}

@article {Ullrich2024,
    AUTHOR = {Ullrich, Mario},
     TITLE = {Inequalities between s-numbers},
   JOURNAL = {Adv. Oper. Theory},
  FJOURNAL = {Advances in Operator Theory},
    VOLUME = {9},
      YEAR = {2024},
    NUMBER = {4},
     PAGES = {Paper No. 82, 7},
   MRCLASS = {47B06 (46B50 47B01)},
  MRNUMBER = {4804930},
MRREVIEWER = {Hermann\ K\"onig},
       DOI = {10.1007/s43036-024-00386-x},
}

@article {Raghunathan1979,
    AUTHOR = {Raghunathan, M. S.},
     TITLE = {A proof of {O}seledec's multiplicative ergodic theorem},
   JOURNAL = {Israel J. Math.},
  FJOURNAL = {Israel Journal of Mathematics},
    VOLUME = {32},
      YEAR = {1979},
    NUMBER = {4},
     PAGES = {356--362},
   MRCLASS = {60B15 (22D40 28D99 58F11)},
  MRNUMBER = {571089},
MRREVIEWER = {Yves\ Guivarc'h},
       DOI = {10.1007/BF02760464},
}

@article {Ruelle1979,
    AUTHOR = {Ruelle, David},
     TITLE = {Ergodic theory of differentiable dynamical systems},
   JOURNAL = {Inst. Hautes \'Etudes Sci. Publ. Math.},
  FJOURNAL = {Institut des Hautes \'Etudes Scientifiques. Publications
              Math\'ematiques},
    NUMBER = {50},
      YEAR = {1979},
     PAGES = {27--58},
      ISSN = {0073-8301,1618-1913},
   MRCLASS = {58F18 (58F15 60G99 82A05)},
  MRNUMBER = {556581},
MRREVIEWER = {L.\ A.\ Bunimovich},
}

@article {Ruelle1982,
    AUTHOR = {Ruelle, David},
     TITLE = {Characteristic exponents and invariant manifolds in {H}ilbert
              space},
   JOURNAL = {Ann. of Math. (2)},
  FJOURNAL = {Annals of Mathematics. Second Series},
    VOLUME = {115},
      YEAR = {1982},
    NUMBER = {2},
     PAGES = {243--290},
   MRCLASS = {58F15 (58F11)},
  MRNUMBER = {647807},
MRREVIEWER = {D.\ Newton},
       DOI = {10.2307/1971392},
}

@article {Thieullen1987,
    AUTHOR = {Thieullen, P.},
     TITLE = {Fibr\'es dynamiques asymptotiquement compacts. {E}xposants de
              {L}yapounov. {E}ntropie. {D}imension},
   JOURNAL = {Ann. Inst. H. Poincar\'e{} Anal. Non Lin\'eaire},
  FJOURNAL = {Annales de l'Institut Henri Poincar\'e. Analyse Non
              Lin\'eaire},
    VOLUME = {4},
      YEAR = {1987},
    NUMBER = {1},
     PAGES = {49--97},
      ISSN = {0294-1449},
   MRCLASS = {58F11 (28D20 34D05 58F15)},
  MRNUMBER = {877991},
MRREVIEWER = {Massimo\ Campanino},
}

@article {Walters1993,
    AUTHOR = {Walters, Peter},
     TITLE = {A dynamical proof of the multiplicative ergodic theorem},
   JOURNAL = {Trans. Amer. Math. Soc.},
  FJOURNAL = {Transactions of the American Mathematical Society},
    VOLUME = {335},
      YEAR = {1993},
    NUMBER = {1},
     PAGES = {245--257},
   MRCLASS = {28D05 (58F11)},
  MRNUMBER = {1073779},
MRREVIEWER = {U.\ Krengel},
       DOI = {10.2307/2154267},
}

@book {Wojtaszczyk1991,
    AUTHOR = {Wojtaszczyk, Przemysław},
     TITLE = {Banach spaces for analysts},
    SERIES = {Cambridge Studies in Advanced Mathematics},
    VOLUME = {25},
 PUBLISHER = {Cambridge University Press, Cambridge},
      YEAR = {1991},
     PAGES = {xiv+382},
   MRCLASS = {46-01 (30-01 43-01 46Bxx 46Exx 47Bxx)},
  MRNUMBER = {1144277},
MRREVIEWER = {Raymond\ H.\ Cox},
       DOI = {10.1017/CBO9780511608735},
}

@article {kingman1968ergodic,
    AUTHOR = {Kingman, J. F. C.},
     TITLE = {The ergodic theory of subadditive stochastic processes},
   JOURNAL = {J. Roy. Statist. Soc. Ser. B},
  FJOURNAL = {Journal of the Royal Statistical Society. Series B.
              Methodological},
    VOLUME = {30},
      YEAR = {1968},
     PAGES = {499--510},
      ISSN = {0035-9246},
   MRCLASS = {60.30},
  MRNUMBER = {254907},
MRREVIEWER = {W.\ F.\ Stout},
}

@article {kingman1973subadditive,
    AUTHOR = {Kingman, J. F. C.},
     TITLE = {Subadditive ergodic theory},
   JOURNAL = {Ann. Probability},
  FJOURNAL = {The Annals of Probability},
    VOLUME = {1},
      YEAR = {1973},
     PAGES = {883--909},
   MRCLASS = {60F15 (28A65)},
  MRNUMBER = {356192},
MRREVIEWER = {K.\ Jacobs},
       DOI = {10.1214/aop/1176996798},
}

@article {Lee2024,
    AUTHOR = {Lee, George},
     TITLE = {Constructing the {O}seledets decomposition with subspace
              growth estimates},
   JOURNAL = {Trans. Amer. Math. Soc. Ser. B},
  FJOURNAL = {Transactions of the American Mathematical Society. Series B},
    VOLUME = {11},
      YEAR = {2024},
     PAGES = {396--419},
   MRCLASS = {37H15},
  MRNUMBER = {4706374},
MRREVIEWER = {Luciana\ A.\ Alves},
       DOI = {10.1090/btran/146},
}

@phdthesis{horan2020spectral,
  title={Spectral gap and asymptotics for a family of cocycles of Perron-Frobenius operators},
  author={Horan, Joseph A.},
  year={2020},
  school={University of Victoria}
}

@phdthesis{Lee2023,
  author  = {Lee, George},
  title   = {Some results on linear dynamical systems},
  school  = {University of Victoria},
  year    = {2023},
  type    = {Ph.D. thesis}
}

@article {Pettis1938,
    AUTHOR = {Pettis, B. J.},
     TITLE = {On integration in vector spaces},
   JOURNAL = {Trans. Amer. Math. Soc.},
  FJOURNAL = {Transactions of the American Mathematical Society},
    VOLUME = {44},
      YEAR = {1938},
    NUMBER = {2},
     PAGES = {277--304},
   MRCLASS = {28B05 (46G10)},
  MRNUMBER = {1501970},
       DOI = {10.2307/1989973},
}

@article {GouezelStoyanov2017,
    AUTHOR = {Gou\"ezel, S\'ebastien and Stoyanov, Luchezar},
     TITLE = {Quantitative {P}esin theory for {A}nosov diffeomorphisms and
              flows},
   JOURNAL = {Ergodic Theory Dynam. Systems},
  FJOURNAL = {Ergodic Theory and Dynamical Systems},
    VOLUME = {39},
      YEAR = {2019},
    NUMBER = {1},
     PAGES = {159--200},
   MRCLASS = {37D30 (37E30 37H15)},
  MRNUMBER = {3881129},
MRREVIEWER = {Rafael\ Potrie},
       DOI = {10.1017/etds.2017.25},
}

@article {GonzalezTokmanQuas2022,
    AUTHOR = {Gonz\'alez-Tokman, C. and Quas, A.},
     TITLE = {Lyapunov exponents for transfer operator cocycles of
              metastable maps: a quarantine approach},
   JOURNAL = {Trans. Moscow Math. Soc.},
  FJOURNAL = {Transactions of the Moscow Mathematical Society},
    VOLUME = {82},
      YEAR = {2021},
     PAGES = {65--76},
   MRCLASS = {37H15},
  MRNUMBER = {4397152},
       DOI = {10.1090/mosc/313},
}

@article {Buzzi1999,
    AUTHOR = {Buzzi, J\'er\^ome},
     TITLE = {Exponential decay of correlations for random {L}asota-{Y}orke
              maps},
   JOURNAL = {Comm. Math. Phys.},
  FJOURNAL = {Communications in Mathematical Physics},
    VOLUME = {208},
      YEAR = {1999},
    NUMBER = {1},
     PAGES = {25--54},
   MRCLASS = {37H15 (37H05)},
  MRNUMBER = {1729876},
MRREVIEWER = {Bernard\ Schmitt},
       DOI = {10.1007/s002200050746},
}

@book {ambrosio2000bv,
    AUTHOR = {Ambrosio, Luigi and Fusco, Nicola and Pallara, Diego},
     TITLE = {Functions of bounded variation and free discontinuity
              problems},
    SERIES = {Oxford Mathematical Monographs},
 PUBLISHER = {The Clarendon Press, Oxford University Press, New York},
      YEAR = {2000},
     PAGES = {xviii+434},
      ISBN = {0-19-850245-1},
   MRCLASS = {49-02 (49J45 49K10 49Qxx)},
  MRNUMBER = {1857292},
MRREVIEWER = {J.\ E.\ Brothers},
}

@book {ConwayFunctionalAnalysis,
    AUTHOR = {Conway, John B.},
     TITLE = {A course in functional analysis},
    SERIES = {Graduate Texts in Mathematics},
    VOLUME = {96},
 PUBLISHER = {Springer-Verlag, New York},
      YEAR = {1985},
     PAGES = {xiv+404},
   MRCLASS = {46-01 (47-01)},
  MRNUMBER = {768926},
MRREVIEWER = {Greg\ Robel},
       DOI = {10.1007/978-1-4757-3828-5},
}

@misc{DS21quenchedLimThms,
  author        = {Davor Dragi{\v{c}}evi{\'c} and Julien Sedro},
  title         = {Quenched limit theorems for expanding on average cocycles},
  year          = {2021},
  eprint        = {2105.00548v2},
  archivePrefix = {arXiv},
  primaryClass  = {math.DS}
}

@incollection {BarreiraPesin2001,
    AUTHOR = {Barreira, L. and Pesin, Ya.},
     TITLE = {Lectures on {L}yapunov exponents and smooth ergodic theory},
 BOOKTITLE = {Smooth ergodic theory and its applications ({S}eattle, {WA},
              1999)},
    SERIES = {Proc. Sympos. Pure Math.},
    VOLUME = {69},
     PAGES = {3--106},
 PUBLISHER = {Amer. Math. Soc., Providence, RI},
      YEAR = {2001},
   MRCLASS = {37D25 (34D08 37C40)},
  MRNUMBER = {1858534},
MRREVIEWER = {Miaohua\ Jiang},
       DOI = {10.1090/pspum/069/1858534},
}

@article {LasotaYorke1973,
    AUTHOR = {Lasota, A. and Yorke, James A.},
     TITLE = {On the existence of invariant measures for piecewise monotonic
              transformations},
   JOURNAL = {Trans. Amer. Math. Soc.},
  FJOURNAL = {Transactions of the American Mathematical Society},
    VOLUME = {186},
      YEAR = {1973},
     PAGES = {481--488 (1974)},
   MRCLASS = {28A70},
  MRNUMBER = {335758},
MRREVIEWER = {K.\ Krzy\.zewski},
       DOI = {10.2307/1996575},
}

@book {Baladi2000,
    AUTHOR = {Baladi, Viviane},
     TITLE = {Positive transfer operators and decay of correlations},
    SERIES = {Advanced Series in Nonlinear Dynamics},
    VOLUME = {16},
 PUBLISHER = {World Scientific Publishing Co., Inc., River Edge, NJ},
      YEAR = {2000},
     PAGES = {x+314},
   MRCLASS = {37C30 (37-02 37A25 37D20 47B38)},
  MRNUMBER = {1793194},
MRREVIEWER = {J\'er\^ome\ Buzzi},
       DOI = {10.1142/9789812813633},
}

@article {SchaumloffelFlandoli1991,
    AUTHOR = {Schauml\"offel, Kay-Uwe and Flandoli, Franco},
     TITLE = {A multiplicative ergodic theorem with applications to a first
              order stochastic hyperbolic equation in a bounded domain},
   JOURNAL = {Stochastics Stochastics Rep.},
  FJOURNAL = {Stochastics and Stochastics Reports},
    VOLUME = {34},
      YEAR = {1991},
    NUMBER = {3-4},
     PAGES = {241--255},
   MRCLASS = {60H15 (28D15 54H20 60E15)},
  MRNUMBER = {1124837},
MRREVIEWER = {Wolfgang\ Kliemann},
       DOI = {10.1080/17442509108833684},
}
\end{document}